\renewcommand{\O}{\Omega}
\newcommand{\F}{\mathbb{F}_{q}}
\newcommand{\Nat}{\mathbb{N}}
\newcommand{\R}{\mathbb{R}}
\newcommand{\la}{\langle}
\newcommand{\ra}{\rangle}
\renewcommand{\to}{\rightarrow}
\newcommand{\leqs}{\leqslant}
\newcommand{\geqs}{\geqslant}
\newcommand{\normeq}{\trianglelefteqslant}
\newcommand{\vs}{\vspace{2mm}}
\newcommand{\fpr}{\mbox{{\rm fpr}}}
\newcommand{\Sym}{\operatorname{Sym}}
\newcommand{\imod}[1]{\allowbreak\mkern4mu({\operator@font mod}\,\,#1)}
\newtheorem{theorem}{Theorem} 
\newtheorem*{theorem*}{Theorem}
\newtheorem*{conj*}{Conjecture}
\newtheorem{thm}{Theorem}[section] 
\newtheorem{prop}[thm]{Proposition} 
\newtheorem{lem}[thm]{Lemma}
\newtheorem{cor}[thm]{Corollary}
\theoremstyle{definition}
\newtheorem{rem}[thm]{Remark}
\newtheorem*{deff}{Definition}
\newtheorem{defn}[thm]{Definition}
\begin{document}

\title[A generalisation of Cameron's base size conjecture]{A generalisation of Cameron's base size conjecture}

\author{Marina Anagnostopoulou-Merkouri}
\address{M. Anagnostopoulou-Merkouri, School of Mathematics, University of Bristol, Bristol BS8 1UG, UK}
\email{marina.anagnostopoulou-merkouri@bristol.ac.uk}
\begin{abstract}
Let $G\leqs {\rm Sym}(\O)$ be a finite transitive permutation group with point stabiliser $H$. A
base for $G$ is a subset of $\O$ whose pointwise stabiliser is trivial, and the minimal cardinality of a base is called the base size of $G$, denoted by $b(G, \O)$. Equivalently, $b(G, \O)$ is the minimal positive integer $k$ such that $G$ has a regular orbit on the Cartesian product $\O^k$. A well-known conjecture of
Cameron from the 1990s asserts that if $G$ is an almost simple primitive group and $H$ is a so-called non-standard subgroup, then $b(G, \O) \leqs 7$, with equality if and only if $G$ is the Mathieu group ${\rm M}_{24}$ in its natural action of degree $24$. This conjecture was settled in a series of papers by Burness et al. (2007-11). 

In this paper, we complete the proof of a natural generalisation of Cameron's conjecture. Our main result states that if $G$ is an almost simple group and $H_1, \ldots, H_k$ are any non-standard maximal subgroups of $G$ with $k \geqs 7$, then $G$ has a regular orbit on $G/H_1 \times \cdots \times G/H_k$, noting that Cameron's original conjecture corresponds to the special case where the $H_i$ are pairwise conjugate subgroups. In addition, we show that the same conclusion holds with $k = 6$, unless $G = {\rm M}_{24}$ and each $H_i$ is isomorphic to ${\rm M}_{23}$. For example, this means that if $G$ is a simple exceptional group of Lie type and $H_1, \ldots, H_6$ are proper subgroups of $G$, then there exist elements $g_i \in G$ such that $\bigcap_i H_i^{g_i} = 1$. By applying recent work in a joint paper with Burness, we may assume $G$ is a group of Lie type and our proof uses  probabilistic methods based on fixed point ratio estimates.
\vs

\noindent MSC: 20B05, 20E32, 20E28, 20D06, 20P05.
\end{abstract}

\date{\today}

\maketitle

\setcounter{tocdepth}{1}
\tableofcontents

\section{Introduction}\label{s:intro}
Let $G \leqs \Sym(\O)$ be a transitive permutation group on a finite set $\Omega$. Recall that a \emph{base} for $G$ is a subset of $\O$ whose pointwise stabiliser is trivial. We write $b(G, \O)$ for the \emph{base size} of $G$, which is the minimal cardinality of a base for $G$. This classical invariant has been extensively studied since the early days of group theory in the nineteenth century, finding a wide range of applications to other areas of mathematics such as graph theory (see~\cite{BC}) and model theory (indicatively, see~\cite{Cherlin, kelsey-colva, lachlan}), for example. Bases are also fundamental in the computational study of finite groups and the implementation of efficient algorithms (see~\cite{Seress}). There is an extensive literature on bases for permutation groups and we refer the reader to the survey articles \cite{BC,LSh3,Maroti} and \cite[Section 5]{Bur181} for further details. 

Determining the base size of a permutation group is a fundamental problem in group theory and it has attracted significant interest in recent years. In particular, there has been a strong focus on studying base sizes of primitive groups, motivated by some highly influential conjectures due to Babai, Cameron, Kantor, and Pyber from the 1990s (see~\cite{babai, CK, pyber}). (Recall that a transitive permutation group $G\leqs {\rm Sym}(\O)$ is \emph{primitive} if a point stabiliser is a maximal subgroup of $G$). Although these conjectures have now been resolved, the extensive work and newly introduced methods required to solve those problems has created lasting momentum in the area and several new directions of research have emerged.

\vs

A finite group $G$ is \emph{almost simple} if there exists a nonabelian simple group $G_0$ such that $G_0\normeq G \leqs {\rm Aut}(G_0)$, and we call $G_0$ the \emph{socle} of $G$ (this is the unique minimal normal subgroup of $G$). The possibilities for $G_0$ are determined by the \emph{Classification of Finite Simple Groups}. In particular, $G_0$ is either alternating, or a group of Lie type, or one of the so-called sporadic simple groups. The latter include the family of Mathieu groups ${\rm M}_{11}, {\rm M}_{12}, {\rm M}_{22}, {\rm M}_{23}, {\rm M}_{24}$. 

We say that a subgroup $H$ of a finite group $G$ is \emph{core-free} if $\bigcap_{g\in G} H^g = 1$. Note that if $G$ is almost simple, then $H$ is core-free if and only if it does not contain $G_0$. Let us now recall the following key definition:

\begin{deff}
Let $G$ be a finite almost simple group with socle $G_0$ and let $H$ be a core-free maximal subgroup of $G$. We say that $H$ is \emph{standard} if one of the following holds:
\begin{enumerate}
\item [\rm (i)] $G_0 = A_n$ and the natural action of $H$ on $\{1, \ldots, n\}$ is not primitive;
\vs
\item [\rm (ii)] $G$ is a classical group and $H$ is a subspace subgroup of $G$.
\end{enumerate}
We say that $H$ is \emph{non-standard} otherwise.
\end{deff}

\noindent We will formally define a subspace subgroup of a classical group in Section~\ref{s:prelims} (see Definition~\ref{def:ns}). But roughly speaking, if $G$ is a classical group, then $H$ is standard if it is \emph{reducible}, in the sense that $H\cap G_0$ acts reducibly on the natural module for $G_0$. Note that if there exists an isomorphism of $G$ to some other group $K$ that maps $H$ to a standard subgroup of $K$, then we will also say that $H$ is standard in this case. For instance, there exists an isomorphism $\phi:G \to K$ with $G = {\rm PSL}_4(2)$ and $K = A_8$, which maps the maximal subgroup $G = {\rm GL}_2(4).2$ of $G$ to the stabiliser in $K$ of some $3$-element subset of $\{1, \ldots, 8\}$, so we will view $H$ as a standard subgroup of $G$. Those cases are handled in Table~\ref{t:ns}.

If $G$ is an almost simple group in a transitive action with point stabiliser $H$, then we will say that the action of $G$ is non-standard if $H$ is non-standard. It turns out that groups in standard actions behave vastly differently to those in non-standard actions in terms of bases, so it is natural to distinguish between standard and non-standard actions in this setting. In particular, standard actions tend to admit much larger bases. For example, the base sizes of $S_n$ and $A_n$ in standard actions have been determined due to work by several authors (see~\cite{BGL, CoenColva, Hal, James, MS, MorSp}) and we observe that in the majority of cases, $b(G, \O)$ can get arbitrarily large as $n\to \infty$. For example, $b(S_n, \{1, \ldots, n\}) = n-1$. On the contrary, as we will now explain, bases of almost simple groups in non-standard actions exhibit a very different behaviour.

It was conjectured by Cameron and Kantor in~\cite{CK} that if $G$ is a primitive almost simple permutation group in a non-standard action, then $b(G, \O)$ is bounded above by an absolute constant independent of $G$. This was proved by Liebeck and Shalev in~\cite{LS} with an undetermined constant. In response, Cameron conjectured in~\cite[p.122]{Cam} that the best possible value of this absolute constant is $7$. This conjecture was proved in a series of papers by Burness et al.~\cite{B07, BGS, BLS, BOW}. In particular, we have the following result:
\begin{theorem}\label{thm:Cameron}
Let $G\leqs {\rm Sym}(\O)$ be an almost simple primitive permutation group with non-standard point stabiliser $H$. Then $b(G, \O) \leqs 7$, with equality if and only if $(G, H) = ({\rm M}_{24}, {\rm M}_{23})$.
\end{theorem}

\noindent The aim of this paper is to generalise Theorem~\ref{thm:Cameron} in the setting we describe below.

\vs

Let $G$ be a finite group and let $H_1, \ldots, H_k$ be a collection of core-free subgroups of $G$, allowing repetitions. Consider the natural componentwise action of $G$ on the Cartesian product 
\[
X = G/H_1 \times \cdots \times G/H_k
\]
and observe that $G$ has a regular orbit on $X$ if and only if  
\[
\bigcap_{i=1}^k H_i^{g_i} = 1
\]
for some elements $g_i \in G$. Following~\cite{AB}, we say that a $k$-tuple $\tau = (H_1, \ldots, H_k)$ of core-free subgroups of $G$ is \emph{regular} if $G$ has a regular orbit on $X$, and \emph{non-regular} otherwise. Here, we are only interested in the regularity of subgroup tuples all of whose components are core-free, and so, given a subgroup tuple, we will always assume that its components are core-free. We will also say that $\tau$ is \emph{conjugate} if the $H_i$ are pairwise conjugate. Finally, we call the minimal $r$ such that every $r$-tuple of core-free subgroups of $G$ is regular the \emph{regularity number} of $G$, which we denote by $R(G)$.

Observe that a transitive group $G\leqs {\rm Sym}(\O)$ has a base of size $k$ if and only if $G$ has a regular orbit on $\O^k$. Therefore, a conjugate tuple $(H_1, \ldots, H_k)$ is regular if and only if $G$ admits a base of size $k$ in its natural action on $G/H_1$, which means that $k\geqs b(G, H_1)$. 
 
If $G$ is almost simple and all the components of a tuple $\tau = (H_1, \ldots, H_k)$ are non-standard, then we will say that $\tau$ is \emph{non-standard}. Hence, in this language, Theorem~\ref{thm:Cameron} asserts that every conjugate non-standard $7$-tuple of $G$ is regular.

\vs

The regularity of subgroup tuples was introduced and studied in~\cite{AB} for almost simple groups with alternating and sporadic socle, where Burness and the author proposed generalisations of some of the most influential base size conjectures in the regularity setting, including Cameron's conjecture. In particular, if $G$ is a finite almost simple group and $R_{\rm ns}(G)$ denotes the minimal integer $r$ such that all non-standard tuples of $G$ are regular, then they conjecture that $R_{\rm ns}(G) \leqs 7$, with equality if and only if $G = {\rm M}_{24}$. As noted above, Theorem~\ref{thm:Cameron} implies that all conjugate non-standard $7$-tuples of a finite almost simple group $G$ are regular and we are claiming that the same conclusion holds without the restrictive conjugacy condition.

This conjecture was proved in~\cite{AB} for almost simple groups with alternating and sporadic socle, which reduces the problem to the almost simple groups of Lie type. The main aim of this paper is to complete the proof of this conjecture. We now state our main theorem:


\begin{theorem}\label{thm:main}
Let $G$ be a finite almost simple group. Then $R_{\rm ns}(G) \leqs 7$, with equality if and only if $G = {\rm M}_{24}$.
\end{theorem}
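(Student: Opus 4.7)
The first step is the reduction from~\cite{AB}: the alternating and sporadic cases of Theorem~\ref{thm:main} are already settled there, including the equality case $G = \mathrm{M}_{24}$, so it suffices to show $R_{\rm ns}(G) \leqs 6$ whenever $G$ is an almost simple group of Lie type. Fix such a $G$ and any core-free non-standard maximal subgroups $H_1, \ldots, H_6 \leqs G$. By the Liebeck--Shalev probabilistic method, the probability that a uniformly random tuple $(H_1 g_1, \ldots, H_6 g_6) \in G/H_1 \times \cdots \times G/H_6$ has nontrivial pointwise stabiliser is bounded above by
\[
\widehat{Q}(G) \,:=\, \sum_{x \in \mathcal{P}(G)} \prod_{i=1}^{6} \fpr(x, G/H_i) \,\leqs\, \sum_{x \in \mathcal{P}(G)} M(x)^{6},
\]
where $\mathcal{P}(G)$ is the set of prime-order elements of $G$ and $M(x) := \max\{\fpr(x, G/H) : H \text{ core-free non-standard maximal in } G\}$. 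Showing $\widehat{Q}(G) < 1$ then exhibits a regular $6$-tuple and establishes the theorem.

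To verify $\sum_{x \in \mathcal{P}(G)} M(x)^{6} < 1$ I would invoke the sharp fixed point ratio estimates developed by Burness and his collaborators in the course of proving Cameron's conjecture~\cite{B07,BGS,BLS,BOW}: for any non-standard maximal $H$ and any nontrivial prime-order $x$ one has $\fpr(x, G/H) \leqs |x^G|^{-1/2 + \e}$, with $\e \to 0$ uniformly as $|G| \to \infty$. Pairing this with the standard bound on $\sum_{[x]} |x^G|^{-2 + O(\e)}$ over classes of prime-order elements gives $\widehat{Q}(G) < 1$ at once for exceptional groups of Lie type of rank at least two and for classical groups of sufficiently large rank. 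The finitely many remaining low-rank families, such as $^{2}B_2(q)$, $^{2}G_2(q)$, $G_2(q)$, and small-rank $\mathrm{PSL}_n(q)$, $\mathrm{PSU}_n(q)$, $\mathrm{PSp}_n(q)$, $\mathrm{P\Omega}^{\pm}_n(q)$, I would treat case by case, exploiting the Aschbacher-class description of non-standard maximal subgroups and the explicit fixed point ratio tables from the same series to refine $M(x)$ on each prime-order class; a handful of borderline small-$q$ cases should then be settled by direct computation.

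The principal obstacle is that the subgroups $H_i$ are no longer required to be pairwise conjugate, so bounding $\prod_{i} \fpr(x, G/H_i)$ by the global maximum $M(x)^{6}$ loses information whenever the subgroup attaining $M(x)$ varies with the class of $x$. This effectively costs a factor of roughly $M(x)$ compared to Cameron's original $k = 7$ setting, eating into the slack that Burness et al.\ could afford in~\cite{B07,BGS,BLS,BOW}, and so the bound is tightest precisely at those element classes for which a single class achieves nearly maximal fixed point ratio across several Aschbacher families simultaneously. The decisive technical work will therefore be to sharpen the Burness--Liebeck--Shalev fixed point ratio estimates uniformly in $H$ for such distinguished classes, most notably involution classes of graph and field-automorphism type in classical groups and certain semisimple classes in the exceptional groups, and to bridge the gap between the asymptotic bound and the combinatorial reality at the remaining small-rank frontier.
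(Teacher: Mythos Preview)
Your outline is essentially the paper's strategy: reduce to Lie type via \cite{AB}, bound $\widehat{Q}(G,\tau)$ by $\sum_{x}f(x,q)^{6}$ with $f(x,q)=\max_{H}\fpr(x,G/H)$, sharpen the Burness--Liebeck--Shalev fixed point ratio estimates uniformly over $H$, and handle the low-rank and small-$q$ frontier by direct computation.

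Two points of substance are worth flagging. First, the bound $\fpr(x,G/H)\leqs |x^G|^{-1/2+\e}$ you quote is Burness's theorem for \emph{classical} groups in non-standard actions; it does not apply to exceptional groups, and the paper does not treat the exceptional case ``at once''. There the relevant bounds come from the character-theoretic machinery of Lawther--Liebeck--Seitz \cite{LLS}, and the paper computes $f(x,q)$ essentially class by class, using L\"ubeck's tables of Foulkes functions for unipotent elements and the explicit formula of \cite[3.2]{LLS} for semisimple elements in parabolic actions, together with refined estimates (Propositions~\ref{prop:e6-fpr}--\ref{prop:f4-fpr}) for non-parabolic actions. This is the bulk of Sections~\ref{s:non-parabolics}--\ref{s:e3} and is not a triviality: for $G_0=E_6(2)$ acting on cosets of $P_1$ or $P_6$ one actually has $\widehat{Q}(G,\tau)>1$ for the corresponding conjugate $6$-tuple, so the argument must be rerouted there.

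Second, for classical groups the paper does not just aim for $\widehat{Q}<1$ via a crude class-sum bound; it organises the sum through the zeta-type function $\eta_G(t)=\sum_{C}|C|^{-t}$ and proves new inequalities of the form $\fpr(x,G/H)<|x^{G_0}|^{-\beta}$ with explicit $\beta$ (Theorem~\ref{thm:low-rank-fprs}) precisely to make $\eta_G(k\beta-1)<1$ work for $k=4$ in almost all cases. Your sketch anticipates the need to sharpen fixed point ratios, but the paper's contribution is to carry this out concretely for the families listed in Table~\ref{t:fprs-classical}, which is where the real margin over the conjugate case is won.
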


It is shown in~\cite{AB}, that up to conjugacy, the only non-regular non-standard $6$-tuple when $G = {\rm M}_{24}$ is $(H, H, H, H, H, H)$, where $H = {\rm M}_{23}$. Moreover, note that if $G$ is sporadic or exceptional, then every core-free maximal subgroup is non-standard, so Theorem~\ref{thm:main} asserts that $R(G) \leqs 7$ (with equality if and only if $G = {\rm M}_{24}$) in these cases.

\vs

As we will see in Section~\ref{s:prelims}, we will prove a stronger version of Theorem~\ref{thm:main} for classical groups. In particular, if $G$ is an almost simple classical group, then we show that $R_{\rm ns}(G) \leqs 5$ with equality if and only if $G = {\rm U}_6(2).2$, in which case the only non-regular $4$-tuple is $(H, H, H, H)$ with $H = {\rm U}_4(3).2^2$. Combining this with~\cite[Theorem 1(ii) and Theorem 2(i)]{AB}, we obtain the following more refined version of Theorem~\ref{thm:main}.

\begin{theorem} Let $G$ be a finite almost simple group with socle $G_0$:
\begin{itemize}
\item [\rm (i)] If $G_0 = A_n$, then $R_{\rm ns}(G) \leqs 6$ with equality if and only if $G = A_8$. Moreover, if $n\geqs 13$, then $R_{\rm ns}(G) = 2$;
\vs
\item [\rm (ii)] If $G_0$ is sporadic, then $R_{\rm ns}(G) = R(G)\leqs 7$ with equality if and only if $G = {\rm M}_{24}$. Moreover, the exact value of $R(G)$ is recorded in \cite[Table 3]{AB};
\vs
\item [\rm (iii)] If $G_0$ is a classical group, then $R_{\rm ns}(G) \leqs 5$ with equality if and only if $G = {\rm U}_6(2).2$;
\vs
\item [\rm (iv)] If $G_0$ is an exceptional group, then $R_{\rm ns}(G) = R(G) \leqs 6$, with equality if $G_0 = E_6(q)$ or $E_7(q)$.
\end{itemize}
\end{theorem}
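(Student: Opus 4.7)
The plan is to deduce the theorem by combining prior work with the Lie-type analysis of the present paper. Parts (i) and (ii) are established in~\cite{AB}, which handles the alternating and sporadic socles exhaustively, including the finer statements $R_{\rm ns}(G)=2$ for $n\geqs 13$ and the table of values $R(G)$ for sporadic $G$. Parts (iii) and (iv) are the new content.

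For part (iii), I would prove the sharper statement flagged just before the theorem: every non-standard $4$-tuple of an almost simple classical group $G$ is regular, except when $G = \mathrm{U}_6(2).2$ and the tuple is $(H,H,H,H)$ with $H = \mathrm{U}_4(3).2^2$, in which case $R_{\rm ns}(G)=5$. The principal tool is the standard probabilistic criterion, which guarantees regularity of $(H_1,\ldots,H_k)$ once
$$\widehat{Q}(G;H_1,\ldots,H_k) \;:=\; \sum_{x\in\mathcal{P}(G)} \prod_{i=1}^{k} \fpr(x, G/H_i) \;<\; 1,$$
where $\mathcal{P}(G)$ is a set of representatives of the conjugacy classes of elements of prime order in $G$. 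The strategy is to partition $\mathcal{P}(G)$ according to the Aschbacher class containing each $H_i$ and to apply the sharp fixed-point-ratio bounds developed by Burness and collaborators in the original proof of Cameron's conjecture. For classical groups of small rank, where the analytic bounds are weakest, one supplements the calculation with direct computation in \textsc{Magma}, isolating $\mathrm{U}_6(2).2$ as the unique residual case and confirming the four-tuple failure and five-tuple success by explicit computation.

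For part (iv), the probabilistic scheme is the same, now with $k=6$. Since an almost simple exceptional group of Lie type has no standard maximal subgroup, one automatically has $R_{\rm ns}(G)=R(G)$, and it suffices to show $\widehat{Q}(G;H_1,\ldots,H_6)<1$ for every tuple of core-free maximal subgroups. The required fixed-point-ratio inputs come from the work of Lawther--Liebeck--Seitz and subsequent refinements, and the smallest groups in each exceptional family are handled by bespoke estimates or direct computation.

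The hardest step throughout will be the low-rank, small-field end of the Lie-type classification: here the generic fixed-point-ratio bounds are too crude to push $\widehat{Q}$ below $1$ at the target value of $k$, forcing one to enumerate the core-free maximal subgroups individually, prove sharper ratio bounds where available, or verify regularity computationally. Identifying $\mathrm{U}_6(2).2$ as the sole obstruction to $R_{\rm ns}\leqs 4$ among classical groups, and then pinning down $(H,H,H,H)$ with $H=\mathrm{U}_4(3).2^2$ as the unique offending tuple, is the most delicate part of the argument.
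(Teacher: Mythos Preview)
Your plan is correct and mirrors the paper's approach exactly: parts (i)--(ii) are quoted from \cite{AB}, part (iii) is the paper's Theorem~\ref{thm:main-classical} proved via the probabilistic criterion together with Burness's fixed-point-ratio bounds (sharpened in low rank, with the $\eta_G$ machinery replacing your vaguer ``partition by Aschbacher class'') and {\sc Magma} for small $q$, and part (iv) is Theorem~\ref{thm:exceptional} proved the same way using Lawther--Liebeck--Seitz and its refinements. One slip to correct: your displayed formula for $\widehat{Q}$ omits the factor $|x^G|$ in each summand, without which the sum cannot exceed $1$ in any interesting way.
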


We note that it is still an open problem to determine all the almost simple exceptional groups with $R(G) = 6$, but it follows from~\cite[Corollary 3]{B18} that if $G_0 = E_6(q)$ (respectively, $E_7(q)$), then the conjugate $5$-tuples containing copies of the $P_1$ or $P_6$ (respectively $P_7$) parabolic subgroup of $G$ are non-regular. 

\vs

We now briefly discuss the methods we use to prove Theorem~\ref{thm:main} for groups of Lie type. These are similar to those used in the proof of Cameron's conjecture, but additional challenges arise due to the substantial weakening of our assumptions, which we will highlight below. Our main tool will be a generalised version of a powerful probabilistic method for studying base sizes, which was first introduced in~\cite{LSh} by Liebeck and Shalev. 

Let $G$ be a finite group and let $\tau = (H_1, \ldots, H_k)$ be a tuple of core-free subgroups of $G$. We will write $X_{\tau} = G/H_1\times \cdots \times G/H_k$ and $\mathbb{P}(G, \tau)$ for the probability that a uniformly random point in $X_{\tau}$ is in a regular orbit of $G$. Then $\tau$ is regular if
\[
Q(G, \tau) = 1 - \mathbb{P}(G, \tau) < 1. 
\]
In general, it is very difficult to compute $Q(G, \tau)$ precisely. However, in most cases it is sufficient to work with the following upper bound (see~\cite{AB}):
\[
Q(G, \tau) \leqs \widehat{Q}(G,\tau) := \sum_{i=1}^t|x_i^G| \cdot \left(\prod_{j=1}^k {\rm fpr}(x_i, G/H_j)\right)
\]
where $x_1, \ldots, x_t$ is a complete set of representatives of the prime order conjugacy classes in $G$ and 
\[{\rm fpr}(x, G/H) = \frac{|C_{\O}(x)|}{|\O|}  = \frac{|x^G\cap H|}{|x^G|}\]
denotes the \emph{fixed point ratio} of $x$ in the natural action of $G$ on the set $\O$ of cosets of $H$. That is, the proportion of fixed points of $x$ in this action. We refer the reader to Section~\ref{s:prob-main} for more information.

\vs

Suppose that $G$ is an almost simple classical group with socle $G_0$ with natural module $V$ and let $\tau$ be a non-standard tuple of $G$. Here, our general approach when estimating $\widehat{Q}(G, \tau)$ is to use the following `zeta type function'
\[
\eta_G(t) = \sum_{C\in \mathscr{C}}|C|^{-t}
\]
where $t\in \R$ and $\mathscr{C}$ denotes the set of $G_0$-conjugacy classes of prime order elements of $G$. Burness obtains upper bounds on ${\rm fpr}(x, G/H)$ for a prime order element $x$ and a non-standard subgroup $H$ in terms of the size of the $G$-class of $x$ in~\cite{fprI, fprII, fprIII, fprIV}. If ${\rm dim}V > 10$, then using these allows us to bound $\widehat{Q}(G, \tau)$ by $\eta_G(t)$ for some specified $t\in (0, 1)$. Our problem then reduces to showing that $\eta_G(t) < 1$. 

On the other hand, if ${\rm dim} V\leqs 10$, then this general approach is no longer applicable. In some cases, we can work with a modified version of the general method, whereas a completely different approach is required in other cases. In particular, if ${\rm dim} V \leqs 5$, using $\eta_G(t)$ will no longer be possible, so we take a more direct approach to estimate $\widehat{Q}(G, \tau)$. In these `low-dimensional' cases, our proof heavily relies on a detailed analysis of fixed point ratios of classical groups in non-standard actions. More specifically, we make use of the description of the structure of the maximal subgroups of the low-dimensional classical groups given by Bray, Holt, and Roney-Dougal in~\cite{BHR}, together with the detailed information on conjugacy classes of prime order elements presented in~\cite{BG} by Burness and Giudici to improve the bounds given in~\cite{fprI, fprII, fprIII, fprIV} sufficiently to force $\widehat{Q}(G, \tau) < 1$. More information on this is given in Section~\ref{s:fprbounds}. We believe that those results might be of independent interest, so we record them in Table~\ref{t:fprs-classical}.

\vs

Now suppose that $G$ is an exceptional group of Lie type over $\F$. Here, our main goal will be again to force $\widehat{Q}(G, \tau) < 1$ for any $6$-tuple $\tau$ of core-free subgroups of $G$, but our approach when doing so is rather different to classical groups. Our proofs will again rely on fixed point ratio estimates, and our main reference for this will be~\cite{LLS}. In some cases, the uniform bounds given in~\cite[Theorems 1 and 2]{LLS} will suffice for our purposes, but in most cases, a more detailed case-by-case analysis will be required. This can be achieved using the powerful machinery developed in~\cite{LLS} by Lawther, Liebeck and Seitz. 

If $H$ is a non-parabolic subgroup of $G$, then upper bounds on ${\rm fpr}(x, G/H)$ for prime order elements $x\in G$ that are sufficient for our purposes can be extracted from~\cite{BLS}. Here, the authors use the techniques developed in~\cite{LLS} to derive those estimates. We believe that a record of such upper bounds could be of independent interest, so we record an upper bound on
\[
g(x, q) = \max\{{\rm fpr}(x, G/H) \, : \, H\leqs G \text{ non-parabolic}\}
\]
for a prime order element $x\in G$ in Tables~\ref{t:e6:fprs}, \ref{t:e6:fprs-q2}, \ref{t:e7:fprs}, \ref{t:f4:fprs}, and \ref{t:f4:fprs-q2} in Section~\ref{s:non-parabolics} when $G_0 \in \{F_4(q), E_6^{\epsilon}(q), E_7(q)\}$, where estimating $g(x, q)$ is a more laborious task.

The case where $H$ is a parabolic subgroup of $G$ requires a lot more work, particularly when $G_0 \in \{F_4(q), E_6^{\epsilon}(q), E_7(q), E_8(q)\}$, in which case the bounds in~\cite[Theorems 1 and 2]{LLS} are not good enough for our purposes. We note that if $H$ is a core-free maximal subgroup of $G$, then
 \[
 {\rm fpr}(x, G/H) = \frac{\chi(x)}{\chi(1)},
 \]
 where $\chi(x) = 1_{H}^G$ is the corresponding permutation character. Lawther, Liebeck and Seitz give an explicit formula for $\chi(x)$ in~\cite[2.4]{LLS} and~\cite[3.2]{LLS} for unipotent and semisimple elements respectively. Using these results together with information on conjugacy classes of unipotent and semisimple elements given in~\cite{LSeitz2} and~\cite{AHL, FJ1, FJ2, Lu2} respectively, as well as data on the so-called \emph{Green functions} recorded in~\cite{Lu3}, we are able to compute the precise value of ${\rm fpr}(x, G/H)$ for unipotent and semisimple elements.
 
 For graph, field and graph-field automorphisms, we are again able to work as in~\cite{LLS} to obtain sufficient fixed point ratio estimates.
 
In the way we describe above, we are able to estimate
 \[
 f(x, q) := \max \{{\rm fpr}(x, G/H) \, : \, H \in \mathcal{M}_G\}
 \]
for every prime order element $x\in G$, where $\mathcal{M}_G$ denotes the set of core-free maximal subgroups of $G$. This in turn allows us to obtain a uniform upper bound on $\widehat{Q}(G, \tau)$ that applies to any $6$-tuple $\tau$ of maximal subgroups of $G$.

\vs

The main challenge when the conjugacy assumption on our subgroup tuples is removed is that we often need to work with a very large number of possibilities of non-standard tuples, which forces us to adopt a uniform approach that allows us to handle all cases simultaneously. More specifically, our main strategy is to obtain a uniform upper bound on $\widehat{Q}(G, \tau)$ that applies to any choice of non-standard tuple $\tau$, which often means that for every choice of prime order element in $G$, we are bound to worst-case fixed point ratio estimates over all non-standard subgroups of our group $G$. 

This means that we will often need to obtain stronger fixed point ratio estimates for our purposes. We state those in Theorem~\ref{thm:low-rank-fprs} for the classical groups. For exceptional groups, we record worst case fixed point ratio estimates over non-parabolic actions in Propositions~\ref{prop:e6-fpr}, \ref{prop:e6-fpr-q2}, \ref{prop:e7-fpr}, and \ref{prop:f4-fpr}. Our main source for obtaining those is inspection of \cite{BLS} and \cite{B18}, and in some instances we need to improve what currently is available. We give detailed information in Sections~\ref{s:prelims-main}, \ref{s:fprbounds} and \ref{s:non-parabolics}. 

Let us see an example that illustrates the necessity of stronger fixed point ratio estimates in certain cases. Consider the case $G_ 0 = {\rm Sp}_6(q)$ for even $q$ and let $\tau = (H, H, H, K)$ be a $4$-tuple of $G$ with $H$ of type $G_2(q)$ and $K$ not of type $G_2(q)$. In~\cite{fprIV} Burness shows that ${\rm fpr}(x, G/H) < |x^G|^{-0.250}$ and moreover, he shows that the bound is best possible. It turns out that this bound is not sufficient to force $b(G, G/H) = 4$, and so Burness uses a constructive approach instead in the proof of~\cite[3.5]{B07} to force $b(G, G/H) = 4$. 

Due to the fact that $K$ is not of the same type as $H$, it would be much harder to obtain a geometric argument. Moreover, whilst sufficient to show that $b(G, G/K) \leqs 4$, the bound ${\rm fpr}(x, G/K) < |x^G|^{-1/3}$ obtained by Burness in~\cite{fprI, fprII, fprIII, fprIV} is not good enough to establish the regularity of $\tau$. However, it turns out that we can improve ${\rm fpr}(x, G/K)$ enough for any choice of $K$, so that we can use a probabilistic approach. More details can be found in Propositions~\ref{prop:sp6} and~\ref{prop:a3}.

Additionally, for classical groups there are certain instances where existing information on $\eta_G(t)$ is no longer sufficient for our purposes and we will need stronger results. We obtain those in Section~\ref{s:eta}.

\vs

The probabilistic approach we adopt to prove Theorem~\ref{thm:main} also allows us to establish some interesting asymptotic results. In particular, if $G$ is an almost simple group of Lie type and $\mathcal{P}$ denotes the set of non-standard subgroups of $G$, then we define
\[
\mathbb{P}(G, c) = \min \{ \mathbb{P}(G, \tau) \, : \, \tau \in \mathcal{P}^c\}.
\]
We have the following theorem:

\begin{theorem}\label{thm:cam-asymptotic}
If $G$ is a finite almost simple group of Lie type, then $\mathbb{P}(G, 6) \to 1$ as $|G|\to \infty$.
\end{theorem}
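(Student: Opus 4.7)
The plan is to apply the same probabilistic framework used in the proof of Theorem~\ref{thm:main}, but to extract a quantitative rate of convergence rather than the mere conclusion $\widehat{Q}(G,\tau) < 1$. For any non-standard $6$-tuple $\tau = (H_1, \ldots, H_6)$ of $G$, one has
\[
1 - \mathbb{P}(G, \tau) \leqs \widehat{Q}(G, \tau) = \sum_{i=1}^{t} |x_i^G| \prod_{j=1}^{6} \fpr(x_i, G/H_j),
\]
where $x_1, \ldots, x_t$ are representatives of the prime order $G$-classes. The right-hand side depends on $\tau$ only through the factors $\fpr(x_i, G/H_j)$, so any worst-case bound on $\fpr(x, G/H)$ that is uniform over non-standard $H$ decouples $\widehat{Q}(G,\tau)$ from $\tau$. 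It therefore suffices to exhibit a $\tau$-free upper bound on $\widehat{Q}(G,\tau)$ that tends to $0$ as $|G| \to \infty$.

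The key input is the asymptotic Liebeck--Shalev/Burness fixed point ratio bound: for every $\e > 0$ there exists $N = N(\e)$ such that, whenever $G$ is an almost simple group of Lie type with $|G| \geqs N$, $H$ is a non-standard maximal subgroup of $G$, and $x \in G$ has prime order, then $\fpr(x, G/H) \leqs |x^G|^{-1/2 + \e}$ (see~\cite{LS} and the sharper estimates in~\cite{fprI, fprII, fprIII, fprIV}). Substituting this into the displayed sum with $k = 6$ gives
\[
\widehat{Q}(G, \tau) \leqs \sum_{i=1}^{t} |x_i^G|^{1 - 6(1/2 - \e)} = \sum_{i=1}^{t} |x_i^G|^{-2 + 6\e} \leqs \eta_G(2 - 6\e),
\]
which is independent of $\tau$. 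Fixing, say, $\e = 1/12$, it remains to show that $\eta_G(3/2) \to 0$ as $|G| \to \infty$ along almost simple groups of Lie type. This follows from the estimates to be developed in Section~\ref{s:eta}, together with the elementary observation that the smallest non-trivial conjugacy class in a simple group of Lie type $G(q)$ of rank $r$ has size bounded below by a positive power of $q$, whereas the number of prime order $G$-classes grows only polynomially in $r$ and $\log q$. Taken together, these force $\eta_G(t) \to 0$ for every fixed $t > 1$, and in particular for $t = 3/2$, giving $1 - \mathbb{P}(G,6) \to 0$ as required.

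The main obstacle is uniformity of the Liebeck--Shalev bound across the low-rank classical families and the exceptional groups of small rank, where the exponent $1/2 - \e$ is not effective until $q$ is sufficiently large. In each such family the Lie rank (or natural-module dimension) is fixed and only $q$ varies, and this is precisely where the improved fpr estimates in Theorem~\ref{thm:low-rank-fprs} and Propositions~\ref{prop:e6-fpr}, \ref{prop:e6-fpr-q2}, \ref{prop:e7-fpr} and \ref{prop:f4-fpr} enter: they give bounds of the shape $\fpr(x, G/H) = O(q^{-c})$ with $c > 0$ depending only on the family, so feeding these into the probabilistic estimate yields the needed decay of $\widehat{Q}(G,\tau)$ as $q \to \infty$ within each exceptional family. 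Combining the generic case with this case-by-case treatment of the low-rank families supplies the desired uniform upper bound and completes the proof.
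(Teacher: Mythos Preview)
Your ``key input'' is false as stated, and neither \cite{LSh} nor \cite{fprI,fprII,fprIII,fprIV} asserts it. The bound in Theorem~\ref{thm:timsbound} is $\fpr(x,G/H)<|x^G|^{-1/2+1/n+\iota(G,H)}$, and the $1/n$ and $\iota$ terms depend on the dimension of the natural module and the type of $H$, not on $|G|$. Along any fixed-rank family these terms do not vanish: for $G_0=\O_7(q)$ with $H$ of type $G_2(q)$ there are prime order elements with $\fpr(x,G/H)$ of order $|x^{G_0}|^{-0.225}$ (Table~\ref{t:fprs-classical}), and this persists for all $q$. So there is no ``generic case'' in which $\widehat{Q}(G,\tau)\leqs\eta_G(2-6\e)$ with $\e$ small; the displayed inequality is valid only when $n$ is large and $\iota(G,H)=0$, which still leaves every bounded-rank classical family and every exceptional family to be handled separately. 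Your final paragraph implicitly concedes this, but it mislocates the obstacle (the $-\tfrac12+\e$ bound does not ``kick in for $q$ large'' in those families; it simply never holds) and is too schematic to count as a proof.

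A secondary error: the justification that $\eta_G(3/2)\to 0$ because ``the number of prime order $G$-classes grows only polynomially in $r$ and $\log q$'' is incorrect --- the number of semisimple classes of a group of Lie type of rank $r$ over $\F$ is a polynomial in $q$ of degree $r$, not in $\log q$. The conclusion $\eta_G(t)\to 0$ for the relevant $t$ is true, but it requires the analysis behind Proposition~\ref{prop:eta-asymptotic} and Section~\ref{s:eta}, not the heuristic you give. The paper's proof is precisely the family-by-family work you defer to: Theorem~\ref{thm:main-classical} and Theorem~\ref{thm:exceptional} each exhibit an explicit upper bound on $\widehat{Q}(G,\tau)$, uniform over non-standard $6$-tuples, that tends to $0$ as $q\to\infty$, and the theorem is an immediate corollary.
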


We note that the conclusion to Theorem~\ref{thm:cam-asymptotic} also sheds light to the regularity of tuples of simple algebraic groups. We will explore this connection further in a follow-up paper.

\vs

\noindent \textbf{Organisation.} This paper is organised as follows: In Section~\ref{s:prelims-main} we record some preliminary results on probabilistic and computational methods, and in Section~\ref{s:prelims}, we present some preliminaries that only apply to classical groups. We discuss subgroup structure and conjugacy classes of classical groups, and we also give a precise definition of a subspace subgroup of a classical group (see Definition~\ref{def:ns}). The rest of the section is devoted to probabilistic methods, which is the key tool that we will be using. 

Section~\ref{s:fprbounds} is devoted to improving existing fixed point ratio estimates for low-dimensional classical groups. In Section~\ref{s:eta} we prove some results relating to the `zeta-type' function introduced in Definition~\ref{def:eta}, and Section~\ref{s:proof} is devoted to the proof of Theorem~\ref{thm:main-classical}, a refinement of Theorem~\ref{thm:main} for classical groups. 

In Section~\ref{s:exceptional} we record some preliminaries that only apply to exceptional groups. In particular, the section is divided into two subsections, where we briefly discuss subgroup structure and conjugacy classes respectively. In Section~\ref{s:non-parabolics} we record fixed point ratio estimates for non-parabolic actions of exceptional groups, and then in Section~\ref{s:parabolics}, we proceed to discuss fixed point ratios for parabolic actions. Finally, Section~\ref{s:proof-exceptional} is devoted to the proof of Theorem~\ref{thm:exceptional}, which allows us to conclude the proof of Theorem~\ref{thm:main}.

\vs

\noindent \textbf{Notation.} Our notation is fairly standard. If $n$ is a natural number, we will often write $[n]$ to denote the set $\{1, \ldots, n\}$. Moreover, we will also often write $\pi(n)$ to denote the set of prime divisors of a positive integer $n$ and $\pi_a(n)$ to denote the set of prime divisors $p$ of $n$ with $p\leqs a$. In a similar fashion, we will denote the set of prime numbers at most $s$ by $\pi_s$ and the set of odd prime numbers at most $s$ by $\pi'_s$. 

For a group $G$, we write $i_s(G)$ for the number of elements of order $s$ in $G$. For classical groups, we write ${\rm PSL}^{+}_n(q) = {\rm PSL}_n(q) = {\rm L}_n(q)$ and ${\rm PSL}^{-}_n(q) = {\rm PSU}_n(q) = {\rm U}_n(q)$. We use the same notation and terminology as~\cite{BG} for field, graph, and graph-field automorphisms, which is consistent with~\cite{CFSGIII}, and we follow the definition of~\cite{KL} for the $\mathscr{C}_i$ and $\mathscr{S}$ collections of maximal subgroups of classical groups. We will also be using the standard notation $E_6^{\epsilon}(q)$ for the finite exceptional groups of type $E_6$, where $E_6^{+}(q) = E_6(q)$ and $E_6^{-}(q) = {}^2E_6(q)$. 

Finally, following the notation in~\cite{LLS}, we will often write $x = s, u, \phi, \tau, \tau\phi$ to mean that $x$ is semisimple, unipotent, a field, graph, or graph-field automorphism of $G_0$ respectively. 

We will introduce more technical terminology and notation, when and where appropriate in the paper.

\vs

\noindent \textbf{Acknowledgements.} This paper is part of the author's doctoral thesis under the supervision of Professor Tim Burness at the University of Bristol. The author thanks Professor Burness for his encouragement, feedback, and invaluable advice. She also thanks him for introducing her to this interesting problem. The author is grateful to the anonymous referee for a careful reading on the paper and helpful comments and suggestions. She also thanks Frank Lübeck for sharing \textsf{GAP}-readable data on Foulkes functions of exceptional groups. Finally, she thanks the Heilbronn Institute for Mathematical Research for funding her PhD at the University of Bristol.

\section{Preliminaries}\label{s:prelims-main}
In this section, we record some preliminary lemmas that we will repeatedly use to prove Theorem~\ref{thm:main}. We first discuss preliminaries related to probabilistic methods, and then we briefly comment on some of the main computational methods we apply in this paper. 

\subsection{Probabilistic methods}\label{s:prob-main}
Our main tool for proving our main result is the probabilistic method. Recall that if $G\leqs {\rm Sym}(\O)$ is a finite transitive permutation group with point stabiliser $H$, then the \emph{fixed point ratio} of $x\in G$ is defined as
\[
{\rm fpr}(x, G/H) = \frac{|C_{\O}(x)|}{|\O|} = \frac{|x^G\cap H|}{|x^G|}
\]
where $C_{\O}(x) = \{\omega \in \O \, : \, \omega^x = \omega\}$ is the set of fixed points of $x$ and $x^G$ denotes the conjugacy class of $x$ in $G$. We will use a generalisation of the powerful probabilistic approach based on fixed point ratio estimates that was first introduced in~\cite{LSh} by Liebeck and Shalev for the study of bases, which is also applicable in the more general regularity setting.

We first state two lemmas that appeared in~\cite[2.1, 2.2]{AB}, which are natural generalisations of equivalent lemmas used for the study of base sizes of permutation groups, which can be found in~\cite{B07}.

\begin{lem}\label{l:fpr}
Let $G$ be a finite group and let $\tau = (H_1, \ldots, H_k)$ be a core-free tuple of subgroups of $G$. Then $\tau$ is regular if 
\[
\widehat{Q}(G,\tau) := \sum_{i=1}^t|x_i^G| \cdot \left(\prod_{j=1}^k {\rm fpr}(x_i, G/H_j)\right) < 1,
\]
where $x_1, \ldots, x_t$ is a set of representatives of the conjugacy classes in $G$ of elements of prime order.
\end{lem}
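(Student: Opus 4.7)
The plan is to bound from above the probability $Q(G,\tau)$ that a uniformly random point of $X_\tau = G/H_1 \times \cdots \times G/H_k$ has non-trivial $G$-stabiliser; once we show $Q(G,\tau) < 1$, some point of $X_\tau$ must have trivial stabiliser, giving a regular orbit and hence the regularity of $\tau$. The main probabilistic ingredient is the observation that a $k$-tuple $\omega = (\omega_1, \ldots, \omega_k)$ is fixed by $g \in G$ if and only if $g$ fixes each coordinate, so the fixed-point set factors as ${\rm Fix}_{X_\tau}(g) = \prod_{j=1}^k {\rm Fix}_{G/H_j}(g)$, which upon dividing by $|X_\tau|$ yields
\[
\frac{|{\rm Fix}_{X_\tau}(g)|}{|X_\tau|} = \prod_{j=1}^k {\rm fpr}(g, G/H_j).
\]

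Next I would reduce to prime order elements. If the stabiliser of $\omega$ is non-trivial, then some non-identity $g$ stabilises $\omega$, and passing to a power of $g$ of prime order (which still fixes $\omega$, since ${\rm Fix}(g) \subseteq {\rm Fix}(g^m)$ for every integer $m$) yields a prime order element stabilising $\omega$. Hence the set of points with non-trivial stabiliser is contained in $\bigcup_{x} {\rm Fix}_{X_\tau}(x)$, where the union is taken over the prime order elements of $G$. Applying the union bound and grouping prime order elements by conjugacy class, noting that the displayed product depends only on the $G$-class of $x$, gives
\[
Q(G,\tau) \leqs \sum_{i=1}^t |x_i^G| \prod_{j=1}^k {\rm fpr}(x_i, G/H_j) = \widehat{Q}(G,\tau),
\]
and the hypothesis $\widehat{Q}(G,\tau) < 1$ immediately delivers the conclusion.

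The argument is standard probabilistic reasoning in the spirit of~\cite{LSh} and there is no genuine obstacle; the formulation here is simply the natural extension of the Liebeck--Shalev base-size method from a single coset space $G/H$ to a Cartesian product of coset spaces, with the key factorisation property of fixed-point sets across the product being what makes the inequality go through verbatim. The real content of the paper lies not in this lemma but in establishing the bound $\widehat{Q}(G,\tau) < 1$ in the concrete applications that follow, which is where the sharp fixed point ratio estimates for non-standard maximal subgroups developed in the later sections are essential.
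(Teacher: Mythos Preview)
Your proof is correct and is exactly the standard argument; the paper does not give its own proof of this lemma but instead cites \cite{AB} (and ultimately \cite{B07} and \cite{LSh}), where precisely this probabilistic union-bound argument appears. There is nothing to add.
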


\begin{lem}\label{l:favbound}
Let $G$ be a finite group and let $\tau = (H_1, \ldots, H_k)$ be a core-free tuple of subgroups of $G$. Suppose $x_1, \ldots, x_m\in G$ are such that $|x_i^G| \geqs B$ for all $i\in [m]$ and $\sum_{i = 1}^m |x_i^G\cap H_j| \leqs A_j$ for all $j\in [k]$. Then 
\[
\sum_{i = 1}^m |x_i^G| \cdot \left( \prod_{j = 1}^k {\rm fpr}(x_i, G/H_j)\right) \leqs B^{1 - k}\cdot \prod_{j = 1}^kA_j
\]
\end{lem}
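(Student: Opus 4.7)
The plan is to convert the target sum into a more tractable form via the classical identity ${\rm fpr}(x, G/H) = |x^G \cap H|/|x^G|$, which comes from the standard double-count of pairs $(x, gH)$ with $(gH)^x = gH$, and then apply two purely elementary estimates.

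First, substituting this identity collapses the left-hand side to
\[
S := \sum_{i=1}^m |x_i^G|^{1-k} \prod_{j=1}^k |x_i^G \cap H_j|.
\]
Since $k \geqs 1$ the exponent $1-k$ is non-positive, and the hypothesis $|x_i^G| \geqs B$ yields the uniform estimate $|x_i^G|^{1-k} \leqs B^{1-k}$, which I would pull outside the summation to obtain
\[
S \leqs B^{1-k} \sum_{i=1}^m \prod_{j=1}^k |x_i^G \cap H_j|.
\]

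The remaining task is to control the sum on the right by $\prod_{j=1}^k A_j$. Setting $a_{ij} = |x_i^G \cap H_j| \geqs 0$, this reduces to the standard observation that for any non-negative array $(a_{ij})$, the expansion
\[
\prod_{j=1}^k \Big(\sum_{i=1}^m a_{ij}\Big) = \sum_{(i_1, \ldots, i_k) \in [m]^k} \prod_{j=1}^k a_{i_j, j}
\]
contains the diagonal contribution $\sum_{i} \prod_{j} a_{ij}$ together with off-diagonal terms that are all non-negative. Combining this with the hypothesis $\sum_i a_{ij} \leqs A_j$ closes the argument.

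There is no genuine obstacle here: the lemma is a purely formal reorganisation of $\widehat{Q}(G,\tau)$ that decouples the two pieces of information one typically has access to, namely lower bounds on conjugacy class sizes and upper bounds on the total contribution of the chosen classes to each individual subgroup $H_j$. The only point worth checking is that $1 - k \leqs 0$, which is automatic since we are working with tuples ($k \geqs 1$), and that the trivial inequality $\sum_i \prod_j a_{ij} \leqs \prod_j \sum_i a_{ij}$ for non-negative arrays is exactly the expansion observed above.
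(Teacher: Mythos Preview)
Your argument is correct and is exactly the standard one: the paper itself does not prove this lemma but simply cites it from \cite{AB} (where it generalises the well-known base-size version in \cite{B07}), and the proof there is precisely the substitution ${\rm fpr}(x_i,G/H_j)=|x_i^G\cap H_j|/|x_i^G|$ followed by $|x_i^G|^{1-k}\leqs B^{1-k}$ and the elementary inequality $\sum_i\prod_j a_{ij}\leqs \prod_j\sum_i a_{ij}$ for non-negative $a_{ij}$.
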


Our aim will be to derive fixed point ratio bounds that apply to all almost simple groups with a fixed socle $G_0$. We now prove a small lemma that will allow us to deal with all such cases simultaneously.

\begin{lem}\label{lem:inndiag-fpr}
Let $G$ be a primitive permuatation group with point stabiliser $H$, let $x\in G$, and let $1\neq N\normeq G$. If $G_1 = \la N, x\ra$ and $H_1 = H\cap G_1$, then ${\rm fpr}(x, G/H) = {\rm fpr}(x, G_1/H_1)$.
\end{lem}

\begin{proof}
First note that the maximality of $H$ implies that $G = NH$. In particular, $G = G_1H$ and therefore $G/H = G_1H/H$. This implies that $G_1$ also acts transitively on the cosets of $H$. 

We will now prove that $G/H = \O$ and $G_1/H_1 = \O_1$ are isomorphic transitive $G_1$-sets. Define $\phi: \O \to \O_1$ by $\phi(Hg) = H_1g$ for $g\in G_1$. We claim that $\phi$ is a bijection. Suppose that $H_1y = H_1z$ for some $y, z \in G_1$. Then $yz^{-1} \in H_1$, and so $yz^{-1}\in H$, since $H_1\leqs H$. Therefore, $\phi$ is injective. Moreover, $|G:H| = |G_1:H_1|$, and so $\phi$ is bijective. 

Now note that if $y, z\in G_1$, then 
\[
\phi(Hyz) = H_1yz = \phi(Hy)z 
\]
so the actions of $G_1$ on $G/H$ and $G_1/H_1$ are indeed equivalent. This implies that $|C_{\O}(x)| = |C_{\O_1}(x)|$, and therefore
\[
\fpr(x, G/H) = \frac{|C_{\O}(x)|}{|\O|} = \frac{|C_{\O_1}(x)|}{|\O_1|} = {\rm fpr}(x, G_1/H_1),
\]
as required.
\end{proof}

\begin{rem}\label{rem:inndiag-fpr}
Note that if $G$ is a finite almost simple group with socle $G_0$, $H$ is a maximal core-free subgroup of $G$ and $x\in G_I = {\rm Inndiag}(G_0)$, then $G_0 \normeq \la G_0, x\ra \leqs G_I$, and Lemma~\ref{lem:inndiag-fpr} implies that 
\[
{\rm fpr}(x, G/H) \leqs \frac{|x^{G_{I}}\cap H|}{|x^{G_0}|}
\]
We will repeatedly use this observation when computing bounds for ${\rm fpr}(x, G/H)$. 
\end{rem}

We finally record a small number theoretic lemma that we will repeatedly use to derive fixed point ratio estimates:

\begin{lem}\label{lem:number-theory}
The following are true:
\begin{itemize}
\item If $\{a_1, \ldots, a_m\}$ and $\{b_1, \ldots, b_n\}$ be two sets of integers, all at least $2$, then
\[
\frac{\prod_{i = 1}^m (q^{a_i}-1)}{\prod_{i = 1}^n(q^{b_i} - 1)} > 2q^{\sum a_i - \sum b_i}
\]

\item If $a, b\in \mathbb{Z}^{+}$ with $b\leqs a$, then 
\[
\frac{(q^a+1)}{(q^b+1)} < q^{a-b}
\]
\end{itemize}
\end{lem}

\begin{proof}
This lemma is~\cite[1.2 (i) and (iii)]{LLS}.
\end{proof}

\subsection{Computational methods}\label{sss:comp-classical}
Here we briefly discuss how we use \textsf{GAP}~\cite{GAP} and  {\sc Magma}~\cite{magma} in our proofs. Note that we only discuss some of the methods here. In particular, for exceptional groups, we heavily use the polynomial functionality in \textsf{GAP} and {\sc Magma} to compute fixed point ratios, but we discuss this in detail later in Section~\ref{s:parabolics}.

\vs 

Let $G$ be a finite almost simple group of Lie type over $\F$ with socle $G_0$. There are two natural instances where we use {\sc Magma} in our proofs. The first is in Section~\ref{s:fprbounds} where $G$ is classical and we aim to improve the existing fixed point ratio bound given in Theorem~\ref{thm:timsbound}. There are some cases where we can only derive rather crude estimates, which prove the result when $q$ is sufficiently large, but do not suffice when $q$ is relatively small. In the latter situation, the dimension of the natural module for $G_0$ is small (in particular no more than $8$), which means that we can construct $G$ using the {\sc Magma} function \texttt{AutomorphismGroupSimpleGroup()} and call \texttt{MaximalSubgroups(G)} to obtain a set of representatives of the $G$-classes of maximal subgroups of $G$. We then filter out the non-standard subgroups of $G$ and one can verify the relevant fixed point ratio bounds using the following functions:

{\small
\begin{verbatim}
fpr := function( G, H, g )
  cl:=Classes(H);
  A:=[cl[i][2] : i in [1..#cl] | IsConjugate(G,cl[i][3],g)];
  Append(~A,0);
  return &+A*#Centraliser(G,g)/#G;
end function;
 
MaxFpr := function(G,H)
  cl:=Classes(G);
  a:=[i : i in [1..#cl] | IsPrime(cl[i][1])];
  b:=[i : i in a | fpr(G,H,cl[i][3]) gt 0];
  c,d:=Maximum([-Log(fpr(G,H,cl[i][3]))/Log(cl[i][2]): i in b]);
  return c;
end function;
\end{verbatim}}

\vs

\noindent The function \texttt{fpr} takes as input a permutation group $G$, a subgroup $H$ of $G$ and an element $g\in G$ and computes ${\rm fpr}(g, G/H)$. The function \texttt{MaxFpr} takes as input a group $G$ and a core-free subgroup $H$ of $G$ and returns the largest $t\in \R$ so that ${\rm fpr}(x, G/H) < |x^G|^{-t}$ for all $x\in G$ of prime order. The case where the computation using the code above is the most time consuming is when $G_0 = {\rm PSp}_8(3)$, in which case we can get output in less than an hour.

\vs

In some cases, we will also be using {\sc Magma} to confirm the regularity of a non-standard tuple $\tau$ of $G$. As mentioned above, our standard method is to bound $\widehat{Q}(G, \tau)$ above by $1$, but again, there are instances where the bounds we obtain are not good enough for small values of $q$, so in these cases we use the function \texttt{RegTuplesPlus} (see~\cite[p.4]{ABcomp}), to obtain $R_{\rm ns}(G)$. This takes as input a finite group $G$, an ordered tuple $M = [H_1, \ldots, H_t]$ of subgroups of $G$ and two positive integers $l$ and $N$ with $l\geqs 2$. The output is the minimal integer $k\geqs l$ such that all $k$-tuples of $G$ containing subgroups in $M$ are regular, together with all the non-regular $(k-1)$-tuples of $G$. We refer the reader to~\cite{ABcomp} for the code and more information.

To be able to use \texttt{RegTuplesPlus}, $|G|$ needs to be relatively small. In particular, we only make use of this function if $G_0\in \{ {}^2F_4(2)', {}^3D_4(2)\}$ for exceptional groups and when $G$ is at most $8$-dimensional. In the classical case, the maximal value of $q$ we use  \texttt{RegTuplesPlus} for, highly depends on the dimension of the natural module and the Lie type of the group we are considering, but it is always relatively small. For example, if $G_0 = {\rm PSp}_8(q)$, then we only use {\sc Magma} for the cases $q = 2, 3$, whilst if $G_0 = {\rm L}_2(q)$, then we use {\sc Magma} for $q\leqs 31$.

\vs

Finally, if $G_0 = F_4(2)$ or $E_6^{\epsilon}(2)$, then we will use the Character Table Library in \textsf{GAP}~\cite{GAPchar} to compute fixed point ratios for certain non-parabolic maximal subgroups.

\section{Classical group preliminaries}\label{s:prelims}
Throughout this section, as well as Sections~\ref{s:fprbounds}, \ref{s:eta} and~\ref{s:proof}, we will take $G$ to be a finite almost simple classical group over a finite field $\F$, where $q = p^f$ for some prime $p$ and some positive integer $f$. We will write $G_0$ for the socle of $G$ and $V$ will denote the natural module for $G_0$. Moreover, $G_I$ will denote the group ${\rm Inndiag}(G_0)$ of inner-diagonal automorphisms of $G_0$. 

Our goal here is to prove Theorem~\ref{thm:main} for classical groups. In particular, we will prove the following stronger statement:

\begin{thm}\label{thm:main-classical}
Let $G$ be a finite almost simple classical group. Then $R_{\rm ns}(G) \leqs 5$ with equality if and only if $G = {\rm U}_6(2).2$. Moreover $\mathbb{P}(G, 4) \to 1$ as $|G| \to \infty$, or $G = {\rm \Omega}_7(q)$ and $\mathbb{P}(G, 6)\to 1$ as $|G| \to \infty$.
\end{thm}

\begin{rem}
Let us record some comments on the statement of Theorem~\ref{thm:main-classical}.
\begin{itemize}
\item [\rm (a)] We will prove that $(H, H, H, H)$ is the only non-regular $4$-tuple of ${\rm U}_6(2).2$ up to conjugacy, where $H = {\rm U}_4(3).2^2$.
\vs
\item [\rm (b)] If $G = {\rm \Omega}_7(q)$ and $\tau$ is a $4$-tuple with at least one entry not of type $G_2(q)$, then $\mathbb{P}(G, \tau) \to 1$ as $|G|\to \infty$.
\vs
\item [\rm (c)] Theorem~\ref{thm:main-classical} is a direct regularity analogue of~\cite[Theorem 1]{B07}. In particular, Burness shows that $b(G, H) \leqs 5$ for every almost simple primitive classical group with non-standard point stabiliser $H$, with equality if and only if $(G, H) = ({\rm U}_6(2).2, {\rm U}_4(3).2^2)$.
\vs
\item [\rm (d)] Our methods will be similar to the ones used in~\cite{B07}. However, recall from Section~\ref{s:intro} that removing the conjugacy assumption on our tuples requires us to work with uniform worst case fixed point ratio bounds. This means that whilst we will heavily rely on the main theorem of~\cite{fprI, fprII, fprIII, fprIV} to obtain sufficient fixed point ratio estimates in the majority of the cases, there will be some cases, which we outline in Section~\ref{s:fprbounds}, where we will require stronger bounds. We obtain those bounds in Theorem~\ref{thm:low-rank-fprs}.
\vs
\item [\rm (e)] As we will explain in Section~\ref{s:prob-classical},we will be heavily using a function $\eta_G(t)$, which is a variant of a `zeta type function' introduced by Liebeck and Shalev in~\cite{LSh1} in the proof of Theorem~\ref{thm:main-classical}. We will rely on results in~\cite{B07} for this function, but as is the case with fixed point ratios, those results will not always be applicable in this more general setting, so we will need to refine those. We do this in Section~\ref{s:eta}.
\end{itemize}
\end{rem}

\vs

We now discuss some preliminary results around classical groups. In particular, we will briefly discuss Aschbacher's theorem that describes the subgroup structure of classical groups and record some information on conjugacy classes of prime order elements. We will then proceed by giving a formal definition of a subspace subgroup, and we will finally provide some more information on probabilistic and computational methods that only apply to the classical groups.

\subsection{Subgroup structure}\label{s:subgp}

The subgroup structure of finite classical groups is described by Aschbacher in~\cite{Asch}, where he defines nine separate collections and proves that every maximal subgroup of a finite classical group is contained in one of those collections. Note that the situation is slightly different when ${\rm P\O}^{+}_8(q)$ or ${\rm PSp}_4(2^f)$ and we discuss this below. The first eight collections, denoted by $\mathscr{C}_1, \ldots, \mathscr{C}_8$ are often referred to as the \emph{geometric collections}, and the final collection, which we denote by $\mathscr{S}$ is known as the \emph{non-geometric collection}. The members of the geometric collections can be described in terms of the underlying geometry of $V$. For example, the members of $\mathscr{C}_2$ are stabilisers of appropriate direct sum decompositions, and similarly, the members of $\mathscr{C}_4$ and $\mathscr{C}_7$ are stabilisers of appropriate tensor product decompositions. The $\mathscr{S}$-collection consists of subgroups that correspond to some absolutely irreducible representation on $V$ of a covering group of $H\cap G_0$ (see~\cite[p.3]{KL} for a precise definition). Note that the groups in $\mathscr{S}$ are almost simple. Throughout this paper, we will adopt the definition of~\cite{KL} for the collections of maximal subgroups, which differs slightly from the definition in the original paper by Aschbacher~\cite{Asch}. We give a brief description of each of the collections in Table~\ref{t:aschbacher}. 

Let $H$ be a maximal subgroup of $G$ such that $H\not\in \mathscr{S}$ and set $H_0 = G_0\cap H$. If $H_0$ is maximal, then $H_0\in \mathscr{C}$, where $\mathscr{C} = \mathscr{C}_1\cup \cdots \cup \mathscr{C}_8$.  If $G_0 = {\rm PSp}_4(2^f)$, or ${\rm P\O}_8^{+}(q)$, then the existence of exceptional automorphisms gives rise to an extra collection of so-called \emph{novelty subgroups}. Following the notation in~\cite{BG}, we will denote this class by $\mathscr{N}$.  We give a description of all those subgroups in Table~\ref{t:n-collection}, although the only cases of groups in $\mathscr{N}$ that will be considered in this paper are the ones where $G_0 = {\rm PSp}_4(q)$ for even $q$ and $H$ is of type ${\rm O}^{\epsilon}_2(q)\wr S_2$ or ${\rm O}_2^{-}(q).2$. We note that Aschbacher's original paper~\cite{Asch} only provides partial information for the case where $G_0 = {\rm P\O}^{+}_8(q)$ and $G$ contains triality graph automorphisms. In this case, the maximal subgroups were determined in~\cite{Kl3} by Kleidman.

We will repeatedly refer to the \emph{type} of a subgroup, following the terminology in~\cite{KL}. If $H\in \mathscr{S}$, then the type of $H$ will be the socle of the almost simple group $H$. On the other hand, if $H\in \mathscr{C}$, then the type of $H$ will give an approximate group-theoretic description of $H$ and will often indicate the underlying geometry associated to it. For example, if $G_0 = {\rm PSL}_n(q)$ and $H$ is of type ${\rm GL}_k(q)\wr S_t$, then $H$ is a stabiliser of a direct sum decomposition $V = V_1\oplus \cdots \oplus V_t$, where ${\rm dim}V_i = k$ for all $i$.

Our standard reference for the existence, structure and maximality of subgroups in $\mathscr{C}$ will be~\cite{KL}, where Kleidman and Liebeck provide a complete description of the structure of geometric subgroups and determine maximality (up to conjugacy) for groups whose natural module has dimension $n\geqs 13$. For classical groups whose natural module has dimension at most $12$ our standard reference becomes~\cite{BHR}, where Bray, Holt and Roney-Dougal determine all the maximal subgroups (up to conjugacy) of the low-dimensional classical groups.

In view of the above discussion, we now provide a version of Aschbacher's theorem on subgroup structure of classical groups:

\begin{thm}
Let $G$ be an almost simple classical group with socle $G_0$, and let $H$ be a core-free maximal subgroup of $G$. Then $H\in \mathscr{C}\cup \mathscr{S} \cup \mathscr{N}$.
\end{thm}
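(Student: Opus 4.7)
The statement is essentially a restatement of Aschbacher's original subgroup structure theorem for finite classical groups \cite{Asch}, together with the refinements needed to handle the exceptional automorphisms in low dimensions. My plan is therefore to assemble the result by citation rather than by a self-contained proof.

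First I would apply Aschbacher's theorem directly: any maximal subgroup $H$ of $G$ either lies in one of the eight geometric collections $\mathscr{C}_1, \ldots, \mathscr{C}_8$ (as defined via reducibility, imprimitivity, field extensions, tensor decompositions, subfield structures, symplectic/orthogonal forms, tensor-induced decompositions, and classical-type subgroups, respectively), or else $H$ is almost simple and acts absolutely irreducibly on $V$ modulo scalars, placing $H$ in $\mathscr{S}$. The existence and (generic) maximality of these subgroups is documented in detail in \cite{KL} for $\dim V \geqs 13$, and the low-dimensional cases $\dim V \leqs 12$ are treated in \cite{BHR}; both references are available to us. For $H \in \mathscr{C}$, one notes that setting $H_0 = H \cap G_0$ gives a maximal subgroup of $G_0$ sitting in the corresponding $G_0$-geometric collection, which is the precise form in which the collections $\mathscr{C}_i$ have been defined above.

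Next I would address the two cases in which Aschbacher's theorem as stated requires modification. When $G_0 = \mathrm{PSp}_4(2^f)$, the exceptional graph automorphism produces so-called \emph{novelty} maximal subgroups of $G$ (whose intersection with $G_0$ is \emph{not} maximal in $G_0$), and the full list of these is classical and recorded in the $\mathscr{N}$-collection of Table~\ref{t:n-collection}. When $G_0 = \mathrm{P\Omega}_8^+(q)$ and $G$ contains triality automorphisms, Aschbacher's original paper only gives partial information, and we appeal to Kleidman's determination of the maximal subgroups in \cite{Kl3}; once again the additional maximal subgroups produced are collected into the class $\mathscr{N}$. In either of these cases, the subgroup $H$ either already lies in $\mathscr{C} \cup \mathscr{S}$ via its intersection with $G_0$, or else $H \in \mathscr{N}$ by construction.

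No step here is a genuine obstacle, since all the ingredients are already in the literature; the only care needed is bookkeeping to check that our labelling of the collections matches the conventions of \cite{KL} (rather than the slightly different conventions of \cite{Asch}), and to confirm that the novelty class $\mathscr{N}$ as described in Table~\ref{t:n-collection} exhausts all the subgroups not captured by $\mathscr{C} \cup \mathscr{S}$ in the two exceptional socle cases. With that verification, combining Aschbacher's theorem in the form of \cite[Theorem 1.2.1]{KL}, Kleidman's triality analysis \cite{Kl3}, and Table~\ref{t:n-collection} yields the conclusion $H \in \mathscr{C} \cup \mathscr{S} \cup \mathscr{N}$.
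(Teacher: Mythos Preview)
Your proposal is correct and matches the paper's treatment: the paper does not give a proof of this theorem at all, but simply states it as ``a version of Aschbacher's theorem'' following the preceding discussion of \cite{Asch}, \cite{KL}, \cite{BHR}, and Kleidman's work \cite{Kl3} on triality, which is exactly the assembly-by-citation you describe. One small inaccuracy: you assert that for $H \in \mathscr{C}$ the intersection $H_0 = H \cap G_0$ is maximal in $G_0$, but the paper is more careful here, writing ``If $H_0$ is maximal, then $H_0 \in \mathscr{C}$''; the point of the novelty class $\mathscr{N}$ is precisely that $H_0$ need \emph{not} be maximal in $G_0$ even though $H$ is maximal in $G$.
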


{\small
\begin{table}
\[
\begin{array}{ll} \hline
\mathscr{C}_1& \text{Stabilisers of subspaces or pairs of subspaces of } V\\
\mathscr{C}_2& \text{Stabilisers of direct sum decompositions } V = \bigoplus_{i = 1}^t V_i  \text{ with } {\rm dim}V_i = k\\
\mathscr{C}_3& \text{Stabilisers of prime degree extension fields of } \F\\
\mathscr{C}_4& \text{Stabilisers of tensor product decompositions } V = V_1\otimes V_2\\
\mathscr{C}_5& \text{Stabilisers of prime index subfields of } \F\\
\mathscr{C}_6& \text{Normalisers of symplectic type } r\text{-groups, where } r\neq p\\
\mathscr{C}_7& \text{Stabilisers of tensor product decompositions } V = \bigotimes_{i = 1}^tV_i \text{ with } {\rm dim}V_i = k\\
\mathscr{C}_8& \text{Stabilisers of nondegenerate forms on } V\\ 
\mathscr{S}& \text{Almost simple absolutely irreducible subgroups}\\
\hline
\end{array}
\]
\caption{The Aschbacher collections.}
\label{t:aschbacher}
\end{table}
}

{\small
\begin{table}
\[
\begin{array}{lll} \hline
G_0 & \text{Type of } H & \text{Conditions}\\\hline
{\rm PSp}_4(q)& {\rm O}_2^{\epsilon}(q)\wr S_2& q \text{ even}\\
& {\rm O}_2^{-}(q^2).2& q\text{ even}\\
& [q^4].{\rm GL}_1(q)^2& q\text{ even}\\
{\rm P\O}^{+}_8(q)& {\rm GL}_1^{\epsilon}(q)\times {\rm GL}_3^{\epsilon}(q)\\
&{\rm O}^{-}_2(q^2)\times {\rm O}^{-}_2(q^2)\\
& G_2(q)\\
& [2^9].{\rm SL}_3(2) & q = p >2\\
& [q^{11}]{\rm GL}_2(q){\rm GL}_1(q)^2\\
\hline
\end{array}
\]
\caption{The $\mathscr{N}$-collection.}
\label{t:n-collection}
\end{table}
}

\vs

In this paper we are interested in non-standard subgroup tuples of $G$. As we saw in the introduction, in the classical setting, a core-free subgroup of $G$ is non-standard if and only if it is not a subspace subgroup. We begin by giving a precise definition of a subspace subgroup.

\begin{defn}\label{def:ns}
A core-free maximal subgroup $H$ of $G$ is a \emph{subspace subgroup} if one of the following holds:
\begin{enumerate}
\item [\rm (i)] $H\in \mathscr{C}_1$;
\vs
\item [\rm (ii)] $(G_0, p) = ({\rm Sp}_{n}(q)', 2)$ and $H \in \mathscr{C}_8$;
\vs
\item [\rm (iii)] $(G, H)$ is displayed in Table~\ref{t:ns}.
\end{enumerate}
\end{defn}

In Table~\ref{t:ns} we list all the cases not covered by (i) and (ii) in Definition~\ref{def:ns}. Those are all instances where there exists some isomorphism $\phi : G\to K$ where ${\rm soc}(K) = T\cong G_0$ and one of the following holds for $\phi(H) = L$:
\begin{itemize}
\item [\rm (a)]  $L$ is a subspace subgroup of $K$ of type (i) or (ii) from Definition~\ref{def:ns};
\vs
\item [\rm (b)]  $T = A_m$ and $L$ does not act primitively on $\{1, \ldots, m\}$.
\end{itemize}
In the first column of Table~\ref{t:ns} we list $G_0$, in the second column the type of $H$, in the third column we list the pairs with entries $T$ and the type of $L$, and finally, in the fourth column we mention the conditions under which such an isomorphism occurs, if any. 

We claim that Table~\ref{t:ns} indeed covers all relevant cases. First suppose that $\phi$ is an automorphism. Noting that inner, diagonal, and field automorphisms do not permute the Aschbacher families, we deduce that $\phi$ must be a graph or graph-field automorphism. Moreover, we must have $G_0 \in \{{\rm P\O}_8^{+}(q), {\rm PSp}_4(q)'\}$, as graph and graph-field automorphisms of linear and unitary groups again preserve Aschbacher families. Hence, either $G_0 = {\rm P\O}_8^{+}(q)$ and $\phi$ is a graph automorphism or $G_0 = {\rm PSp}_4(q)'$ and $\phi$ is a graph-field automorphism. In those two cases, we verify that $\phi(H)$ is as claimed in Table~\ref{def:ns} by inspecting the first two columns~\cite[Tables 8.14, 8.50]{BHR}. 

If $\phi$ is not an automorphism, then the possible isomorphisms of classical groups are listed in~\cite[2.9.1]{KL}, and by inspecting the Tables in~\cite[Chapter 8]{BHR} and comparing orders of maximal subgroups, we verify that Table~\ref{t:ns} is complete.

\vs

{\small
\begin{table}
\[
\begin{array}{lllll} \hline
G_0 & \text{Type of } H & T& \text{Type of } L & \text{Conditions} \\ \hline
 {\rm P\O}_8^{+}(q)& \O_7(q)  & {\rm P\O}_8^{+}(q)& {\rm O}_7(q)\perp {\rm O}_1(q) & H\, \text{irreducible}, \, q\, \text{odd} \\
& {\rm Sp}_6(q) & {\rm P\O}_8^{+}(q)& {\rm Sp}_6(q) & H\, \text{irreducible},\, q\, \text{even}\\
 & {\rm GL}_4(q) & {\rm P\O}_8^{+}(q)& {\rm O}_2^{+}(q) \perp {\rm O}_6^{+}(q)\\
& {\rm GU}_4(q) & {\rm P\O}_8^{+}(q)& {\rm O}_2^{-}(q)\perp {\rm O}_6^{-}(q)\\
 & {\rm Sp}_2(q)\otimes {\rm Sp}_4(q) & {\rm P\O}_8^{+}(q)& {\rm O}_3(q)\perp {\rm O}_5(q)& q\, \text{odd}\\
{\rm L}^{\epsilon}_4(q)& {\rm Sp}_4(q)  & {\rm P\O}_6^{\epsilon}(q)& {\rm O}_5(q) \perp {\rm O}_1(q) & q\, \text{odd} \\
& {\rm Sp}_4(q) & {\rm P\O}_6^{\epsilon}(q) & {\rm Sp}_4(q) & q\, \text{even}\\
& {\rm GL}_2^{\epsilon}(q)\wr S_2 & {\rm P\O}_6^{\epsilon}(q)& {\rm O}_4^{+}(q) \perp {\rm O}^{\epsilon}_2(q) &q\geqs 3 \text{ and } q\geqs 4 \text{ if } \epsilon = +\\
& {\rm GL}_2^{\epsilon}(q^2) & {\rm P\O}_6^{\epsilon}(q)& {\rm O}_4^{-}(q)\perp {\rm O}^{-\epsilon}_2(q) & q\geqs 4 \text{ if } \epsilon  = -\\
& A_7 & A_8& S_7 & (\epsilon, q) = (+, 2)\\
{\rm PSp}_4(q)' & {\rm Sp}_2(q)\wr S_2& {\rm Sp}_4(q)& {\rm O}_4^{+}(q)&q\, \text{even}\\
& {\rm Sp}_2(q^2)& {\rm Sp}_4(q)& {\rm O}_4^{-}(q)&q\, \text{even}\\
& {\rm Sp}_2(q)\wr S_2& \O_5(q)& {\rm O}_4^{+}(q)\perp {\rm O}_1(q)&q\, \text{odd}\\
& {\rm Sp}_2(q^2)& \O_5(q)& {\rm O}_4^{-}(q)\perp {\rm O}_1(q)&q\, \text{odd}\\
& {\rm GL}^{\epsilon}_2(q)& \O_5(q)& {\rm O}_3(q) \perp {\rm O}_2^{\epsilon}(q)&q\, \text{odd}\\
& 2^{5}.{\rm O}_4^{-}(2)& {\rm PSU}_4(2)& P_2& q = p = 3\\
{\rm L}_2(q)& {\rm GL}_1(q)\wr S_2& {\rm Sp}_2(q)& {\rm O}^+_2(q)& q \, \text{even}\\
& {\rm GL}_1(q^2)& {\rm Sp}_2(q)& {\rm O}^-_2(q)& q \, \text{even}\\
& {\rm GL}_2(q_0)& \O^{-}_4(q_0)& {\rm O}_3(q_0) \perp {\rm O}_1(q_0)& q = q_0^2, q_0\neq 2\\
& 2^2.{\rm O}_2^{-}(2) & A_5& S_4& q = 5\\
& A_5& A_6& S_5& q=9\\
& 2^2.{\rm Sp}_2(2)& {\rm L}_3(2)& P_1& q = 7\\
\hline
\end{array}
\]
\caption{The standard subgroups of classical groups arising in part (iii) of Definition~\ref{def:ns}.}
\label{t:ns}
\end{table}
}

\subsection{Conjugacy classes}\label{s:cclasses}

In this section we will give a brief description of the conjugacy classes of prime order elements in classical groups, and we will establish the notation and terminology that we will be using throughout. We refer the reader to~\cite[Chapter 3]{BG} for a more detailed description of those classes and to~\cite[Chapter 2]{BG} for a more detailed analysis of the basic properties of the classical groups, as well as their diagonal, field, graph, and graph-field automorphisms. We note that due to a theorem of Steinberg~\cite[Theorem 30]{Stein}, every element in an almost simple classical group is a product of an inner, a diagonal, a field, and a graph automorphism. A statement of this result consistent with our definitions and notation can be found in~\cite[2.1.3]{BG}

First let $x\in G_I$ be an element of prime order $r$. We will say that $x$ is \emph{unipotent} if $r = p$ and that $x$ is \emph{semisimple} if $r\neq p$. The description of $x$ will differ depending on whether it is unipotent or semisimple, so we divide the analysis into two separate sections. 

\vs 

\paragraph{\textbf{Semisimple elements}}

Here we will assume that $x$ is semisimple. We start by stating a fundamental result (\cite[4.2.2.(j)]{CFSGIII}) that we will repeatedly use throughout this paper:

\begin{prop}\label{thm:GLS-ss}
If $T$ is a finite simple group of Lie type and let $x\in {\rm Inndiag}(T)$ be a semisimple element. Then $x^{T} = x^{{\rm Inndiag}(T)}$.
\end{prop}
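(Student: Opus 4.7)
The plan is to work within the algebraic group framework and combine Steinberg's theorem on connectedness of semisimple centralizers with the Lang--Steinberg theorem. Let $\hat G$ denote the simply connected simple algebraic group of the same Lie type as $T$, defined over $\overline{\mathbb F}_p$, equipped with a Steinberg endomorphism $F$ such that $T$ is the simple quotient of $\hat G^F$ by its centre. Let $\pi\colon \hat G \to G_{\mathrm{ad}}$ be the central isogeny to the adjoint group, with kernel $Z = Z(\hat G)$. By the standard definition, ${\rm Inndiag}(T)$ is the image in ${\rm Aut}(T)$ of the conjugation action of $G_{\mathrm{ad}}^F$ on $T$.

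Since the inclusion $x^T \subseteq x^{{\rm Inndiag}(T)}$ holds trivially, the claim is equivalent to the factorisation ${\rm Inndiag}(T) = T \cdot C_{{\rm Inndiag}(T)}(x)$. First I would lift $x$ to a semisimple element $\tilde x \in \hat G^F$ and set $\bar x = \pi(\tilde x) \in G_{\mathrm{ad}}^F$. After handling the bookkeeping of the central subgroups that distinguish $\hat G^F$ from $T$ and $G_{\mathrm{ad}}^F$ from ${\rm Inndiag}(T)$, the task reduces to proving the analogous factorisation
\[
G_{\mathrm{ad}}^F \;=\; \pi(\hat G^F)\cdot C_{G_{\mathrm{ad}}^F}(\bar x).
\]

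The essential input is Steinberg's theorem: since $\hat G$ is simply connected, $C_{\hat G}(\tilde x)$ is a \emph{connected} reductive subgroup. Applying Lang--Steinberg to the central isogeny $\pi$ (and using $H^1(F,\hat G)=1$) yields the exact sequence
\[
1 \to Z^F \to \hat G^F \to G_{\mathrm{ad}}^F \to H^1(F,Z) \to 1,
\]
so that the index $[G_{\mathrm{ad}}^F : \pi(\hat G^F)]$ is governed by $H^1(F,Z)$. A second application of Lang--Steinberg, this time to the connected group $C_{\hat G}(\tilde x)$ (which contains $Z$), gives an analogous sequence for the centralizers with the same cokernel $H^1(F,Z)$. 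A standard diagram chase then shows that every $\bar g \in G_{\mathrm{ad}}^F$ can be modified by an element of $C_{G_{\mathrm{ad}}^F}(\bar x)$ to lie in $\pi(\hat G^F)$, which is the required factorisation.

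The main obstacle will be the careful bookkeeping of the central subgroups when passing between $\hat G^F$, $G_{\mathrm{ad}}^F$, $T$, and ${\rm Inndiag}(T)$, together with the complication that $C_{G_{\mathrm{ad}}}(\bar x)$ may itself be disconnected even though $C_{\hat G}(\tilde x)$ is not. This is especially delicate in types $A_n$, $D_n$, and $E_6$, where $Z$ is non-trivial; for the remaining Lie types one has $\hat G = G_{\mathrm{ad}}$ and the statement is immediate from Steinberg's theorem alone. For Suzuki and Ree groups the conclusion is also trivial because ${\rm Inndiag}(T) = T$.
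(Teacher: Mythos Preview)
The paper does not actually prove this proposition; it simply states it as a fundamental fact and cites \cite[4.2.2(j)]{CFSGIII} (Gorenstein--Lyons--Solomon). Your sketch is essentially the standard proof that appears there: Steinberg's connectedness theorem for centralisers of semisimple elements in simply connected groups, combined with Lang--Steinberg applied to the connected centraliser to realise any prescribed $Z$-valued $1$-cocycle, yields the factorisation $G_{\mathrm{ad}}^F = \pi(\hat G^F)\cdot C_{G_{\mathrm{ad}}^F}(\bar x)$, and the bookkeeping you flag (passing between $\hat G^F$, $T$, $G_{\mathrm{ad}}^F$, and $\mathrm{Inndiag}(T)$) is handled in GLS via their careful set-up of the ``$\sigma$-setup'' for groups of Lie type. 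So you have supplied more than the paper does, and your outline is correct.

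One small point worth tightening: the lift of a semisimple $x\in T$ to a semisimple $\tilde x\in \hat G^F$ works because $Z(\hat G)(\overline{\mathbb F}_p)$ always has order coprime to $p$ (even when $p$ divides the order of the abstract fundamental group, the scheme-theoretic centre has no non-trivial $p$-torsion $\overline{\mathbb F}_p$-points), so any preimage of $x$ in $\hat G^F$ is automatically semisimple. You implicitly assume this but do not say it.
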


Note that this theorem applies to all finite simple groups, and we will in fact make use of it when dealing with exceptional groups.

The analysis slightly differs when $x$ is an involution, so we will first discuss the structure of semisimple elements of odd order and then proceed to briefly comment on involutions. More information on all this can be found in~\cite[Chapter 3]{BG}. 

First suppose that $x$ has odd order $r$ and let $i\geqs 1$ be minimal such that $r$ divides $q^i-1$. As in~\cite{BG}, we will write
\begin{equation}\label{eq:c}
c =
\begin{cases}
2i \text{ if } i \text{ is odd and }G_0\neq {\rm L}_n(q)\\
i/2 \text{ if } i\equiv 2 \imod{4} \text{ and } G_0 = {\rm U}_n(q)\\
i \text{ otherwise}
\end{cases}
\end{equation}

First assume that $c\geqs 2$ and let $\widehat{G}$ be defined according to Table~\ref{t:matrix-gp} for each type of classical group. Then~\cite[3.1.3]{BG} implies that $x$ lifts to a unique element $\hat{x}$ of order $r$ in $\widehat{G}$. That is, $x = \hat{x}Z$, where $Z$ is the centre of $\widehat{G}$. 

We will now focus on the linear case, that is $G_0 = {\rm L}_n(q)$, to describe the structure of prime order classes. We will provide information on representatives, as well as the structure of centralisers. One can get similar descriptions with minor modifications for the unitary, symplectic, and orthogonal groups.

{\small
\begin{table}
\[
\begin{array}{llll} 
G_0 & {\rm PSL}_n^{\epsilon}(q) & {\rm PSp}_n(q)& {\rm P\O}_n^{\epsilon}(q)\\
\hline
\widehat{G}& {\rm GL}_n^{\epsilon}(q)& {\rm GSp}_n(q) & {\rm GO}_n^{\epsilon}(q)\\
\end{array}
\]
\caption{Matrix groups of classical groups.}
\label{t:matrix-gp}
\end{table}
}

Note that $\hat{x}$ is diagonalisable in $\mathbb{F}_{q^i}$, but not any proper subfield, so Maschke's Theorem implies that $\hat{x}$ fixes a direct sum decomposition
\[
V = U_1\oplus \cdots \oplus U_s\oplus C_V(\hat{x})
\]
of the natural ${\rm GL}_n(q)$-module $V$, where
\[
C_V(\hat{x}) = \{v\in V \, : \, v\hat{x} = v\}
\]
is the $1$-eigenspace of $\hat{x}$ and $U_j$ is an $i$-dimensional subspace of $V$, on which $\hat{x}$ acts irreducibly. 

Now let $\mathscr{S}_r$ denote the set of non-trivial $r$-th roots of unity in $\mathbb{F}_{q^i}$, and observe that the standard Frobenius automorphism $\sigma : \mathbb{F}_{q^i} \to \mathbb{F}_{q^i}$ defined by $\lambda \mapsto \lambda^q$ induces a permutation on $\mathscr{S}_r$. In particular, all $\sigma$-orbits have length $i$, so there are precisely $t = (r-1)/i$ distinct $\sigma$-orbits. We will write $\Lambda_1, \ldots, \Lambda_t$ to denote those $\sigma$-orbits, where
\[
\Lambda_j = \{\lambda_j, \lambda_j^q, \ldots, \lambda_j^{q^{i-1}}\}
\]
for some $\lambda_j \in \mathscr{S}_r$. We now note that the set of eigenvalues of $\hat{x}$ on $U_j \otimes \mathbb{F}_{q^i}$ coincides with some $\sigma$-orbit, so we may abuse notation and write
\[
\hat{x} = [\Lambda_1^{a_1}, \Lambda_2^{a_2}, \ldots, \Lambda_t^{a_t}, I_e],
\]
where $a_j$ denotes the multiplicity of $\Lambda_j$ on the multiset of eigenvalues of $\hat{x}$ on $V\otimes \mathbb{F}_{q^i}$ and $e = {\rm dim}C_V(\hat{x})$. It now follows from~\cite[3.1.7]{BG} that two semisimple elements of odd order $r$ are ${\rm GL}_n(q)$-conjugate if and only if they have the same multiset of eigenvalues on $\mathbb{F}_{q^i}$, and in particular, since every $x\in {\rm PGL}_n(q)$ of order $r$ has a unique lift $\hat{x}$ of order $r$ in ${\rm GL}_n(q)$, we have a complete description of the prime order semisimple classes with $c\geqs 2$ in ${\rm PGL}_n(q)$. In addition,~\cite[3.1.9]{BG} gives
\[
C_{{\rm GL}_n(q)}(\hat{x}) = {\rm GL}_e(q)\times \prod_{j = 1}^t {\rm GL}_{a_j}(q^i)
\]

With minor modifications we can get similar descriptions for odd order semisimple elements in all classical groups. For example if $G_0 = {\rm PSp}_n(q)$, then the main difference is that when $i$ is odd, then the $\sigma$-orbits come in inverse pairs $(\Lambda, \Lambda^{-1})$, and so every element of order $r$ is of the form $[(\Lambda_1, \Lambda_1^{-1})^{a_1}, \ldots, (\Lambda_t, \Lambda_t^{-1})^{a_t}, I_e]$. And again, two elements of order $r$ are ${\rm Sp}_n(q)$-conjugate if and only if they have the same eigenvalues in $\mathbb{F}_{q^i}$, so we get a similar description for odd order semisimple classes. We also get a similar description of the centraliser structure of $\hat{x}$. In particular, if $\widehat{G}$ is as defined in Table~\ref{t:matrix-gp} for each type of classical group, then~\cite[Remark 3.3.4, Remark 3.4.4, Remark 3.5.6]{BG} give
\[
C_{\widehat{G}}(\hat{x}) = \widehat{G}\cap C_{{\rm GL}_n(q^{\delta})}(\hat{x})
\]
where $\delta = 2$ if $G$ is unitary and $\delta = 1$ otherwise. This is all covered in detail in~\cite[Chapter 3]{BG}. Note that we will often refer to the \emph{type} of $C_G(x)$, which will always mean $C_{\widehat{G}}(\hat{x})$.

Finally, if $c = 1$, then we still write $x = \hat{x}Z$, but in this case the lift in $\hat{x}$ might not be unique, so additional conjugacy classes arise when $r$ divides $n$. A precise analysis of conjugacy in this case is given in~\cite[3.2.2, 3.3.3]{BG} and the structure of $C_{G_I}(x)$ is recorded in~\cite[Tables B.3 and B.4]{BG}.

We now briefly comment on the case where $G_0 = {\rm PSL}_n(q)$ and $H$ is a $\mathscr{C}_3$-subgroup of $G$, as we will often come across this situation in this paper. Set $L = {\rm GL}_{n/k}(q^k)$, let $W$ be the natural $L$-module over $\mathbb{F}_{p^k}$ and consider the natural embedding of $L$ in ${\rm GL}_n(q)$ with respect to a fixed $\mathbb{F}_{q^k}$-basis $\{w_1, \ldots, w_{n/k}\}$ for $W$. We refer the reader to~\cite[Construction 2.1.4]{BG} for more information. Now set $i_0$ to be the smallest positive integer such that $r$ divides $q^{ki_0}-1$, and note that
\[
i_0 = 
\begin{cases}
i/k \text{ if } k \text{ divides }i\\
i \text{ otherwise}
\end{cases}
\]
If $\Gamma_1, \ldots, \Gamma_s$ denote the distinct $\sigma_0$-orbits on $\mathscr{S}_r$, where $\sigma_0$ is the permutation defined by $\lambda \mapsto \lambda^{q^k}$ for all $\lambda\in \mathscr{S}_r$, then either each $\sigma$-orbit is a union of $k$ distinct $\sigma_0$-orbits, or $i = i_0$ and $\sigma$ and $\sigma_0$-orbits coincide. In particular,~\cite[5.3.2]{BG} gives the following description of the embedding of $\hat{x}\in L$ inside ${\rm GL}_n(q)$:
\begin{lem}\label{lem:c3}
If $\hat{x} = [\Gamma_1, \ldots \Gamma_s, I_e]\in L$, then, up to conjugacy, 
\[
\hat{x} = [\Gamma_1^{a_1}, (\Gamma_1^q)^{a_1}, \ldots, (\Gamma_1^{q^{k-1}})^{a_1}, \ldots, \Gamma_s^{a_s}, (\Gamma_s^q)^{a_s}, \ldots, (\Gamma_s^{q^{k-1}})^{a_s}, I_{ke}]
\]
on $V$. In particular, if $i = i_0$, then $\hat{x}$ is conjugate to $[\Gamma_1^{ka_1}, \ldots, \Gamma_s^{ka_s}, I_{ke}]$.
\end{lem}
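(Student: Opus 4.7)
The plan is to unpack the definition of the embedding $L \hookrightarrow {\rm GL}_n(q)$ via restriction of scalars from $\mathbb{F}_{q^k}$ to $\mathbb{F}_q$, to compute the eigenvalue multiset of $\hat{x}$ acting on $V \otimes \overline{\mathbb{F}_q}$, and then to invoke the fact from~\cite[3.1.7]{BG} that the ${\rm GL}_n(q)$-class of a semisimple element is determined by its multiset of eigenvalues over $\overline{\mathbb{F}_q}$. So the whole argument is really a bookkeeping exercise in eigenvalue multisets, together with one field-theoretic computation.

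First I would fix the $\mathbb{F}_{q^k}$-basis $\{w_1, \ldots, w_{n/k}\}$ of $W$ chosen in~\cite[Construction 2.1.4]{BG}, which induces a natural $\mathbb{F}_q$-basis of $V$ of size $n = k \cdot (n/k)$, so that the embedding $L \hookrightarrow {\rm GL}_n(q)$ is realised explicitly. The key field-theoretic input is the following standard fact: if $\mu \in \mathbb{F}_{q^k}$ and $T_\mu$ denotes the $\mathbb{F}_q$-linear ``multiplication by $\mu$'' map on $\mathbb{F}_{q^k}$ viewed as a $k$-dimensional $\mathbb{F}_q$-vector space, then the characteristic polynomial of $T_\mu$ over $\overline{\mathbb{F}_q}$ equals $\prod_{l=0}^{k-1}(X - \mu^{q^l})$. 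In other words, restriction of scalars replaces each eigenvalue $\mu$ of an $\mathbb{F}_{q^k}$-linear operator by the multiset $\{\mu, \mu^q, \ldots, \mu^{q^{k-1}}\}$ of its $\mathrm{Gal}(\mathbb{F}_{q^k}/\mathbb{F}_q)$-conjugates.

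Second, I would apply this eigenvalue-replacement rule to $\hat{x}$. As an element of $L$, $\hat{x}$ is semisimple with eigenvalue multiset (over $\overline{\mathbb{F}_q}$) equal to $\bigsqcup_{j=1}^s \Gamma_j^{a_j} \sqcup \{1^e\}$, since each $\Gamma_j$ is a full $\sigma_0$-orbit appearing with multiplicity $a_j$ and $\hat{x}$ has a further $e$-dimensional $1$-eigenspace $C_W(\hat{x})$ over $\mathbb{F}_{q^k}$. The restriction-of-scalars computation then yields that the eigenvalue multiset of $\hat{x}$ on $V \otimes \overline{\mathbb{F}_q}$ is
\[
\bigsqcup_{j=1}^s \bigsqcup_{l=0}^{k-1} (\Gamma_j^{q^l})^{a_j} \sqcup \{1^{ke}\},
\]
where $\Gamma_j^{q^l} := \{\nu^{q^l} : \nu \in \Gamma_j\}$ is a $\sigma_0$-orbit in $\mathscr{S}_r$. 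Combined with~\cite[3.1.7]{BG}, this gives the claimed ${\rm GL}_n(q)$-conjugacy class description of $\hat{x}$ on $V$.

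For the ``in particular'' clause, I would use the arithmetic relationship $i_0 = i/\gcd(i,k)$, which forces $\gcd(i,k) = 1$ precisely when $i = i_0$. Under this condition, iterating the map $\lambda \mapsto \lambda^q$ on $\mathscr{S}_r$ cycles through the same values as iterating $\lambda \mapsto \lambda^{q^k}$, so $\sigma$- and $\sigma_0$-orbits coincide; consequently $\Gamma_j^{q^l} = \Gamma_j$ for every $j$ and every $0 \leqs l \leqs k-1$, and the inner union in the displayed formula collapses to $\Gamma_j^{ka_j}$, delivering the stated conjugacy form. The only genuine content is the restriction-of-scalars computation, and the main obstacle is presentational rather than mathematical: one has to keep careful track of whether the multiplicities $a_j$ label $\sigma$-orbits or $\sigma_0$-orbits and of how the decomposition of $\sigma$-orbits into $\sigma_0$-orbits in the general case is implicit in the notation $[\Gamma_1^{a_1}, (\Gamma_1^q)^{a_1}, \ldots]$.
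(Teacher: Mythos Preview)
Your argument is correct. The paper does not give its own proof of this lemma but simply cites \cite[5.3.2]{BG}; your restriction-of-scalars computation (each $\mathbb{F}_{q^k}$-eigenvalue $\mu$ becomes the Galois packet $\{\mu,\mu^q,\ldots,\mu^{q^{k-1}}\}$ upon viewing $W$ as an $\mathbb{F}_q$-space, followed by the eigenvalue criterion \cite[3.1.7]{BG}) is exactly the standard proof one finds in that reference, so there is nothing to add. One minor remark: your formula $i_0 = i/\gcd(i,k)$ is the general order relation, but in the paper's setting $k$ is prime (the $\mathscr{C}_3$ family consists of prime-degree field-extension subgroups), which is why the text only records the dichotomy $i_0 = i/k$ or $i_0 = i$.
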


\vs 

We finish this section by very briefly commenting on the semisimple involutions in ${\rm PGL}_n(q)$ and we refer the reader to~\cite{BG} for a detailed analysis of semisimple involutions in all classical almost simple groups. So now let $G_0 = {\rm PGL}_n(q)$ and take $x$ to be a semisimple involution. For every $i$ such that $1 \leqs i \leqs n/2$ we get a unique class of involutions with representative $t_i$, where $t_i = \hat{t}Z$ and $\hat{t}$ is an involution in ${\rm GL}_n(q)$ whose $(-1)$-eigenspace has dimension $i$. If $x \in t_i^G$, then we will say that it is a $t_i$-\emph{involution}. Finally, if $n$ is even, then we get an additional class of involutions with representative $t'_{n/2}$. This class of involutions arises from the quadratic field extension $\mathbb{F}_{q^2}/\F$ and we direct the reader to~\cite[3.2.2.2]{BG} for more details. The precise structure of $C_{G_I}(x)$, as well as conditions for which $x\in G_0$ are recorded in~\cite[Table B.1]{BG}.

\vs

\paragraph{\textbf{Unipotent elements}}

We now shift our attention to unipotent elements in finite classical groups. We refer the reader to~\cite{LSeitz2} and~\cite{BG} for more information. Let $x\in G$ be an element of order $p$. Then~\cite[3.1.3]{BG} implies that $x$ again lifts to a unique element $\hat{x}$ of order $p$ in the corresponding matrix group, so that $x = \hat{x}Z$, where $Z$ again denotes the centre of this matrix group. We may now identify $\hat{x}$ with its Jordan decomposition on $V$ and write
\[
\hat{x} = [J_p^{a_p}, \ldots, J_1^{a_1}],
\]
where each $J_i$ is a unipotent Jordan block of size $i$ and $a_i$ denotes the multiplicity of $J_i$ in $\hat{x}$. Note that if $G$ is symplectic or orthogonal, then there are restrictions on the multiplicities of odd and even Jordan blocks respectively. In particular, combining~\cite[3.4.1, 3.5.1]{BG} we get:
\begin{lem}
Let $\hat{x}\in {\rm Sp}_n(q)$ (respectively ${\rm O}^{\epsilon}_n(q)$) be an element of odd order $p$ with Jordan decomposition $[J_p^{a_p}, \ldots, J_1^{a_1}]$ on $V$. Then $a_i$ is even for all odd (respectively even) $i$.
\end{lem}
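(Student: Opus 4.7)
The plan is to decompose $V$ as a direct sum of indecomposable $\langle \hat x\rangle$-submodules via Krull--Schmidt and then analyse how the non-degenerate $\hat x$-invariant bilinear form $B$ on $V$ (alternating if $\hat x \in {\rm Sp}_n(q)$, symmetric if $\hat x \in {\rm O}_n^{\epsilon}(q)$) restricts to the summands. Since $\hat x$ has order $p$, every indecomposable summand has dimension at most $p$. Writing $U_k$ for the indecomposable $\langle \hat x\rangle$-module of dimension $k$ (corresponding to a Jordan block $J_k$), we obtain $V \cong \bigoplus_{k = 1}^{p} U_k^{a_k}$, and the task is to control $a_k$ via the parity of $k$.

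The heart of the argument is the following parity dichotomy for $1 \leqs k \leqs p$ with $p$ odd: $U_k$ admits a non-degenerate $\hat x$-invariant alternating form if and only if $k$ is even, and admits a non-degenerate $\hat x$-invariant symmetric form if and only if $k$ is odd. To prove this, I would work in the standard basis $e_1, \ldots, e_k$ of $U_k$ with $\hat x e_i = e_i + e_{i-1}$ (setting $e_0 = 0$). Applying the invariance condition $B(\hat x u, \hat x v) = B(u,v)$ first to the pairs $(e_1, e_j)$ gives $B(e_1, e_j) = 0$ for $j\leqs k-1$, and a straightforward double induction using the relation $B(e_i, e_j) = -B(e_{i-1}, e_{j+1}) - B(e_{i-1}, e_j)$ then yields $B(e_i, e_j) = 0$ for all $i + j \leqs k$ and the anti-diagonal formula $B(e_i, e_{k+1-i}) = (-1)^{i-1}B(e_1, e_k)$. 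The form matrix therefore vanishes strictly below the anti-diagonal and has anti-diagonal entries $\pm B(e_1, e_k)$, so $B$ is non-degenerate precisely when $B(e_1, e_k) \neq 0$. Comparing $B(e_i, e_{k+1-i})$ with $B(e_{k+1-i}, e_i) = (-1)^{k-i}B(e_1, e_k)$ shows that such a non-degenerate $B$ satisfies $B(e_i, e_{k+1-i}) = (-1)^{k+1}B(e_{k+1-i}, e_i)$, forcing $B$ to be symmetric when $k$ is odd and alternating when $k$ is even; here the oddness of $p$ is essential, since the wrong parity forces $2B(e_1, e_k) = 0$ and hence $B(e_1, e_k) = 0$.

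To finish, I would orthogonally decompose $V$ with respect to $B$ by a standard $\hat x$-equivariant Witt-type induction on $\dim V$: at each stage one peels off either (a) an $\hat x$-invariant indecomposable $U_k$ on which $B|_{U_k}$ is non-degenerate, or (b) a hyperbolic pair $U \oplus U'$ in which $U, U'$ are totally isotropic indecomposables dually paired by $B$. In case (b) the pair satisfies $U \cong U' \cong U_k$ for some $k$, contributing $2$ to $a_k$. In case (a), the parity dichotomy forces $k$ to match the form type: $k$ even if $B$ is symplectic and $k$ odd if $B$ is symmetric. Consequently every summand $U_k$ with $k$ odd (in the symplectic case), respectively with $k$ even (in the orthogonal case), can only arise via a hyperbolic pair, and hence $a_k$ is even in each of these cases, as required.

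The main obstacle is the parity dichotomy, and specifically the ruling out of non-degenerate forms of the wrong symmetry type on $U_k$; this is where the oddness of $p$ is crucially used. The orthogonal decomposition in the final step is a routine $\hat x$-equivariant version of the classical Witt decomposition and should not cause difficulty.
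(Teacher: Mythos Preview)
Your proof is correct and follows the standard line (essentially the argument underlying Wall's classification of unipotent classes in classical groups in odd characteristic). The paper itself does not prove this lemma at all: it simply records it as a consequence of \cite[3.4.1, 3.5.1]{BG}, so there is no ``paper's proof'' to compare against beyond a citation. Your approach---establishing the parity dichotomy for non-degenerate $\hat{x}$-invariant forms on a single Jordan block $U_k$ via the anti-diagonal computation, then orthogonally decomposing $V$ into non-degenerate indecomposables and hyperbolic pairs---is exactly how one would prove the cited results from scratch, and you have correctly isolated where the hypothesis $p$ odd enters (ruling out non-degenerate forms of the wrong symmetry type via $2B(e_1,e_k)=0$). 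The only place I would urge a little more care is the ``routine Witt-type induction'': peeling off a non-degenerate or hyperbolic summand requires a short argument (e.g.\ take $W\cong U_k$ of maximal $k$, and if $B|_W$ is degenerate use the unique submodule chain of $W$ together with maximality of $k$ to build a totally isotropic partner $W'\cong U_k^*\cong U_k$), but this is indeed standard and your sketch is adequate for a proposal.
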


Now~\cite[3.1.14]{BG} implies that two elements in ${\rm GL}_n(q)$ are conjugate if and only if they have the same Jordan decomposition on $V$, and the uniqueness of the lift $\hat{x}$ implies that we again have a complete description of unipotent classes in ${\rm PGL}_n(q)$. 

There are similar descriptions in the unitary, symplectic, and orthogonal cases and we direct the reader to~\cite{BG} for detailed information. It is worth noting that in the symplectic and orthogonal cases the Jordan decomposition of $x$ does not determine the $G_I$-class of $x$ in general. Let us briefly look at the symplectic case:

If $p$ is odd, then the $G_0$-classes of unipotent elements are in one-to-one correspondence with the \emph{signed partitions} of the form
\[
(p^{a_p}, \epsilon_{p-1}(p-1)^{a_{p-1}}, (p-2)^{a_{p-2}}, \ldots, \epsilon_22^{a_2}, 1^{a_1}),
\]
where $\sum_i ia_i = n$, $\epsilon_{2i} = \pm$ and $a_i$ is even when $i$ is odd.

If $p=2$,  all unipotent elements have Jordan decomposition $[J_2^s, J_1^{n - 2s}]$. For each odd $s \in \{1, \ldots, n/2\}$ we get a unique $G_I$-class with Jordan decomposition $[J_2^s, J_1^{n - 2s}]$, which we denote by $b_s$, and for each even $s \in \{1, \ldots, n/2\}$, we get two distinct $G_I$-classes with Jordan decomposition $[J_2^s, J_1^{n - 2s}]$, which we label by $a_s$ and $c_s$. The orthogonal case is similar.

Finally, we will often be interested in the number and size of $G_0$-classes of unipotent elements, so we will often appeal to lemmas in~\cite{BG} that determine in how many $G_0$-classes a given $G_I$-class splits. For example, if $G_0 = {\rm PGL}_n^{\epsilon}(q)$, then~\cite[3.2.7, 3.3.10]{BG} tell us that the $G_I$-class of an element $x\in G_I$ of order $p$ with Jordan decomposition $[J_p^{a_p}, \ldots, J_1^{a_1}]$ splits into $(\nu, q-\epsilon)$ distinct $G_0$-classes, where $\nu = \gcd \{j \, : \, a_j > 0\}$. Similar results are given in~\cite[3.4.12, 3.5.14]{BG} for symplectic and orthogonal groups.

\vs

Finally, if $x$ is a field, graph, or graph-field automorphism, then detailed information can be found in~\cite[Chapter 3]{BG} and the structure of $C_{G_I}(x)$ is recorded in the relevant tables of~\cite[Appendix B]{BG}. We will again be particularly interested in the number of $G_0$-classes each $G_0$-class splits into and we will often refer to the relevant lemmas in~\cite{BG}. For example, in the linear case, we will repeatedly refer to~\cite[3.2.9, 3.2.14, 3.2.15]{BG}.

We record~\cite[3.50]{fprII} that will be useful to us when dealing with field, graph and graph-field automorphisms.

\begin{lem}\label{lem:outer-cosets}
Suppose that $G_0 \neq {\rm P\O}^{+}_8(q)$, let $H$ be a subgroup of $G$ and suppose that $x\in H\setminus {\rm PGL}(V)$ has prime order. Then $x^{G}\cap H \subseteq H_Ix$, where $H_I = H\cap G_I$.
\end{lem}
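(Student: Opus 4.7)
The plan is to reduce the statement to a one-line fact about the quotient $G/G_I$. The hypothesis $G_0 \neq {\rm P\O}^{+}_8(q)$ rules out triality, so for every remaining classical socle the outer automorphism group $G/G_I$ is abelian: it is generated by a field automorphism of some order $f$ and, when present, a graph automorphism of order $2$ (or the exceptional graph automorphism in the ${\rm PSp}_4(2^f)$ case, which itself squares to a field automorphism), and in each case these commute. Thus I would begin by recording the fact that $G/G_I$ is abelian, citing \cite[Chapter 2]{BG}.

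Given this, let $y \in x^G \cap H$, so $y = x^g$ for some $g \in G$. Passing to the abelian quotient $G/G_I$, the coset of $y = g^{-1}xg$ equals that of $x$, so $yx^{-1} \in G_I$. Since $x, y \in H$, we also have $yx^{-1} \in H$, and therefore
\[
yx^{-1} \in H \cap G_I = \widetilde{H},
\]
which gives $y \in \widetilde{H}x$, as required. The hypothesis $x \notin {\rm PGL}(V) = G_I$ is not actually used in this calculation; it merely ensures that the coset $\widetilde{H}x$ is a proper (non-identity) coset of $\widetilde{H}$ in $H$, so that the conclusion carries real content and matches the intended application to field, graph and graph-field automorphisms.

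There is no real obstacle beyond verifying case-by-case that $G/G_I$ is abelian for each relevant socle; the only place this fails is the $D_4$ case, precisely because the triality graph automorphism together with an involutory graph automorphism generates a copy of $S_3$ inside ${\rm Out}(G_0)$. This is exactly why that case is excluded from the hypothesis. Once the abelianness is in hand, the argument is a two-line manipulation of cosets.
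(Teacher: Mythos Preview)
Your argument is correct. The paper does not actually prove this lemma; it simply records it as \cite[3.50]{fprIII}. Your approach---noting that ${\rm Aut}(G_0)/G_I$ is abelian once the $D_4$ triality case is excluded, so that $G$-conjugate elements lie in the same $G_I$-coset---is the natural proof and is presumably what appears in the cited reference.

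Two small notational points. First, writing ``$G/G_I$'' is imprecise, since $G_I = {\rm Inndiag}(G_0)$ need not be contained in $G$; what you are really using is the quotient ${\rm Aut}(G_0)/G_I$ (or equivalently $G/(G\cap G_I)$), and the argument goes through verbatim there. Second, the parenthetical identification ${\rm PGL}(V) = G_I$ is not literally correct: for instance if $G_0 = {\rm PSU}_n(q)$ then $G_I = {\rm PGU}_n(q)$ while ${\rm PGL}(V) = {\rm PGL}_n(q^2)$. What is true in every case is $G_I \leqs {\rm PGL}(V)$, so the hypothesis $x\notin {\rm PGL}(V)$ does force $x\notin G_I$; but as you rightly observe, this plays no role in the argument itself.
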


\subsection{Probabilistic methods}\label{s:prob-classical}
The proof of Theorem~\ref{thm:main-classical} will heavily rely on fixed point ratio bounds established in a series of papers by Burness~\cite{fprI, fprII, fprIII, fprIV}. We now state the main result of those papers, which is Theorem 1 in~\cite{fprI}.

We will write $n$ for the dimension of the natural module, unless $G_0 = {\rm Sp}_4(2)'$ or ${\rm L}_3(2)$, in which case $n = 2$ (since ${\rm Sp}_4(2)' \cong {\rm L}_2(2)$ and ${\rm L}_3(2)\cong {\rm L}_2(7)$). We will refer to $n$ as the \emph{dimension} of $G$.

\begin{thm}\label{thm:timsbound}
Let $G$ be a finite almost simple classical group over $\F$ and let $H$ be a non-standard subgroup of $G$. Then
\[
\fpr(x, G/H) < |x^G|^{-\frac{1}{2} + \frac{1}{n} + \iota(G, H)}
\]
for all $x\in G$ of prime order, where $\iota(G, H) \geqs 0$ is a known constant.
\end{thm}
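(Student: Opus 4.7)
The plan is to reduce the inequality to the equivalent bound
\[
|x^G \cap H| < |x^G|^{\frac{1}{2} + \frac{1}{n} + \iota(G,H)}
\]
via the identity $\fpr(x, G/H) = |x^G \cap H|/|x^G|$. Since $H$ is non-standard, Aschbacher's theorem places $H$ in one of the families $\mathscr{C}_2,\ldots,\mathscr{C}_8$, $\mathscr{S}$, or (in the relevant small cases) the novelty family $\mathscr{N}$ from Tables~\ref{t:aschbacher} and~\ref{t:n-collection}. The elements $x \in G$ of prime order I would split into three types — semisimple ($r \neq p$), unipotent ($r = p$), and outer (field, graph, graph-field) — since the structural tools that control $|x^G|$ differ in each case.

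For semisimple $x$, Proposition~\ref{thm:GLS-ss} lets me replace $x^G$ by $x^{G_I}$, and the centraliser decomposition recorded in Section~\ref{s:cclasses} (a product of classical groups over extension fields) yields a lower bound of the form $|x^G| \geqs c\cdot q^d$ where $d$ is essentially the codimension of $C_V(x)$. For unipotent $x$, the analogous estimate comes from the Jordan block multiplicities $a_i$. For outer $x$, Lemma~\ref{lem:outer-cosets} confines $x^G \cap H$ to the single coset $(H\cap G_I)x$, after which the centraliser orders from Appendix~B of~\cite{BG} complete the estimate. On the subgroup side, each geometric family admits a dedicated counting argument: for $\mathscr{C}_2,\mathscr{C}_4,\mathscr{C}_7$, one parametrises the prime-order elements of $H$ by their action on the summands or tensor factors and sums the resulting class sizes; for $\mathscr{C}_3$, Lemma~\ref{lem:c3} converts the eigenvalue or Jordan data of an element of the extension-field subgroup into its data on $V$; for $\mathscr{C}_5$ and $\mathscr{C}_8$, the subgroup $H$ is itself a classical group of smaller field or different form and one uses the explicit order formulas; and for $\mathscr{C}_6$ the crude bound $|x^G \cap H| \leqs |H|$ suffices, since $|H|$ is polynomially small in $q$.

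The principal obstacle is the non-geometric family $\mathscr{S}$, where $H$ is almost simple but there is no explicit module-theoretic structure on $V$ to exploit. The plan there is to combine an upper bound on $|H|$ — drawn from the classification of low-dimensional representations of quasisimple groups together with the explicit lists of~\cite{BHR, KL} — with a lower bound on $|x^G|$ obtained from the minimal faithful action of $H\cap G_0$ on $V$; when $H$ is of Lie type, Lang--Steinberg-type fixed point counts in $H$ complete the estimate. A secondary but serious difficulty is that for small values of $n$ and $q$ the asymptotic estimates degrade, so a finite list of exceptional pairs must be handled computationally using the character-theoretic identity $\fpr(x,G/H) = \chi(x)/|\O|$ or the \texttt{MaxFPR} procedure recorded in Section~\ref{sss:comp-classical}; any residual slack produced by these exceptions is precisely what the constant $\iota(G,H)$ absorbs, which is why the theorem is stated with this correction term rather than as the clean bound $|x^G|^{-1/2 + 1/n}$.
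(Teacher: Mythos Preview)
The paper does not prove this theorem. It is stated as a quotation of the main result of Burness's four-paper series~\cite{fprI, fprII, fprIII, fprIV} (the text immediately preceding the statement says ``We now state the main result of those papers, which is Theorem~1 in~\cite{fprI}''), and is used throughout the present paper as a black box. There is therefore no proof in the paper to compare your proposal against.

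That said, your outline is a fair high-level description of how the cited papers actually proceed: one does reduce to bounding $|x^G\cap H|$, one does run a case analysis over the Aschbacher families and over the semisimple/unipotent/outer trichotomy, and the $\mathscr{S}$ family is indeed the most delicate part, handled (in~\cite{fprIV}) largely via bounds on $|H|$ coming from representation-dimension arguments. Two small corrections to your sketch: first, the constant $\iota(G,H)$ is not a residual slack term absorbing computational exceptions but a precisely tabulated quantity (in~\cite[Table~1]{fprI}) attached to a short list of specific families of $(G,H)$ where the clean exponent genuinely fails; second, the crude bound $|x^G\cap H|\leqs |H|$ is not sufficient for $\mathscr{C}_6$ in general and more careful counting is needed there too. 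But as a roadmap of a proof that spans several hundred journal pages, your proposal captures the architecture correctly.
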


Note that in most cases $\iota(G, H) = 0$. The precise value of $\iota(G, H)$ when $\iota(G, H) > 0$ is listed in~\cite[Table 1]{fprI}. 

Following~\cite{LSh1}, we now define an `zeta-type function' associated to a finite group $G$, which will be a key tool in the proof of our main theorem for high-dimensional classical groups.

\begin{defn}\label{def:eta}
Let $G$ be a finite almost simple classical group with socle $G_0$, and let $\mathcal{C}$ be the set of $G_0$-conjugacy classes of prime order elements in $G$. For $t\in \R$, we define:
\[
\eta_G(t) = \sum_{C\in \mathcal{C}}|C|^{-t}
\]
Evidently, there exists a real number $T_G\in (0, 1)$ such that $\eta_G(T_G) = 1$.
\end{defn}

Results on the asymptotic behaviour of $\eta_G(t)$ can be extracted from~\cite[1.10]{LSh1}. Note that the function considered in~\cite{LSh1} is rather different to $\eta_G(t)$. In particular, Liebeck and Shalev sum over all $G$-conjugacy classes, whilst we are summing over all prime order $G_0$-classes. However, as we explain below, it is still possible to extract asymptotic results for our purposes:

\begin{lem}\label{lem:eta-g0}
We have $\sum_{i = 1}^r |x_i^{G}|^{-t} \leqs \eta_G(t)$ for any almost simple classical group $G$ with socle $G_0$ and $t\in (0, 1)$, where $x_1, \ldots, x_r$ is a set of representatives of $G$-conjugacy classes of prime order elements in $G$.
\end{lem}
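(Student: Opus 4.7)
The plan is to exploit the fact that every $G$-conjugacy class decomposes as a disjoint union of $G_0$-conjugacy classes, because $G_0$ is normal in $G$. Concretely, for each representative $x_i$ of prime order, the class $x_i^G$ splits as
\[
x_i^G = C_{i,1} \sqcup C_{i,2} \sqcup \cdots \sqcup C_{i,s_i},
\]
where each $C_{i,j}$ is a $G_0$-conjugacy class of a prime order element in $G$. Conversely, every element of $\mathscr{C}$ is a $G_0$-class of some prime order element $x \in G$, and that element lies in exactly one $G$-class $x_i^G$; so the collection $\{C_{i,j} : 1 \leqs i \leqs r,\ 1 \leqs j \leqs s_i\}$ is precisely $\mathscr{C}$, partitioned according to the $G$-class it sits inside.

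Next, I would use that each $C_{i,j}$ is a subset of $x_i^G$, so $|C_{i,j}| \leqs |x_i^G|$. Since $t \in (0,1)$, in particular $t > 0$, the function $u \mapsto u^{-t}$ is decreasing on $(0, \infty)$, so
\[
|C_{i,j}|^{-t} \geqs |x_i^G|^{-t}.
\]
Since $s_i \geqs 1$ for each $i$, summing over $j$ gives $\sum_{j=1}^{s_i} |C_{i,j}|^{-t} \geqs |x_i^G|^{-t}$.

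Summing over $i$ and using the partition of $\mathscr{C}$ above yields
\[
\eta_G(t) = \sum_{i=1}^{r} \sum_{j=1}^{s_i} |C_{i,j}|^{-t} \geqs \sum_{i=1}^{r} |x_i^G|^{-t},
\]
which is exactly the claim. There is no real obstacle here: the lemma is a purely formal consequence of the decomposition of $G$-classes into $G_0$-classes, together with the monotonicity of $u \mapsto u^{-t}$. Its role, as indicated in the surrounding text, is simply to transfer asymptotic information known for the sum over $G_0$-classes (via~\cite[1.10]{LSh1}) to the sum over $G$-classes that actually appears in the probabilistic bound $\widehat{Q}(G,\tau)$.
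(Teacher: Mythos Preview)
Your proof is correct and follows essentially the same approach as the paper: both decompose each $G$-class of prime order elements into its constituent $G_0$-classes and use monotonicity of $u \mapsto u^{-t}$. The only cosmetic difference is that the paper uses the fact that the $G_0$-classes inside a given $G$-class $x_i^G$ all have the same size $|x_i^G|/c_i$ (since they are permuted transitively by $G$), writing the inequality as $|x_i^G|^{-t} \leqs c_i\bigl(|x_i^G|/c_i\bigr)^{-t}$, whereas you use only the weaker observation $|C_{i,j}| \leqs |x_i^G|$ together with $s_i \geqs 1$; both routes yield the same conclusion.
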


\begin{proof}
Let $x_1, \ldots, x_s\in G$ be a set of representatives of the prime order $G_0$-classes, and let $c_i$ be the number of $G_0$-classes in which $x_i^G$ splits into. Then
\[
 \sum_{i = 1}^r|x_i^G|^{-t} \leqs \sum_{i = 1}^r c_i\left(\frac{|x_i^G|}{c_i}\right)^{-t} = \sum_{j = 1}^{s}|x_j^{G_0}|^{-t} = \eta_G(t)
\]
as required.
\end{proof}

\begin{rem}
Note that the proof of Lemma~\ref{lem:eta-g0} does not rely at all on the assumption that the elements we are summing over have prime order. In fact, by carefully inspecting~\cite[1.10]{LSh1}, we note that Liebeck and Shalev use the exact same observation in their proof and work in terms of $G_0$-classes. This allows us to extract the following asymptotic result:
\end{rem}

\begin{prop}\label{prop:eta-asymptotic}
Let $G = G(q)$ be an almost simple classical group with socle $G_0$ over $\F$. Then the following hold:
\begin{itemize}
\item [\rm (i)] $\eta_G(t) \to 0$ as $q\to \infty$ for any $t > T_G$. 
\vs
\item [\rm (ii)] For a fixed real number $t > 0$, there exists some integer $r(t)$, such that for all almost simple classical groups $G = G(q)$ with dimension $r \geqs r(t)$ we have
\[
\eta_G(t) \to 0 \text{ as } |G| \to \infty.
\]
\end{itemize}
\end{prop}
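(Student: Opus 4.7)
The plan is to reduce $\eta_G(t)$ to the classical zeta function $\zeta_{G_0}(t) = \sum_{D} |D|^{-t}$, summed over all conjugacy classes $D$ of the simple socle $G_0$, since the asymptotic statements we need for $\zeta_{G_0}$ are furnished directly by~\cite[1.10]{LSh1}. First I would partition $\mathscr{C} = \mathscr{C}_{\rm in} \cup \mathscr{C}_{\rm out}$, where $\mathscr{C}_{\rm in}$ consists of the $G_0$-classes contained in $G_0$ and $\mathscr{C}_{\rm out}$ of those lying in $G \setminus G_0$. Since each class in $\mathscr{C}_{\rm in}$ is a $G_0$-class of $G_0$, the inner contribution is bounded immediately by
\[
\sum_{C \in \mathscr{C}_{\rm in}} |C|^{-t} \leqs \zeta_{G_0}(t),
\]
so only the outer contribution requires a separate argument.

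For this, I would use Lemma~\ref{lem:outer-cosets} together with Steinberg's description of $\Aut(G_0)$ to bound the number of prime order $G_0$-classes in $G \setminus G_0$ by $O(r^{c} \log q)$ for some absolute constant $c$, where $r$ denotes the rank of $G$ and $q = p^f$. Indeed, every outer prime order $G_0$-class lies in a coset of $G_I$ in $G$ indexed by a power of a field, graph, or graph-field automorphism, and there are $O(\log q)$ such cosets. The centraliser data compiled in~\cite[Appendix B]{BG} give $|C_{G_0}(x)| \leqs |G_0|^{1 - \alpha}$ with $\alpha \geqs 1/2$ for every outer element $x$ of prime order, so each outer class has size at least $|G_0|^{\alpha}$. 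Combining these yields
\[
\sum_{C \in \mathscr{C}_{\rm out}} |C|^{-t} = O\!\left( r^{c} \log q \cdot |G_0|^{-\alpha t} \right),
\]
which tends to $0$ in either of the two asymptotic regimes considered.

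Both parts now reduce to applying~\cite[1.10]{LSh1} to $\zeta_{G_0}(t)$. For part (i) I fix the Lie type and rank of $G$ and let $q \to \infty$: then~\cite[1.10]{LSh1} gives $\zeta_{G_0}(t) \to 0$ for every $t$ strictly above the corresponding critical exponent, which agrees with $T_G$ up to lower order corrections coming from $\mathscr{C}_{\rm out}$. For part (ii) I fix $t > 0$ and extract from~\cite[1.10]{LSh1} the integer $r(t)$ beyond which $\zeta_{G_0}(t) \to 0$ as $|G_0| \to \infty$; since the outer estimate above is uniform in $r$, this same $r(t)$ works for $\eta_G(t)$. The main obstacle will be ensuring in part (i) that the outer contribution does not spoil the critical exponent, which requires a careful case-by-case inspection of centralisers of field, graph, and graph-field automorphisms using~\cite[Chapter 3]{BG}; this is essentially routine once one observes that the polynomial-in-$r$ count of outer classes is absorbed by the polynomial growth of $|G_0|^{\alpha t}$ in $q$.
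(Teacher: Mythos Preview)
Your argument has a genuine gap at its core. You write $\sum_{C\in\mathscr{C}_{\rm in}}|C|^{-t}\leqs\zeta_{G_0}(t)$ and then assert that~\cite[1.10]{LSh1} gives $\zeta_{G_0}(t)\to 0$. But $\zeta_{G_0}(t)$ is a sum over \emph{all} conjugacy classes of $G_0$, including the identity class of size $1$, which contributes $1^{-t}=1$ for every $t$. Hence $\zeta_{G_0}(t)\geqs 1$ always, and the correct conclusion of~\cite[1.10]{LSh1} is $\zeta_{G_0}(t)\to 1$, not $\to 0$. So your inner bound is useless as stated. The paper's argument is precisely to exploit the fact that $\eta_G$ sums only over classes of \emph{prime order} elements, hence excludes the identity, giving $\eta_G(t)<\zeta_G(t)-1\to 0$.

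Beyond this, your approach is considerably more involved than what is needed. The paper does not split into inner and outer classes at all: it observes (see the Remark preceding the Proposition) that the proof of~\cite[1.10]{LSh1} already works in terms of $G_0$-classes in $G$, so the single inequality $\eta_G(t)<\zeta_G(t)-1$ suffices and the outer contribution is absorbed automatically. Your separate treatment of $\mathscr{C}_{\rm out}$ is therefore unnecessary, and as written it also has a false step: the claim that every outer prime order element satisfies $|C_{G_0}(x)|\leqs |G_0|^{1/2}$ fails, for instance, for an involutory graph automorphism of $E_6^{\epsilon}(q)$ of type $F_4$, where $|x^{G_0}|\sim q^{26}$ while $|G_0|\sim q^{78}$, giving exponent $1/3$ rather than $1/2$.
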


\begin{proof}
We will prove (i). The argument for (ii) is identical. If $\mathcal{S}$ denotes the set of $G_0$-conjugacy classes in $G$, then inspecting~\cite[1.10]{LSh1} gives
\[
\zeta_G(t) := \sum_{C\in \mathcal{S}} |C|^{-t} \to 1 \,\,\text{ as } \,\,  |G| \to \infty
\]
and noting that $\eta_G(t) < \zeta_G(t) - 1$, the result follows.
\end{proof}

We now prove a variant of~\cite[2.2]{B07}, which will allow us to use $\eta_G$ to establish bounds on $R_{\rm ns}(G)$ for a finite classical almost simple group $G$.

\begin{prop}\label{prop:regular}
Let $G$ be a finite almost simple classical group and let $\tau = (H_1, \ldots, H_k)$ be a core-free tuple. Suppose that there exists $m > 0$ such that the following hold:
\begin{itemize}
\item [\rm (i)]  ${\rm fpr}(x, G/H_i) < |x^{G_0}|^{-m}$ for all $x\in G$ of prime order;
\vs
\item [\rm (ii)] $T_G < km - 1$.
\end{itemize}
Then $\tau$ is regular.

\end{prop}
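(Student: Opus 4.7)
The plan is to apply Lemma~\ref{l:fpr} and bound $\widehat{Q}(G,\tau)$ in terms of $\eta_G$ evaluated at $t = km-1$, then use hypothesis (ii) and the monotonicity of $\eta_G$.

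First, by Lemma~\ref{l:fpr}, it suffices to show $\widehat{Q}(G,\tau) < 1$. Let $x_1, \ldots, x_r$ be a set of representatives of the $G$-conjugacy classes of prime order elements in $G$. Applying hypothesis (i) to each factor of the product gives
\[
\widehat{Q}(G,\tau) = \sum_{i=1}^{r} |x_i^G| \prod_{j=1}^{k} \fpr(x_i, G/H_j) < \sum_{i=1}^{r} |x_i^G| \cdot |x_i^{G_0}|^{-km}.
\]

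Next, I would translate this sum over $G$-classes into a sum over $G_0$-classes. For each prime order element $x_i \in G$, the $G$-class $x_i^G$ is a disjoint union of $G_0$-classes, all of the same size $|x_i^{G_0}|$; write $c_i$ for the number of such $G_0$-classes, so $|x_i^G| = c_i |x_i^{G_0}|$. Then
\[
\sum_{i=1}^{r} |x_i^G| \cdot |x_i^{G_0}|^{-km} = \sum_{i=1}^{r} c_i \cdot |x_i^{G_0}|^{1-km} = \sum_{C \in \mathscr{C}} |C|^{-(km-1)} = \eta_G(km-1),
\]
where in the middle equality we have collected the $c_i$ identical contributions coming from the $G_0$-classes in each $x_i^G$ and used the fact that $\mathscr{C}$ is precisely the set of all prime order $G_0$-classes in $G$ (as in Definition~\ref{def:eta}).

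Finally, since $\eta_G(t)$ is a strictly decreasing function of $t$ with $\eta_G(T_G) = 1$, hypothesis (ii) (namely $T_G < km-1$) gives $\eta_G(km-1) < 1$, so $\widehat{Q}(G,\tau) < 1$ and $\tau$ is regular. No step here is really an obstacle; the only minor care needed is the class-splitting bookkeeping in the middle equality, but this is routine since $G_0 \trianglelefteqslant G$ forces all $G_0$-classes fused into a given $G$-class to have equal size.
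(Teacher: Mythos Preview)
Your proof is correct and takes essentially the same approach as the paper: reduce to $\widehat{Q}(G,\tau)<1$ via Lemma~\ref{l:fpr}, apply hypothesis (i) to bound by $\eta_G(km-1)$, and conclude by (ii). Your explicit class-splitting argument (writing $|x_i^G| = c_i|x_i^{G_0}|$ and regrouping) is exactly the bookkeeping the paper's proof glosses over when it jumps directly from Lemma~\ref{l:fpr} to the $\eta_G$ bound.
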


\begin{proof}
In view of Lemma~\ref{l:fpr}, it suffices to show that $\widehat{Q}(G, \tau) < 1$. If $x_1, \ldots, x_k$ are representatives of the $G_0$-conjugacy classes of prime order elements in $G$, then Lemma~\ref{l:fpr} implies that
\[
\widehat{Q}(G, \tau) \leqs \sum_{i = 1}^k |x_i^{G_0}|^{-km + 1} = \eta_G(km - 1).
\]
Finally, noting that $\eta_G(t) < 1$ for all $t > T_G$, the claim follows.
\end{proof}

We now record some bounds on $T_G$ established in~\cite[Proposition 2.2, Remark 2.3]{B07} that will be key to proving the main result for almost simple classical groups of large dimension.

\begin{prop}\label{prop:TG}
Let $G$ be a finite almost simple classical group of dimension $n$ with socle $G_0$. Then the following hold:
\begin{itemize}
\item [\rm (i)] If $n\geqs 6$, then $T_G < 1/3$;
\vs
\item [\rm (ii)] If $G_0 = {\rm PSL}_{10}^{\epsilon}(q)$, then $T_G < 1/5$;
\vs
\item [\rm (iii)] If $G_0 = {\rm P\O}_n^{\epsilon}(q)$ and $n\geqs 12$, then $T_G < 4/15$.
 \end{itemize}
\end{prop}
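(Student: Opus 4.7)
The plan is to verify $\eta_G(t_0) < 1$ for each of the three claimed thresholds $t_0$, since $\eta_G$ is strictly decreasing on $(0,1)$ and so $\eta_G(t_0) < 1$ is equivalent to $T_G < t_0$. To do this, I would partition the prime-order $G_0$-classes of $G$ into three families: unipotent classes (of order $p$), semisimple classes (of order $r \neq p$), and classes consisting of field, graph or graph-field automorphisms of prime order. For each family, the centralizer descriptions and class-splitting formulas recorded in Section~\ref{s:cclasses} (together with the explicit data in \cite[Chapter~3 and Appendix~B]{BG}) provide, for every class $x^{G_0}$, an explicit lower bound on $|x^{G_0}|$ and an upper bound on the number of such classes, both as explicit functions of $n$ and $q$.

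For semisimple and unipotent elements, the smallest classes are those whose representatives act non-trivially on a subspace of small codimension in $V$, such as a transvection, a reflection, or a semisimple element with one dominant eigenvalue; in each of these cases the centralizer contains essentially a classical group of rank $n-1$, so $|x^{G_0}| \gg q^{2n - O(1)}$ with an explicit implied constant. Parametrising the remaining prime-order classes by the dimension of the fixed subspace (or by the Jordan-block multiplicity vector in the unipotent case) organises the sum as, up to polynomial-in-$(n,q)$ factors, a geometric series with ratio $q^{-2t}$. This yields a bound of the shape
\[
\sum_{x \text{ semisimple or unipotent}} |x^{G_0}|^{-t} \;\leqs\; P(n,q)\cdot q^{-(2n-2)t}
\]
for an explicit polynomial $P$. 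The outer-automorphism contribution is harmless: for a prime-order automorphism $\phi$ of order $r$, the centralizer is essentially a classical group of the same rank over $\mathbb{F}_{q^{1/r}}$, so $|\phi^{G_0}| \gg |G_0|^{1-1/r}$, and after summing over the finitely many such classes the total is bounded by $|G_0|^{-t/3}$, say, which is negligible.

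Combining these estimates, for $n \geqs 6$ and $t = 1/3$ the dominant bound is of order $q^{-10/3}$ up to polynomial corrections, which is easily below $1$ except for finitely many small pairs $(n,q)$ that can be checked directly. The refinements $T_G < 1/5$ for $\mathrm{PSL}_{10}^{\epsilon}(q)$ and $T_G < 4/15$ for $\mathrm{P\Omega}_n^{\epsilon}(q)$ with $n \geqs 12$ follow from exactly the same estimates, since the larger rank more than compensates for the smaller value of $t$. The main obstacle will be tightening the polynomial factor $P(n,q)$ enough to cover the case $q=2$, where the exponential gain is weakest; this requires using the exact centralizer orders from \cite[Appendix~B]{BG} rather than their asymptotic forms, and in the symplectic and orthogonal groups in characteristic $2$ it requires the refined partition of unipotent elements into $G_0$-classes, since Jordan form alone does not distinguish those classes.
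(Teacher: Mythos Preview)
The paper does not give a self-contained proof of this proposition; it simply imports the bounds from \cite[Proposition~2.2, Remark~2.3]{B07}, and the Remark immediately following the statement explains why those bounds (originally phrased for sums over $G$-classes of prime-order elements) transfer to the function $\eta_G$ defined here via $G_0$-classes, using the observation of Lemma~\ref{lem:eta-g0}. Your sketch is essentially the strategy that \cite{B07} carries out: stratify prime-order elements by the codimension of the largest eigenspace, bound the number of classes and the minimum class size at each level using the centraliser data in \cite{BG}, treat field/graph/graph-field automorphisms separately, and sum the resulting geometric-type series; so your approach and the paper's cited source coincide in content, the paper just cites rather than reproves.
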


\begin{rem}
Burness again works with a slightly different function than $\eta_G(t)$. In particular, he sums over $G$-classes of prime order elements instead of $G_0$-classes. However, by closely inspecting his proofs, we note that he also makes use of the observation in Lemma~\ref{lem:eta-g0}, and so the bounds in~\cite[Proposition 2.2, Remark 2.3]{B07} still apply in our setting.
\end{rem}

We now record a result on fixed point ratios of almost simple classical groups, which is an immediate consequence of~\cite[Theorem 1]{LS}. Note that the original result concerns all the almost simple classical groups, but we will only need the result for groups with socle ${\rm PSL}_2(q)$. 

\begin{prop}\label{prop:liebeck-saxl}
Let $G$ be an almost simple group with socle ${\rm PSL}_2(q)$ for $q\geqs 23$, $H$ be a non-standard subgroup of $G$, and $x\in G$ an element of prime order $r$. Then 
\[
{\rm fpr}(x, G/H) < \frac{4}{3q}
\]
or ${\rm fpr}(x, G/H) < \alpha$ and $(H, x, \alpha)$ is recorded in Table~\ref{t:liebeck-saxl}.
\end{prop}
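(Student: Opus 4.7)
The plan is to derive Proposition~\ref{prop:liebeck-saxl} directly from the Liebeck--Saxl theorem \cite[Theorem 1]{LS}, which establishes exactly this $4/(3q)$ bound on ${\rm fpr}(x, G/H)$ for every almost simple classical group $G$ over $\F$, every core-free subgroup $H$, and every prime-order element $x \in G$, apart from an explicit list of exceptional triples $(G, H, x)$. The content of the proposition is to specialise that list to the case $G_0 = {\rm PSL}_2(q)$ and to tabulate what survives.

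First, I would reduce to maximal $H$: if $H \leqs M$ with $M$ a core-free maximal overgroup of $H$, then $|x^G \cap H| \leqs |x^G \cap M|$ and consequently ${\rm fpr}(x, G/H) \leqs {\rm fpr}(x, G/M)$, so only maximal $H$ need be treated. By Dickson's classification, the maximal subgroups of ${\rm PSL}_2(q)$ are (up to conjugacy) a Borel subgroup $[q]{:}C_{(q-1)/d}$, the dihedral subgroups $D_{2(q \pm 1)/d}$, the subfield subgroups of type ${\rm PSL}_2(q_0)$ or ${\rm PGL}_2(q_0)$ when $q = q_0^k$ with $k$ prime, and the small subgroups $A_4$, $S_4$, $A_5$ (subject to the usual congruence conditions on $q$). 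For almost simple overgroups $G$ of ${\rm PSL}_2(q)$ one must additionally account for the corresponding extensions and novelty subgroups arising from diagonal, field, and graph-field automorphisms.

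Second, I would intersect these types with the exceptional triples in \cite[Theorem 1]{LS} and, for each surviving triple, compute the sharper bound ${\rm fpr}(x, G/H) = |x^G \cap H|/|x^G|$ by hand, using the description of prime-order conjugacy classes in ${\rm PGL}_2(q)$ recalled in Section~\ref{s:cclasses}, together with Lemma~\ref{lem:outer-cosets} whenever $x$ is a field, graph, or graph-field automorphism. For instance, if $H$ is a Borel then $G/H$ is the projective line and the ratio is controlled by the $\F$-eigenvalues of a lift of $x$ on the natural module; if $H$ has subfield type, the count reduces to a class-intersection calculation between ${\rm PGL}_2(q_0)$ and ${\rm PGL}_2(q)$; and the small subgroups contribute only finitely many triples that can be checked directly. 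The resulting values of $\alpha$ populate Table~\ref{t:liebeck-saxl}.

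There is no substantive technical obstacle beyond bookkeeping: the argument is essentially an extraction from \cite{LS}. The only mild subtlety is that, because ${\rm PSL}_2(q)$ has rank one, several natural actions produce fixed point ratios hovering close to the threshold $4/(3q)$, so one has to be systematic for small $q$ and when $q$ is a proper prime power (where extra subfield subgroups appear) to ensure that the exception list is genuinely complete.
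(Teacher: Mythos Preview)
Your approach is correct and matches the paper's: the paper does not give a proof at all but simply records the result as ``an immediate consequence of~\cite[Theorem 1]{LS}'', which is exactly the source you propose to specialise. Your outline is therefore more detailed than what the paper provides, but the strategy is identical.
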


{\small
\begin{table}
\[
\begin{array}{llll} \hline
\text{Type of } H & x & \alpha & \text{Conditions}\\
\hline
{\rm GL}_1(q)\wr S_2 & \phi & 2/(q+1) & r = 2\\
{\rm GL}_2(q^{1/2}) & u & 2/(q+1) & p \text{ odd}\\
& s & 2/(q+1) & r = 2, \, p \text{ odd}\\
& \phi & (2 + q^{1/2}(q^{1/2} + 1))/(q^{1/2}(q+1))& r = 2, \, p \text{ odd}\\
\hline
\end{array}
\]
\caption{The exceptional cases in Proposition~\ref{prop:liebeck-saxl}.}
\label{t:liebeck-saxl}
\end{table}
}

In view of Lemma~\ref{lem:inndiag-fpr}, Lemma~\ref{lem:eta-g0} and Remark~\ref{rem:inndiag-fpr}, we will often need to refer to results in the literature on lower bounds on $|x^G_{0}|$. In particular, if $G_0$ is in the following collection:
\[
\mathcal{A} = \{{\rm PSL}^{\epsilon}_3(q), {\rm PSL}_4^{\epsilon}(q), {\rm PSL}_5^{\epsilon}(q), {\rm PSp}_4(q)\}
\]
then we will repeatedly appeal to results in~\cite{fprII} on conjugacy class bounds. In the following lemma we record the bounds that are of interest to us.

\begin{lem}\label{lem:class-bounds}
Suppose that $G_0\in \mathcal{A}$ and $x\in G$ is an element of prime order $r$. Then $|x^{G_0}| > \alpha$, where $\alpha$ is recorded in Table~\ref{t:class-bounds} and $\beta = \frac{q}{q+1}$.
\end{lem}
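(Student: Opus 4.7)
The plan is to verify the bounds by working through each group $G_0\in\mathcal{A}$ in turn, and within each case through each type of prime order element. For fixed $G_0$, I would organise the prime order elements of $G$ according to the scheme described in Section~\ref{s:cclasses}: namely, unipotent elements (parameterised by Jordan form), semisimple involutions (parameterised by the dimension of the $(-1)$-eigenspace), semisimple elements of odd order $r\neq p$ (parameterised by multisets of $\sigma$-orbits of eigenvalues), and outer prime order automorphisms of field, graph, or graph-field type.

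For each class I would extract $|C_{G_0}(x)|$ from the explicit centraliser formulas in~\cite[Appendix B]{BG} together with the $G_0$-class splitting data in~\cite[Chapter 3]{BG} that was recalled in Section~\ref{s:cclasses}. This gives an exact expression for $|x^{G_0}|=|G_0|/|C_{G_0}(x)|$, at which point the elementary estimates in Lemma~\ref{lem:number-theory} convert each formula into a clean lower bound of the shape appearing in Table~\ref{t:class-bounds}. The final value $\alpha(q)$ in each row is then the minimum over all prime order classes of these lower bounds; by standard centraliser size comparisons one expects the minimum to be attained on a short list of ``small'' classes — long-root unipotent elements with Jordan form $[J_2,J_1^{n-2}]$, $t_1$-involutions when $p$ is odd, and field automorphisms over a maximal prime-index subfield — and one must simply verify that no other class beats these.

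The main obstacle is bookkeeping rather than any conceptual difficulty. For each $G_0\in\mathcal{A}$ one must identify which class realises the minimum, and this depends on the parity of $q$ (since unipotent elements behave quite differently in even and odd characteristic), on which outer automorphisms actually occur in $G$, and on the low-rank exceptional isomorphisms (${\rm PSp}_4(q)\cong\Omega_5(q)$ for $q$ odd, ${\rm PSL}_4(q)\cong {\rm P}\Omega_6^{+}(q)$, ${\rm PSU}_4(q)\cong {\rm P}\Omega_6^{-}(q)$, etc.), which introduce additional graph-type classes that have to be included in the count. The factor $\beta=q/(q+1)$ appearing in $\alpha$ arises from stripping a $(q+1)$ divisor present in the relevant centraliser order and using the crude bound $|x^{G_0}|\geqs |G_0|/|C_{G_0}(x)|$ together with Lemma~\ref{lem:number-theory}(ii). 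Since these computations have already been carried out in detail in~\cite{fprII} as part of the proof of Theorem~\ref{thm:timsbound}, in practice the lemma reduces to reading off the relevant bounds from the tables there and verifying that they specialise to the entries of Table~\ref{t:class-bounds}.
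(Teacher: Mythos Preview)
Your proposal is correct and follows essentially the same approach as the paper: a case-by-case verification in which one reads off centraliser orders and the $G_I$-to-$G_0$ class-splitting information from~\cite{BG}, and then appeals to the explicit class-size estimates already established in Burness's fixed-point-ratio papers. One small correction: the paper's primary source for the relevant conjugacy-class bounds is~\cite[Sections 3.3--3.5]{fprIII} (with occasional input from~\cite{fprII}), not~\cite{fprII} alone; and note that Table~\ref{t:class-bounds} records a separate bound $\alpha$ for each listed class type rather than a single global minimum, so your ``minimum over all classes'' framing slightly understates what is being proved.
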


\begin{proof}
Lower bounds on $|x^{G_I}|$ (or $|x^{G_0}|$ if $x$ is a field, graph, or graph-field automorphism) are established in~\cite[Sections 3.3-3.5]{fprIII}. Combining these with results in~\cite{BG}, which describe when prime order $G_I$-classes split into $G_0$-classes, we verify the entries in Table~\ref{t:class-bounds}. The argument is similar in most cases, so we only treat a few cases that are slightly more involved here. In particular, we will discuss the cases where $G_0 = {\rm PSL}_3^{\epsilon}(q)$ and $x\in G_I$, or $x$ is a field or graph-field automorphism and $G_0 = {\rm PSp}_4(q)$ and $x\in G_I$.

First assume that $G_0 = {\rm PSL}_3^{\epsilon}(q)$ and $x\in G_I$. If $x$ is unipotent, then either $x = [J_3]$ and~\cite[3.22]{fprIII} gives $|x^{G_I}| > \frac{1}{2}\left(\frac{q}{q+1}\right)q^6$, or $x = [J_2, J_1]$ and ~\cite[3.22]{fprIII} gives $|x^{G_I}| > \frac{1}{2}\left(\frac{q}{q+1}\right)q^4$. Moreover,~\cite[3.2.8, 3.3.10]{BG} imply that $x^{G_I} = x^{G_0}$ if $x = [J_2, J_1]$, whilst $x^{G_I}$ splits into at most $3$ $G_0$-classes if $x = [J_3]$, so the bounds in Table~\ref{t:class-bounds} are satisfied.

Next suppose that $x$ is semisimple and let $c$ be as defined in~\eqref{eq:c}. If $c\geqs 2$, or $c = 1$ and $r\neq 3$, then the bounds in Table~\ref{t:class-bounds} are verified via~\cite[3.36]{fprII}. On the other hand, if $c = 1$ and $r = 3$, then~\cite[3.35]{fprII} gives
\[
|x^{G_0}| > \frac{1}{6}\left(\frac{q}{q+1}\right)^2q^6,
\]
as claimed.

Now assume that $x$ is a field or graph-field automorphism. If $r = 2$, then
\[
|x^{G_0}| \geqs \frac{|{\rm PSL}_3^{\epsilon}(q)|}{|{\rm PGU}_3(q^{1/2})|} > \frac{1}{2(3, q-\epsilon)}\left(\frac{q}{q+1}\right)q^{(3^2-1)/2} > \frac{1}{6}\left(\frac{q}{q+1}\right)q^4.
\] 
Similarly, if $r\geqs 3$, then we obtain $|x^{G_0}| > \frac{1}{6}\left(\frac{q}{q+1}\right)q^{16/3}$.

Next assume that $G_0 = {\rm PSp}_4(q)$ and $x\in G_I$. Suppose that $x$ is unipotent. If $p$ is odd, then appealing to~\cite[3.21]{fprII} we find that $|x^{G_0}| > \frac{1}{4}\left(\frac{q}{q+1}\right)q^4$, and in particular $|x^{G_0}| > \frac{1}{4}\left(\frac{q}{q+1}\right)q^6$ if $x$ does not have Jordan decomposition $[J_2, J_1^2]$ on $V$. On the other hand, if $p = 2$, then inspecting~\cite[Table 3.5]{fprII} tells us that $|x^{G_0}| > \frac{1}{2}q^4$ if $x$ is an $a_2$ or $b_1$-involution and $|x^{G_0}| > \frac{1}{2}q^6$ if $x$ is a $c_2$-involution.

Similarly, if $x$ is semisimple and $s$ denotes the codimension of the largest eigenspace of the lift $\hat{x}$ of $x$ on ${\rm Sp}_4(q)$, then we note that $s > 1$, and so~\cite[3.36-3.37]{fprII} immediately give 
\[
|x^{G_0}| = |x^{G_I}| > \frac{1}{4}q^4.
\]
Moreover, unless $x$ is a $t_1$ or $t_1'$ involution, then we obtain the better bound $|x^{G_0}| > \frac{1}{2}\left(\frac{q}{q+1}\right)q^6$ via~\cite[3.30, 3.37]{fprII}.

We obtain all other bounds in Table~\ref{t:class-bounds} in a similar fashion.
\end{proof}

{\small
\begin{table}
\[
\begin{array}{llll} \hline
G_0 & x & \alpha & \text{Conditions}\\ \hline
{\rm PSL}_3^{\epsilon}(q)& s& \frac{1}{2}\beta q^4& C_G(x) \text{ of type } {\rm GL}_2^{\epsilon}(q)\times {\rm GL}_1^{\epsilon}(q)\\
& & \frac{1}{6}\beta^2q^6& C_G(x) \text{ not of type } {\rm GL}_2^{\epsilon}(q)\times {\rm GL}_1^{\epsilon}(q)\\
& u& \frac{1}{2}\beta q^4& x = [J_2, J_1]\\
& & \frac{1}{6}\beta q^6& x = [J_3]\\
& \phi& \frac{1}{6}\beta q^{16/3}& r\geqs 3\\
&\phi, \tau, \phi\tau & \frac{1}{6}\beta q^{4}& r = 2\\
{\rm PSL}_4^{\epsilon}(q)& s& \frac{1}{2}\beta q^6& C_G(x) \text{ of type } {\rm GL}_3^{\epsilon}(q)\times {\rm GL}_1^{\epsilon}(q)\\
& & \frac{1}{4}\beta q^8& C_{G}(x) \text{ of type } {\rm GL}_2^{\epsilon}(q)^2 \text{ or } {\rm GL}_2^{\epsilon}(q^2)\\
& & \frac{1}{2}\beta^2q^{10} & C_G(x) \text{ not of type } {\rm GL}_{2}^{\epsilon}(q^2) \text{ or } {\rm GL}^{\epsilon}_s(q)\times {\rm GL}_{4-s}^{\epsilon}(q) \text{ for } s\in \{2, 3\}\\
& u&  \frac{1}{2}\beta q^6& x = [J_2, J_1^2]\\
& & \frac{1}{4}\beta q^8& x = [J_2^2]\\
&& \frac{1}{2}\beta q^{10}& x \neq [J_2, J_1^2], [J_2^2]\\
& \phi, \phi \tau & \frac{1}{8}\beta q^{15/2}&\\
& \tau & \frac{1}{4} \beta q^4\\
{\rm PSL}_5^{\epsilon}(q) & s & \frac{1}{2}\beta q^8& C_G(x) \text{ of type } {\rm GL}_4^{\epsilon}(q)\times {\rm GL}_1^{\epsilon}(q)\\
&& \frac{1}{2}\beta q^{12} & C_G(x) \text{ of type } {\rm GL}_3^{\epsilon}(q)\times {\rm GL}_2^{\epsilon}(q)\\
&& \frac{1}{2}\beta^2q^{14} & C_G(x) \text{ not of type } {\rm GL}_s^{\epsilon}(q)\times {\rm GL}_{5-s}^{\epsilon}(q) \text{ for } s\in \{3, 4\}\\
& u& \frac{1}{2}\beta q^8 & x = [J_2, J_1^3]\\
& & \frac{1}{2}\beta q^{12} & x = [J_2^2, J_1]\\
& & \frac{1}{2}\beta q^{14} & x \neq [J_2, J_1^3], [J_2^2, J_1]\\
& \phi, \phi\tau & \frac{1}{10}\beta q^{12} & \\
& \tau& \frac{1}{10}\beta q^{14}\\
{\rm PSp}_4(q)& s & \frac{1}{4}q^4 & x \text{ is a } t_1 \text{ or } t'_1 \text{ involution}\\
& & \frac{1}{2}\beta q^6& x \text{ is not a } t_1 \text{ or } t'_1 \text{involution}\\
& u& \frac{1}{4}\beta q^4 & p\neq 2  \text{ and } x = [J_2, J_1^2]\\
& & \frac{1}{4}\beta q^6& p\neq 2  \text{ and }x \neq [J_2, J_1^2]\\
&&\frac{1}{2}q^4& p = 2 \text{ and } x \text{ is of type } a_2 \text{ or } b_1\\
&& \frac{1}{2}q^6& p=2 \text{ and } x \text{ is of type } c_2\\
& \phi& \frac{1}{4}\beta q^{20/3} & r\geqs 3\\
& \phi, \phi\tau & \frac{1}{4}q^{5} & r = 2\\
\hline
\end{array}
\]
\caption{The bounds from Lemma~\ref{lem:class-bounds}.}
\label{t:class-bounds}
\end{table}
}

We are now in a position to prove Theorem~\ref{thm:main-classical} for high-dimensional classical groups. Let $\mathcal{C}$ denote the set of simple classical groups. We will divide the possibilities for $G_0\in \mathcal{C}$ into the following six collections, according to the method we use to prove Theorem~\ref{thm:main-classical}, and we will handle each one in turn:
\begin{align*}
\mathcal{A}_1 &= \{{\rm PSp}_8(q), {\rm PSL}_8^{\epsilon}(q), {\rm P\O}^{\epsilon}_{10}(q)\}\\
\mathcal{A}_2 &= \{{\rm PSL}^{\epsilon}_6(q)\}\\
\mathcal{A}_3 &= \{{\rm PSp}_6(q) \, : \, q \text{ even}\} \cup \{{\rm \O}_7(q) \, : \, q \text{ odd}\}\\
\mathcal{A}_4 &= \{{\rm PSL}_3^{\epsilon}(q), {\rm PSL}_4^{\epsilon}(q), {\rm PSL}_5^{\epsilon}(q), {\rm PSp}_4(q)\}\\
\mathcal{A}_5 &= \{{\rm PSL}_2(q)\}\\
\mathcal{A}_6 &= \mathcal{C} \setminus (\mathcal{A}_1\cup \mathcal{A}_2\cup \mathcal{A}_3\cup \mathcal{A}_4\cup \mathcal{A}_5)
\end{align*}

\begin{prop}\label{prop:a6}
The conclusion to Theorem~\ref{thm:main-classical} holds for the groups with $G_0\in \mathcal{A}_6$.
\end{prop}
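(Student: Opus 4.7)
The plan is to establish regularity of every non-standard $4$-tuple of $G$ and the asymptotic estimate $\mathbb{P}(G, 4) \to 1$, by combining the uniform fixed point ratio bound of Theorem~\ref{thm:timsbound} with Proposition~\ref{prop:regular} and the asymptotic behaviour of $\eta_G$ from Proposition~\ref{prop:eta-asymptotic}. First observe that every $G_0 \in \mathcal{A}_6$ has dimension $n \geqs 6$; indeed $n = 6$ arises only for $G_0 = {\rm PSp}_6(q)$ with $q$ odd, while $n \geqs 7$ in every other case.

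Given a non-standard tuple $\tau = (H_1, \ldots, H_4)$, set $\iota = \max_i \iota(G, H_i)$ and $m = 1/2 - 1/n - \iota$. Since $|x^{G_0}| \leqs |x^G|$, Theorem~\ref{thm:timsbound} gives ${\rm fpr}(x, G/H_i) < |x^{G_0}|^{-m}$ for every prime order $x \in G$ and every $i$. By Proposition~\ref{prop:regular}, the regularity of $\tau$ reduces to verifying $T_G < 4m - 1 = 1 - 4/n - 4\iota$. In the generic case $\iota = 0$, which applies to all but a handful of subgroup families, the target becomes $T_G < 1 - 4/n$: for $n \geqs 7$ we have $1 - 4/n \geqs 3/7 > 1/3 > T_G$ by Proposition~\ref{prop:TG}, and for $n = 6$ we have $1 - 4/n = 1/3 > T_G$ by the same proposition. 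The finitely many families with $\iota > 0$, enumerated in~\cite[Table 1]{fprI}, will be treated one at a time: for each pair $(G, H)$ I plan to either sharpen the fpr exponent by inspecting the proof of the corresponding case in~\cite{fprI, fprII, fprIII, fprIV}, or to estimate $\widehat{Q}(G, \tau)$ directly over the small list of problematic conjugacy classes, with any residual small-$q$ instances verified in {\sc Magma} using the routines of Section~\ref{sss:comp-classical}.

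For the asymptotic conclusion, the chain $1 - \mathbb{P}(G, \tau) \leqs \widehat{Q}(G, \tau) \leqs \eta_G(4m - 1)$ is uniform in $\tau$; when $\iota = 0$, monotonicity of $\eta_G$ gives $\widehat{Q}(G, \tau) \leqs \eta_G(1/3)$, and Proposition~\ref{prop:eta-asymptotic} forces $\eta_G(1/3) \to 0$ as $|G| \to \infty$ both in the fixed-rank regime and in the large-rank regime. The exceptional $\iota > 0$ families are absorbed by the same argument using the strengthened fpr estimates mentioned above. The main technical obstacle is thus the case analysis of the $\iota > 0$ entries of~\cite[Table 1]{fprI} intersecting $\mathcal{A}_6$: each such family requires a tailored strengthening of the fpr bound to compensate for the loss of $4\iota$ in the exponent, and bookkeeping these refinements uniformly is where most of the work will lie.
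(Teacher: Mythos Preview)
Your approach is the same as the paper's: combine Theorem~\ref{thm:timsbound} with Proposition~\ref{prop:regular}, the $T_G$ bounds of Proposition~\ref{prop:TG}, and Proposition~\ref{prop:eta-asymptotic}. The $\iota = 0$ part of your argument is correct and essentially identical to what the paper does.

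Where you diverge is in the handling of the $\iota > 0$ entries of~\cite[Table~1]{fprI}. You propose a separate case analysis (sharpening fpr exponents, direct $\widehat{Q}$ estimates, {\sc Magma} checks), but this is unnecessary and, as written, only a plan rather than a proof. The point you have missed is that the \emph{second sentence} of Proposition~\ref{prop:TG} is there precisely to absorb those cases. The non-standard pairs $(G_0,H)$ with $\iota(G,H)>0$ and $G_0\in\mathcal{A}_6$ are covered by the refined bounds $T_G<1/5$ for $G_0={\rm PSL}_{10}^{\epsilon}(q)$ and $T_G<4/15$ for $G_0={\rm P\O}_n^{\epsilon}(q)$ with $n\geqs 12$. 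For instance, if $G_0={\rm PSL}_{10}^{\epsilon}(q)$ and $H$ is of type ${\rm Sp}_{10}(q)$ with $\iota=1/10$, then $4m-1=1/5$ and $T_G<1/5$ gives the conclusion of Proposition~\ref{prop:regular} directly; similarly the orthogonal entries are handled by $T_G<4/15$. No new fpr estimates and no computation are required: the paper's proof is a two-line appeal to Theorem~\ref{thm:timsbound} and Propositions~\ref{prop:regular}, \ref{prop:TG}, \ref{prop:eta-asymptotic}, using the full content of Proposition~\ref{prop:TG}. Once you invoke those refined $T_G$ bounds, your proposed case analysis collapses and your argument becomes the paper's.
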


\begin{proof}
Let $\tau = (H_1, H_2, H_3, H_4)$ be a non-standard tuple. First suppose that $G_0 = {\rm PSp}_n(q)$, in which case $n\geqs 10$. If $\iota(G, H_i) = 0$ for all $i\in \{1, \ldots, 4\}$, then it follows from Theorem~\ref{thm:timsbound} that ${\rm fpr}(x, G/H_i) < |x^G|^{-2/5}$ for all $i$ and prime order elements $x\in G$. Now Proposition~\ref{prop:TG} implies that 
\[
T_G < \frac{1}{3} < 4\cdot \frac{2}{5} - 1 = \frac{3}{5},
\]
and so setting $m = 2/5$ and using Proposition~\ref{prop:regular} proves the claim.

Now suppose that $\iota(G, H_i) > 0$ for some $i\in \{1, \ldots, 4\}$. Inspecting~\cite[Table 1]{fprI}, we deduce that $n\equiv 0 \imod{4}$ and $\iota(G, H_i) \leqs 1/n$. Noting that $n\geqs 12$, we deduce that ${\rm fpr}(x, G/H_j) < |x^G|^{-1/3}$ for all $j\in \{1, \ldots, 4\}$ and prime order elements $x\in G$, and Proposition~\ref{prop:TG} implies that $\tau$ is regular.

\vs

Next suppose that $G_0 = {\rm PSL}_n(q)$, in which case we again have $n \geqs 10$. If $\iota(G, H_i) = 0$, then as in the symplectic case, Theorem~\ref{thm:timsbound} implies that ${\rm fpr}(x, G/H_i) < |x^G|^{-2/5}$ for all $i\in \{1, \ldots, 4\}$ and prime order elements $x\in G$. Then the argument above carries through unchanged. 

Now suppose that $\iota(G, H_i) > 0$ for some $i\in \{1, \ldots, 4\}$. Inspecting~\cite[Table 1]{fprI} tells us that $n\equiv 0 \imod{2}$ and $\iota  = 1/n$. If $n = 10$, then Theorem~\ref{thm:timsbound} gives ${\rm fpr}(x, G/H_j) < |x^G|^{-3/10}$ for all $j\in \{1, \ldots, 4\}$ and Proposition~\ref{prop:TG} tells us that 
\[
T_G < \frac{1}{5} = 4\cdot \frac{3}{10}- 1.
\]
Therefore, setting $m = 3/10$ and applying Proposition~\ref{prop:regular} shows that $\tau$ is regular. If $n\geqs 12$, then Theorem~\ref{thm:timsbound} gives ${\rm fpr}(x, G/H_j) < |x^G|^{-1/3}$ for all $j\in \{1, \ldots, 4\}$, and we compute $T_G < 1/3 = 4\cdot 1/3 - 1$, so setting $m = 1/3$ and applying Proposition~\ref{prop:regular} proves the claim.

\vs

Finally suppose that $G_0 = {\rm P\O}^{\epsilon}_n(q)$, in which case $n\geqs 12$. Here inspecting~\cite[Table 1]{fprI} tells us that $\iota(G, H_i) < 1/(n-2)$, and so if follows from Theorem~\ref{thm:timsbound} that ${\rm fpr}(x, G/H_i) < |x^G|^{-19/60}$. Finally, we can infer from Proposition~\ref{prop:TG} that 
\[
T_G < \frac{4}{15} = 4\cdot \frac{19}{60} - 1,
\]
and so setting $m = 19/60$ and using Proposition~\ref{prop:regular} proves the claim.

In addition, Proposition~\ref{prop:eta-asymptotic} implies that $\mathbb{P}(G, 4)\to 1$ as $|G|\to \infty$.
\end{proof}

The remainder of the part of the paper on classical groups is devoted to proving Theorem~\ref{thm:main-classical} for groups with socle in collections $\mathcal{A}_1, \ldots, \mathcal{A}_5$.

\section{Fixed point ratios for classical groups}\label{s:fprbounds}

In this section we improve the fixed point ratio bounds obtained in~\cite{fprI, fprII, fprIII, fprIV} for some low-dimensional classical groups. The goal here is to obtain sharper bounds that will allow us to use the eta function machinery discussed in Definition~\ref{def:eta} and Propositions~\ref{prop:eta-asymptotic} and~\ref{prop:regular} in some low-dimensional cases, where a more direct approach was taken in~\cite{B07} to derive results on base sizes.

In this section, we will write $G$ for a finite almost simple classical group with socle $G_0$ over $\F$, where $q = p^f$ for some prime $p$. Moreover, $V$ will denote the natural module of $G_0$ and we will write $H$ for a non-standard subgroup of $G$ and $G_I$ for ${\rm Inndiag}(G_0)$. Finally, we will denote the standard Frobenius morphism defined by $\lambda \mapsto \lambda^q$ by $\sigma$. We will improve the fixed point ratio bounds in Theorem~\ref{thm:timsbound} only for certain families of classical groups. The following list contains the possible socle types of the groups we consider and we will treat each possibility in turn:
\[
\mathcal{B} = \{{\rm P\O}_{10}^{\epsilon}(q),  {\rm PSL}^{\epsilon}_{8}(q), {\rm PSp}_8(q), {\rm \O}_7(q), {\rm PSL}_6^{\epsilon}(q), {\rm PSp}_6(q),  {\rm PSL}_{5}^{\epsilon}(q), {\rm PSL}_4^{\epsilon}(q), {\rm PSL}_3^{\epsilon}(q)
\}
\]

We will prove the following theorem:

\begin{thm}\label{thm:low-rank-fprs}
If $G_0\in \mathcal{B}$, and $x\in G$ has prime order $r$, then ${\rm fpr}(x, G/H) < |x^{G_0}|^{-\beta}$, where $\beta$ is recorded in Table~\ref{t:fprs-classical}. 
\end{thm}

\begin{rem}
We note that our bounds have a slightly weaker form than the ones given by Burness in~\cite{fprI, fprII, fprIII, fprIV}. In particular, he bounds ${\rm fpr}(x, G/H)$ in terms of the $G$-conjugacy class of $x$, whilst we obtain an estimate in terms of the $G_0$-class of $x$. However, note that our different definition of $\eta_G$ allows us to work with this slightly looser bound.
\end{rem}

We will divide the proof of Theorem~\ref{thm:low-rank-fprs} in a series of propositions, according to the socle type of $G$, where we will consider the non-standard subgroups of $G$, for which the claim does not follow immediately from Theorem~\ref{thm:timsbound}. We briefly outline our general approach:
\begin{itemize}
\item [\rm (i)] In the majority of cases we first verify that the claim holds using {\sc Magma} for small values of $q$, where our general approach is not applicable. To do this, we use the methods described in Section~\ref{sss:comp-classical}.
\vs
\item [\rm (ii)] We next consider elements $x\in G_I$. To derive an upper bound on ${\rm fpr}(x, G/H)$ we appeal to Lemma~\ref{lem:inndiag-fpr} and Remark~\ref{rem:inndiag-fpr}, which allow us to work with the estimate
\[ 
{\rm fpr}(x, G/H) \leqs \frac{|x^{G_I}\cap H|}{|x^{G_0}|}.
\]
Therefore, our first step is to derive an upper bound on $|x^{G_I}\cap H|$ and a lower bound on $|x^{G_0}|$, which we either do directly, or we inspect the proofs of~\cite{fprI, fprII, fprIII, fprIV}, and we use the bounds derived there when those are sufficient.
\vs
\item [\rm (ii)] We then note that $\frac{|x^{G_I}\cap H|}{|x^{G_0}|} < |x^{G_0}|^{-\beta}$ if and only if $\log |x^{G_I}\cap H|/\log |x^{G_0}| < 1-\beta$, so we work with this bound and verify that the claim in Theorem~\ref{thm:low-rank-fprs} holds.
\vs
\item [\rm (iii)] We then consider graph, field and graph-field automorphisms. In this case, the estimates in the proofs of~\cite{fprI, fprII, fprIII, fprIV} are sufficient in most cases, and when they are not, then we work in a similar fashion as Burness to derive tighter bounds.
\end{itemize}

{\small
\begin{table}
\[
\begin{array}{lll} \hline
G_0 & \beta & \text{Conditions}\\
\hline
{\rm P\O}_{10}^{\epsilon}(q) & 1/3\\
{\rm PSL}_8(q) & 1/3\\
{\rm PSp}_8(q) & 1/3\\
{\rm \O}_7(q)& 0.4& H \text{ not of type } G_2(q), p\neq 2\\
& 0.225& H \text{ of type } G_2(q), p\neq 2\\
{\rm PSL}_6(q)& 1/3 & C_G(x) \text{ not of type } {\rm GL}_3(q)^2\\
& 0.32& C_G(x) \text{ of type } {\rm GL}_3(q)^2\\
{\rm PSU}_6(q)& 1/3 & C_G(x) \text{ not of type } {\rm GL}_3(q^2)\\
& 0.32& C_G(x) \text{ of type } {\rm GL}_3(q^2)\\
{\rm PSp}_6(q) & 0.4 & H \text{ not of type } G_2(q), p = 2\\
 & 0.249 & H \text{ of type } G_2(q), p = 2\\
{\rm PSL}_{n}^{\epsilon}(q)& 9/20 &n\in \{3, 4, 5\} \text{ and } x\in {\rm Inndiag}(G_0)\\
& 3/10 &n\in \{3, 4, 5\} \text{ and } x = \phi, \tau, \text{ or } \phi\tau\\
{\rm PSp}_{4}(q)& 9/20 &x\in {\rm Inndiag}(G_0)\\
& 3/10 &x = \phi \text{ or } \phi\tau\\

\hline
\end{array}
\]
\caption{The value of $\beta$ in Theorem~\ref{thm:low-rank-fprs}.}
\label{t:fprs-classical}
\end{table}
}

\begin{prop}\label{prop:sp8}
Suppose that $G_0 = {\rm PSp}_8(q)$, $q \geqs 3$ and $x\in G$ is an element of prime order $r$. Then $\fpr(x, G/H) < |x^{G_0}|^{-1/3}$. 
\end{prop}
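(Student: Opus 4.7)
The plan rests on Theorem~\ref{thm:timsbound}. Since $G_0 = {\rm PSp}_8(q)$ has $n = 8$, the theorem yields $\fpr(x, G/H) < |x^G|^{-3/8 + \iota(G, H)}$ for every prime-order $x \in G$; combined with the trivial inequality $|x^G| \geqs |x^{G_0}|$, this already gives $\fpr(x, G/H) < |x^{G_0}|^{-1/3}$ whenever $\iota(G, H) \leqs 1/24$. The proof therefore reduces to the finitely many pairs $(G, H)$ for which $\iota(G, H) > 1/24$, as listed in~\cite[Table~1]{fprI}, together with a handful of small-$q$ cases. For $q = 3$, I would dispatch the problem computationally in {\sc Magma}, using the routines from Section~\ref{sss:comp-classical}: construct $G$ via \texttt{AutomorphismGroupSimpleGroup()}, iterate through its non-standard maximal subgroups returned by \texttt{MaximalSubgroups()}, and apply \texttt{MaxFPR} to verify that every prime-order $x$ satisfies $\fpr(x, G/H) < |x^{G_0}|^{-1/3}$.

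For $q \geqs 4$ and each remaining exceptional pair $(H, x)$, the argument splits by the type of $x$. If $x \in G_I$, Lemma~\ref{lem:inndiag-fpr} together with Remark~\ref{rem:inndiag-fpr} reduces the problem to showing
\[
|x^{G_I} \cap H| < |x^{G_0}|^{2/3}.
\]
Upper bounds on $|x^{G_I} \cap H|$ can be extracted from the intermediate estimates in the proofs of~\cite{fprI, fprIV}, which are typically sharper than the published overall bound; lower bounds on $|x^{G_0}|$ come from the centralizer analysis in~\cite[Chapter~3]{BG}, combined with Proposition~\ref{thm:GLS-ss} for semisimple $x$ and the standard splitting lemmas for unipotent classes. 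Taking logarithms and invoking Lemma~\ref{lem:number-theory}, one reduces the target inequality to elementary polynomial comparisons in $q$ that hold for $q \geqs 4$. When $x$ is a field, graph, or graph-field automorphism, Lemma~\ref{lem:outer-cosets} restricts $x^G \cap H$ to $\widetilde{H}x$ with $\widetilde{H} = H \cap G_I$, and the fixed-subfield estimates in~\cite[Chapter~3]{fprIII} leave considerable room since $|x^{G_0}|$ grows like $q^c$ with $c$ comparable to $\dim G_0$.

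The main obstacle will be the small collection of $\mathscr{C}_2$ subgroups of type ${\rm Sp}_4(q) \wr S_2$ or ${\rm Sp}_2(q) \wr S_4$, together with certain $\mathscr{S}$-subgroups, for which the worst-case element is either a short-Jordan-form unipotent involution such as $[J_2^2, J_1^4]$ or a semisimple involution with centralizer of type ${\rm Sp}_4(q)^2$. For these the margin between $|x^{G_I} \cap H|$ and $|x^{G_0}|^{2/3}$ is tight, and closing it requires an ad hoc sharpening of the count of $x^{G_I}$-representatives inside $H$, typically through a careful enumeration of the $H$-conjugacy classes with the appropriate action on the natural module.
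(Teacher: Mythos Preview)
Your proposal is correct and follows essentially the same route as the paper: reduce via Theorem~\ref{thm:timsbound} to the subgroups $H$ with $\iota(G,H)>1/24$, handle $q=3$ with {\sc Magma}, and for $q\geqs 4$ verify $|x^{G_I}\cap H|<|x^{G_0}|^{2/3}$ by explicit centraliser calculations drawn from~\cite{BG} and the proofs in~\cite{fprII,fprIII}. One correction to your expectations: the two subgroups that actually survive the reduction are the $\mathscr{C}_2$-type ${\rm Sp}_4(q)\wr S_2$ and the $\mathscr{C}_3$-type ${\rm Sp}_4(q^2)$; the latter is absent from your list, while ${\rm Sp}_2(q)\wr S_4$ and the $\mathscr{S}$-subgroups you flag are already covered by Theorem~\ref{thm:timsbound} and need no further work. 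Once you consult \cite[Table~1]{fprI} as you plan, this will sort itself out, and the delicate elements you anticipate (unipotents of shape $[J_2^2,J_1^4]$ and semisimple involutions with centraliser type ${\rm Sp}_4(q)^2$) are indeed among those treated by hand in the paper.
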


\begin{proof}
If $H$ is not of type ${\rm Sp}_4(q^2)$ or ${\rm Sp}_4(q)\wr S_2$, then the claim follows by Theorem~\ref{thm:timsbound}, so it remains to consider those two cases. The claim can easily be checked using {\sc Magma} for $q\leqs 3$ using the function \texttt{MaxFpr()} provided in Section~\ref{sss:comp-classical}, so for the remainder of the proof we may assume that $q\geqs 4$. As mentioned in Section~\ref{sss:comp-classical}, the computation for $q = 3$ is more time-consuming than other cases we handle using {\sc Magma}, but one can get output in less than an hour.

\vs

\noindent \textbf{Case 1.} \emph{$x$ is semisimple and $r>2$:} First suppose that $H$ is of type ${\rm Sp}_4(q^2)$. Let $i$ be minimal such that $r$ divides $q^i - 1$ and $i_0$ be minimal such that $r$ divides $q^{2i_0}-1$, and note that $i_0 = i/2$ if $i$ is even and $i_0 = i$ otherwise. Note that we may assume that $x\in H$, since otherwise ${\rm fpr}(x, G/H) = 0$. In this case $r$ must divide $|H|$, and so we have $i\in \{1, 2, 4, 8\}$. We need to consider each possibility of $i$ in turn. The calculations for each value of $i$ are straightforward and similar, and so we will only consider the cases where $i = 4$ and $i=1$ here and leave the rest for the reader to check.

\vs 

First suppose that $i = 4$. In this case $i_0 = 2$ and $C_G(x)$ is of type ${\rm GU}_2(q^2)$, ${\rm GU}_1(q^2)^2$, or ${\rm GU}_1(q^2) \times {\rm Sp}_4(q)$. If $C_G(x)$ is of type ${\rm GU}_2(q^2)$, then we may write $x = [\Lambda^2]$ for some $\sigma$-orbit $\Lambda$ and using Lemma~\ref{lem:c3} we compute
\[
|x^{G_I} \cap H| = 2\frac{|{\rm Sp}_4(q^2)|}{|{\rm GU}_2(q^2)|} + \frac{|{\rm Sp}_4(q^2)|}{|{\rm GU}_1(q^2)|}  = 2q^6(q^4 + 1)(q^2 - 1) + q^8(q^2-1)^2(q^4+1)
\]
and
\[
|x^{G_0}| = \frac{|{\rm Sp}_8(q)|}{|{\rm GU}_2(q^2)|}  = q^{14}(q^2 - 1)^2(q^4 + 1)(q^6 - 1),
\]
and so the claim follows.

If $C_G(x)$ is of type ${\rm GU}_1(q^2)^2$, then $x = [\Lambda_1, \Lambda_2]$ for some $\sigma$-orbits $\Lambda_1, 
\Lambda_2$, and we have
\[
|x^{G_I}\cap H| \leqs 4\frac{|{\rm Sp}_4(q^2)|}{|{\rm GU}_1(q^2)|^2} = 4q^8(q^2 - 1)^2(q^4+1)
\]
by Lemma~\ref{lem:c3} and
\[
|x^{G_0}| = \frac{|{\rm Sp}_8(q)|}{|{\rm GU}_1(q^2)|^2}  = q^{16}(q^2 - 1)^3(q^4 + 1)(q^6 - 1),
\]
and so ${\rm fpr}(x, G/H) < |x^{G_0}|^{-1/3}$ here as well.

Finally, if $C_G(x)$ is of type ${\rm GU}_1(q^2)\times {\rm Sp}_4(q)$, then $x = [\Lambda, I_4]$ for some $\sigma$-orbit $\Lambda$, and again using Lemma~\ref{lem:c3} we compute
\[
|x^{G_I}\cap H| \leqs 2\frac{|{\rm Sp}_4(q^2)|}{|{\rm GU}_1(q^2)||{\rm Sp}_2(q^2)|} = 2q^6(q^4 + 1)(q^2 - 1)
\]
and 
\[
|x^{G_0}| = \frac{|{\rm Sp}_8(q)|}{|{\rm GU_1}(q^2)||{\rm Sp}_4(q)|} = q^{12}(q^2 - 1)(q^6 - 1)(q^4 + 1)
\]
and we observe that the desired bound is satisfied.

\vs

We finally consider the case $i = i_0=1$. Note that we have assumed that $x\in H$ and under this assumption we claim that $C_G(x)$ must be of type ${\rm GL}_4(q)$, ${\rm GL}_2(q)\times{\rm Sp}_4(q)$, or ${\rm GL}_2(q)^2$. Let $\hat{x}$ denote the lift of $x$ in ${\rm Sp}_4(q^2)$. Recall from Section~\ref{s:cclasses} that two elements of order $r$ are ${\rm Sp}_4(q^2)$-conjugate if and only if they have the same eigenvalues in $\mathbb{F}_{q^{2i_0}} = \mathbb{F}_{q^2}$. Moreover, we have seen again in Section~\ref{s:cclasses} that since $i_0 = 1$ the eigenvalues of $\hat{x}$ come in inverse pairs $(\lambda, \lambda^{-1})$. Thus, if $\hat{x}$ denotes the lift of $x$ in ${\rm Sp}_4(q^2)$, then we may write $\hat{x} = [\lambda, \lambda^{-1}, \mu, \mu^{-1}]$ with $\lambda, \mu\in \mathbb{F}_{q^2}$ not necessarily distinct.


As explained in Section~\ref{s:cclasses}, the equality $i = i_0$ implies that $\sigma$ and $\sigma_0$-orbits coincide, and in particular, Lemma~\ref{lem:c3} implies that $\hat{x}$ embeds as $[(\lambda, \lambda^{-1})^2, (\mu, \mu^{-1})^2]$ in $G$, so all eigenvalues occur with even multiplicity. It follows that $C_G(x)$ cannot be of type ${\rm GL}_1(q)^4$ or ${\rm GL}_3(q) \times {\rm Sp}_2(q)$, and the claim follows.

We continue by considering each possible centraliser type for $x$ in turn. All cases are entirely similar so we only consider the case where $C_G(x)$ is of type ${\rm GL}_2(q)\times{\rm Sp}_4(q)$ and leave the remaining cases to the reader.

In this case we compute
\[
|x^{G_I}\cap H| = \frac{|{\rm Sp}_4(q^2)|}{|{\rm GL}_1(q^2)||{\rm Sp}_2(q^2)|} =q^6(q^2+1)^2(q+1)(q^4+1)
\]
and 
\[
|x^{G_0}| = \frac{|{\rm Sp}_8(q)|}{|{\rm GL}_2(q)||{\rm Sp}_4(q)|} =  q^{11}(q^4+q^2+1)(q^4+1)(q^2+1)(q+1),
\]
and so the claim holds.

\vs

Now suppose that $H$ is of type ${\rm Sp}_4(q)\wr S_2$ and again let $i$ be minimal such that $r\vert q^i-1$. We may again assume that $x\in H$, and in particular that $r$ divides $|H|$. It follows that $i\in \{1, 2, 4\}$. The computations are straightforward here as well, and so, to avoid unnecessary repetition, we only consider the case $i = 2$ and leave the remaining cases to the reader.

If $i = 2$, then there are at most $11$ possibilities for $C_G(x)$ up to isomorphism. More precisely, $C_G(x)$ is one of the following types:
\begin{align*}
&{\rm GU}_4(q), {\rm GU}_3(q)\times {\rm GU}_1(q), {\rm GU}_3(q)\times {\rm Sp}_2(q), {\rm GU}_2(q)^2, {\rm GU_2}(q)\times {\rm GU}_1(q)^2,\\ &{\rm GU}_2(q)\times {\rm GU}_1(q)\times {\rm Sp}_2(q), {\rm GU}_2(q)\times {\rm Sp}_4(q),{\rm GU}_1(q)^4, {\rm GU}_1(q)^3\times {\rm Sp}_2(q),\\ &{\rm GU}_1(q)^2\times {\rm Sp}_4(q), {\rm GU}_1(q)\times {\rm Sp}_6(q). 
\end{align*}
All cases can be handled in a similar way. For example, let us consider the case where $C_G(x)$ is of type ${\rm GU}_2(q)^2$. As described in Section~\ref{s:cclasses}, if $\hat{x}$ denotes the lift of $x$ in ${\rm Sp}_8(q)$, then $\hat{x}$ is ${\rm Sp}_8(q)$-conjugate to an element of the form $[\Lambda_1^2, \Lambda_2^2]$ for two distinct $\sigma$-orbits $\Lambda_1, \Lambda_2$.

Let $\widehat{H} = {\rm Sp}_4(q) \wr S_2$ denote the lift of $H$ in ${\rm Sp}_8(q)$ and $V_1\oplus V_2$ denote the decomposition of $V$ into $4$-dimensional $\F$-spaces $V_1, V_2$ preserved by $\hat{H}$. Note that since $r$ is odd, we must have $\hat{x} \in \widehat{B} = {\rm Sp}_4(q)^2$. Now there are two $\widehat{H}$-classes of elements that fuse to the ${\rm Sp}_8(q)$-class of $\hat{x}$, namely one with representative $[Y^2]$, where $Y\in {\rm Sp}_4(q)$ is ${\rm Sp}_4(q)$-conjugate to $[\Lambda_1, \Lambda_2]$ and the one with representative $[Z, W]$, where $Z\in {\rm Sp}_4(q)$ is ${\rm Sp}_4(q)$-conjugate to $[\Lambda_1^2]$ and $W\in {\rm Sp}_4(q)$ is ${\rm Sp}_4(q)$-conjugate to $[\Lambda_2^2]$. It now remains to compute the sizes of the $\widehat{H}$-classes of $[Y^2]$ and $[Z, W]$ to obtain a bound on $|x^{G_I}\cap H|$. It is easy to see that the $\widehat{H}$-class of $[Y^2]$ consists of all elements of the form $[A, B]$, with $A, B$ $\widehat{H}$-conjugate to $Y$, and so
\[
|[Y^2]^{\widehat{H}}| = \frac{|{\rm Sp}_4(q)|^2}{|{\rm GU}_1(q)|^4}.
\]
On the other hand, if $[A, B]$ is $\widehat{H}$-conjugate to $[Z, W]$, then since $\widehat{H}$ can permute $V_1$ and $V_2$ either $A \in Z^{\widehat{H}}$ and $B \in W^{\widehat{H}}$, or  $A \in W^{\widehat{H}}$ and $B \in Z^{\widehat{H}}$, and so 
\[
|[Z, W]^{\widehat{H}}| = 2\left(\frac{|{\rm Sp}_4(q)|}{|{\rm GU}_2(q)|}\right)^2.
\]
It follows that
\begin{align*}
|x^{G_I}\cap H| = 2\left(\frac{|{\rm Sp}_4(q)|}{|{\rm GU}_2(q)|}\right)^2 + \frac{|{\rm Sp}_4(q)|^2}{|{\rm GU}_1(q)|^4}
= 2q^6(q-1)^2(q^2+1)^2 + q^8(q-1)^4(q^2+1)^2
\end{align*}
and moreover,
\[
|x^{G_0}| = \frac{|{\rm Sp}_8(q)|}{|{\rm GU}_2(q)|^2} = q^{14}(q-1)^2(q^2+1)^2(q^4+q^2+1)(q^4+1),
\]
so ${\rm fpr}(x, G/H) < |x^{G_0}|^{-1/3}$.

\vs

\noindent \textbf{Case 2.} \emph{$x$ is unipotent and $p$ is odd:} If $H$ of type ${\rm Sp}_4(q^2)$, then~\cite[5.3.2(i)]{BG} implies that the possible Jordan forms on $V$ for an element $x\in H$ are $[J_4^2], [J_2^4]$, and $[J_2^2, J_1^4]$. We can easily compute $|x^{G_I}\cap H|$ and $|x^{G_0}|$ using the information in~\cite[Table B.7]{BG} in every case and deduce that the desired bound holds. For example, suppose that $x$ has Jordan form $[J_2^2, J_1^4]$, and let $W$ denote the natural module of ${\rm Sp}_4(q^2)$. Appealing to~\cite[5.3.2(i)]{BG}, we deduce that $x$ has Jordan form $[J_2, J_1^2]$ on $W$. Moreover, we note that there are two signed partitions corresponding to this Jordan decomposition, so we compute
\[
|x^{G_I}\cap H| \leqs \frac{2|{\rm Sp}_4(q^2)|}{q^6|{\rm Sp}_2(q^2)|} = 2(q^8-1)
\]
and
\[
|x^{G_0}| \geqs \frac{|{\rm Sp}_8(q)|}{q^{11}|{\rm O}^-_2(q)||{\rm Sp}_4(q)|} = \frac{1}{2}q(q-1)(q^2+1)(q^4+1)(q^6-1)
\]
and these bounds are sufficient.

If $H$ is of type ${\rm Sp}_4(q)\wr S_2$, then the possibilities and bounds for $|x^G\cap H|$ and $|x^G|$ are listed in the proof of~\cite[3.2]{B07}, and one can check that those bounds are sufficient, so we omit the details here. 
\vs

\noindent \textbf{Case 3.} \emph{$x$ is an involution:} If $H$ is of type ${\rm Sp}_4(q)\wr S_2$, then one can easily check that the desired bound holds by inspecting the proof of~\cite[2.8, 2.9]{fprIII}, so we now assume that $H$ is of type ${\rm Sp}_4(q^2)$. 

If $x$ is not semisimple with $C_G(x)$ of type ${\rm Sp}_4(q)^2$, then the bounds found in the proof of~\cite[3.2]{fprIII} are sufficient, so we may now assume that $q$ is odd and $C_G(x)$ is of type ${\rm Sp}_4(q)^2$. By~\cite[Section 3.4.2.2]{BG} we have $x\in G_0$. Now~\cite[4.3.10]{KL} tells us that
\[
H\cap G_0 = {\rm PSp}_4(q^2).\la\phi\ra = B.\la \phi\ra,
\]
where $\phi$ is a field automorphism of order 2. We first consider involutions in $x^{G_I}\cap B$. Note that, as explained in~\cite[3.4.2]{BG}, $B$ contains two classes of involutions, namely those with centraliser of type ${\rm Sp}_2(q^2)^2$ and those with centraliser of type ${\rm GL}_2(q^2)$. If $y$ is of type ${\rm Sp}_2(q^2)^2$, then it clearly embeds in $G$ as an involution of type ${\rm Sp}_4(q)^2$, so $y\in x^{G_I}\cap B$. On the other hand, if $y$ is of type ${\rm GL}_2(q^2)$, then the lift $\hat{y}$ of $y$ in ${\rm Sp}_4(q^2)$ has order $4$. Now if $\hat{z}$ denotes the embedding of $\hat{y}$ inside ${\rm Sp}_8(q)$, then inspecting~\cite[Construction 2.4.2]{BG} reveals that $\hat{z}$ also has order $4$, so $z = \hat{z}Z({\rm Sp}_8(q))$ is of type ${\rm GL}_4(q)$. In particular, $y$ embeds in $G_I$ as an involution of type ${\rm GL}_4(q)$, and so $y\not\in x^{G_I}\cap B$. Therefore, we compute
\[
|x^{G_I}\cap B| \leqs \frac{|{\rm PSp}_4(q^2)|}{|{\rm Sp}_2(q^2)|^2} \,\,\,\,\, \text{and} \,\,\,\,\, |x^{G_0}| \geqs \frac{|{\rm PSp}_8(q)|}{2|{\rm Sp}_4(q)|^2}.
\]
Moreover, (68) in~\cite{fprIII} gives us
\[
|x^{G_I}\cap (H\setminus B)| \leqs \frac{|{\rm Sp}_4(q^2)|}{|{\rm Sp}_4(q)|}
\]
and so the claim holds.

\vs

\noindent \textbf{Case 4.} \emph{$x$ is a field automorphism:} In this case, the bounds presented at the start of~\cite[Section 3.2]{fprIII} and~\cite[2.3]{fprIII} are sufficient.
\end{proof}

\begin{prop}\label{prop:gl8}
Suppose that $G_0 = {\rm PSL}^{\epsilon}_{8}(q)$ and $x\in G$ is an element of prime order $r$. Then $\fpr(x, G/H) < |x^{G_0}|^{-1/3}$. 
\end{prop}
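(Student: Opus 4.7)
The plan is to follow the same pattern as Proposition~\ref{prop:sp8}. With $n=8$, Theorem~\ref{thm:timsbound} immediately yields $\fpr(x,G/H) < |x^G|^{-3/8 + \iota(G,H)}$, which delivers the claim whenever $\iota(G,H)$ is sufficiently small (noting that $|x^G|$ and $|x^{G_0}|$ differ by a factor bounded uniformly by $|G:G_0|$). Inspection of~\cite[Table~1]{fprI} reveals that the subgroups for which $\iota(G,H)$ is too large to conclude immediately form a short list: the extension-field subgroup of type ${\rm GL}_4^\epsilon(q^2)$ in $\mathscr{C}_3$, the imprimitive subgroup of type ${\rm GL}_4^\epsilon(q) \wr S_2$ in $\mathscr{C}_2$, the tensor-product subgroup of type ${\rm GL}_2^\epsilon(q) \otimes {\rm GL}_4^\epsilon(q)$ in $\mathscr{C}_4$, and the classical subgroups in $\mathscr{C}_8$ (of type ${\rm Sp}_8(q)$, ${\rm SO}_8^\pm(q)$, and in the linear case also the unitary subgroup). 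I will handle each of these in turn.

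For each such $H$, small values of $q$ are first treated by direct computation in {\sc Magma} using the function \texttt{MaxFPR} described in Section~\ref{sss:comp-classical}. For $q$ beyond the computational range I bound ${\rm fpr}(x,G/H)$ by $|x^{G_I}\cap H|/|x^{G_0}|$ via Lemma~\ref{lem:inndiag-fpr} and Remark~\ref{rem:inndiag-fpr}, and verify the equivalent inequality $\log|x^{G_I}\cap H|/\log|x^{G_0}| < 2/3$. The analysis splits according to the type of the prime-order element $x$. For semisimple elements of odd order $r$ I take $i$ minimal with $r\mid q^i-1$; the requirement $r\mid|H|$ restricts $i$ to an explicit finite set depending on the type of $H$, and for each admissible $i$ I enumerate the possible centraliser types of $x$ using the framework of Section~\ref{s:cclasses}, applying Lemma~\ref{lem:c3} in the $\mathscr{C}_3$ case. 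Unipotent elements are handled by enumerating admissible Jordan forms on $V$ subject to the divisibility constraints of $\mathscr{C}_3$ and the tensor constraints of $\mathscr{C}_4$, with centraliser orders read off from the tables in~\cite[Appendix~B]{BG}. Involutions, field, graph, and graph-field automorphisms are treated by importing the estimates from~\cite[Sections~3.2--3.3]{fprIII}, refined where necessary via Lemma~\ref{lem:outer-cosets}.

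The main technical obstacle is expected to be the tensor-product subgroup of type ${\rm GL}_2^\epsilon(q)\otimes {\rm GL}_4^\epsilon(q)$, where the fusion of $H$-classes into $G_I$-classes depends on how a prescribed eigenvalue-multiset on $V$ decomposes across the two tensor factors; a careful enumeration analogous to the $i=1$ sub-case of the $\mathscr{C}_2$ analysis in Proposition~\ref{prop:sp8} will be required, and this is the only place where the crude estimate $|x^{G_I}\cap H| \leqs |H|/|C_{H}(x)|$ is insufficient. A secondary subtlety arises for the $\mathscr{C}_3$ subgroup at the boundary case $i=i_0$: the collapse of $\sigma$- and $\sigma_0$-orbits forces even multiplicities on the eigenvalues of $\hat x$ on $V$ and thereby eliminates several a priori admissible centraliser types, exactly as in the $i=i_0=1$ step of Proposition~\ref{prop:sp8}, and the same line of argument will transfer to the present setting.
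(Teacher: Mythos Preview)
Your overall strategy---reduce via Theorem~\ref{thm:timsbound} to a short list of exceptional subgroup types, then handle each by bounding $|x^{G_I}\cap H|/|x^{G_0}|$ case by case---is exactly the paper's. However, your list of exceptions is much longer than it needs to be. A careful reading of \cite[Table~1]{fprI} shows that for $G_0 = {\rm PSL}_8^{\epsilon}(q)$ the \emph{only} non-standard maximal subgroup with $\iota(G,H) > 1/24$ is the $\mathscr{C}_8$-subgroup of type ${\rm Sp}_8(q)$. The $\mathscr{C}_2$, $\mathscr{C}_3$, $\mathscr{C}_4$ subgroups you list, and likewise the orthogonal and unitary $\mathscr{C}_8$-subgroups, all have $\iota$ small enough that Theorem~\ref{thm:timsbound} already yields $\fpr(x,G/H) < |x^G|^{-1/3} \leqs |x^{G_0}|^{-1/3}$. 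Note here that $|x^{G_0}| \leqs |x^G|$, so the passage from $|x^G|$ to $|x^{G_0}|$ costs nothing and your hedge about a factor of $|G:G_0|$ is unnecessary. In particular, the tensor-product subgroup you single out as the ``main technical obstacle'' requires no work at all.

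For the one genuine exception $H$ of type ${\rm Sp}_8(q)$, the paper streamlines the analysis by observing that $H\cap {\rm PGL}(V) \leqs {\rm PGSp}_8(q)$, whence $|x^{G_I}\cap H| \leqs |x^{{\rm PGSp}_8(q)}|$; one can then read off both this and $|x^{G_0}|$ directly from the centraliser tables in \cite[Appendix~B]{BG} for ${\rm PGSp}_8(q)$ and ${\rm PGL}_8^{\epsilon}(q)$, while the bounds in \cite[8.1]{fprII} dispose of field, graph and graph-field automorphisms. Your proposed enumeration of centraliser types and Jordan forms amounts to the same computation and would certainly go through, so there is no gap in your argument for this case---it is simply phrased in a more laborious way than necessary.
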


\begin{proof}
We only need to consider the case where $H$ is of type ${\rm Sp}_8(q)$, since otherwise the claim follows straight from Theorem~\ref{thm:timsbound}. In this remaining case, the claim can be verified using {\sc Magma} for $q \leqs 2$, and for $q\geqs 3$ one can argue in a similar fashion to the proof of Proposition~\ref{prop:sp8}, so we leave the details for the reader to check. 

If $x$ is a field, graph, or graph-field automorphism, then the bounds in~\cite[8.1]{fprII} suffice, so it is left to check the claim for semisimple and unipotent elements. In those cases we essentially follow the same strategy as in~\cite[8.1]{fprII}. In particular, we note that $H_I = H\cap {\rm PGL}(V) = {\rm GSp}_8(q)$ and the conjugacy class structure of prime order elements in is briefly discussed in~\ref{s:cclasses} and in~\cite[Chapter 3]{BG} in more detail. By using this information and reading off the centraliser structure of prime order elements in ${\rm PGL}_8(q)$ and ${\rm PGSp}_8(q)$ from~\cite[Tables B.3 and B.7]{BG}, we can verify that the claim holds. 

For example, suppose that $x$ is a unipotent involution with Jordan decomposition $[J_2^2, J_1^4]$. Then we observe that $x^{G_I} \cap H_I$ is the union of two $H_I$-classes, namely the classes of type $a_2$ and $c_2$, and so by inspecting~\cite[Table B.7]{BG} we find
\[
|x^{G_I}\cap H| \leqs \frac{|{\rm Sp}_8(q)|}{q^{11}|{\rm Sp}_2(q)||{\rm Sp}_4(q)|} + \frac{|{\rm Sp}_8(q)|}{q^{12}|{\rm Sp}_4(q)|} = (q^8-1)(q^6+q^4+q^2)
\]
and 
\[
|x^{G_0}| \geqs \frac{|{\rm GL}_8(q)|}{q^{9}|{\rm GL}_2(q)||{\rm GL}_4(q)|} \geqs q^{23}(q^5-1)(q^7-1)
\]
so the claim holds. We deal with all other cases in a similar fashion.
\end{proof}

\begin{prop}\label{prop:o10}
Suppose that $G_0 = {\rm P\O}^{\epsilon}_{10}(q)$ and $x\in G$ is an element of prime order $r$. Then $\fpr(x, G/H) < |x^{G_0}|^{-1/3}$. 
\end{prop}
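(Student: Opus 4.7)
The plan is to mirror the strategy of Propositions~\ref{prop:sp8} and~\ref{prop:gl8}. Since the dimension is $n=10$, Theorem~\ref{thm:timsbound} already gives
\[
\fpr(x, G/H) < |x^G|^{-2/5 + \iota(G,H)},
\]
so the desired bound $|x^{G_0}|^{-1/3}$ follows immediately (via Lemma~\ref{lem:eta-g0}, after noting $|x^{G_0}| \le |x^G|$) whenever $\iota(G,H) \leqs 1/15$. Thus the first step is to consult~\cite[Table 1]{fprI} and isolate the short list of non-standard types $H$ of maximal subgroups of $G$ for which $\iota(G,H) > 1/15$. For $\O_{10}^{\epsilon}(q)$ the relevant candidates are the $\mathscr{C}_2$-subgroups of type $\O_5(q)\wr S_2$ (with $\epsilon=+$, $q$ odd) and $\GL_5^{\epsilon}(q)$, and the $\mathscr{C}_3$-subgroup of type $\O_5(q^2)$ (with $\epsilon=-$, $q$ odd). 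These are the only cases requiring direct analysis.

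For each such $H$, the plan is a four-step case analysis following the template laid out before the statement of Theorem~\ref{thm:low-rank-fprs}:
\begin{enumerate}
\item[\rm (a)] Use {\sc Magma} with the \texttt{MaxFPR} function to verify the claim for small $q$ (for $\O_{10}^{\epsilon}$ this means roughly $q\leqs 3$), using the same constructions described in Section~\ref{sss:comp-classical}.
\item[\rm (b)] For a semisimple element $x\in G_I$ of odd prime order $r$, apply Remark~\ref{rem:inndiag-fpr} to reduce to bounding $|x^{G_I}\cap H|/|x^{G_0}|$. Parametrise by the integer $i$ minimal with $r \mid q^i-1$ and the centraliser type of $x$, then compute $|x^{G_I}\cap H|$ via the embedding described in Section~\ref{s:cclasses} (for $\mathscr{C}_2$-subgroups, splitting into the two types of $\widehat{H}$-classes that fuse in $G$, exactly as in the ${\rm GU}_2(q)^2$ calculation in Proposition~\ref{prop:sp8}; for $\mathscr{C}_3$-subgroups of type $\O_5(q^2)$, via Lemma~\ref{lem:c3}). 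The lower bounds on $|x^{G_0}|$ come from the standard formulas recorded in~\cite[Table B.4]{BG}.
\item[\rm (c)] For unipotent elements of odd order $p$ and for involutions, use the Jordan form classification and the centraliser data in~\cite[Tables B.7--B.10]{BG}; the estimates in the proofs of~\cite[Sections 3.2--3.4]{fprIII} are typically already sharp enough for the $-1/3$ bound, so the work reduces to extracting them and checking. Involutions of type $t_i$ or $t_i'$ inside the imprimitive $\O_5(q)\wr S_2$ require the most care, handled by splitting $H\cap G_0 = B.\langle\phi\rangle$ in the same way as in the $\mathscr{C}_3$ involution case of Proposition~\ref{prop:sp8}.
\item[\rm (d)] For field, graph, and graph-field automorphisms, the bounds in~\cite[2.3]{fprIII} and the proofs of~\cite[Sections 4, 5]{fprIII} already deliver an exponent better than $-1/3$, so these cases are immediate.
\end{enumerate}

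The main obstacle is anticipated in step (b) when $H$ is of type $\O_5(q)\wr S_2$ and $x$ is a semisimple element with small support on each factor (for example, when the centraliser of $\hat{x}$ in the ambient group is of type $\GU_1(q)^2\times \Sp_{\bullet}(q)$-like shape intersected with each $\O_5$-factor). Here one must carefully count the number of $\widehat{H}$-classes fusing into $x^{G_I}$, accounting for the $S_2$-permutation action on the two orthogonal summands exactly as in the $[\hat z,\hat w]$ versus $[\hat y,\hat y]$ dichotomy in Proposition~\ref{prop:sp8}. After these orbit counts are pinned down, the verification $\log|x^{G_I}\cap H|/\log|x^{G_0}| < 2/3$ is a uniform calculation in $q$, with Lemma~\ref{lem:number-theory} used to clear denominators and reduce to a polynomial inequality that holds for all $q\geqs 4$.
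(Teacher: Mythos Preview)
Your reduction via Theorem~\ref{thm:timsbound} is exactly the paper's approach, but you have overcounted the exceptional cases. The paper asserts that \emph{only} $H$ of type ${\rm GL}_5^{\epsilon}(q)$ needs separate treatment; for all other non-standard $H$ (including the types $\O_5(q)\wr S_2$ and $\O_5(q^2)$ you list), Theorem~\ref{thm:timsbound} already delivers the $-1/3$ exponent. You should re-check \cite[Table~1]{fprI} against the threshold $\iota(G,H) \leqs 1/15$ for these two types. (Also, the reference to Lemma~\ref{lem:eta-g0} in your first paragraph is spurious; the reduction only needs the trivial observation $|x^{G_0}| \leqs |x^G|$.)

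For the one genuine exceptional case, the paper's proof is far more economical than your plan: rather than rerunning the full semisimple/unipotent/involution/outer analysis from scratch, it simply observes that the bounds already established in the proof of \cite[3.3]{fprIII} are sharp enough for $q\geqs 4$, and dispatches $q\in\{2,3\}$ by {\sc Magma}. Your detailed programme in steps (b)--(d) would certainly work and is a faithful reproduction of the \cite{fprIII} methodology, but it duplicates effort that is already in the literature. The lesson here is that for these ``clean-up'' propositions it is worth first checking whether the existing bounds in \cite{fprII, fprIII} (which are often stated more precisely in the body of the proofs than in the main theorems) already suffice before embarking on a fresh case analysis.
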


\begin{proof}
In this case we may assume that $H$ is of type ${\rm GL}_5^{\epsilon}(q)$, as the claim follows by Theorem~\ref{thm:timsbound} in all other cases. The claim can be verified using {\sc Magma} for $q \in \{2, 3\}$, and for $q\geqs 4$ the bounds in the proof of~\cite[3.3]{fprIII} are sufficient.
\end{proof}

\begin{prop}\label{prop:gl6}
Suppose that $G_0 = {\rm PSL}^{\epsilon}_{6}(q)$ and $x\in G$ is an element of prime order $r$. Then either  ${\rm fpr}(x, G/H) < |x^{G_0}|^{-1/3}$, or one of the following holds:
\begin{enumerate}
\item[\rm (i)] $\epsilon = +$, $x$ is semisimple with $C_G(x)$ of type ${\rm GL}_3(q)^2$, and $\fpr(x, G/H) < |x^{G_0}|^{-0.32}$
\vs
\item[\rm (ii)] $\epsilon = -$, $x$ is semisimple with $C_G(x)$ of type ${\rm GL}_3(q^2)$, and $\fpr(x, G/H) < |x^{G_0}|^{-0.32}$.
\end{enumerate}
\end{prop}

\begin{proof}
Here we may assume that $H$ is of type ${\rm Sp}_6(q)$ since the claim follows directly by Theorem~\ref{thm:timsbound} in all other cases. 

\vs

\noindent \textbf{Case 1.} \emph{$x$ is semisimple and $r>2$:} Let $i$ be minimal such that $r\vert q^i-1$. Again, we may assume that $x\in H$ and in particular that $r$ divides $|H|$, since otherwise we have ${\rm fpr}(x, G/H) = 0$. Thus, we have $i\in \{1, 2, 3, 4, 6\}$. The embedding here is straightforward so we will only give details for the cases where $i \in \{1, 4\}$ and leave the remaining cases for the reader.

First suppose $i = 4$. In this case $C_G(x)$ is of type ${\rm GL}_1(q^4) \times {\rm GL}^{\epsilon}_2(q)$ and we have
\[
|x^{G_I}\cap H| \leqs \frac{|{\rm Sp}_6(q)|}{|{\rm GU}_1(q^2)||{\rm Sp}_2(q)|} = q^8(q^2-1)(q^6-1)
\]
and 
\begin{align*}
|x^{G_0}| \geqs \frac{|{\rm GL}_6(q)|}{|{\rm GL}_1(q^4)||{\rm GU}_2(q)|} = q^{14}(q-1)^2(q^3-1)(q^5-1)(q^4 + q^2+1).
\end{align*}
The claim follows.

Now assume $i=1$. If $\epsilon = +$, then there are six possibilities for $C_G(x)$ for an element $x\in H$. More precisely, $C_G(x)$ can be of type ${\rm GL}_3(q)^2$, ${\rm GL}_2(q)^2\times {\rm GL}_1(q)^2$, ${\rm GL}_2(q)^3$, ${\rm GL}_2(q)\times {\rm GL}_1(q)^4$, ${\rm GL}_1(q)^6$, or ${\rm GL}_4(q)\times {\rm GL}_1(q)^2$. All cases can be handled in a similar way. For example, if $C_G(x)$ is of type ${\rm GL}_3(q)^2$, then we find
\[
|x^{G_I}\cap H| = \frac{|{\rm Sp}_6(q)|}{|{\rm GL}_3(q)|} = q^6(q+1)(q^2+1)(q^3+1)
\]
and
\[
|x^{G_0}| = \frac{|{\rm GL}_6(q)|}{|{\rm GL}_3(q)|^2} = q^9(q^4+q^3+q^2+q+1)(q^2+1)(q^3+1)
\]
and one can check that $\log|x^{G_I}\cap H|/\log|x^{G_0}| < 0.68$.

Similarly, if $C_G(x)$ is of type ${\rm GL}_2(q)^2\times {\rm GL}_1(q)^2$, then we have
\[
|x^{G_I}\cap H| \leqs \frac{|{\rm Sp}_6(q)|}{|{\rm GL}_2(q)||{\rm GL}_1(q)|} = q^8(q+1)^2(q^2-1)(q^4+q^2+1) 
\]
and
\[
|x^{G_0}| \geqs \frac{|{\rm GL}_6(q)|}{|{\rm GL}_2(q)|^2|{\rm GL}_1(q)|}
 = q^{13}(q+1)(q^2+q+1)(q^2+1)(q^4+q^3+q^2+q+1)(q^4+q^2+1)
\]
and so the claim is true in this case as well. We leave the remaining cases for the reader to check.

Now suppose that $\epsilon = -$. In this case the possible centraliser types for a prime order element $x\in H$ are ${\rm GL}_3(q^2)$, ${\rm GL}_2(q^2)\times {\rm GL}_1(q^2)$, ${\rm GL}_2(q^2)\times {\rm GU}_1(q)$, ${\rm GL}_1(q^2)^2\times {\rm GU}_2(q)$, ${\rm GL}_1(q^2)^3$, and ${\rm GL}_1(q^2)\times {\rm GU}_4(q)$, and again we can verify that the desired bound holds by treating each possibility in turn. For example, if $C_G(x)$ is of type ${\rm GL}_3(q^2)$, then 
\[
|x^{G_I}\cap H| = \frac{|{\rm Sp}_6(q)|}{|{\rm GL}_3(q)|} = q^6(q+1)(q^2+1)(q^3+1)
\]
and
\[
|x^{G_0}| = \frac{|{\rm GU}_6(q)|}{|{\rm GL}_3(q^2)|} = q^9(q+1)(q^3+1)(q^5+1) 
\]
and one can check that $\log|x^{G_I}\cap H|/\log|x^{G_0}| < 0.68$.

\vs

\noindent \textbf{Case 2.} \emph{$x$ is unipotent:} By appealing to Section~\ref{s:cclasses}, we find that the Jordan decompositions corresponding to an element $x\in H$ are 
\[
[J_6], [J_4, J_2], [J_4, J_1^2], [J_3^2], [J_2^3], [J_2^2, J_1^2], [J_2, J_1^4], 
\]
so we check each possibility in turn to confirm that the required bound holds. For example, if $x$ is of type $[J_2, J_1^4]$, then we get
\[
|x^{G_I}\cap H| \leqs \frac{2|{\rm Sp}_6(q)|}{q^5|{\rm Sp}_4(q)|} \leqs 2(q^6-1)
\]
and 
\[
|x^{G_0}| \geqs \frac{|{\rm GU}_6(q)|}{q^9|{\rm GU}_4(q)||{\rm GU}_1(q)|} = (q^4 - q^3+q^2-q+1)(q^6-1)
\]
and hence ${\rm fpr}(x, G/H) < |x^{G_0}|^{-1/3}$. The other cases can be dealt with in a similar fashion by inspecting ~\cite[Tables B.3 and B.7]{BG}, so we omit the details here.

\vs

\noindent \textbf{Case 3.} \emph{$x$ is a graph, field or graph-field automorphism or a semisimple involution:} Here the bounds in the proof of~\cite[8.1]{fprII} suffice.
\end{proof}

\begin{rem} The cases in (i) and (ii) of Proposition~\ref{prop:gl6} are genuine exceptions. In particular, the bounds we have derived are precise and one can check that the functions
\[
f(x) = \frac{\log (x^6(x+1)(x^2+1)(x^3+1))}{\log (x^9(x^4 + x^3 + x^2+x+1)(x^2+1)(x^3+1))}
\]
and
\[
g(x) = \frac{\log(x^6(x+1)(x^2+1)(x^3+1))}{\log(x^9(x+1)(x^3+1)(x^5+1))}
\]
both are decreasing on $(0, \infty)$ with a horizontal asymptote at $2/3$, so we cannot bound ${\rm fpr}(x, G/H)$ by $|x^{G_0}|^{-1/3}$.
\end{rem}

\begin{prop}\label{prop:sp6}
Suppose that $G_0 = {\rm PSp}_{6}(q)$ $q$ is even and $x\in G$ is an element of prime order $r$. Then either $\fpr(x, G/H) < |x^{G_0}|^{-0.4}$, or $H$ is of type $G_2(q)$. 
\end{prop}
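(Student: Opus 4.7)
The plan is to proceed in the same case-by-case style as Propositions~\ref{prop:sp8}--\ref{prop:gl6}. First, I would enumerate the non-standard maximal subgroups of $G_0 = \mathrm{PSp}_6(q)$ ($q$ even) other than $G_2(q)$. By Definition~\ref{def:ns}(ii) the $\mathscr{C}_8$ subgroups of type $\mathrm{O}_6^\pm(q)$ are standard, so using the classification from~\cite{KL, BHR} the remaining non-standard types are $\mathrm{Sp}_2(q)\wr S_3$ and $\mathrm{GL}_3(q).2$ (in $\mathscr{C}_2$), $\mathrm{Sp}_2(q^3).3$ (in $\mathscr{C}_3$), plus a short list of $\mathscr{S}$-subgroups that arise only for small $q$. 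For each type, the goal is to establish
\[
\frac{|x^{G_I}\cap H|}{|x^{G_0}|} < |x^{G_0}|^{-0.4}
\]
for every prime order $x \in G$, invoking Lemma~\ref{lem:inndiag-fpr} and Remark~\ref{rem:inndiag-fpr}; equivalently, $\log|x^{G_I}\cap H|/\log|x^{G_0}| < 3/5$.

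For the small values of $q$ where the generic asymptotic estimates are not sharp enough, I would invoke \texttt{MaxFPR} from Section~\ref{sss:comp-classical} to verify the bound directly in \textsc{Magma}; this also takes care of the handful of small-$q$ sporadic $\mathscr{S}$-subgroups. For large $q$, I would split the analysis by element type. For semisimple $x$ of odd prime order $r$, let $i$ be minimal with $r\mid q^i-1$; the condition $r\mid|H|$ restricts $i$ to $\{1,2\}$ when $H$ is of type $\mathrm{Sp}_2(q)\wr S_3$, to $\{1,2,3\}$ when $H$ is of type $\mathrm{GL}_3(q).2$, and to $\{1,2,6\}$ when $H$ is of type $\mathrm{Sp}_2(q^3).3$. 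For each such $(H,i)$ pair I would enumerate the $C_G(x)$ types compatible with an embedding in $H$ (using the ``$\sigma$/$\sigma_0$-orbits coincide'' observation and Lemma~\ref{lem:c3} in the $\mathscr{C}_3$ case, and the standard $\wr S_t$-fusion argument for $\mathscr{C}_2$), then compute $|x^{G_I}\cap H|$ and $|x^{G_0}|$ from the centraliser data in~\cite[Chapter 3, Appendix B]{BG}, exactly as in the case analyses of Proposition~\ref{prop:sp8}.

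Unipotent elements in characteristic $2$ have Jordan form $[J_2^s, J_1^{6-2s}]$ for $s\in\{1,2,3\}$, and there are two $G_0$-classes when $s$ is even; class sizes come from~\cite[Table~B.7]{BG}, while $|x^{G_I}\cap H|$ is bounded by the analogous count inside $H$ (for the $\mathscr{C}_2$ and $\mathscr{C}_3$ embeddings this amounts to adding up $\mathrm{Sp}_2(q)$- or $\mathrm{Sp}_2(q^3)$-class sizes with appropriate cross terms). For field automorphisms of odd prime order the estimates of~\cite[Section~3.2]{fprIII} and~\cite[2.3]{fprIII} are already sharp enough to yield exponent $-0.4$, so I would simply quote them. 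Involutions need no separate treatment since every involution in $G$ with $q$ even lies in $\mathrm{PGL}(V)$ and is therefore unipotent, hence covered above (or is a field involution, which is handled by the preceding reference).

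The main obstacle I anticipate is the $\mathscr{C}_2$-subgroup of type $\mathrm{GL}_3(q).2$ with $i=1$: here $x$ can have many centraliser types in $G$ (for example $\mathrm{GL}_2(q)\times\mathrm{GL}_1(q)^4$, $\mathrm{GL}_1(q)^6$, or types involving a $\mathrm{Sp}_2$ factor), so the fusion of $H$-classes to $G$-classes must be tracked carefully, and the resulting inequality $\log|x^{G_I}\cap H|/\log|x^{G_0}|<3/5$ is tight for some small centralisers. I expect the margin to be worst precisely for the most highly split elements, and the \textsc{Magma} verification will be needed for a slightly larger range of $q$ in that case. Once these are handled, the conclusion $\mathrm{fpr}(x,G/H) < |x^{G_0}|^{-0.4}$ follows uniformly across all remaining $H$.
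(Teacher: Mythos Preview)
Your overall approach matches the paper's: enumerate the non-standard maximal subgroups other than $G_2(q)$, dispose of small $q$ by {\sc Magma}, and for each remaining type bound $|x^{G_I}\cap H|/|x^{G_0}|$ case by case over semisimple, unipotent and field-automorphism elements. However, your enumeration of the subgroups is wrong in two places, and one of them is a genuine gap.

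First, the phantom case. The $\mathscr{C}_2$-subgroup of type ${\rm GL}_3(q).2$ (stabiliser of a pair of complementary maximal totally singular $3$-spaces) is \emph{not} maximal in ${\rm Sp}_6(q)$ when $q$ is even: by~\cite[Table 3.5.C]{KL} this type occurs only for $q$ odd, since in even characteristic it is contained in the $\mathscr{C}_8$-subgroup ${\rm O}_6^{+}(q)$. So your ``main obstacle'' is not actually there, and the tightness concerns you raise for $i=1$ centralisers in ${\rm GL}_3(q)$ are irrelevant.

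Second, the real gap. You have omitted the $\mathscr{C}_5$ subfield subgroups of type ${\rm Sp}_6(q_0)$ with $q=q_0^k$, $k$ prime. These are non-standard and must be handled. For inner elements the embedding is transparent (same Jordan form / same eigenvalue pattern over $\mathbb{F}_{q_0}$) and~\cite[Table~B.7]{BG} does the work. The subtle part is field automorphisms: when the order $r$ of $x$ differs from $k$ one argues via~\cite[4.9.1]{CFSGIII}, but when $r=k$ one needs the crude bound $|x^G\cap H|\leqslant |{\rm Sp}_6(q_0)|$ for $r>2$, and for $r=k=2$ a sharper estimate on $i_2({\rm Sp}_6(q_0))$ (the paper uses~\cite[4.3]{FGS}). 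This case does not fit the template you sketched.

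Two smaller corrections: for $H$ of type ${\rm Sp}_2(q^3).3$ the range for $i$ is $\{1,2,3,6\}$, not $\{1,2,6\}$ (when $i_0=1$ one can have $i=3$); and you should not dismiss the case $x\in H\setminus B$ so quickly, since for $H={\rm Sp}_2(q)\wr S_3$ or ${\rm Sp}_2(q^3).3$ an element of order $3$ in the top $S_3$ or $C_3$ embeds in $G$ as a genuine semisimple element whose fixed point ratio must be bounded separately (the paper computes its image explicitly and identifies the centraliser type as ${\rm GL}_2^{\pm}(q)\times{\rm Sp}_2(q)$).
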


\begin{proof}
The claim can be easily verified using {\sc Magma} for $q \leqs 4$, and so we may assume that $q\geqs 8$ for the remainder of the proof. By inspecting~\cite[Tables 8.28 and 8.29]{BHR}, we find that if $H$ is not of type $G_2(q)$, then $H_0 = H\cap G_0 \in \{{\rm Sp}_2(q)\wr S_3, {\rm Sp}_2(q^3) {:} 3, {\rm Sp}_6(q_0)\}$, where $q = q_0^k$ for some prime $k$.

First suppose that $H_0 = {\rm Sp}_2(q)\wr S_3= B.S_3$. If $x$ is a field automorphism of order $r$, then the bounds in~\cite[Section 2.3]{fprIII} give
\[
|x^G\cap H| < 48q^{9(1-1/r)} \,\,\,\,\, \text{and} \,\,\,\,\, |x^G| > \frac{1}{4}q^{21(1-1/r)}
\]
and so the claim holds. 

Now suppose that $x\in {\rm PGL}(V)$. There are two cases to consider depending whether or not $x^G \cap (H\setminus B) \neq \emptyset$. First suppose that $x^G \cap (H\setminus B) \neq \emptyset$. In this case, we note that either $r = 2$ or $r = 3$. If $r = 2$, then by inspecting the proof of~\cite[2.8]{fprIII}, we deduce that $\log|x^{G_I}\cap H|/\log|x^{G_0}| < 0.543$, so we may now assume that $r = 3$. Here, we may assume $x = b\pi = (x_1, x_2, x_3)\pi$, where $b\in B$ and $\pi\in S_3$ is a $3$-cycle. It then follows from the proof of~\cite[4.5]{LSh} that $x$ is $B$-conjugate to $\pi$. Now note that if $y = (X, Y, Z)\in C_B(x)$, then since $\pi$ permutes $X, Y$, and $Z$, we must have $X = Y = Z$, and so
\[
|x^{G_I}\cap H| = \frac{6|{\rm Sp}_2(q)|^3}{3|{\rm Sp}_2(q)|} = 2q^2(q^2-1)^2.
\]

We now note that with respect to a symplectic basis $\{e_1, e_2, e_3, f_1, f_2, f_3\}$, $\pi$ embeds in ${\rm Sp}_6(q)$ as the block-diagonal matrix $[A, A]$, where
\begin{equation}\label{eq:a}
A = 
\begin{pmatrix}
0 & 0 & 1 \\
1 & 0 & 0 \\
0&1&0 
\end{pmatrix}.
\end{equation}
Since $r = 3$ and $q$ is even, it follows that $x$ is semisimple. Moreover, since $q$ is even, $3$ must either divide $q-1$ or $q+1$, and so if $i$ is minimal such that $3$ divides $q^i-1$, we must have $i=1$ or $2$. 

Now note that $x$ has eigenvalues $1^2, (\omega)^2, (\omega^2)^2$, where $\omega\in \mathbb{F}_{q^i}$ is a primitive cube root of unity, and so $C_G(x)$ is of type ${\rm GL}_2^{\epsilon}(q)\times {\rm Sp}_2(q)$, where $\epsilon = (-1)^{i+1}$. Therefore,
\[
|x^{G_0}| \geqs \frac{|{\rm Sp}_6(q)|}{|{\rm Sp}_2(q)||{\rm GU}_2(q)|} = q^7(q-1)(q^2+1)(q^4+q^2+1)
\]
and the claim holds.

The case $x^G\cap H \subseteq B$ is straightforward. First suppose that $x$ is semisimple and $i$ is minimal such that $r$ divides $q^i-1$. We may again assume that $x\in H$, in which case $r$ divides $|H|$, and we must have $i\in \{1, 2\}$. If $i = 2$, then $C_G(x)$ must be of type 
\[
{\rm GU}_3(q), {\rm GU}_2(q)\times {\rm Sp}_2(q), {\rm GU}_2(q)\times {\rm GU}_1(q), {\rm GU}_1(q)\times {\rm Sp}_4(q), {\rm GU}_1(q)^2\times {\rm Sp}_2(q), \text{ or } {\rm GU}_1(q)^3. 
\]
We can now compute bounds for $|x^{G_I}\cap H|$ and $|x^{G_0}|$ using~\cite[Table B.7]{BG}. For example, if $C_G(x)$ is of type ${\rm GU}_2(q)\times {\rm GU}_1(q)$, then as discussed in Section~\ref{s:cclasses}, we may assume that the lift $\hat{x}$ of $x$ in ${\rm Sp}_6(q)$ is $[(\Lambda_1)^2, \Lambda_2]$ for some $\sigma$-orbits $\Lambda_1, \Lambda_2$. Now we find that $\hat{y} = [Y_1, Y_2, Y_3] \in \hat{x}^{{\rm Sp}_2(q)^3}$ if and only if $Y_i$ is ${\rm Sp}_2(q)$-conjugate to $[\Lambda_2]$ for some $i$ and $Y_j$ is ${\rm Sp}_2(q)$-conjugate to $\Lambda_1$ for $j\neq i$. Therefore,
\[
|x^{G_I}\cap H| \leqs \frac{3|{\rm Sp}_2(q)^3|}{|{\rm GU}_1(q)|^3} = 3q^3(q-1)^3
\]
and
\[
|x^{G_0}| \geqs \frac{|{\rm Sp}_6(q)|}{|{\rm GU}_2(q)||{\rm GU}_1(q)|} = q^8(q-1)^2(q^2+1)(q^4+q^2+1)
\]
and the claim follows. We can compute in the same way in all other cases. If $i=1$, then the argument is almost identical, so we omit the details to avoid unnecessary repetition.

Now suppose that $x\in H$ is unipotent. Since $x^G\cap H \subseteq B$, we deduce that $x$ must be a $b_1, c_2$ or $b_3$ involution by consulting~\cite[3.4.14]{BG}. We consider each possibility in turn by inspecting~\cite[Table B.7]{BG} and we obtain the bounds in Table~\ref{t:sp6invols}, which allow us to verify that the desired claim holds.
{\small
\begin{table}
\[
\begin{array}{lll} \hline
x & |x^G\cap H| \leqs & |x^G| \geqs \\ \hline
b_1 & 3(q^2-1) & q^6-1\\
c_2 & 2(q^2-1)^2 & (q^4-1)(q^6-1)\\
b_3 & (q^2-1)^3 & q^2(q^4-1)(q^6-1)\\
\hline
\end{array}
\]
\caption{Fixed point ratio bounds for unipotent involutions arising in Proposition~\ref{prop:sp6}.}
\label{t:sp6invols}
\end{table}
}

\vs 

Now suppose that $H_0 = {\rm Sp}_2(q^3).3 = B.3$. If $x$ is a field automorphism of order $r$, then the proof of~\cite[3.2]{fprIII} gives
\[
|x^G\cap H| \leqs 2q^{9(1-1/r)} \,\,\,\,\, \text{and} \,\,\,\,\, |x^G| > \frac{1}{4}q^{21(1-1/r)}
\]
and the claim follows.

Now suppose that $x\in {\rm PGL}(V)$. If $x^G \cap (H\setminus B) \neq \emptyset$, then $x$ acts on ${\rm soc}(H_0) = {\rm Sp}_2(q^3)$ as a field automorphism of order $3$, and so we get
\[
|x^G\cap H| \leqs 2\frac{|{\rm Sp}_2(q^3)|}{|{\rm Sp}_2(q)|} = 2q^2(q^4+q^2+1).
\]
Moreover, with respect to the symplectic basis $\{e_1, e_1^q, e_1^{q^2}, f_1, f_1^q, f_1^{q^2}\}$, $x$ embeds in ${\rm Sp}_6(q)$ as $[A, A]$, where $A$ is the matrix defined in~\eqref{eq:a}. By arguing as in the case $H_0 = {\rm Sp}_2(q)\wr S_3$, we get
\[
|x^G| \geqs \frac{|{\rm Sp}_6(q)|}{|{\rm Sp}_2(q)||{\rm GU}_2(q)|} =q^7(q-1)(q^2+1)(q^4+q^2+1).
\]
Therefore, the required bound is satisfied and it now remains to deal with the case where $x^G\cap H\subseteq B$.

First suppose that $x$ is semisimple (so $r$ is odd), and let $i$ be minimal such that $r\vert q^i-1$ and $i_0$ be minimal such that $r\vert q^{3i_0} - 1$. Note that $i_0 = i/3$ if $3$ divides $i$, and $i = i_0$ otherwise. We may again assume that $x\in H$, so $r$ divides $|H|$, and in particular $i\in \{1, 2, 3, 6\}$. All cases are similar, so we only consider the cases where $i = 6$ and $i = 2$ here.

First suppose that $i = 6$, so $i_0 = 2$. In this case, $x$ is $G_I$-conjugate to $[\Lambda]Z$ for some $\sigma$-orbit $\Lambda$, which as discussed in~\cite[Section 5.3.1]{BG} is a union of $3$ distinct $\sigma_0$-orbits, where $\sigma_0: \alpha \mapsto \alpha^{q^3}$. Therefore, $C_G(x)$ is of type ${\rm GU}_1(q^3)$, and we compute
\[
|x^{G_I}\cap H| \leqs \frac{|{\rm Sp}_2(q^3)|}{|{\rm GU}_1(q^3)|} = q^3(q^3-1)
\]
and 
\[
|x^{G_0}| \geqs \frac{|{\rm Sp}_6(q)|}{|{\rm GU}_1(q^3)|} = q^9(q^2-1)(q^3-1)(q^4-1)
\]
and so the claim holds.

Similarly, if $i = i_0= 2$, then $C_G(x)$ is of type ${\rm GU}_1(q)^3$, and so
\[
|x^{G_I}\cap H| \leqs \frac{|{\rm Sp}_2(q^3)|}{|{\rm GU}_1(q^3)|}  = q^3(q^3-1)
\]
and 
\[
|x^{G_0}| \geqs \frac{|{\rm Sp}_6(q)|}{|{\rm GU}_1(q)|^3} =  q^9(q-1)^3(q^2+1)(q^4+q^2+1)
\]
and we deduce that ${\rm fpr}(x, G/H) < |x^{G_0}|^{-0.4}$ as desired.

Finally, if $x\in H$ is unipotent, then it embeds in $G$ as a $b_3$ involution and we compute
\[
|x^{G_I}\cap H| = q^6-1
\]
and 
\[
|x^{G_0}| \geqs q^2(q^4-1)(q^6-1)
\]
Therefore, the claim is true for all elements $x\in H$.

\vs

We are now left with the case where $H$ is of type ${\rm Sp}_6(q_0)$, and $q = q_0^k$ for some prime $k$. If $x\in {\rm PGL}(V)$, then we obtain the required bound by inspecting \cite[Table B.7]{BG} in a similar fashion to the previous two cases, so we leave the details to the reader. 

Now assume that $x$ is a field automorphism of order $r$. If $r\neq k$, then~\cite[4.9.1]{CFSGIII} implies that
\[
|x^G\cap H| \leqs |x^{{\rm Sp}_6(q_0)}| < 2q_0^{21(1-1/r)}
\]
and~\cite[3.48]{fprII} gives
\[
|x^G| > \frac{1}{4}q_0^{21(1-1/r)}
\]
and it can be checked that those bounds are sufficient.

Suppose now that $r = k$. If $r > 2$, then one can check that the trivial bound
\[
|x^G\cap H| \leqs |{\rm Sp}_6(q_0)|
\]
and
\begin{equation}\label{eq:xgC5}
|x^G| > \frac{|{\rm Sp}_6(q)|}{|{\rm Sp}_6(q_0)|}
\end{equation}
suffice. Otherwise, if $r = k = 2$, then~\cite[4.3]{FGS} gives us
\begin{align*}
|x^G\cap H| \leqs i_2({\rm Sp}_6(q_0)) + 1 &\leqs 1 + \frac{|{\rm Sp}_6(q_0)|}{q_0^7|{\rm Sp}_2(q_0)|} + \frac{|{\rm Sp}_6(q_0)|}{q_0^8|{\rm Sp}_2(q_0)|} + \frac{|{\rm Sp}_6(q_0)|}{q_0^5|{\rm Sp}_4(q_0)|} + \frac{|{\rm Sp}_6(q_0)|}{q_0^6|{\rm Sp}_2(q_0)|}\\
& = (q_0^2+q_0+1)(q_0^4-1)(q_0^6-1) + q_0^6,
\end{align*}
where $i_2({\rm Sp}_6(q_0))$ denotes the number of involutions in ${\rm Sp}_6(q_0)$. The result follows via~\eqref{eq:xgC5}, so the proof is complete.
\end{proof}

\begin{prop}\label{prop:o7}
Suppose that $G_0 = {\O}_{7}(q)$, $q \geqs 5$ and $x\in G$ is an element of prime order $r$. Then either $\fpr(x, G/H) < |x^{G_0}|^{-0.4}$, or $G$ is of type $G_2(q)$. 
\end{prop}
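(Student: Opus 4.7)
The proof closely parallels that of Proposition~\ref{prop:sp6}. First, the claim can be checked directly using {\sc Magma} for small values of $q$ (say $q \in \{5,7,9\}$) via the \texttt{MaxFPR} routine from Section~\ref{sss:comp-classical}, so we may assume $q \geqs 11$ is odd. By inspecting the maximal subgroups of $\Omega_7(q)$ listed in~\cite[Table 8.39]{BHR}, we find that the non-standard subgroups $H$ of $G$ that are not of type $G_2(q)$ fall into the following families: (a) $\mathscr{C}_2$-subgroups of type $\mathrm{O}_1(q) \wr S_7$; (b) $\mathscr{C}_5$-subgroups of type $\Omega_7(q_0)$, where $q = q_0^k$ for some prime $k$; and (c) a small number of sporadic $\mathscr{S}$-subgroups (such as $\mathrm{Sp}_6(2)$ or $S_9$), which arise only for specific small values of $q$ and so are covered by the initial {\sc Magma} verification.

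For family (a), the bound $|H| \leqs 2^7 \cdot 7!$ is uniform in $q$, so the fixed point ratio estimate follows immediately from Lemma~\ref{lem:inndiag-fpr} together with the lower bounds on $|x^{G_0}|$ recorded in~\cite[Chapter 3, Appendix B]{BG}, once $q$ is reasonably large. The main work lies in family (b). For $x \in \mathrm{PGL}(V)$, we handle the semisimple and unipotent cases in turn by bounding $|x^{G_I} \cap H|$ using the centraliser data for $\Omega_7(q_0)$ in~\cite[Appendix B]{BG} and bounding $|x^{G_0}|$ from below using the same source, exactly as in the analogous part of the proof of Proposition~\ref{prop:sp6} for $\mathrm{Sp}_6(q_0)$.

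For $x$ a field automorphism of order $r$, we split into three subcases, again in direct analogy with Proposition~\ref{prop:sp6}. If $r \neq k$, then~\cite[4.9.1]{CFSGIII} gives an upper bound on $|x^G \cap H|$ and~\cite[3.48]{fprII} gives a lower bound on $|x^G|$, and the resulting quotient is well within the required exponent. If $r = k$ is odd, the trivial bound $|x^G \cap H| \leqs |\Omega_7(q_0)|$, together with $|x^G| > |\Omega_7(q)|/|\Omega_7(q_0)|$, is sufficient. If $r = k = 2$, we apply~\cite[4.3]{FGS} to obtain $|x^G \cap H| \leqs i_2(\Omega_7(q_0)) + 1$, and estimate $i_2(\Omega_7(q_0))$ by summing the $\Omega_7(q_0)$-class sizes of involutions as read off from~\cite[Appendix B]{BG}.

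The main obstacle I expect is precisely the subcase $r = k = 2$ in (b): here both sides of the inequality scale as comparable powers of $q_0$, so the exponent $0.4$ is close to tight and requires an explicit enumeration of all involution classes in $\Omega_7(q_0)$ together with the orders of their centralisers. This step is analogous to the corresponding calculation for $\mathrm{Sp}_6(q_0)$ at the end of Proposition~\ref{prop:sp6}, and although routine, it is the most delicate part of the argument.
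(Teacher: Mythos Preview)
Your overall architecture mirrors the paper's, and your treatment of the $\mathscr{C}_5$ case (b) is essentially what the paper does (it simply cites \cite[5.3]{fprII} and \cite[Table~B.12]{BG} and says ``as in Proposition~\ref{prop:sp6}''). However, there are two genuine gaps.

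First, in case (a) your trivial bound $|x^G\cap H|\leqs |H|\leqs 2^7\cdot 7!$ is far too crude for the smallest conjugacy classes. For a $t_3$ or $t_3'$ involution one has $|x^{G_0}|\sim \tfrac14 q^6$, so $|x^{G_0}|^{0.6}\sim q^{3.6}/2.3$, and this does not exceed $2^6\cdot 7!=322560$ until roughly $q\geqs 43$. Your {\sc Magma} range $q\leqs 9$ leaves all odd $q$ with $11\leqs q\leqs 41$ uncovered. The paper avoids this by quoting the refined estimates in \cite[2.10]{fprIII}, which are sharp enough for all $q\geqs 7$; you need either to invoke that result or to push the computational verification much further.

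Second, your claim in case (c) that the sporadic $\mathscr{S}$-subgroups ``arise only for specific small values of $q$'' is incorrect: $H$ of type ${\rm Sp}_6(2)$ occurs for infinitely many primes $q=p$ (subject to a congruence condition), so it cannot be absorbed into the small-$q$ computation. The paper handles this uniformly by citing \cite[2.15]{fprIV}, which gives ${\rm fpr}(x,G/H)<|x^G|^{-1/2}$, immediately yielding the required exponent $0.4$. (The group $S_9$ you mention is not among the maximal subgroups for $q\geqs 7$; for these $q$ the only $\mathscr{S}$-subgroups besides $G_2(q)$ are of type ${\rm Sp}_6(2)$.)
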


\begin{proof}
The claim can be verified using {\sc Magma} for $q = 5$, so we may assume that $q\geqs 7$ for the remainder of the proof. By inspecting~\cite[Tables 8.39-8.40]{BHR} we see that if $H$ is not of type $G_2(q)$, then $H$ must be of type ${\rm O}_1(q)\wr S_7, {\rm O}_7(q_0)$ with $q = q_0^k$ for some prime $k$, or ${\rm Sp}_6(2)$. We treat each possibility in turn. 

If $H$ is of type  ${\rm Sp}_6(2)$, then it follows directly from~\cite[2.15]{fprIV} that ${\rm fpr}(x) < |x^G|^{-1/2}$, and so the claim holds.

If $H$ is of type ${\rm O}_1(q)\wr S_7$, then the bounds in~\cite[2.10]{fprIII} suffice.

Finally, if $H$ is of type ${\rm O}_7(q)$, then we obtain the result by inspecting the proof of~\cite[5.3]{fprII} and~\cite[Table B.12]{BG}, in a similar fashion to the proof of Proposition~\ref{prop:sp6}. We omit the details.
\end{proof}

\begin{prop}\label{prop:gl5}
Suppose that $G_0 = {\rm PSL}^{\epsilon}_{5}(q)$ and $x\in G_I$ is an element of prime order $r$. Then $\fpr(x, G/H) < |x^{G_0}|^{-9/20}$. 
\end{prop}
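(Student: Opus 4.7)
The plan is to follow the template established in Propositions~\ref{prop:sp8}--\ref{prop:gl6}. Since $n = 5$, Theorem~\ref{thm:timsbound} only yields the exponent $-1/2 + 1/5 = -3/10$, which is strictly weaker than the target $-9/20$, so a direct case-by-case analysis is unavoidable. I would first use the \texttt{MaxFPR} routine of Section~\ref{sss:comp-classical} to verify the claim for small $q$ (specifically for those $q$ where the asymptotic bounds derived below fail to close), and then handle the generic range via Remark~\ref{rem:inndiag-fpr}, which reduces the task to bounding $|x^{G_I} \cap H|$ above and $|x^{G_0}|$ below, the latter using Lemma~\ref{lem:class-bounds} and Table~\ref{t:class-bounds}. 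Equivalently, we must show $\log |x^{G_I} \cap H|/\log|x^{G_0}| < 11/20$.

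Next I would enumerate the non-standard maximal subgroups of $G$ by inspecting~\cite[Tables 8.18--8.21]{BHR}: in $\mathscr{C}$ the relevant types are $\mathscr{C}_2$ (type ${\rm GL}_1^{\epsilon}(q)\wr S_5$), $\mathscr{C}_3$ (type ${\rm GL}_1^{\epsilon}(q^5)$), $\mathscr{C}_5$ (subfield type, $q = q_0^k$ with $k$ odd prime, or ${\rm GU}_5(q^{1/2})$ when $\epsilon = +$ and $q$ is a square), $\mathscr{C}_6$ (type $5^{1+2}.{\rm Sp}_2(5)$ when present), $\mathscr{C}_8$ (classical types ${\rm Sp}_4(q)$, ${\rm O}_5(q)$ for $q$ odd), together with the $\mathscr{S}$-collection. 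For each such $H$ I would split into cases according to whether $x$ is semisimple or unipotent, noting that field, graph and graph-field automorphisms are excluded by the hypothesis $x \in G_I$. For $x$ semisimple of odd prime order $r$, let $i$ be minimal with $r \mid q^i - 1$; I may assume $x \in H$ (else $\fpr(x,G/H) = 0$), which forces $r \mid |H|$ and restricts $i$ to a short list in each case. The embedding of $x$ in $H$ and the number of fused classes are then read off from the discussion in Section~\ref{s:cclasses}, with Lemma~\ref{lem:c3} handling the $\mathscr{C}_3$-type. For $x$ unipotent of order $p$, the admissible Jordan forms and corresponding class-size bounds come from~\cite[Tables~B.3, B.6]{BG}.

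Most cases are then routine arithmetic: the tightest numerators $|x^{G_I} \cap H|$ occur for the largest non-standard subgroups (chiefly $\mathscr{C}_8$-type symplectic and orthogonal subgroups), whereas Lemma~\ref{lem:class-bounds} gives denominator bounds of the form $|x^{G_0}| > \tfrac{1}{2}\beta q^{2s}$ where $s$ is the codimension of the largest eigenspace of the lift $\hat{x}$, and the inequality $\log|x^{G_I}\cap H|/\log|x^{G_0}| < 11/20$ is comfortably satisfied once $q$ is not too small. The $\mathscr{S}$-subgroups satisfy strong bounds directly, as $|H|$ is polynomially bounded in $q$ (often trivially so), so $|x^{G_I} \cap H| \leqs |H|$ is typically far below $|x^{G_0}|^{11/20}$.

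The main obstacle will be the borderline cases in which $|x^{G_0}|$ is smallest relative to $|H|$: specifically semisimple elements $x$ with centraliser type ${\rm GL}_4^{\epsilon}(q) \times {\rm GL}_1^{\epsilon}(q)$ (giving $|x^{G_0}| > \tfrac{1}{2}\beta q^{8}$) and unipotent elements of Jordan shape $[J_2, J_1^3]$ (the same bound), combined with $H$ of $\mathscr{C}_8$-type ${\rm Sp}_4(q)$ or ${\rm O}_5(q)$ where the entire class can meet $H$. Here the required bound $|x^{G_I} \cap H| < q^{88/20 + o(1)}$ is delicate, and for small $q$ the asymptotic estimate will fail; in those residual instances one resorts to the computational verification outlined in Section~\ref{sss:comp-classical}. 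Once each subgroup/element combination is dispatched, the bound $\fpr(x, G/H) < |x^{G_0}|^{-9/20}$ follows uniformly, completing the proof.
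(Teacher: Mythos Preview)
Your overall strategy matches the paper's: {\sc Magma} verification for small $q$ (the paper takes $q \leqs 5$, then assumes $q \geqs 7$), followed by a case analysis over the non-standard subgroup types using Remark~\ref{rem:inndiag-fpr} and the class-size bounds of Lemma~\ref{lem:class-bounds}. Two points need correction, however.

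First, there is no $\mathscr{C}_8$-subgroup of type ${\rm Sp}_4(q)$ when $n = 5$: symplectic forms exist only in even dimension, so the relevant $\mathscr{C}_8$-types are just ${\rm O}_5(q)$ (for $q$ odd) and ${\rm GU}_5(q^{1/2})$ (for $\epsilon = +$ and $q$ a square). The paper's Table~\ref{t:gl5} confirms this list.

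Second, you have located the main difficulty in the wrong family. In the paper the $\mathscr{C}_8$- and $\mathscr{C}_5$-types (${\rm O}_5(q)$, ${\rm GL}_5^{\epsilon'}(q_0)$) are dispatched in one line by direct lookup in the tables of~\cite[Appendix~B]{BG}, and the $\mathscr{C}_3$-type ${\rm GL}_1^{\epsilon}(q^5)$ is handled by observing via~\cite[5.3.2]{BG} that no small-class elements lie in $H$, so the trivial bound $|x^{G_I}\cap H| \leqs |H\cap G_I|$ suffices. The case requiring genuine work is the $\mathscr{C}_2$-type $H = {\rm GL}_1^{\epsilon}(q)\wr S_5 = B.S_5$: for the small classes (centraliser type ${\rm GL}_s^{\epsilon}(q)\times {\rm GL}_{5-s}^{\epsilon}(q)$ with $s\in\{3,4\}$, or Jordan form $[J_2^s,J_1^{5-2s}]$ with $s\in\{1,2\}$) one writes $x = b\pi$ with $b\in B$ and $\pi\in S_5$, uses the argument of~\cite[4.5]{LSh} and~\cite[2.6]{fprIII} to identify the cycle type of $\pi$ forced by the eigenvalue/Jordan data, and counts $\widehat{B}$-conjugates directly. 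For example, if $\hat{x}$ has Jordan form $[J_2,J_1^3]$ then $\pi$ must be a transposition, giving $|x^{G_I}\cap H| = \binom{5}{2}(q-\epsilon)$, comfortably below $(\tfrac{1}{2}\beta q^8)^{11/20}$.
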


\begin{proof}
The claim can easily be verified using {\sc Magma} when $q\leqs 5$, so for the remainder we will assume that $q\geqs 7$. By inspecting~\cite[Tables 8.18-8.21]{BHR}, we deduce that  the possible types of $H$ are the ones listed in Table~\ref{t:gl5}. Moreover, it follows from Lemma~\ref{lem:class-bounds} that $|x^{G_0}| >  \frac{1}{2}\left(\frac{q}{q+1}\right)q^{8}$.

{\small
\begin{table}
\[
\begin{array}{lll} \hline
G_0& \text{type of } H & \text{Conditions} \\ \hline
{\rm L}_5^{\epsilon}(q)& {\rm GL}_1^{\epsilon}(q)\wr S_5&\\
& {\rm GL}_1^{\epsilon}(q^5)&\\
& 5^2.{\rm Sp}_2(5)& p\equiv q\equiv \epsilon\imod{5}\text{ or } \epsilon = -, q = p^2 \text{ and } p\equiv 2, 3 \imod{5}\\
& {\rm O}_5(q)& q\, \text{odd}\\
& {\rm GU}_5(q_0)& q = q_0^k \text{ with } k=2 \text{ if } \epsilon = + \text{ and }k \text{ odd prime if } \epsilon = -\\
& {\rm L}_2(11)&q = p \equiv \epsilon, 3\epsilon, 4\epsilon, 5\epsilon, 9\epsilon \imod{11}\\
& {\rm U}_4(q)& q = p\equiv \epsilon \imod{6}\\
{\rm L}_5(q)& {\rm GL}_5(q_0)& q = q_0^k, k\,\text{prime}
\\\hline
\end{array}
\]
\caption{Non-standard subgroups of almost simple groups with socle ${\rm L}_5^{\epsilon}(q)\, (q\geqs 7)$.}
\label{t:gl5}
\end{table}
}

First note that if $H$ is of type $5^{2}.{\rm Sp}_2(5)$, then since $q\geqs 7$ and $q\equiv 1\imod{5}$, the required bound holds, since $|x^G\cap H| \leqs |H|$. 

Similarly, if $H$ is of type ${\rm L}_2(11)$ or ${\rm U}_4(2)$, then we can construct ${\rm Aut}(H)$ using the {\sc Magma} command \texttt{AutomorphismGroupSimpleGroup()}, and we observe that 
\[
|x^G\cap H| \leqs \max{\{i_s({\rm Aut}(H)) \, : \, s \in \pi(|{\rm Aut}(H)|)\}} \leqs 5184,
\]
where we recall that $i_s({\rm Aut}(H))$ denotes the number of elements of prime order $s$ in ${\rm Aut}(H)$. This is sufficient unless $q = 7$ and $\epsilon = +$ and $H$ is of type $U_4(2)$. In this case, we observe that if $r \neq 5$, then $i_r({\rm Aut}(H)) \leqs 891$, which is sufficient. 
 
 On the other hand, if $r = 5$, then $x$ is semisimple and the minimal $i$ such that $r\vert q^i-1$ is $4$. This implies that $C_G(x)$ is not of type ${\rm GL}_4^{\epsilon}(q)\times {\rm GL}_1^{\epsilon}(q)$, so $|x^{G_0}| > \frac{1}{2}\left( \frac{q}{q+1}\right)q^{12}$, and the claim is again satisfied.

Now assume that $H$ is of type ${\rm GL}_1^{\epsilon}(q)\wr S_5$. Then $H$ preserves a decomposition $V_1\oplus \cdots \oplus V_5$ of the natural module $V$. Recall from Lemma~\ref{lem:class-bounds} that $|x^{G_0}| > \frac{1}{2}\left(\frac{q}{q+1}\right)^2q^{14}$, unless $x$ is semisimple and $C_G(x)$ is of type ${\rm GL}^{\epsilon}_s(q)\times {\rm GL}^{\epsilon}_{n-s}(q)$ for $s\in \{3, 4\}$, or $x$ is unipotent with Jordan form $[J_2^s, J_1^{5-2s}]$ for $s\in \{1, 2\}$. Therefore, if $x$ is not in those classes of prime order elements, we may use the trivial bound 
\begin{equation}\label{eq:gl5}
|x^G\cap H| \leqs |H| < \left(\frac{1}{2}\left(\frac{q}{q+1}\right)^2q^{14}\right)^{11/20} < |x^{G_0}|^{11/20}.
\end{equation}

We may now assume that $|x^{G_0}| \leqs \frac{1}{2}\left(\frac{q}{q+1}\right)^2q^{14}$. Recall that~\cite[4.2.9]{KL} implies that ${\rm PGL}(V) \cap H \leqs (q-\epsilon)^4.S_5 = B.S_5$. First assume that $x$ is unipotent. Then $x^{G_I}\cap B = \emptyset$, since $r$ does not divide $|B|$, so we may write $x = b\pi$ for some $b\in B$ and $\pi = (r^h, 1^{5-hr}) \in S_5$. Note that~\cite[3.11]{fprII} implies that $x$ lifts to a unique element $\hat{x} = (\hat{x}_1, \ldots, \hat{x}_5)\pi\in \hat{B}\pi$ of order $r = p$, where $\widehat{B} = {\rm GL}^{\epsilon}_1(q)^5$. Moreover, it is shown in the proof of~\cite[4.5]{LSh} that $\hat{x}$ is $\widehat{B}$-conjugate to $(1, \ldots, 1, \hat{x}_{hr+1}, \ldots, \hat{x}_5)\pi$, and as demonstrated in the proof of~\cite[2.6]{fprIII}, $\hat{x}$ has Jordan form $[J_p^{h + b_p}, J_{p-1}^{b_{p-1}}, \ldots, J_1^{b_1}]$, where $[J_p^{b_p}, \ldots, J_1^{b_1}]$ is the Jordan form of the restriction of $x$ on $V_{hr+1}\oplus \cdots \oplus V_5$. 

From this we deduce that if $\hat{x}$ has Jordan form $[J_2, J_1^3]$, then $\pi$ must be a $2$-cycle, and so we compute
\[
|x^{G_I}\cap H| = \binom{5}{2}(q-\epsilon)
\]
and since $|x^{G_0}| > \frac{1}{2}\left(\frac{q}{q+1}\right)q^8$, we clearly have ${\rm fpr}(x, G/H) < |x^{G_0}|^{-9/20}$, as claimed. 

Similarly, if $\hat{x}$ has Jordan form $[J_2^2, J_1]$, then $\pi$ must be a product of two transpositions, and so
\[
|x^{G_I}\cap H| = \frac{1}{2}\binom{5}{2}\binom{3}{2}(q-1)^3
\]
which is again sufficient.

Now assume that $x$ is semisimple. Recall that under our assumption $C_G(x)$ is of type ${\rm GL}^{\epsilon}_4(q)\times {\rm GL}^{\epsilon}_1(q)$ or ${\rm GL}^{\epsilon}_3(q)\times {\rm GL}^{\epsilon}_2(q)$. First assume that $C_G(x)$ is of type ${\rm GL}^{\epsilon}_4(q)\times {\rm GL}^{\epsilon}_1(q)$, so $\hat{x}$ has eigenvalues $(\lambda)^4, \mu\in \F$. If $x^{G_I}\cap H\subseteq B$, then $|x^G\cap H|$ is bounded above by the number of ways of arranging the eigenvalues of $\hat{x}$ on the main diagonal, so we get $|x^{G_I}\cap H| \leqs 5$, and the bound quickly follows. We may now assume that $x^{G_I} \cap (H\setminus B) \neq \emptyset$. We may gain write $x = b\pi$ for some $b\in B$ and $\pi = (r^h, 1^{5-hr}) \in S_5$. Note that $\pi$ embeds in ${\rm GL}(V)$ as a permutation matrix, and so we quickly deduce that if $\hat{x} = [\lambda I_4, \mu]$, then $\pi$ must be a transposition, and consequently, $x$ must embed in $G$ as a $t_1$-involution. Therefore, we get
\[
|x^{G_I}\cap H| = |x^{G_I}\cap B| + |x^{G_I} \cap (H\setminus B)| \leqs 5 + 10(q-\epsilon)
\]
and so the claim is true.

If $C_G(x)$ is of type ${\rm GL}^{\epsilon}_3(q)\times {\rm GL}^{\epsilon}_2(q)$, then using the same arguments we find that 
\[
|x^{G_I}\cap B| \leqs \binom{5}{2} \,\,\, \text{and} \,\,\, |x^{G_I}\cap (H\setminus B)| = \frac{1}{2}\binom{5}{2}\binom{3}{2}(q-1)^3
\]
and since $|x^{G_0}| > \frac{1}{2}\left(\frac{q}{q+1}\right)q^8$ the result follows.

If $H$ is of type ${\rm GL}_1^{\epsilon}(q^5).5$, then it follows directly from~\cite[5.3.2]{BG} that there are no elements in $H$ with $|x^{G_0}| \leqs  \frac{1}{2}\left(\frac{q}{q+1}\right)q^{12}$, and so we have
\[
|x^G\cap H| \leqs |H| < \left( \frac{1}{2}\left(\frac{q}{q+1}\right)q^{12}\right)^{11/20} < |x^{G_0}|^{11/20}
\]
for all $x\in H$, as required.

Finally, if $H$ is of type ${\rm O}_5(q)$, ${\rm GL}_5^{\epsilon'}(q_0)$, then it is straightforward to verify the required bound using the relevant tables in~\cite[Appendix B]{BG}, as in the proof of Proposition~\ref{prop:sp6}. 
\end{proof}

\begin{prop}\label{prop:gl4}
Suppose that $G_0 = {\rm PSL}^{\epsilon}_{4}(q)$ and $x\in G_I$ is an element of prime order $r$. Then $\fpr(x, G/H) < |x^{G_0}|^{-9/20}$.
\end{prop}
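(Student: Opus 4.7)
The strategy mirrors Proposition~\ref{prop:gl5} closely. First, using the function \texttt{MaxFPR} from Section~\ref{sss:comp-classical}, I would verify the claim in {\sc Magma} for $q \leqs 5$; henceforth assume $q \geqs 7$. Inspecting \cite[Tables 8.8--8.11]{BHR} and discarding the types recorded as standard in Table~\ref{t:ns} (namely ${\rm Sp}_4(q)$, ${\rm GL}_2^{\epsilon}(q)\wr S_2$, and ${\rm GL}_2^{\epsilon}(q^2)$), the possible types of non-standard maximal subgroups are ${\rm GL}_1^{\epsilon}(q)\wr S_4$, ${\rm GL}_1^{\epsilon}(q^4).4$, the $\mathscr{C}_6$-subgroup $2^4.{\rm Sp}_4(2)$, subfield subgroups of type ${\rm GL}_4^{\epsilon'}(q_0)$ with $q = q_0^k$, the $\mathscr{C}_8$-type ${\rm GU}_4(q_0)$ with $\epsilon = +$ and $q = q_0^2$, and $\mathscr{S}$-type subgroups such as ${\rm PSL}_2(7)$, $A_7$, and ${\rm U}_4(2)$. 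By Lemma~\ref{lem:class-bounds}, $|x^{G_0}| > \tfrac{1}{2}\bigl(\tfrac{q}{q+1}\bigr)q^{6}$, with this bound improving to $|x^{G_0}| > \tfrac{1}{2}\bigl(\tfrac{q}{q+1}\bigr)q^{10}$ outside a short explicit list of small classes.

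For the $\mathscr{S}$ and $\mathscr{C}_6$ subgroups the trivial bound $|x^{G_I}\cap H| \leqs \max_s i_s(\Aut(H))$, evaluated in {\sc Magma} if necessary, combined with the above class-size estimates confirms the claim. For $H$ of type ${\rm GL}_1^{\epsilon}(q^4).4$, Lemma~\ref{lem:c3} shows that every prime-order element of $H$ has centraliser in $G$ forcing $|x^{G_0}| > \tfrac{1}{2}\bigl(\tfrac{q}{q+1}\bigr)q^{10}$, so $|H| \leqs 4(q^4 - \epsilon) < |x^{G_0}|^{11/20}$ suffices. The subfield and ${\rm GU}_4(q_0)$ subgroups are handled as at the end of Proposition~\ref{prop:sp6} via a trivial bound combined with~\eqref{eq:xgC5}, noting that no field-automorphism contribution arises here since $x \in G_I$.

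The principal case is $H$ of type ${\rm GL}_1^{\epsilon}(q)\wr S_4$. Writing $H\cap {\rm PGL}(V) \leqs B.S_4$ with $B$ the diagonal subgroup of $G_I$ of order dividing $(q-\epsilon)^3$, for all prime-order classes satisfying $|x^{G_0}| > \tfrac{1}{2}\bigl(\tfrac{q}{q+1}\bigr)q^{10}$ the trivial bound $|x^{G_I}\cap H| \leqs 24(q-\epsilon)^3$ establishes the claim. The problematic classes are therefore the semisimple $t_1$- and $t_2$-involutions together with the semisimple classes with centraliser of type ${\rm GL}_2^{\epsilon}(q)^2$ or ${\rm GL}_2^{\epsilon}(q^2)$, and the unipotent classes $[J_2, J_1^2]$ and $[J_2^2]$. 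For each such class I would split into the subcases $x^{G_I}\cap H \subseteq B$ and $x^{G_I}\cap (H\setminus B) \neq \emptyset$; in the latter case the associated $\pi \in S_4$ must be a transposition or a product of two transpositions, and invoking \cite[3.11]{fprII} in the unipotent case and a direct eigenvalue-arrangement count in the semisimple case yields $|x^{G_I}\cap H| = O(q^2)$, comfortably below the required threshold $|x^{G_0}|^{11/20}$.

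The main obstacle is the sharpness of the $9/20$ exponent against the smallest class sizes $\sim q^6$: for $H$ of type ${\rm GL}_1^{\epsilon}(q)\wr S_4$, the trivial bound $|H| \leqs 24(q-\epsilon)^3$ falls just short of $|x^{G_0}|^{11/20}$ when $|x^{G_0}|$ is smallest, so the combinatorial enumeration above is essential rather than cosmetic. Apart from this, the proof is a routine case-by-case inspection.
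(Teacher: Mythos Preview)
Your overall strategy is correct and matches the paper's, but there are two concrete gaps. First, you have omitted the $\mathscr{C}_8$-subgroups of type ${\rm O}_4^{\pm}(q)$ for $q$ odd (and you have also listed ${\rm GL}_1^{\epsilon}(q^4).4$, which is not maximal since $\mathscr{C}_3$-subgroups require a prime-degree field extension). The orthogonal case is genuine and must be handled; the paper does this by direct computation from the tables in \cite[Appendix B]{BG}, exactly as for the subfield and ${\rm GU}_4(q_0)$ cases.

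Second, and more seriously, your {\sc Magma} cutoff $q\leqs 5$ is too small for the $\mathscr{S}$-subgroup argument to go through. Taking $H$ of type ${\rm U}_4(2)$ (which arises for $q=p\equiv \epsilon\imod{6}$, e.g.\ $q\in\{7,11,13\}$), one has $\max_s i_s({\rm Aut}(H)) = 5184$, attained at $s=5$, whereas the smallest class has $|x^{G_0}|^{11/20}$ well under $400$ at $q=7$; the crude bound fails. The paper handles this by pushing the {\sc Magma} verification up to $q\leqs 11$ and then treating $q=13$ by hand, noting that $i_r({\rm Aut}(H))\leqs 891$ for $r\neq 5$ while elements of order $5$ lie in classes of size at least $\tfrac{1}{4}\bigl(\tfrac{q}{q+1}\bigr)q^8$. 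A similar difficulty arises for the $\mathscr{C}_6$-subgroup $2^4.{\rm Sp}_4(2)$, where the paper appeals to the sharper bounds in the proof of \cite[6.5]{fprII} rather than a trivial count. You therefore need either to extend the computational range to $q\leqs 11$ and add the $q=13$ analysis, or to supply the missing refined estimates for $7\leqs q\leqs 13$.
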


\begin{proof}
If $q \leqs 11$, then we verify that the bound holds using {\sc Magma}, so we may assume for the remainder of the proof that $q \geqs 13$. By inspecting~\cite[Tables 8.8-8.11]{BHR} we deduce that the possible types of $H$ are the ones listed in Table~\ref{t:gl4}. Moreover, Lemma~\ref{lem:class-bounds} implies that $|x^{G_0}| > \frac{1}{2}\left( \frac{q}{q+1}\right)q^6$ for any prime order element $x\in G_I$.

{\small
\begin{table}
\[
\begin{array}{lll} \hline
G_0& \text{type of } H & \text{Conditions} \\ \hline
{\rm L}_4^{\epsilon}(q)& {\rm GL}_1^{\epsilon}(q)\wr S_4&\\
& {\rm GU}_4(q_0)& q = q_0^k \text{ with } k=2 \text{ if } \epsilon = + \text{ and }k \text{ odd prime if } \epsilon = - \\
&2^4.{\rm Sp}_4(2)& p = q\equiv \epsilon, 5\epsilon\imod{8}\\
& {\rm O}^{+}_4(q)& q\, \text{odd}\\
& {\rm O}^{-}_4(q)& q\, \text{odd}\\
& {\rm U}_4(2)& q = p\equiv \epsilon \imod{6}\\
& {\rm L}_2(7)& q = p\equiv \epsilon, 2\epsilon, 4\epsilon \imod{7}\\
& A_7& q = p\equiv \epsilon, 2\epsilon, 4\epsilon \imod{7}\\
{\rm L}_4(q)& {\rm GL}_4(q_0)& q = q_0^k, k\text{ prime}\\
 \hline
\end{array}
\]
\caption{Non-standard subgroups of almost simple groups with socle ${\rm L}_4^{\epsilon}(q)\, (q\geqs 8)$.}
\label{t:gl4}
\end{table}
}

If $H\in \mathscr{S}$, then using the {\sc Magma} command \texttt{AutomorphismGroupSimpleGroup()}, we can construct ${\rm Aut}(H)$ and obtain bounds for $m = \max\{i_s({\rm Aut}(H)) \, : \, s \in \pi(|{\rm Aut}(H)|)\}$. For $q > 13$ we find
\begin{equation}\label{eq:5184}
|x^G\cap H| \leqs m \leqs 5184 < |x^{G_0}|^{11/20}
\end{equation}
and this bound is sufficient, so we may now assume that $q = 13$. If $H$ is not of type ${\rm U}_4(2)$, or $r\neq 5$, then again by computing directly using {\sc Magma}, we find that $m\leqs 891$, and so the claim holds. 

Now let us assume that $q = 13$, $r = 5$ and $H$ is of type ${\rm U}_4(2)$. Then $\epsilon = +$ and we note that $r$ does not divide $q-1$, so Lemma~\ref{lem:class-bounds} implies that that $|x^{G_0}| > \frac{1}{4}\left(\frac{q}{q+1}\right)q^8$. The claim is then verified via~\eqref{eq:5184}.

Now assume $H$ is of type ${\rm GL}_1^{\epsilon}(q) \wr S_4$. If $|x^{G_0}| > \frac{1}{2}\left(\frac{q}{q+1}\right)^2q^{10}$, then the trivial bound $|x^G\cap H| \leqs |H|$
is sufficient. We may now assume that $|x^{G_0}| \leqs \frac{1}{2}\left(\frac{q}{q+1}\right)^2q^{10}$. Lemma~\ref{lem:class-bounds} implies that if this holds, then either $x$ is unipotent with Jordan form $[J_2^s, J_1^{4-2s}]$ for $s\in \{1, 2\}$, or $x$ is semisimple and $C_G(x)$ is of type ${\rm GL}^{\epsilon}_s(q) \times {\rm GL}^{\epsilon}_{4-s}(q)$ for $s\in \{2, 3\}$ or ${\rm GL}_2(q^2)$. In those cases we can argue in the same way as in the proof of Proposition~\ref{prop:gl5} and obtain the required bound, so we leave the details to the reader. 

If $H$ is of type $2^4.{\rm Sp}_4(2)$, then the claim follows directly from the bounds established in the proof of~\cite[6.5]{fprII}.

Finally, if $H$ is of type ${\rm GL}_4(q_0)$, ${\rm GU}_4(q_0)$, or ${\rm O}^{\epsilon}_4(q)$ then we can compute directly using the relevant tables in~\cite[Appendix B]{BG}.
\end{proof}

\begin{prop}\label{prop:gl4-outer}
Suppose that $G_0 = {\rm PSL}^{\epsilon}_{4}(q)$ and $x\in G$ is a field, graph, or graph-field automorphism of prime order $r$. Then $\fpr(x, G/H) < |x^{G_0}|^{-3/10}$.
\end{prop}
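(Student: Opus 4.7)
The plan closely parallels Proposition~\ref{prop:gl4}, with Lemma~\ref{lem:outer-cosets} replacing the inner-diagonal bookkeeping. First I would dispatch small fields computationally: using \texttt{MaxFPR} from Section~\ref{sss:comp-classical}, the claim is verified for $q \leq 11$, so the rest of the argument assumes $q \geq 13$, in which case the non-standard subgroups of $G$ are listed in Table~\ref{t:gl4}. By Lemma~\ref{lem:outer-cosets}, for any prime-order element $x \in G \setminus {\rm PGL}(V)$ and any maximal subgroup $H$ with $x \in H$, we have $x^G \cap H \subseteq \widetilde{H}x$ where $\widetilde{H} = H \cap G_I$, so $|x^G \cap H|$ is at most the number of prime-order elements in the single coset $\widetilde{H}x$. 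Combining this with the class bounds from Lemma~\ref{lem:class-bounds}, namely $|x^{G_0}| > \tfrac{1}{8}\beta q^{15/2}$ when $x$ is a field or graph-field automorphism and $|x^{G_0}| > \tfrac{1}{4}\beta q^4$ when $x = \tau$, the target estimate reduces to the inequality $|x^G \cap H| < |x^{G_0}|^{7/10}$.

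I would then work through Table~\ref{t:gl4} case by case. For the $\mathscr{S}$-members (${\rm L}_2(7)$, $A_7$, ${\rm U}_4(2)$, and $2^4.{\rm Sp}_4(2)$), the crude bound $|x^G \cap H| \leq |H|$ is independent of $q$, and the chosen power of $|x^{G_0}|$ already dominates once $q \geq 13$; this is essentially the same argument used at~\eqref{eq:5184}. For the geometric subgroups of type ${\rm GL}_1^\epsilon(q) \wr S_4$ and ${\rm O}_4^{\pm}(q)$, the coset $\widetilde{H}x$ has order $O(q^4)$ and $O(q^6)$ respectively, and a direct estimate of the number of prime-order elements in $\widetilde{H}x$, together with the analysis of how a field or graph automorphism acts on such a wreath/orthogonal structure (as in~\cite[Section 3.2]{fprIII}), suffices. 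For the ${\mathscr C}_6$-subgroup $2^4.{\rm Sp}_4(2)$, the subgroup order is absolutely bounded, so only the trivial estimate is needed.

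The genuine case to address is the subfield subgroups of type ${\rm GL}_4(q_0)$ or ${\rm GU}_4(q_0)$ with $q = q_0^k$, $k$ prime. Here I would split on whether $r = k$ or not. If $r \neq k$, the bound $|x^G \cap H| \leq |x^{{\rm Inndiag}(H_0)}|$ from~\cite[4.9.1]{CFSGIII} gives the inequality $|x^G \cap H| \lesssim q_0^{15(1-1/r)}$, which is polynomially smaller than $|x^{G_0}|^{7/10} \gtrsim q^{21/4}$ and straightforwardly sufficient. If instead $r = k$, then $\widetilde{H}x$ corresponds to a coset of field automorphisms of $H_0$; I would use a Lang--Steinberg style count (as exploited at~\cite[2.3]{fprIII}) to bound $|x^G \cap H|$ by $c \cdot i_r({\rm Inndiag}(H_0))$ and check that the resulting power of $q_0$ is absorbed by $|x^{G_0}|^{7/10}$ given the relation $q = q_0^k$.

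The main obstacle I anticipate is this subfield case with $r = k = 2$ and $q$ close to $13$: the estimate is tightest there because $|x^{G_0}|$ picks up the relatively small $\tfrac{1}{4}\beta q^4$ denominator (for $\tau$-involutions interacting with subfield structures via the graph-field automorphism), and the exponent $7/10$ leaves less room than the $11/20$ that worked in Proposition~\ref{prop:gl4}. If the polynomial inequalities fail for one or two small values of $q$, I would fall back on \texttt{MaxFPR} to verify those individually, keeping the computational burden confined to a short finite list.
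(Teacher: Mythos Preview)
Your overall plan mirrors the paper's—same computational cut-off at $q\leqs 11$, same case-by-case run through Table~\ref{t:gl4}, same reduction via Lemma~\ref{lem:outer-cosets}—but there is one missed simplification and one concrete miscalculation.

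The simplification you miss is that every $\mathscr{S}$-subgroup in Table~\ref{t:gl4}, and the $\mathscr{C}_6$-subgroup $2^4.{\rm Sp}_4(2)$, requires $q=p$. This immediately eliminates field and graph-field automorphisms for those $H$, so only involutory \emph{graph} automorphisms remain. The paper exploits this and then bounds $|x^G\cap H|$ by $i_2({\rm Aut}(H_0))\leqs 891$ rather than by $|H|$. Your assertion that ``$|x^G\cap H|\leqs |H|$ \ldots\ already dominates once $q\geqs 13$'' is simply false for graph automorphisms: with $|x^{G_0}|>\tfrac{1}{4}\beta q^4$ one has $|x^{G_0}|^{7/10}\approx 480$ at $q=13$, whereas $|{\rm U}_4(2).2|=51840$; even the $i_s$-bound $m=5184$ from~\eqref{eq:5184} does not suffice until $q$ is past $30$, and $m=891$ still fails at $q=13$. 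The paper handles $q=13$ by hand: for $\epsilon=+$ it computes $|x^{G_0}|=\tfrac{1}{2}|{\rm PGL}_4(13):{\rm PGSp}_4(13)|=371124$ exactly, which is large enough; for $\epsilon=-$ it observes that ${\rm U}_4(2)$ does not occur and that $i_2\leqs 241$ suffices for the remaining $\mathscr{S}$-types.

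You also have the difficulty inverted. The subfield case you flag as the main obstacle is dispatched in the paper in one line by citing the bounds in~\cite[5.1, 8.2]{fprII}; no Lang--Steinberg argument is needed. The genuinely tight spot is the graph-automorphism-versus-$\mathscr{S}$ interaction at $q=13$. Your \texttt{MaxFPR} fallback would rescue the argument, but if you insist on the crude $|H|$ bound you would need it for a dozen or more primes, not ``one or two''.
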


\begin{proof}
If $q\leqs 11$, then we can verify the claim using {\sc Magma}, so we may assume that $q\geqs 13$ for the remainder of the proof. First suppose that $r$ is odd. Note that we must have $q\neq p$, and so $G$ is of type ${\rm GL}_1^{\epsilon}(q)\wr S_4$, ${\rm GL}_4^{\epsilon'}(q_0)$ or ${\rm O}^{\epsilon''}_4(q)$. In those cases the bounds in the proofs of  the bounds in the proofs of~\cite[2.7]{fprIII},~\cite[8.2]{fprII} and~\cite[5.1]{fprII} respectively are sufficient.

Now suppose that $r = 2$. If $H\in \mathscr{S}$, then again  $q = p$, and so $x$ must be an involutory graph automorphism. The same argument we used in Proposition~\ref{prop:gl4} shows that $|x^{G}\cap H| \leqs 891$. Moreover, Lemma~\ref{lem:class-bounds} implies that $|x^{G_0}| > \frac{1}{4}\left(\frac{q}{q+1}\right) q^4$ and those bounds are sufficient unless $q = 13$. Now suppose that $q = 13$. If $\epsilon = +$, then we find
\[
|x^{G_0}| = \frac{1}{2}\frac{|{\rm PGL}_4(13)|}{|{\rm PGSp}_4(13)|} = 371124
\]  
and one can check that the claim holds. On the other hand, if $\epsilon = -$, then $H$ is not of type ${\rm U}_4(2)$, so we get
\[
|x^{G}\cap H| \leqs i_2({\rm P\Gamma L}_2(7)) \leqs 241
\]
and this bound is sufficient. 

If $H$ is of type ${\rm GL}_1^{\epsilon}(q)\wr S_4$, then the bounds in~\cite[2.7]{fprIII} are sufficient if $x$ is a field or graph automorphism. If $\epsilon = +$ and $x$ is a graph-field automorphism then Lemma~\ref{lem:class-bounds} gives $|x^{G_0}| > \frac{1}{8}\left(\frac{q}{q+1}\right)q^{15/2}$ and this together with the bound $|x^{G}\cap H| < |H|$ is sufficient.

If $H$ is of type $2^4.{\rm Sp}_4(2)$, then $q = p$, and so again, $x$ must be an involutory graph automorphism. As demonstrated in the proof of~\cite[6.6]{fprII}, we have $|x^G\cap H| \leqs 340$, and one can check that this bound suffices. 

Finally, if $H$ is of type ${\rm O}_4^{\epsilon''}(q)$ or ${\rm GL}_4(q_0)$, then the claim can be verified via the bounds in the proofs of~\cite[8.2]{fprII} and~\cite[5.1]{fprII} respectively.
\end{proof}

\begin{prop}\label{prop:gl3}
Suppose that $G_0 = {\rm PSL}^{\epsilon}_{3}(q)$ and $x\in G_I$ is an element of prime order $r$. Then $\fpr(x, G/H) < |x^{G_0}|^{-9/20}$.
\end{prop}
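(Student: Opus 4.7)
The plan is to follow the template established in the proofs of Propositions~\ref{prop:gl5} and~\ref{prop:gl4}. First, I would handle small values of $q$ (say $q \leqs 11$) computationally using the \texttt{MaxFPR} function described in Section~\ref{sss:comp-classical} applied to each non-standard maximal subgroup $H$ of $G$ in turn; this is cheap because $G_0 = {\rm PSL}_3^{\epsilon}(q)$ is low-dimensional. For the remainder, we assume $q$ is sufficiently large and read off the possible types of $H$ from~\cite[Tables 8.3--8.6]{BHR}: these are the geometric subgroups of type ${\rm GL}_1^{\epsilon}(q)\wr S_3$, ${\rm GL}_1^{\epsilon}(q^3).3$, ${\rm GL}_3(q_0)$ or ${\rm GU}_3(q_0)$ (subfield subgroups), ${\rm O}_3(q)$ (with $q$ odd), $3^{1+2}.Q_8$, together with a handful of $\mathscr{S}$-subgroups such as ${\rm L}_2(7)$, $A_6$, and ${\rm U}_3(3)$ under appropriate congruence conditions on $q$. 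Throughout, Lemma~\ref{lem:class-bounds} gives $|x^{G_0}| > \frac{1}{2}\beta q^4$ for every prime order $x \in G_I$ (with $\beta = q/(q+1)$), and this baseline bound is sharp only when $C_G(x)$ is of type ${\rm GL}_2^{\epsilon}(q)\times {\rm GL}_1^{\epsilon}(q)$ or when $x$ is unipotent of Jordan type $[J_2, J_1]$; in all other cases we have the much better bound $|x^{G_0}| > \frac{1}{6}\beta^2 q^6$.

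For $H \in \mathscr{S}$, I would construct ${\rm Aut}(H)$ in {\sc Magma} via \texttt{AutomorphismGroupSimpleGroup()} and bound $|x^G \cap H|$ by $m = \max\{i_s({\rm Aut}(H)) : s \in \pi(|{\rm Aut}(H)|)\}$; since $m$ is an absolute constant while $|x^{G_0}|^{9/20}$ grows as a fixed positive power of $q$, this settles all but a small number of small-$q$ cases which we dispatch by hand, using the sharper class-size bound that applies when $r$ does not divide $q - \epsilon$. For $H$ of type ${\rm GL}_1^{\epsilon}(q^3).3$, every semisimple element in $H \cap G_I$ has centraliser of type ${\rm GL}_1^{\epsilon}(q^3)$ or is lifted via Lemma~\ref{lem:c3}, so by the description in~\cite[Section 5.3]{BG} none of the small-centraliser classes from Lemma~\ref{lem:class-bounds} arise in $H$, and the trivial bound $|x^G \cap H| \leqs |H|$ suffices. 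For $H$ of type ${\rm GL}_3(q_0)$ or ${\rm GU}_3(q_0)$ (and for ${\rm O}_3(q)$ and $3^{1+2}.Q_8$) we follow the approach of~\cite[Sections 5,6,8]{fprII}, reading off $|x^{G_I} \cap H|$ from the relevant centraliser tables in~\cite[Appendix B]{BG} and comparing with the appropriate lower bound on $|x^{G_0}|$.

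The only genuine work lies in the case $H = B.S_3$ of type ${\rm GL}_1^{\epsilon}(q) \wr S_3$, where $B = {\rm GL}_1^{\epsilon}(q)^3 \cap G_I$. Here I would split according to whether $x^{G_I} \cap (H\setminus B)$ is empty. If $x \in B$, then $x$ is semisimple and $|x^{G_I}\cap B|$ is bounded by the number of multiset arrangements of the eigenvalues of $\hat{x}$ on the main diagonal (at most $3$ in all relevant cases), which dominates easily over $|x^{G_0}|^{-9/20}$. If $x^{G_I} \cap (H \setminus B) \neq \emptyset$ then, exactly as in the proof of Proposition~\ref{prop:gl5}, we write $x = b\pi$ with $\pi = (r^h, 1^{3-hr}) \in S_3$; this forces either $r = 2$ (so $\pi$ is a transposition and $x$ embeds in $G$ as a $t_1$-involution or as a $[J_2, J_1]$ unipotent) or $r = 3$ (so $\pi$ is a $3$-cycle and $x$ is semisimple with $C_G(x)$ of type ${\rm GL}_1^{\epsilon}(q^3)$). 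In each case $|x^{G_I}\cap(H\setminus B)|$ is bounded by $3(q-\epsilon)$ or $6(q-\epsilon)^2$ respectively, and combining with the appropriate lower bound on $|x^{G_0}|$ from Lemma~\ref{lem:class-bounds} yields $\mathrm{fpr}(x, G/H) < |x^{G_0}|^{-9/20}$.

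The main obstacle is the interaction between the weakest class-size lower bound $|x^{G_0}| > \frac{1}{2}\beta q^4$ (which applies precisely to $t_1$-involutions, $[J_2, J_1]$ unipotents, and order-$r$ semisimple elements with $C_G(x)$ of type ${\rm GL}_2^{\epsilon}(q)\times {\rm GL}_1^{\epsilon}(q)$) and the exponent $9/20$, which leaves very little slack: for $H$ of type ${\rm GL}_1^{\epsilon}(q)\wr S_3$ and a $t_1$-involution we need $|x^{G_I}\cap H| < (\tfrac{1}{2}\beta q^4)^{11/20}$, i.e.\ roughly $q^{2.2}$, so the combinatorial count of transposition lifts must genuinely save a factor over the trivial $|H|$ estimate. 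This case, together with the boundary cases of the $\mathscr{S}$-subgroup analysis at the smallest admissible values of $q$, will require the most care.
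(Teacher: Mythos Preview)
Your approach is essentially the same as the paper's, with the same case split by subgroup type and the same appeal to Lemma~\ref{lem:class-bounds} and the centraliser tables in~\cite[Appendix~B]{BG}. Two minor calibration points: the paper pushes the computational cutoff to $q\leqs 19$ (not $q\leqs 11$), because the trivial bound $|x^{G_I}\cap H|\leqs |H|$ for the $\mathscr{C}_6$-subgroup $3^2.{\rm Sp}_2(3)$ (your $3^{1+2}.Q_8$) and the $A_6$ count $m=144$ only beat $(\tfrac{1}{2}\beta q^4)^{11/20}$ once $q\geqs 23$; and ${\rm U}_3(3)$ does not arise as an $\mathscr{S}$-subgroup here, so it can be dropped from your list. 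Your organisation of the ${\rm GL}_1^{\epsilon}(q)\wr S_3$ case (splitting on whether $x^{G_I}$ meets $H\setminus B$, and treating the $3$-cycle case separately) is a harmless variant of the paper's split by class size---the paper simply observes that the $3$-cycle elements already satisfy $|x^{G_0}|>\tfrac{1}{6}\beta^2 q^6$, so the trivial $|H\cap G_I|$ bound disposes of them without the explicit count you propose.
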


\begin{proof}
The claim can easily be verified using {\sc Magma} if $q\leqs 19$, and so we may assume for the remainder of the proof that $q\geqs 23$. By inspecting~\cite[Tables 8.3-8.6]{BHR} we deduce that  the possible types of $H$ are the ones listed in Table~\ref{t:gl3}.

{\small
\begin{table}
\[
\begin{array}{l l l} \hline
G_0& \text{type of } H & \text{Conditions} \\ \hline
{\rm L}_3^{\epsilon}(q)& {\rm GL}_1(q)\wr S_3&\\
& {\rm GU}_3(q_0)&q = q_0^k \text{ with } k=2 \text{ if } \epsilon = + \text{ and }k \text{ odd prime if } \epsilon = - \\
& 3^2.{\rm Sp}_2(3)& p = q \equiv \epsilon \imod{3}\\
& {\rm O}_3(q)& q\, \text{odd}\\
& {\rm L}_2(7)& q = p\equiv \epsilon, 2\epsilon, 4\epsilon \imod{7}\\
& A_6& q = p\equiv \epsilon, 4\epsilon \imod{15} \text{ or } \epsilon = +, q = p^2 \text{ and } p\equiv 2, 3 \imod{5}\\
{\rm L}_3^{\epsilon}(q)& {\rm GL}_3(q_0)& q = q_0^k, k\,\text{prime}
\\ \hline
\end{array}
\]
\caption{Non-standard subgroups of almost simple groups with socle ${\rm L}_3^{\epsilon}(q)\, (q\geqs 16)$.}
\label{t:gl3}
\end{table}
}

If $H\in \mathscr{S}$, then as in the proofs of Propositions~\ref{prop:gl5}, and~\ref{prop:gl4}, we use the {\sc Magma} command $\texttt{AutomorphismGroupSimpleGroup()}$ to construct ${\rm Aut}(H)$ and compute $m = \max \{i_s(H) \,:\, s \, \text{prime}\}$. If $H$ is of type ${\rm L}_2(7)$, then
\[
|x^{G_I}\cap H| \leqs m = 56
\]
and this bound is sufficient for all $x\in H$, as $|x^{G_0}| > \frac{1}{2}\left(\frac{q}{q+1}\right)q^4$ by Lemma~\ref{lem:class-bounds}. On the other hand, if $H$ is of type $A_6$, then $m = 144$, and since $q\geqs 23$, we deduce that 
\[
|x^{G_I}\cap H| \leqs m = 144 < \left(\frac{1}{2}\left(\frac{q}{q+1}\right)q^4\right)^{11/20} < |x^{G_0}|^{11/20}.
\]

Now suppose that $H$ is of type $3^{2}.{\rm Sp}_2(3)$. Since $q\geqs 23$, the trivial bound
\[
|x^{G_I}\cap H| \leqs |H| < \left(\frac{1}{2}\left(\frac{q}{q+1}\right)q^4\right)^{11/20} < |x^{G_0}|^{11/20}
\]
is sufficient.

Next let us assume that $H$ is of type ${\rm GL}_1^{\epsilon}(q) \wr S_3$. If $|x^{G_0}| > \frac{1}{6}\left(\frac{q}{q+1}\right)^2q^6$, then we obtain the result via the trivial bound $|x^{G_I}\cap H| \leqs |H\cap G_{I}|$, so we may assume that $|x^{G_0}| \leqs \frac{1}{6}\left(\frac{q}{q+1}\right)^2q^6$. In this case, Lemma~\ref{lem:class-bounds} implies that if $x$ is unipotent, then it has Jordan form $[J_2, J_1]$ and if $x$ is semisimple, then $C_G(x)$ is of type ${\rm GL}^{\epsilon}_2(q)\times {\rm GL}^{\epsilon}_1(q)$. If $x$ is unipotent, then we argue in exactly the same way as in the proof of Proposition~\ref{prop:gl5} and we find $|x^{G_I}\cap H| \leqs 3(q+1)$. Moreover, we note that $|x^{G_0}| > \frac{1}{2}\left(\frac{q}{q+1}\right)q^4$, so the claim holds. If $x$ is semisimple, then the argument is also identical with the one in the proof of Proposition~\ref{prop:gl5}, and we get $|x^G\cap H| \leqs 3(q+1) + 3$, which is again sufficient.

If $H$ is of type ${\rm GL}_1^{\epsilon}(q^3)$, then it follows immediately from~\cite[5.3.2]{BG} that $|x^{G_0}| > \frac{1}{6}\left(\frac{q}{q+1}\right)^2q^6$, and so the trivial bound $|x^{G_I}\cap H| \leqs |H\cap G_{I}|$ is sufficient. 

Finally, if $H$ is of type ${\rm GL}^{\epsilon'}_3(q_0)$ or ${\rm O}_3(q)$, then we obtain the required result by computing exact fixed point ratios using the relevant tables in~\cite[Appendix B]{BG}. All cases are very similar, so we will only consider the case where $\epsilon = +$ and $H$ is of type ${\rm GU}_3(q^{1/2})$ here to avoid unnecessary repetition. 

First suppose that $x$ is unipotent. Then $x$ has Jordan form $[J_3]$ or $[J_2, J_1]$ on $V$. If $x$ has Jordan form $[J_3]$, then we find
\[
|x^{G_I}\cap H| \leqs \frac{|{\rm GU}_3(q_0)|}{q_0^2|{\rm GU}_1(q_0)|} = q_0(q_0^2-1)(q_0^3+1) 
\]
and
\[
|x^{G_0}| \geqs \frac{|{\rm GL}_3(q)|}{q^2|{\rm GL}_1(q)|} = q(q^2-1)(q^3-1)
\]
and one can verify that the claim holds. Similarly, if $x$ has Jordan form $[J_2, J_1]$, then we get
\[
|x^{G_I}\cap H| \leqs \frac{|{\rm GU}_3(q_0)|}{q_0^3|{\rm GU}_1(q_0)|^2} = (q_0-1)(q_0^3+1)
\]
and 
\[
|x^{G_0}| \geqs \frac{|{\rm GL}_3(q)|}{q^3|{\rm GL}_1(q)|^2} = (q+1)(q^3-1)
\]
and the required bound holds in this case as well. 

Now suppose that $x$ is semisimple, and let $i$ be minimal such that $r\vert q^i-1$ and $i_0$ be minimal such that $r\vert q_0^{2i_0}-1$. Note that $i = i_0/2$ if $i_0$ is even and $i = i_0$ otherwise. There are two possible centraliser types for $x$, namely ${\rm GL}_2(q)\times {\rm GL}_1(q)$ and ${\rm GL}_1(q)^3$. If $C_G(x)$ is of type ${\rm GL}_2(q)\times {\rm GL}_1(q)$, then we compute
\[
|x^{G_I}\cap H| \leqs q_0^2(q_0^2-q_0+1) \,\,\, \text{and} \,\,\, |x^{G_0}| \geqs q^2(q^2+q+1)
\]
and if $C_G(x)$ is of type ${\rm GL}_1(q)^3$, then 
\[
|x^{G_I}\cap H|\leqs q_0^3(q_0-1)(q_0^2-q_0+1) + q_0^3(q_0^3+1) \,\,\, \text{and} \,\,\, |x^{G_0}|\geqs q^3(q+1)(q^2+q+1)
\]
and one can check that those bounds are sufficient.

\end{proof}

\begin{prop}\label{prop:gl3-outer}
Suppose that $G_0 = {\rm PSL}^{\epsilon}_{3}(q)$ and $x\in G$ is a field, graph, or graph-field automorphism of prime order $r$. Then $\fpr(x, G/H) < |x^{G_0}|^{-3/10}$.
\end{prop}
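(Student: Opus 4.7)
The plan is to model this proof on Proposition~\ref{prop:gl4-outer}. First I would dispose of small values of $q$ using \textsf{Magma} via the \texttt{MaxFPR} and \texttt{fpr} routines described in Section~\ref{sss:comp-classical}; based on the maximal subgroup tables \cite[Tables 8.3--8.6]{BHR}, one expects this to be sufficient for $q \leqs 19$, after which we may assume $q \geqs 23$. Throughout, the list of non-standard subgroups to consider is the one in Table~\ref{t:gl3}, and for each element $x$ the key lower bounds on $|x^{G_0}|$ are already recorded in Lemma~\ref{lem:class-bounds}: namely $|x^{G_0}| > \tfrac{1}{6}\beta q^{16/3}$ when $r \geqs 3$ and $|x^{G_0}| > \tfrac{1}{6}\beta q^{4}$ when $r = 2$, where $\beta = q/(q+1)$.

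Next I would treat the case $r$ odd. Here $x$ is either a field or graph-field automorphism, so in particular $q \neq p$, and this forces $H$ to lie in $\mathscr{C}_2\cup\mathscr{C}_5\cup\mathscr{C}_8$ (the $\mathscr{S}$-cases and the $\mathscr{C}_6$-case require $q = p$, and $H$ of type ${\rm GL}_1^{\epsilon}(q^3)$ falls in $\mathscr{C}_3$ but contains no elements outside $G_I$ of odd prime order). For $H$ of type ${\rm GL}_1^{\epsilon}(q)\wr S_3$, the bounds in the proof of \cite[2.7]{fprIII} (specialised to $n=3$) give $|x^G\cap H|$ of order at most $q_0^{6(1-1/r)}$ up to a constant, which combined with the estimate on $|x^{G_0}|$ is easily sufficient; for $H$ of type ${\rm GL}_3(q_0)$ the bounds in the proof of \cite[8.2]{fprII} apply directly; and for $H$ of type ${\rm O}_3(q)$ (only relevant when $r$ odd forces a graph-field automorphism in the unitary case) the bounds in the proof of \cite[5.1]{fprII} take care of it.

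The case $r = 2$ is where the case analysis is most involved. If $H \in \mathscr{S}$, then $q = p$, so $x$ is an involutory graph automorphism; I would bound $|x^G\cap H| \leqs i_2({\rm Aut}(H))$ by constructing ${\rm Aut}(H)$ in \textsf{Magma} via \texttt{AutomorphismGroupSimpleGroup()} exactly as in the proof of Proposition~\ref{prop:gl3}, obtaining an absolute constant (at most a few thousand), which combined with the lower bound $|x^{G_0}| > \tfrac{1}{6}\beta q^{4}$ handles every $q \geqs 23$. For $H$ of type ${\rm GL}_1^{\epsilon}(q)\wr S_3$, the bounds in the proof of \cite[2.7]{fprIII} suffice when $x$ is a field or graph automorphism, and when $x$ is a graph-field automorphism I would combine the trivial estimate $|x^G\cap H| < |H\cap G_I|$ with the sharper lower bound on $|x^{G_0}|$ from Lemma~\ref{lem:class-bounds}. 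For $H$ of type $3^2.{\rm Sp}_2(3)$ (requiring $q = p$), the trivial bound $|x^G\cap H| \leqs |H|$ is comfortable since $|H|$ is an absolute constant while $|x^{G_0}|$ grows. For $H$ of type ${\rm GL}_1^{\epsilon}(q^3)$, again the trivial bound works since elements outside $G_I$ in this subgroup contribute only field-like automorphisms of the embedded cyclic group. Finally, $H$ of type ${\rm O}_3(q)$ or ${\rm GL}_3(q_0)$ is handled by the bounds in the proofs of \cite[5.1]{fprII} and \cite[8.2]{fprII} respectively.

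The main obstacle I expect is keeping the computational verification honest for the small-$q$ regime, particularly when $q$ is such that several $\mathscr{S}$-subgroups exist simultaneously (e.g.\ $q = 7, 13, 16, 19$ in various congruence classes), since the combinations of involution classes in ${\rm Aut}(H)$ need to be cross-referenced with the $G$-class fusion given by Lemma~\ref{lem:outer-cosets}. Beyond that, everything is a routine cross-reference with the bounds already catalogued in \cite{fprII, fprIII} together with the class-size bounds from Lemma~\ref{lem:class-bounds}.
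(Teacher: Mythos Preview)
Your plan follows the same overall strategy as the paper's proof and is largely sound, but there is one concrete gap when $r = 2$ and $x$ is an involutory graph-field automorphism.

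For $H$ of type ${\rm O}_3(q)$ or ${\rm GL}_3^{\epsilon'}(q_0)$ you assert that the bounds in the proofs of \cite[5.1]{fprII} and \cite[8.2]{fprII} handle all $r=2$ cases, by direct analogy with the $n=4$ argument. This analogy breaks: for $n = 3$ those bounds are \emph{not} sufficient when $x$ is an involutory graph-field automorphism, because the target exponent $-3/10$ is being applied to a class of size only of order $q^4$ rather than $q^{15/2}$. The paper handles this case separately by invoking Lemma~\ref{lem:outer-cosets} to get $x^G\cap H \subseteq \widetilde{H}x$ and then bounding $|x^G\cap H| \leqs i_2(\widetilde{H}) + 1 < 2(1+q^{-1/2})q^{5/2}$ via \cite[1.3]{LLS}. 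A related issue arises for $H$ of type ${\rm GL}_1^{\epsilon}(q)\wr S_3$ with $x$ a graph-field involution: you propose the trivial bound $|x^G\cap H| < |H\cap G_I|$ combined with a ``sharper lower bound on $|x^{G_0}|$'' from Lemma~\ref{lem:class-bounds}, but for $n=3$ that lemma gives no sharper bound for graph-field involutions than the generic $\tfrac{1}{6}\beta q^4$. The paper instead uses Lemma~\ref{lem:outer-cosets} here as well to obtain $|x^G\cap H| \leqs i_2(\widetilde{H}) + 1 \leqs 16$, which is comfortably sufficient.

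In short, the missing ingredient is the systematic use of Lemma~\ref{lem:outer-cosets} to pass to a single $\widetilde{H}$-coset and then bound by the number of involutions there; this is precisely what rescues the graph-field case when the class-size lower bound is only of order $q^4$.
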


\begin{proof}
The cases where $q\leqs 19$ can be checked using {\sc Magma}, so we may assume that $q\geqs 23$. Lemma~\ref{lem:class-bounds} implies that $|x^{G_0}| > \frac{1}{6}\left(\frac{q}{q+1}\right)q^4$, and so unless $H$ is of type ${\rm GL}_3^{\epsilon'}(q_0)$, ${\rm O}_3(q)$ or ${\rm GL}_1^{\epsilon}(q)\wr S_3$, then the bound $|x^G\cap H| < |H|$ is sufficient. 

If $H$ is of type ${\rm GL}_3^{\epsilon'}(q_0)$ or ${\rm O}_3(q)$ and $x$ is not an involutory graph-field automorphism then the bounds in the proofs of~\cite[5.1]{fprII} and~\cite[8.2]{fprII} are sufficient, so now assume that $\epsilon = +$ and $x$ is an involutory graph-field automorphism. In both cases, Lemma~\ref{lem:outer-cosets} implies that $x^G\cap H \subseteq H_Ix$, where $H_I = H\cap G_I$ and using~\cite[1.3]{LLS} we find
\[
|x^G\cap H| \leqs i_2(H_I) + 1 < 2(1+1q^{-1/2})q^{5/2}
\]
which is sufficient.

Finally assume that $H$ is of type ${\rm GL}_1^{\epsilon}(q)\wr S_3$. If $r > 2$, then the bound $|x^{G}\cap H| < |H|$ is sufficient, so now assume that $r = 2$. In this case, again appealing to Lemma~\ref{lem:outer-cosets} gives 
\[
|x^G\cap H| \subseteq H_Ix \leqs i_2(H_I) + 1 \leqs 16,
\]
and recall that $|x^{G_0}| > \frac{1}{6}\left(\frac{q}{q+1}\right)q^4$, so we are done.
\end{proof}

\begin{prop}\label{prop:sp4}
Suppose that $G_0 = {\rm PSp}_{4}(q)$ for $q\geqs 4$, and $x\in G_I$ is an element of prime order $r$. Then $\fpr(x, G/H) < |x^{G_0}|^{-9/20}$.
\end{prop}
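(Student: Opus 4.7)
The plan is to mirror the template used in Propositions~\ref{prop:sp6}--\ref{prop:gl3}. Since the statement is restricted to $x \in G_I$, Lemma~\ref{lem:inndiag-fpr} together with Remark~\ref{rem:inndiag-fpr} reduces the task to verifying the inequality $|x^{G_I}\cap H| < |x^{G_0}|^{11/20}$. The class-size lower bounds from Lemma~\ref{lem:class-bounds} yield $|x^{G_0}| > \tfrac{1}{4}\beta q^6$ in all but two worst-case families: the $t_1$ and $t_1'$ semisimple involutions, and the unipotent class of type $[J_2,J_1^2]$, where only $|x^{G_0}| > \tfrac{1}{4}\beta q^4$ is available. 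Small values of $q$ would be dispatched directly with \textsc{Magma} as explained in Section~\ref{sss:comp-classical}, and the remaining range is handled type by type.

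Consulting~\cite[Tables 8.12--8.15]{BHR} and eliminating the subspace subgroups along with those rendered standard by the exceptional isomorphisms recorded in Table~\ref{t:ns}, the non-standard maximal subgroups of $G$ fall into the following families: the $\mathscr{C}_5$-subgroups of type ${\rm Sp}_4(q_0)$ with $q=q_0^k$ and $k$ prime; the $\mathscr{C}_6$-subgroup of type $2^{1+4}.{\rm O}_4^-(2)$ (only for certain $q=p$); the $\mathscr{S}$-class subgroups with socle ${\rm Sz}(q)$ (when $q=2^{2m+1}$), ${\rm L}_2(q)$, ${\rm L}_2(7)$, $A_6$ or $2.A_5$; and, when $q$ is even, the novelty subgroups of type ${\rm O}_2^{\epsilon}(q)\wr S_2$, ${\rm O}_2^-(q^2).2$ or $[q^4].{\rm GL}_1(q)^2$ drawn from Table~\ref{t:n-collection}.

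For the $\mathscr{S}$-class subgroups with $|H|$ absolutely bounded I would proceed as in Propositions~\ref{prop:gl4} and~\ref{prop:gl3}: construct ${\rm Aut}(H)$ in \textsc{Magma} via \texttt{AutomorphismGroupSimpleGroup}, bound $|x^{G_I}\cap H| \leqs \max\{i_s({\rm Aut}(H)) : s \in \pi(|H|)\}$, and observe that this is dwarfed by $(\tfrac{1}{4}\beta q^4)^{11/20}$ once $q$ is moderately large. For socle ${\rm Sz}(q)$, the known class list gives $|x^{G_I}\cap H|$ growing at worst as $q^2$, which is comfortably below $(\tfrac{1}{4}\beta q^4)^{11/20}$ since the Suzuki case only arises for $q = 2^{2m+1} \geqs 8$. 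The $\mathscr{C}_6$-subgroup has absolutely bounded order, so $|x^{G_I}\cap H| \leqs |H|$ suffices except for a few small $q$, which are handled computationally. Finally, the novelty subgroups have order a polynomial of degree at most $4$ in $q$, so the crude estimate $|x^{G_I}\cap H| \leqs |H|$ easily beats $(\tfrac{1}{4}\beta q^4)^{11/20}$ for large $q$.

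The main obstacle is the $\mathscr{C}_5$ case $H$ of type ${\rm Sp}_4(q_0)$, and especially the subcase $k=2$, where $|H| \sim q^{5}$ vastly exceeds the threshold $q^{11/5}$ coming from the worst-case classes. A class-by-class analysis is therefore required: each $x \in x^{G_I}\cap H$ lifts to a prime-order element of $\widetilde{H} = H\cap G_I$, so $|x^{G_I}\cap H| \leqs |x^{\widetilde{H}}|$, a quantity that can be read off from~\cite[Table B.7]{BG} (or bounded via~\cite[4.3]{FGS} when $r=p$ and one counts involutions). One then identifies the $G_0$-class into which each such $\widetilde{H}$-class embeds: when the embedded class avoids the three worst cases, the bound $|x^{G_0}| > \tfrac{1}{4}\beta q^6$ easily absorbs the polynomial growth of $|x^{\widetilde{H}}|$ in $q_0 = q^{1/k}$; for embeddings into the $t_1$, $t_1'$ or $[J_2, J_1^2]$ class, a direct centraliser comparison is needed to verify that the matching $\widetilde{H}$-class sizes remain below $q^{11/5}$. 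Any residual tight cases for small $q_0$ are then resolved directly in \textsc{Magma}.
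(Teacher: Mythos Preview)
Your overall template is right, but there are two genuine gaps.

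First, you never actually handle the $\mathscr{S}$-subgroup of type ${\rm L}_2(q)$ for $q$ odd. You list it among the families but then only treat ``the $\mathscr{S}$-class subgroups with $|H|$ absolutely bounded'' and the Suzuki case; ${\rm L}_2(q)$ falls into neither. Here $|H|$ grows like $q^3$, so the crude bound $|x^{G_I}\cap H|\leqs |H|$ is useless against the threshold $(\tfrac14\beta q^4)^{11/20}\sim q^{11/5}$. The paper works through the embedding afforded by the symmetric cube module $S^3(W)$: one computes explicitly that the image of a unipotent $[J_2]$ has Jordan form with a rank-$3$ difference from $I_4$ (hence is not a transvection, forcing $|x^{G_0}|>\tfrac12\beta q^6$), that odd-order semisimple elements $[\lambda,\lambda^{-1}]$ map to $[\lambda^3,\lambda,\lambda^{-1},\lambda^{-3}]$ (regular semisimple once $r\geqs 5$), and that the unique involution class of ${\rm PSL}_2(q)$ lands in the $A_1T_1$-type class of $\bar G$, again giving the large class bound. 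Without this representation-theoretic analysis there is no way to push the worst-case classes out of play.

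Second, your treatment of the novelty subgroups for even $q$ does not work as stated. The subgroups of type ${\rm O}_2^\epsilon(q)\wr S_2$ and ${\rm O}_2^-(q^2).2$ have order roughly $8(q\pm 1)^2$ and $4(q^2+1)$, so comparing $|H|$ against $(\tfrac14\beta q^4)^{11/20}\approx 0.47\,q^{11/5}$ requires $q$ of order $10^6$; this is not a range one can close with {\sc Magma}. (Also, the novelty $[q^4].{\rm GL}_1(q)^2$ should not be on your list, and in any case has order of degree~$6$, not~$4$.) The paper instead observes that the only class falling below $\tfrac12\beta q^6$ here is the $b_1$ involution, and then uses the fact that $H_0$ embeds in a standard $\mathscr{C}_2$- or $\mathscr{C}_3$-subgroup $K$ (of type ${\rm Sp}_2(q)\wr S_2$ or ${\rm Sp}_2(q^2)$) to bound $|x^G\cap H_0|\leqs |x^G\cap K|$; in the second case $K$ contains no $b_1$ involutions at all. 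You need some mechanism of this kind for the $b_1$ class, not the crude order bound.
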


\begin{proof}
Using~\cite[Tables 8.12-8.14]{BHR}, as well as Table~\ref{t:ns}, we can determine the possibilities of $H$. For most of those cases we use entirely similar methods as the ones we use to prove Propositions~\ref{prop:gl5}, \ref{prop:gl4}, and~\ref{prop:gl3}, so we only discuss the cases where we use different methods here, to avoid unnecessary repetition. In particular, we discuss the cases where $H$ is of type ${}^2B_2(q)$, $O_2^{\epsilon}(q)\wr S_2$, or $O^{-}_2(q^2).2$ for $q$ even, and ${\rm L}_2(q)$ for $q$ odd. 

If $H$ is of type ${}^2B_2(q)$, then we obtain the required bounds by inspecting the proof of~\cite[4.4]{B07}, so it remains to handle the other four cases.

Suppose that $H$ is of type $O_2^{\epsilon}(q)\wr S_2$. Note that if $|x^{G_0}| > \frac{1}{2}\left(\frac{q}{q+1}\right)q^6$, then the trivial bound $|x^G\cap H| < |H|$ is sufficient. Now suppose that $|x^{G_0}| \leqs \frac{1}{2}\left(\frac{q}{q+1}\right)q^6$. Lemma~\ref{lem:class-bounds} implies that this occurs when $x$ is an $a_2$ or $b_1$ involution. In these cases we note that $x\in G_0$, and so $x^G\cap H = x^G \cap H_0$, where $H_0 = H\cap G_0$. 
We now note that $H_0 = C_{q-\epsilon}^2{:}{\rm D}_8$, and in particular, $i_2(H_0) = (q+\epsilon)(q+3)$. We therefore compute
\[
|x^{G_I}\cap H| \leqs i_2(H_0) \leqs (q+1)(q+3) \leqs (q^4-1)^{11/20} = |x^{G_0}|^{11/20}
\]

when $q > 32$. If $q\leqs 32$, then we can construct $H_0$ using {\sc Magma} as the normaliser of an appropriate Sylow subgroup. We only give the details for the case where $H$ is of type $\O^{-}_2(q)\wr S_2$ when $q = 32$, as the other cases are similar. In this case we can obtain $H_0$ as the normaliser of a Sylow $3$-subgroup. Note that $3$ divides $q+1$, and in particular, the largest power of $3$ dividing $|G_0|$ is the same as the largest power of $3$ dividing $(q+1)^2$, so a Sylow $3$-subgroup $P$ of $G_0$ is contained in some maximal torus $T \cong C_{q+1}\times C_{q+1}$.  In particular, $P\cong C_3\times C_3$ is a characteristic subgroup of $T$, so $N_G(T) \leqs N_{G}(P)$. One then checks using {\sc Magma} that $|N_G(P)| = |H_0|$, so $N_G(P) = H_0$, and we can verify that the bound $|x^G\cap H| < i_2(H_0)$ is sufficient. 

The case where $H$ is of type  $O^{-}_2(q^2).2$ is very similar. If $q\leqs 32$, then we again use {\sc Magma}, so now assume that $q\geqs 64$. If $|x^{G_0}| > \frac{1}{2}\left(\frac{q}{q+1}\right)q^6$, then the trivial bound $|x^G\cap H| < |H|$ is sufficient, so it remains to prove the claim for $a_2$ and $b_1$-type involutions. Again we note that $x\in G_0$, and so $x^G\cap H = x^G\cap H_0$. We now note that $H_0 = C_{q^2+1}:4$, and in particular $i_2(H_0) = q^2+1$. We then have
\[
|x^{G_I}\cap H| \leqs i_2(H_0) = q^2+1 \leqs (q^4-1)^{11/20} = |x^{G_0}|^{11/20}
\]
as required.

Finally, suppose that $H\in \mathscr{S}$ is of type ${\rm L}_2(q)$. Let $W = \la v_1, v_2 \ra$ be the natural module for $L = {\rm SL}_2(q)$. Then the embedding of $L$ in ${\rm Sp}_4(q)$ is the image of the representation $\rho : L \to {\rm Sp}_4(q)$ corresponding to the $L$-module $S^3(W)$. Let $\hat{x}$ be the lift of $x$ in ${\rm GSp}_4(q)$ and first suppose that $x$ is unipotent. In this case $x$ has associated Jordan decomposition $[J_2]$ one $W$ and we note that 
\[
\rho\left(
\begin{pmatrix}
1 & 1\\
0 & 1
\end{pmatrix}\right) = 
\begin{pmatrix}
1&1&1&1\\
0&1&2&3\\
0&0&1&3\\
0&0&0&1
\end{pmatrix} = M
\]
with respect to the basis $\{v_1^3, v_1^2v_2, v_1v_2^2, v_2^3\}$.
Now, since $M - I_4$ has rank $3$, we deduce that $M$ is not a transvection, and therefore $|x^{G_0}| > \frac{1}{2}\left(\frac{q}{q+1}\right)q^6$. Moreover, we know that $|x^G\cap H| \leqs q^2-1$, and so the claim holds.

Next suppose that $x\in H$ is semisimple and $r$ is odd, and let $i$ be minimal such that $r\vert q^i - 1$. First suppose that $i = 1$. Here $\hat{x}$ is $L$-conjugate to $[\lambda, \lambda^{-1}]$ for some $\lambda \in \F^{\times}$ and we have
\[
\rho([\lambda, \lambda^{-1}]) = [\lambda^3, \lambda, \lambda^{-1}, \lambda^{-3}]
\] 
If $x$ has order $3$, then $C_G(x)$ is of type ${\rm GL}_1(q)\times {\rm Sp}_2(q)$. In this case we have 
\[
|x^{G_0}| \geqs \frac{|{\rm Sp}_4(q)|}{|{\rm GL}_1(q)||{\rm Sp}_2(q)|} = q^3(q+1)(q^2+1)
\]
and
\[
|x^{G_I}\cap H| \leqs \frac{|{\rm GL}_2(q)|}{|{\rm GL}_1(q)|^2} = q(q+1)
\]
and so the bound holds.

If $r \geqs 5$, then $x$ is a regular semisimple element, and we have 
\[
|x^{G_0}| \geqs \frac{|{\rm Sp}_4(q)|}{|{\rm GL}_1(q)|^2}  = q^4(q+1)^2(q^2+1),
\]
which is sufficient. The case where $i = 2$ is similar, so we leave the details to the reader.

We finally consider involutions. Let $K$ denote the algebraic closure of $\F$ and let $\bar{G} = {\rm PSp}_4(K)$ and $\bar{H} = {\rm PSL}_2(K)$. Now let $\bar{W}$ be the natural module for ${\rm SL}_2(K)$ and $\bar{\rho} : {\rm SL}_2(K) \to {\rm Sp}_4(K)$ denote the representation afforded by the $K{\rm SL}_2(q)$-module $S^3(\bar{W})$. Now note that $\bar{H}$ has a unique class of involutions $y^{\bar{H}}$, and $y$ lifts in ${\rm SL}_2(K)$ to a diagonal matrix $\hat{y} = [\lambda, \lambda^{-1}]$ for some $\lambda \in K^{\times}$ of order $4$. Then
\[
\bar{\rho}([\lambda, \lambda^{-1}]) = [\lambda^{-1}, \lambda, \lambda^{-1}, \lambda]
\]
with respect to the basis $\{w_1^3, w_1^2w_2, w_1w_2^2, w_2^3\}$.  Now there are two classes of involutions in $\bar{G}$, one of type $A_1A_1$ and one of type $A_1T_1$, and the former lift to involutions in ${\rm Sp}_4(K)$, whilst the latter lift to elements of order $4$. We therefore deduce that $y$ embeds in $\bar{G}$ as an $A_1T_1$ involution. 

Since both $\bar{H}$ and $\bar{G}$ are stable under the standard Frobenius morphism $\sigma$, we deduce that if $x\in \bar{H}_{\sigma}$ is an involution, then it embeds in $\bar{G}_{\sigma} = {\rm Inndiag}(G_0)$ as an involution with centraliser type ${\rm GL}_2^{\epsilon}(q)$. We therefore have
\[
|x^{\bar{G}_{\sigma}} \cap H| \leqs i_2({\rm PSL}_2(q)) = q^2-1
\]
and
\[
|x^{G_0}| = |x^{\bar{G}_{\sigma}}| \geqs \frac{|{\rm Sp}_4(q)|}{2|{\rm GU}_2(q)|} = \frac{1}{2}q^3(q-1)(q^2+1)
\]
so the claim holds and the proof is complete.
\end{proof}

\begin{prop}\label{prop:sp4-outer}
Suppose that $G_0 = {\rm PSp}_{4}(q)$ for $q\geqs 4$, and $x\in G$ is a field or graph-field automorphism of prime order $r$. Then $\fpr(x, G/H) < |x^{G_0}|^{-3/10}$.
\end{prop}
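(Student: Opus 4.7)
The plan is to follow the template established in Propositions~\ref{prop:gl3-outer} and~\ref{prop:gl4-outer}. First, I would verify the claim computationally using the {\sc Magma} function \texttt{MaxFpr} from Section~\ref{sss:comp-classical} for $q$ below some small threshold, and then argue uniformly for $q$ sufficiently large. The possible types of non-standard maximal $H$ are enumerated from~\cite[Tables 8.12--8.14]{BHR} and Table~\ref{t:ns}; note that the existence of a field automorphism forces $q\neq p$, and graph-field automorphisms only arise when $q$ is an even power of $2$ (coming from the exceptional graph automorphism of ${\rm PSp}_4$ in characteristic $2$).

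For a field automorphism $x$ of prime order $r$, Lemma~\ref{lem:class-bounds} supplies the lower bounds $|x^{G_0}| > \tfrac{1}{4}\beta q^{20/3}$ when $r\geqs 3$ and $|x^{G_0}| > \tfrac{1}{4} q^{5}$ when $r = 2$. For most subgroup types the trivial estimate $|x^G\cap H|\leqs |H\cap G_I|$ already yields the required inequality once $q$ is large enough. The cases that require more work are the subfield subgroups of type ${\rm Sp}_4(q_0)$ with $q = q_0^k$: when $r\neq k$ we appeal to~\cite[4.9.1]{CFSGIII} to obtain $|x^G\cap H|\leqs |x^{{\rm Sp}_4(q_0)}| < 2q_0^{10(1-1/r)}$ as in the proof of Proposition~\ref{prop:sp6}; when $r = k$ we use the trivial bound $|x^G\cap H|\leqs |{\rm Sp}_4(q_0)|$ together with $|x^G|\geqs |{\rm Sp}_4(q)|/|{\rm Sp}_4(q_0)|$. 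Analogous adaptations of the bounds in~\cite[Section 2.3]{fprIII} and the proof of~\cite[3.2]{fprIII} handle the remaining small subgroup types.

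For a graph-field automorphism $x$, necessarily $r = 2$ and $q$ is an even power of $2$. Here I would invoke Lemma~\ref{lem:outer-cosets} to get $x^G\cap H\subseteq \widetilde{H}x$ with $\widetilde{H} = H\cap G_I$, so that $|x^G\cap H|\leqs i_2(\widetilde{H}) + 1$. The involution count in each subgroup type can be bounded via~\cite[1.3]{LLS} or by direct inspection of the structure of $\widetilde{H}$, and combining this with the bound $|x^{G_0}|\geqs \tfrac{1}{4}q^{5}$ from Table~\ref{t:class-bounds} yields the required estimate. The main obstacle throughout will be the subfield subgroups of type ${\rm Sp}_4(q_0)$, since this is where the fixed point ratio sits closest to the bound; I expect that borderline values of $q$ not covered by the asymptotic arguments will need to be handled by direct {\sc Magma} computation, exactly as in the previous propositions.
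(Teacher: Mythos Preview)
Your proposal is correct and matches the paper's approach: the paper's own proof is a single sentence stating that the methods are the same as in Propositions~\ref{prop:gl4-outer} and~\ref{prop:gl3-outer}, with details omitted, so you have in fact supplied more detail than the paper does. One minor imprecision worth correcting: the involutory graph-field automorphism of ${\rm PSp}_4(q)$ (with centraliser of type ${}^2B_2(q)$) exists whenever $p=2$ and $f = \log_2 q$ is odd, not when $q$ is an even power of $2$; but this does not affect the structure of your argument.
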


\begin{proof}
The methods we use here are similar to the ones in the proofs of Propositions~\ref{prop:gl4-outer} and~\ref{prop:gl3-outer}, so we leave the details for the reader.
\end{proof}

\begin{proof}[Proof of Theorem~\ref{thm:low-rank-fprs}]
This follows by combining Propositions~\ref{prop:sp8}, \ref{prop:gl8}, \ref{prop:o10}, \ref{prop:gl6}, \ref{prop:sp6}, \ref{prop:o7}, \ref{prop:gl5}, \ref{prop:gl4}, \ref{prop:gl4-outer}, \ref{prop:gl3}, \ref{prop:gl3-outer}, \ref{prop:sp4}, and \ref{prop:sp4-outer}.
\end{proof}

\section{Eta function bounds}\label{s:eta}
We will use Proposition~\ref{prop:regular} and variations as heavily as possible in the proof of Theorem~\ref{thm:main-classical}. In most cases, the bounds in Proposition~\ref{prop:TG} suffice, but in some cases we will need stronger bounds on $T_G$, where $T_G$ is as defined in Definition~\ref{def:eta}, or even stronger bounds on $\eta_G(t)$ for some $t\geqs T_G$. We establish those bounds in this section.

\begin{prop}\label{prop:eta-sp6}
If $G_0 = {\rm PSp}_6(q)$ and $q$ is even, then $\eta_G(1/5) < 0.9$.
\end{prop}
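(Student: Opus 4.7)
The plan is to partition $\mathscr{C}$ into three natural families---unipotent involutions (since $p=2$), semisimple classes of odd prime order, and (when $G > G_0$) field automorphism classes of prime order---and to bound the contribution of each to $\eta_G(1/5)$ separately, verifying small-$q$ cases by direct computation.

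Both the unipotent and field automorphism contributions are straightforward. By \cite[Section 3.4.3 and Table B.10]{BG}, the involutions in $G_0 = {\rm Sp}_6(q)$ partition into a bounded number of $G_0$-classes (at most six), each satisfying $|x^{G_0}| \geqs q^{\alpha}$ for some explicit $\alpha \geqs 8$, so their total contribution is $O(q^{-8/5})$. Field automorphism classes of prime order $r$ dividing $f$ have $|x^{G_0}| \geqs |G_0|/|{\rm Sp}_6(q_0)|$ for $q = q_0^r$, and there are at most $\omega(f)$ such classes, so their contribution is also negligible.

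The main work is the semisimple contribution. I would partition the semisimple classes by the minimal $i$ such that $r \mid q^i - 1$ (so $i \in \{1, 2, 3, 6\}$), enumerate the possible centraliser types of $x$ for each $i$ using Section~\ref{s:cclasses} and \cite[Table B.7]{BG}, and for each type combine the lower bound on $|x^{G_0}|$ from \cite[Table B.7]{BG} (in the spirit of Lemma~\ref{lem:class-bounds}) with an upper bound on the number of classes of that type. Summing over centraliser types and over primes $r \mid q^i - 1$ yields an explicit bound $C(q)$ tending to $0$ as $q \to \infty$. The cases $i \in \{3, 6\}$ are immediate as the centralisers are small and few in number; the cases $i \in \{1, 2\}$ require more delicate bookkeeping.

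For small values of $q$ not covered by the asymptotic estimate (typically $q \in \{2, 4, 8\}$), I would compute $\eta_G(1/5)$ directly in {\sc Magma} by constructing $G_0 = {\rm Sp}_6(q)$ via \texttt{SymplecticGroup}, enumerating its prime order conjugacy classes, and summing $|C|^{-1/5}$ explicitly, following the style of Section~\ref{sss:comp-classical}. The main obstacle is the $i = 1$ semisimple case: classes with split-torus centralisers of type ${\rm GL}_1(q)^3$ are numerous, and a careful count of the order-$r$ elements in the maximal torus $(q-1)^3$ together with the sharp lower bound on $|x^{G_0}|$ for the worst centraliser type is needed to keep the total safely below $0.9$ for all $q$.
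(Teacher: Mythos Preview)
Your approach is essentially the same as the paper's. The paper, for $q \geqs 32$, invokes an explicit inequality of exactly this shape already worked out by Burness in the proof of \cite[2.3]{B07} (it is displayed as \eqref{eq:eta-sp6}), for $q \leqs 16$ computes $\eta_G(1/5)$ directly in {\sc Magma} via \texttt{ClassicalClasses}, and for $16 < q < 32$ sharpens \eqref{eq:eta-sp6} by replacing the crude $\log$ estimates for $|\pi(q^2 \pm 1)|$ and $|\pi(q^6-1)|$ by their exact values. In effect you are reconstructing the \cite{B07} bound from scratch rather than citing it; the partition into unipotent, semisimple (by $i$), and field automorphism classes is precisely what underlies \eqref{eq:eta-sp6}.

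Two small corrections, neither fatal. First, the smallest involution class in ${\rm Sp}_6(q)$ for $q$ even is the $b_1$ transvection class, of size $q^6 - 1$, so $\alpha = 6$ rather than $\alpha \geqs 8$; the unipotent contribution is $O(q^{-6/5})$, not $O(q^{-8/5})$, and in particular two classes of size roughly $\tfrac{1}{2}(q^6-1)$ account for the leading term in \eqref{eq:eta-sp6}. Second, for each prime $r \mid f$ there can be up to $r-1$ distinct $G_0$-classes of field automorphisms of order $r$, so the count is closer to $\sum_{r \mid f}(r-1)$ than to $\omega(f)$. The first correction means your direct-computation range will likely need to go past $q = 8$ (the paper goes to $q = 16$).
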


\begin{proof}
If $q\leqs 16$, then we check the claim using {\sc Magma}. In particular, we first call \texttt{ClassicalClasses("Sp", 6, q)}, and we compute $\alpha = \sum_{i = 1}^k |x_i^{G_0}|^{-1/5}$ directly, where $x_1, \ldots, x_k$ denote representatives of the prime order classes of $G_0$. It now suffices to manually add the contribution $\beta$ from outer automorphisms to $\alpha$ to obtain an upper bound for $\eta_G(1/5)$. We note that
\[
\beta = \sum_{r\in \pi(f)} (r-1) \cdot \left(\frac{|{\rm PSp}_6(q)|}{|{\rm PSp}_6(q^{1/r})|}\right)^{-1/5}
\]
where recall that $f = \log_p(q)$ and we confirm that $\alpha + \beta < 0.9$ for $q\leqs 16$.

If $q\geqs 32$, then we use the bound obtained by Burness in the proof of~\cite[2.3]{B07}. In particular, we have 
\begin{align}\label{eq:eta-sp6}
\eta_G(t) &< 2\left(\frac{1}{2}(q^6-1)\right)^{-t} + \left(2 + \frac{q}{2}\log(q^2-1)\right)\cdot \left( \frac{1}{4}(q+1)^{-1}q^{11}\right)^{-t}\\
&+ ((q^2+1)(q^6-1))^{-t} + \left(6 + \frac{q}{2}\log(q^2-1)\right) \cdot \left( \frac{1}{4}(q+1)^{-1}q^{13}\right)^{-t} \nonumber \\
&+ q^2\log(q^2+1)\left(\frac{1}{8}q^{14}\right)^{-t} + \frac{1}{3}q^3\log(q^6-1)\cdot \left(\frac{1}{4}q^{18}\right)^{-t} \nonumber \\
&+2(\log_{2}q-1)\cdot \log(\log_{2}q + 2)\cdot \left(\frac{1}{4}q^{21/2}\right)^{-t} \nonumber
\end{align}
and one can check that this bound is sufficient if $q\geqs 32$. 

Finally, for $16 < q < 32$ we use a more precise version of~\eqref{eq:eta-sp6}. In particular, $\log (q^2-1), \log(q^2+1)$ and $\log (q^6-1)$ appear in~\eqref{eq:eta-sp6} as approximations of the number of prime divisors of $q^2-1, q^2+1$, and $q^6-1$ respectively, and replacing those approximations with the exact number of prime divisors of those numbers for $16 < q < 32$ gives the desired bound.
\end{proof}

\begin{prop}\label{prop:eta-o7}
If $G_0 = {\rm \O}_7(q)$ and $q$ is odd, then $\eta_G(1/5) < 0.9$.
\end{prop}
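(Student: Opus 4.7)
The plan is to follow the same three-tier strategy used in the proof of Proposition~\ref{prop:eta-sp6}, exploiting the fact that $|\O_7(q)| = |{\rm PSp}_6(q)|$ so that class sizes in the two groups are comparable. First I would handle a range of small values of $q$ (say $q \leq 11$ or so) computationally: using \texttt{ClassicalClasses("O", 7, q)} in {\sc Magma} one obtains representatives $x_1,\dots,x_k$ of the prime order $G_0$-classes and computes $\alpha = \sum_{i=1}^k |x_i^{G_0}|^{-1/5}$ directly. One then adds the contribution from field automorphisms,
\[
\beta = \sum_{r\in \pi(f)} (r-1)\cdot \left(\frac{|\O_7(q)|}{|\O_7(q^{1/r})|}\right)^{-1/5},
\]
to obtain an upper bound on $\eta_G(1/5)$, and verifies $\alpha + \beta < 0.9$.

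For large $q$ the strategy is to establish an explicit analogue of the bound \eqref{eq:eta-sp6}. Using the description of prime order classes in $\O_7(q)$ from Section~\ref{s:cclasses} (together with the centraliser data in \cite[Table B.12]{BG} or the relevant parts of \cite[Chapter 3]{BG}), I would group classes by centraliser type and estimate, for each type, both the number of classes of that type and a lower bound on the class size. The involution classes (of types $t_i$, $t_i'$ with centraliser a product of orthogonal groups), the semisimple regular and subregular classes, and the unipotent classes each contribute a term of the form $(\text{number of classes})\cdot (\text{minimal class size})^{-1/5}$; the latter behaves like a negative power of $q$, so each group of terms decays polynomially in $q$. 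Summing these and adding the field automorphism contribution as in \eqref{eq:eta-sp6} yields an upper bound that is $< 0.9$ once $q$ is sufficiently large (the threshold should be around $q \geq 27$ or $q \geq 29$).

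For the intermediate values of $q$ not handled by {\sc Magma} nor by the crude large-$q$ bound, I would, exactly as in Proposition~\ref{prop:eta-sp6}, sharpen the large-$q$ formula by replacing the logarithmic over-approximations of $|\pi(q^i \pm 1)|$ (appearing in the counts of semisimple classes of given type) with the exact number of prime divisors of $q^2-1$, $q^2+1$, $q^3-1$, $q^3+1$, $q^6-1$, etc., for each specific $q$ in the intermediate range. This refinement closes the gap between the computational and asymptotic regimes.

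The main obstacle is bookkeeping rather than any deep new idea: in odd characteristic, $\O_7(q)$ has a richer supply of involution classes and semisimple classes than ${\rm PSp}_6(q)$ in even characteristic (for instance, the $t_i, t_i'$ dichotomy), and unipotent classes have more restrictive Jordan forms and may split over $G_0$; one must enumerate these carefully and verify that the minimal class size in each family is large enough to make its contribution negligible after raising to the power $1/5$. I expect the dominant terms in the final bound to come from the involution classes of smallest centraliser codimension and from the unipotent class $[J_2^2, J_1^3]$, and the whole calculation should give some room below the target $0.9$.
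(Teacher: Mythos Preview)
Your proposal is correct and follows essentially the same approach as the paper: small $q$ via {\sc Magma}, and for larger $q$ an explicit tabulation of the prime order $G_0$-class types (unipotent by Jordan form, odd-order semisimple by centraliser type, the involution classes $t_i,t_i'$, and field automorphisms) together with bounds on the number of classes of each type and lower bounds on their sizes, summed as $\sum f(x,q)\cdot g(x,q)^{-1/5}$. The only minor divergence is that the paper's explicit bounds already work for all $q\geqs 17$, and since $q$ is odd there is no prime power strictly between $13$ and $17$, so the intermediate-range refinement you anticipate (replacing $\log$ by exact prime-divisor counts) turns out to be unnecessary here.
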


\begin{proof}
If $q\leqs 13$, then again we compute directly using {\sc Magma} exactly as in the proof of Proposition~\ref{prop:eta-sp6}, so we may assume that $q\geqs 17$ for the remainder of the proof.

Our aim here is to derive a similar bound to the one used in Proposition~\ref{prop:eta-sp6}. To do this, we must bound the number of $G_0$-conjugacy classes of prime order elements of each type in $G$, as well as the size of each $G_0$-conjugacy class. We record the relevant bounds for each type of prime order element in Table~\ref{t:etao7}, but for the sake of brevity, we only discuss a few cases in detail. The cases not discussed are very similar. 

Table~\ref{t:etao7} is organised as follows: The first row records the type of each prime order element in $G_0$. For unipotent elements we record the Jordan form on $V$, for odd order semisimple elements we record the centraliser type, and if $x$ is a semisimple involution or a field automorphism, then we use the standard notation from~\cite[Sections 3.5.2 and 3.5.5]{BG}. The second row records an upper bound $f(x, q)$ on the number of $G_0$-classes of each type and the third row records a lower bound $g(x, q)$ on the $G_0$-conjugacy class size of each element.

We first consider unipotent classes. Note that the possible Jordan forms on $V$ are as follows: 
\[
[J_7], [J_5, J_1^2], [J_3, J_2^2], [J_3, J_1^4], [J_2^2, J_1^3]
\]
We can estimate how many classes correspond to each Jordan decomposition by counting the number of signed partitions corresponding to that form and consulting~\cite[3.5.14]{BG} to determine which $G_I$-classes split in $G_0$. The $G_0$-class bounds in Table~\ref{t:etao7} are obtained via~\cite[3.21]{fprII}.

We now look at semisimple classes of odd order. Let $i$ be minimal such that $r\vert q^i - 1$. Note that since $r$ divides $|G|$ we must have $i\in \{1, 2, 3, 4, 6\}$. The $G_0$-class bounds are determined via~\cite[Tables 3.6 and 3.8]{fprII}, so it remains to determine $f(x, q)$. We must treat each possibility in turn. We indicatively discuss two cases in detail here.

Suppose first that $i = 6$. In this case $C_G(x)$ is of type ${\rm O}_1(q) \times {\rm GU}_1(q^3)$, and $x$ is of the form $[\Lambda, 1]$, for some $\sigma$-orbit $\Lambda$. We now know that if $i = 6$, then $r\vert q^3+1$, and moreover that there are exactly $\frac{r-1}{6} \leqs \frac{q^3}{6}$ $\sigma$-orbits, so the number of $G_0$-classes of elements of this type in $G$ is at most $\frac{q^3}{6}\pi(q^3+1) \leqs \frac{q^3}{6}\log (q^3+1)$. 

Now suppose that $i = 1$ and $C_G(x)$ is of type ${\rm GL}_1(q)^3 \times {\rm O}_1(q)$. In this case $r \vert q-1$, and since there are exactly $\frac{q - 1}{2}$ distinct pairs $(\lambda, \lambda^{-1})\in (\F^{\times})^2$, we conclude that there are at most $\binom{(q-1)/2}{3}\pi(q-1)\leqs \frac{q^3}{48}\log(q-1)$ $G_0$-classes of this type. 

We next consider semisimple involutions. By inspecting~\cite[Table B.8]{BG} we find that there are at most six $G_0$-classes of semisimple involutions, namely $t_i$ for $i\in \{1, 2, 3\}$ and $t'_i$ for $i\in \{1, 2, 3\}$.

Finally, we can infer from~\cite[3.5.20]{BG} that for a fixed prime divisor $r$ of $\log_{p}q$, there are at most $2(r-1) \leqs 2\log_{p}q$ $G_0$-classes of field automorphisms of order $r$, and $\log_{p}q$ has at most $\log (\log_{p}q + 2)$ distinct prime divisors, so there are at most $2\log_{p}q\cdot \log (\log_{p}q + 2)$ $G_0$-classes of field automorphisms.

Now if $x_1, \ldots, x_t$ is a set of representatives for the $G_0$-classes of prime order elements in $G$, then we have
\[
\eta_G(1/5) \leqs \sum_{i = 1}^t f(x_i, q)\cdot g(x_i, q)^{-1/5}
\]
where $f(x_i, q)$ and $g(x_i, q)$ are recorded in Table~\ref{t:etao7} for every $i$ and one can check that this bound is sufficient for $q\geqs 17$.  
{\small
\begin{table}
\[
\begin{array}{llll} \hline
\text{Type of } x& f(x, q) & g(x, q) & \text{Notes} \\ \hline
(J_7)&  4&\frac{1}{4}\left(\frac{q}{q+1}\right)q^{18}& \\
(J_5, J_1^2) &4 &\frac{1}{8}\left(\frac{q}{q+1}\right)^2q^{16}& \\
(J_3, J_2^2)& 4&\frac{1}{4}\left(\frac{q}{q+1}\right)q^{12}& \\
(J_3^2, J_1)& 4&\frac{1}{8}\left(\frac{q}{q+1}\right)^2q^{10}&\\
 (J_3, J_1^4)&4&\frac{1}{8}\left(\frac{q}{q+1}\right)^2q^{10}&\\
 (J_2^2, J_1^3)&2&\frac{1}{4}\left(\frac{q}{q+1}\right)q^{8}&\\
{\rm GU}_1(q^3)\times {\rm O}_1(q)&\frac{1}{6}q^3\log(q^3+1)&\frac{1}{2}\left(\frac{q}{q+1}\right)q^{18}& i = 6\\
{\rm GU}_1(q^2)\times {\rm O}_3(q) &\frac{1}{4}q^2\log (q^2+1)&\frac{1}{2}\left(\frac{q}{q+1}\right)q^{14}&i=4\\
{\rm GL}_1(q^3)\times {\rm O}_1(q) &\frac{1}{6}(q^2+q)\log(q^2+q+1)&\frac{1}{2}q^{18}&i = 3\\
{\rm GU}_3(q)\times {\rm O}_1(q) &\frac{1}{2}q\log(q+1)&\frac{1}{2}\left(\frac{q}{q+1}\right)q^{12}&i=2\\
{\rm GU}_2(q)\times {\rm GU}_1(q) \times {\rm O}_1(q)&\frac{1}{4}q^2\log (q+1)&\frac{1}{2}\left(\frac{q}{q+1}\right)^2q^{16}&i=2\\
{\rm GU}_2(q)\times {\rm O}_3(q)&\frac{1}{2}q\log (q+1)&\frac{1}{2}\left(\frac{q}{q+1}\right)q^{14}&i=2\\
{\rm GU}_1(q)^2\times {\rm O}_3(q)&\frac{1}{8}q^2\log (q+1)&\frac{1}{2}\left(\frac{q}{q+1}\right)^2q^{16}&i=2\\
{\rm GU}_1(q)^3\times {\rm O}_1(q)&\frac{1}{48}q^3\log (q+1)&\frac{1}{2}\left(\frac{q}{q+1}\right)^3q^{18}&i=2\\
{\rm GL}_3(q)\times {\rm O}_1(q) &\frac{1}{2}q\log(q-1)&\frac{1}{2}q^{12}&i=1\\
{\rm GL}_2(q)\times {\rm GL}_1(q) \times {\rm O}_1(q)&\frac{1}{4}q^2\log(q-1)&\frac{1}{2}q^{16}&i=1\\
{\rm GL}_2(q)\times {\rm O}_3(q)&\frac{1}{2}q\log(q-1)&\frac{1}{2}q^{14}&i=1\\
{\rm GL}_1(q)^2\times {\rm O}_3(q)&\frac{1}{8}q^2\log(q-1)&\frac{1}{2}q^{16}&i=1\\
{\rm GL}_1(q)^3\times {\rm O}_1(q)&\frac{1}{48}q^3\log(q-1)&\frac{1}{2}q^{18}&i=1\\
t_1&1&\frac{1}{4}\left(\frac{q}{q+1}\right)q^{10}&\\
t_2&1&\frac{1}{4}\left(\frac{q}{q+1}\right)q^{12}&\\
t_3&1&\frac{1}{4}\left(\frac{q}{q+1}\right)q^{6}&\\
t'_1&1&\frac{1}{4}\left(\frac{q}{q+1}\right)q^{10}&\\
t'_2&1&\frac{1}{4}\left(\frac{q}{q+1}\right)q^{12}&\\
t'_3&1&\frac{1}{4}\left(\frac{q}{q+1}\right)q^6&\\
\phi^{fj/r}&2\log_{p}q\cdot \log (\log_{p}q + 2)&\frac{1}{2}q^{21/2}&q = p^f, \,\, 1\leqs j < r\\
\hline
\end{array}
\]
\caption{Bounds on conjugacy classes and number of prime order elements in $\O_7(q)$.}
\label{t:etao7}
\end{table}
}
\end{proof}



\begin{prop}\label{prop:eta-gl6}
If $G_0 = {\rm PSL}^{\epsilon}_6(q)$ and $q\geqs 3$, then $\eta_G(1/3) < 0.979$.
\end{prop}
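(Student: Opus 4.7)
The plan is to follow the two-phase strategy used in Propositions~\ref{prop:eta-sp6} and~\ref{prop:eta-o7}. First, for small values of $q$, I would compute $\eta_G(1/3)$ directly in {\sc Magma}: construct representatives of the prime order $G_0$-classes using \texttt{ClassicalClasses}, sum $\sum_i |x_i^{G_0}|^{-1/3}$, and then add the contribution of outer prime order automorphisms. The field automorphism contribution can be written down explicitly as $\sum_{r\in\pi(f)}(r-1)\cdot\bigl(|G_0|/|{\rm PSL}_6^{\epsilon}(q^{1/r})|\bigr)^{-1/3}$, with analogous expressions for graph and graph-field classes whose sizes and splitting behaviour are read off from~\cite[3.2.14, 3.2.15, 3.3.17]{BG}. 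Since the exponent $1/3$ here is substantially larger than the $1/5$ used in the previous two propositions and the dimension is lower, I expect to need a fairly generous cut-off (perhaps $q\leqs 11$) for this direct treatment.

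For $q$ above the cut-off, I would build a table analogous to Table~\ref{t:etao7} listing, for each type of prime order element $x\in G$, an upper bound $f(x,q)$ on the number of $G_0$-classes of that type together with a lower bound $g(x,q)$ on each such class size. For unipotent classes there are ten non-trivial Jordan forms on $V$, each giving at most $\gcd(\nu,q-\epsilon)\leqs 6$ distinct $G_0$-classes by~\cite[3.2.7, 3.3.10]{BG}, with class size bounds drawn from~\cite[3.22]{fprII}. For semisimple elements of odd order the minimal $i$ with $r\mid q^i-1$ satisfies $i\in\{1,2,3,4,6\}$; for each $i$ I would enumerate the possible centraliser types (products of the form $\prod_j {\rm GL}_{a_j}^{\epsilon}(q^i)\times{\rm GL}_e^{\epsilon}(q)$ in the linear/unitary case) and bound the number of $\sigma$-orbit labellings, multiplied by $\log(q^i\pm 1)$ to account for the choice of a primitive prime divisor, exactly as in the proof of Proposition~\ref{prop:eta-o7}. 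Semisimple involutions contribute a bounded number of classes ($t_1,t_2,t_3,t'_3$) with sizes read off from~\cite[Table B.3]{BG}, and field, graph and graph-field automorphism classes contribute $O(\log_p q\cdot\log\log_p q)$ terms with sizes bounded below by $\tfrac{1}{2}|G_0|/|K|$ for the appropriate fixed-point subgroup $K$.

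Combining these contributions gives
\[
\eta_G(1/3)\;\leqs\;\sum_x f(x,q)\,g(x,q)^{-1/3},
\]
and the bulk of the proof is to verify that this estimate is strictly less than $0.979$ for every $q$ above the computational cut-off. The dominant contributions come from families with many classes and comparatively small representatives, notably semisimple classes whose centraliser is of type ${\rm GL}_1^{\epsilon}(q)^6$ (contributing roughly $\tfrac{1}{720}q^5\log(q-\epsilon)$ terms) and the involution classes $t_3,t'_3$; these are what force the threshold $0.979$ as opposed to the $0.9$ of the previous propositions.

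The main obstacle is precisely that the combination $n=6$ with $t=1/3$ makes the tail terms from semisimple classes of large centraliser corank considerably heavier than in the $\O_7(q)$ analysis where $t=1/5$ was used. Careful book-keeping of the $\gcd$-factor $(\nu,q-\epsilon)$ in unipotent class splitting and of the quadratic extension phenomenon for $\epsilon=-$ (handling $i$ even versus $i$ odd separately) will be required; in particular, the case $\epsilon=+$ with small $q$ is the tightest regime and is precisely where the direct {\sc Magma} verification is unavoidable.
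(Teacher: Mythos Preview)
Your proposal is viable and would reach the conclusion, but it differs organisationally from the paper's proof. You propose following the template of Propositions~\ref{prop:eta-sp6} and~\ref{prop:eta-o7}, tabulating by centraliser type (Jordan form for unipotents, full centraliser structure for semisimples) and summing $f(x,q)\,g(x,q)^{-1/3}$ over all types. The paper instead follows~\cite[2.3]{B07} and groups elements by the codimension $s=\nu(x)$ of the largest eigenspace on $V$: for each $s\in\{1,\dots,5\}$ it writes $k_{s,u},k_{s,s}$ for the number of unipotent/semisimple $G_0$-classes with $\nu=s$ and $c_u(s),c_s(s)$ for the smallest such class size, then uses the uniform bounds $k_{s,u}<6\cdot 2^s$ and $k_{s,s}<22\log q\cdot q^{s+1}$ from~\cite[(8), 3.40]{fprII} together with class-size bounds from~\cite[3.22, 3.36--3.37]{fprII}. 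This is coarser than your centraliser-by-centraliser enumeration but avoids the lengthy case analysis you anticipate.

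A second difference concerns small $q$. You plan direct {\sc Magma} evaluation of $\eta_G(1/3)$ for $q$ below a cut-off; the paper does \emph{not} compute $\eta_G$ directly for any $q$. Instead, for $3\leqs q\leqs 9$ it refines the crude bounds on $k_{s,s}$ and $k_{s,u}$ by explicit counting (e.g.\ determining exactly which centraliser types occur for each $s$ and each relevant prime $r$), and then feeds these sharper constants back into the same $\nu$-based estimate. Your {\sc Magma} route is arguably more straightforward, while the paper's route keeps the argument self-contained and uniform in shape across all $q\geqs 3$.
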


\begin{proof}
We adopt essentially the same strategy as the one used in the proof of~\cite[2.3]{B07}. For an element $x\in {\rm PGL}^{\epsilon}_6(q)$, we write $\nu(x)$ for the codimension of the largest eigenspace of the lift $\hat{x}$ on $\overline{V} = V\otimes \overline{\mathbb{F}_q}$, where $\overline{\mathbb{F}_q}$ denotes the algebraic closure of $\F$. Note that~\cite[3.1.3, 3.2.2, 3.3.3]{BG} together with~\cite[Sections 3.4.2, 3.5.2]{BG} imply that $\nu(x)$ is uniquely determined. For $1\leqs s\leqs 5$ we denote the number of distinct $G_0$-classes of unipotent (respectively semisimple) prime order elements $x\in G$ with $\nu(x) =s$ by $k_{s, u}$ (respectively $k_{s, s}$).
Moreover, we will write $c_u(s)$ (respectively $c_s(s)$) for the size of the smallest unipotent (respectively semisimple) $G_0$-class $y^G$ with $\nu(y) = s$, $\pi$ for the distinct prime divisors of $\log_{p}q$ and for $r\in \pi$ we will write $\mathcal{F}_r$ for the set of $G$-classes of field automorphisms of order $r$. Finally, we will denote the set of $G$-classes of involutory graph-field automorphisms by $\mathcal{F}_{gf}$, and we will write $C_{1}, C_{2}, C_3$ for the $G_I$-classes of a graph automorphism of type ${\rm PGSp}_6(q), {\rm PGO}^{+}_6(q)$, and ${\rm PGO}^{-}_6(q)$ respectively. Noting that~\cite[3.2.14]{BG} tells us that $C_i$ splits into at most $3$ $G_0$-classes for all $i\in \{1, 2, 3\}$, we have 
\[
\eta_G(t) \leqs  \sum_{C\in {\mathcal{F}_{gf}}}|C|^{-t} + \sum_{r\in \pi}\sum_{C\in \mathcal{F}_r}|C|^{-t} + \sum_{i = 1}^{3}3\left(\frac{|C_i|}{3}\right)^{-t} + \sum_{s = 1}^{5}(k_{s, u} + k_{s, s})\cdot c(s)^{-t}
\]
and therefore it suffices to find appropriate bounds for $|\mathcal{F}_r|$, $|\mathcal{F}_{gf}|$, $|C|$, $|C_i|$, $|\pi|$, $k_{s, u}$, $k_{s, s}$, $c_u(s)$, and $c_s(s)$ for $s\in \{1, \ldots, 5\}$.

Firstly, we know that every $N\in \Nat$ has at most $\log(N+2)$ distinct prime divisors, and so $|\pi| \leqs \log(\log_{p}q + 2)$ if $p\neq q$ and $|\pi| = 0$ if $p = q$.
Moreover,~\cite[4.9.1]{CFSGIII} gives $|\mathcal{F}_r| \leqs (6, q-\epsilon)(r-1) \leqs 6(\log_{p}q + 1)$. Similarly, it follows straight from~\cite[3.2.15]{BG} that the $G_I$-class of graph-field automorphisms splits into $(6, q^{1/2}-1)$ $G_0$-classes, and so $|\mathcal{F}_{gf}| \leqs 6$. 

We now consider unipotent elements. Note that $(8)$ in~\cite{fprII} tells us that there are at most $2^{s}$ unipotent ${\rm Inndiag}(G_0)$-classes with $\nu(x) = s$ for $s\in \{1, \ldots, 5\}$ and each class can split to at most $6$ classes in $G_0$. Therefore, $k_{s, u} < 6\cdot2^s$. Moreover, we can infer from~\cite[3.22]{fprII} and~\cite[3.2.7]{BG} that
\[
c_u(s) > 
\begin{cases}
 \frac{1}{2}\left(\frac{q}{q+1}\right)q^{2s(6-s)} \, \text{if} \, s\in \{1, 2\}\\
\frac{1}{4}\left(\frac{q}{q+1}\right)q^{6s} \, \text{if} \, s = 3\\
\frac{1}{12}\left(\frac{q}{q+1}\right)q^{6s} \, \text{if} \, s\in \{4, 5\}\\
\end{cases}
\]

Similarly, for semisimple elements~\cite[3.40]{fprII} tells us that there are at most $q^{s + 1}$ prime order elements with $\nu(x) = s$ of order $r$, so now it suffices to bound the possibilities for $r$ to bound $k_{s, s}$. Note that $r$ must divide $\prod_{i = 1}^{3}(q^i - \epsilon)(q^{2i} - 1) < q^{22}$, and since every integer $N > 6$ has at most $\log N$ divisors, we conclude that $k_{s, s} < 22\log q \cdot q^{s+1}$.

Finally,~\cite[3.36-3.37]{fprII} gives
\[
c_s(s) > 
\begin{cases}
 \frac{1}{2}\left(\frac{q}{q+1}\right)q^{2s(6-s)} \, \text{if} \, s\in \{1, 2\}\\
\frac{1}{4}\left(\frac{q}{q+1}\right)q^{6s} \, \text{if} \, s = 3\\
\frac{1}{6}\left(\frac{q}{q+1}\right)^{\frac{s}{6-s}}q^{6s} \, \text{if} \, s\in \{4, 5\}\\
\end{cases}
\]
and one can check that those bounds are sufficient for $q\geqs 11$. If $3\leqs q\leqs 9$, then we can compute $k_{s, s}$, and $k_{s, u}$ precisely and obtain the result this way. For example, let us consider the case $(\epsilon, q, s) = (+, 8, 4)$. If $\nu(x) = 4$ for a unipotent element $x\in G$, then the Jordan decomposition of $x$ on $V$ must comprise of two blocks. But one cannot have an involution in $G$ whose Jordan decomposition only has two blocks, so $k_{4, u} = 0$.

Now suppose that $x$ is semisimple and let $i$ be minimal such that $r\vert q^i-1$, so $i\in \{1, \ldots, 6\}$. Now it is easy to see that if $i\geqs 4$, then the lift $\hat{x}$ of $x$ in ${\rm GL}(V)$ is of the form $[\Lambda, I_{6-i}]$ for some $\sigma$-orbit $\Lambda$, so $\nu(x) = 5$, a contradiction, and so $i\geqs 3$. If $i = 3$, then since $\nu(x) = 4$ we must have $\hat{x} = [\Lambda^2]$ for some $\sigma$-orbit $\lambda$. Moreover, by inspecting the prime divisors of $|G_I|$, we find that $r = 73$, and so, as explained in~\cite[Section 3.2.1]{BG}, there are exactly $\frac{73-1}{3} = 24$ distinct $\sigma$-orbits for $i = 3$, so we get precisely $24$ $G_0$-classes with $\nu(x) = 4$. 

Now assume that $i = 2$. Note that for all the eigenvalues of $\hat{x}$ to occur with multiplicity at least $2$, $\hat{x}$ must be of the form $[\Lambda^2, I_2]$. Moreover, the assumption $i = 2$ implies that $r = 3$, and there is a unique $\sigma$-orbit when $r = 3$. 

Finally assume that $i = 1$, so $r = 7$. Since $7$ does not divide $6$, we find that $\hat{x} = [\lambda I_2, \lambda^{\alpha}I_2, \lambda^{\beta}I_2]$ for some non-trivial $7$-th root of unity $\lambda$ and $1 < \alpha, \beta \leqs 6$, so there are $\binom{7}{3} = 35$ possibilities for $x$. Therefore $k_{4, s} = 24 + 1 + 35 = 90 \ll 22\log 8 \cdot 8^5$. The argument in all remaining cases is similar, so we omit the details.
\end{proof}

\section{Proof of Theorem~\ref{thm:main-classical}}\label{s:proof}

In this section we complete the proof of Theorem~\ref{thm:main-classical}. Recall that we are left to deal with the following collections:
\begin{align*}
\mathcal{A}_1 &= \{{\rm PSp}_8(q), {\rm PSL}_8^{\epsilon}(q), {\rm P\O}^{\epsilon}_{10}(q)\}\\
\mathcal{A}_2 &= \{{\rm PSL}^{\epsilon}_6(q)\}\\
\mathcal{A}_3 &= \{{\rm PSp}_6(q) \, : \, q \text{ even}\} \cup \{{\rm \O}_7(q) \, : \, q \text{ odd}\}\\
\mathcal{A}_4 &= \{{\rm PSL}_3^{\epsilon}(q), {\rm PSL}_4^{\epsilon}(q), {\rm PSL}_5^{\epsilon}(q), {\rm PSp}_4(q)\}\\
\mathcal{A}_5 &= \{{\rm PSL}_2(q)\}
\end{align*}
We will treat each collection in turn.

\begin{prop}\label{prop:a1}
The conclusion to Theorem~\ref{thm:main-classical} holds for the groups with $G_0\in \mathcal{A}_1$.
\end{prop}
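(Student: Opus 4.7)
The plan is to bound every non-standard $4$-tuple by the probabilistic criterion in Proposition~\ref{prop:regular}, feeding it the sharpened fixed point ratio estimates from Section~\ref{s:fprbounds}. Each socle in $\mathcal{A}_1$ lies in $\mathcal{B}$ and appears in Table~\ref{t:fprs-classical} with parameter $\beta = 1/3$, so Theorem~\ref{thm:low-rank-fprs} supplies the uniform bound $\fpr(x, G/H) < |x^{G_0}|^{-1/3}$ for every prime-order $x \in G$ and every non-standard maximal subgroup $H \leqs G$. In parallel, the natural module of each such $G_0$ has dimension at least $8 \geqs 6$, so Proposition~\ref{prop:TG} yields $T_G < 1/3$.

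Taking $k = 4$ and $m = 1/3$ in Proposition~\ref{prop:regular}, condition (i) is provided by Theorem~\ref{thm:low-rank-fprs}, while condition (ii) reads $T_G < 4m - 1 = 1/3$, which is exactly what Proposition~\ref{prop:TG} gives. Hence every non-standard $4$-tuple $\tau$ of $G$ is regular, so $R_{\rm ns}(G) \leqs 4 < 5$; in particular the equality case of Theorem~\ref{thm:main-classical} does not occur for these socles.

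For the asymptotic clause, inspection of the proof of Proposition~\ref{prop:regular} shows that $\widehat{Q}(G, \tau) \leqs \eta_G(1/3)$ for any non-standard $4$-tuple, whence $\mathbb{P}(G, \tau) \geqs 1 - \eta_G(1/3)$ uniformly in $\tau$. Since the bound $T_G < 1/3$ from Proposition~\ref{prop:TG} is uniform in $q$, we may apply Proposition~\ref{prop:eta-asymptotic}(i) with $t = 1/3$ to conclude that $\eta_G(1/3) \to 0$ as $q \to \infty$, and hence $\mathbb{P}(G, 4) \to 1$ as $|G| \to \infty$. I expect no serious obstacle here: the argument is a direct assembly of Theorem~\ref{thm:low-rank-fprs} with the $T_G$ bounds of Proposition~\ref{prop:TG} and the asymptotics of Proposition~\ref{prop:eta-asymptotic}. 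The only minor nuisance is that some subsidiary propositions underpinning Theorem~\ref{thm:low-rank-fprs}, such as Proposition~\ref{prop:sp8}, carry the hypothesis $q \geqs 3$, so the case $q = 2$ for $G_0 = {\rm PSp}_8(q)$ has to be treated separately via the computational routine \texttt{RegTuplesPlus} recalled in Section~\ref{sss:comp-classical}; this does not affect the asymptotic statement.
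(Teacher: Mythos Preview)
Your proof is correct and essentially identical to the paper's own argument: the paper likewise handles $G_0={\rm PSp}_8(2)$ by \texttt{RegTuplesPlus}, then invokes the $|x^{G_0}|^{-1/3}$ bound from Propositions~\ref{prop:sp8}, \ref{prop:gl8}, \ref{prop:o10} (which is what Theorem~\ref{thm:low-rank-fprs} packages) together with $T_G<1/3$ from Proposition~\ref{prop:TG}, and concludes via Propositions~\ref{prop:regular} and~\ref{prop:eta-asymptotic}. The only cosmetic difference is that you cite the summary Theorem~\ref{thm:low-rank-fprs} directly rather than the three constituent propositions.
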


\begin{proof}
If $G_0 = {\rm PSp}_8(q)$ and $q = 2$, then the claim can be verified using {\sc Magma}, using the function \texttt{RegTuplesPlus()}, so we may assume for the remainder of the proof that $q\geqs 3$ if $G_0 = {\rm PSp}_8(q)$. 

Propositions~\ref{prop:sp8}, ~\ref{prop:gl8}, and ~\ref{prop:o10}, together with Theorem~\ref{thm:timsbound}, tell us that ${\rm fpr}(x, G/H) < |x^{G_0}|^{-1/3}$ for every core-free subgroup $H$ of $G$ and all $x\in G$ of prime order. Moreover, Proposition~\ref{prop:TG} gives $T_G < 1/3$, and so if $\tau = (H_1, \ldots, H_4)$ is any non-standard tuple of $G$, then Proposition~\ref{prop:regular} implies that $\tau$ is regular. Finally, Proposition~\ref{prop:eta-asymptotic} implies that $\mathbb{P}(G, 4) \to 1$ as $|G| \to \infty$.
\end{proof}

\begin{prop}\label{prop:a2}
The conclusion to Theorem~\ref{thm:main-classical} holds for the groups with $G_0 \in \mathcal{A}_2$.
\end{prop}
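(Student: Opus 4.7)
The plan is to combine Proposition~\ref{prop:gl6} with Proposition~\ref{prop:eta-gl6} via Lemma~\ref{l:fpr} and the argument in the proof of Proposition~\ref{prop:regular}. The subtlety is that Proposition~\ref{prop:gl6} only supplies the uniform bound ${\rm fpr}(x, G/H) < |x^{G_0}|^{-1/3}$ away from a small family $\mathcal{E}$ of semisimple classes, namely those with centraliser of type ${\rm GL}_3(q)^2$ when $\epsilon = +$ and of type ${\rm GL}_3(q^2)$ when $\epsilon = -$; on $\mathcal{E}$ one only has the weaker bound $|x^G|^{-0.32} \leqs |x^{G_0}|^{-0.32}$. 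A single application of Proposition~\ref{prop:regular} with one exponent is therefore unavailable, so for a non-standard $4$-tuple $\tau = (H_1, \ldots, H_4)$ I would split $\widehat{Q}(G, \tau) = S_1 + S_2$, with $S_1$ collecting the contribution of $G_0$-classes in $\mathcal{E}$ and $S_2$ the rest.

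The manipulation from the proof of Proposition~\ref{prop:regular} immediately gives $S_2 \leqs \eta_G(1/3) < 0.979$ whenever $q \geqs 3$, by Proposition~\ref{prop:eta-gl6}. For $S_1$ I would use the lower bound $|x^{G_0}| \geqs q^{18}$, which is immediate from the centraliser-type estimates $|x^{G_0}| \geqs |{\rm GL}_6(q)|/|{\rm GL}_3(q)|^2$ and $|x^{G_0}| \geqs |{\rm GU}_6(q)|/|{\rm GL}_3(q^2)|$ recorded in the proof of Proposition~\ref{prop:gl6}, together with a crude $O(q)$ count of the classes in $\mathcal{E}$ obtained by enumerating the primes $r$ dividing $q-\epsilon$ (respectively $q+\epsilon$) and the associated $\sigma$-orbits described in Section~\ref{s:cclasses}. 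This yields $S_1 \leqs q \cdot (q^{18})^{-0.28} = q^{-4.04}$, which is strictly less than $0.021$ for every $q \geqs 3$. Hence $\widehat{Q}(G, \tau) < 1$, and Lemma~\ref{l:fpr} forces $\tau$ to be regular, so $R_{\rm ns}(G) \leqs 4$ whenever $q \geqs 3$. The asymptotic conclusion $\mathbb{P}(G, 4) \to 1$ then follows from part~(i) of Proposition~\ref{prop:eta-asymptotic} (which gives $\eta_G(1/3) \to 0$ as $q \to \infty$ since $T_G < 1/3$ by Proposition~\ref{prop:TG}), combined with $S_1 = O(q^{-4})$.

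The residual case is $q = 2$, where Proposition~\ref{prop:eta-gl6} is not available; here $G_0 \in \{{\rm PSL}_6(2), {\rm PSU}_6(2)\}$ and every almost simple extension is small enough to treat computationally. I would apply the \texttt{RegTuplesPlus} routine of~\cite{ABcomp} described in Section~\ref{sss:comp-classical} to each such $G$; this is expected to output $R_{\rm ns}(G) \leqs 4$ in every case except $G = {\rm U}_6(2).2$, for which the routine should return $R_{\rm ns}(G) = 5$ together with the unique non-regular non-standard $4$-tuple up to conjugacy, namely $(H, H, H, H)$ with $H = {\rm U}_4(3).2^2$. The main obstacle is the narrow margin $1 - 0.979 = 0.021$ left by Proposition~\ref{prop:eta-gl6}: the enumeration of $\mathcal{E}$ and the class-size lower bound must be genuinely sharp to push $S_1$ below this threshold at $q = 3$, and at $q = 2$ one has to rely on a nontrivial {\sc Magma} computation to pin down the single exceptional tuple for ${\rm U}_6(2).2$.
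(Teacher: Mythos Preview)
Your approach is correct and essentially identical to the paper's. Both proofs handle $q=2$ by the {\sc Magma} routine \texttt{RegTuplesPlus}, and for $q\geqs 3$ both split $\widehat{Q}(G,\tau)$ into the contribution from the exceptional classes $\mathcal{E}$ (centraliser type ${\rm GL}_3(q)^2$ or ${\rm GL}_3(q^2)$) and the remainder, bounding the latter by $\eta_G(1/3)<0.979$ via Proposition~\ref{prop:eta-gl6}. The only cosmetic difference is in the exceptional term: the paper goes back to the explicit estimates $|x^{G_I}\cap H|\leqs q^6(q+1)(q^2+1)(q^3+1)$ and $|x^{G_0}|\geqs q^9(q-\epsilon)(q^3-\epsilon)(q^5-\epsilon)$ from the proof of Proposition~\ref{prop:gl6}, together with the class count $\tfrac{q-1}{2}\log(q-1)$, to obtain
\[
\alpha=\frac{q-1}{2}\log(q-1)\,\frac{(q^6(q+1)(q^2+1)(q^3+1))^4}{(q^9(q-\epsilon)(q^3-\epsilon)(q^5-\epsilon))^3}<0.021,
\]
whereas you feed the exponent $0.32$ from Proposition~\ref{prop:gl6} directly back in to get $S_1\leqs N(q)\cdot q^{-18\cdot 0.28}$. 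Your route is marginally cleaner since it avoids re-quoting the raw $|x^G\cap H|$ bounds; the paper's route gives a slightly sharper numerical constant but this is not needed. The asymptotic arguments coincide.
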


\begin{proof}
If $q = 2$, then as described in Section~\ref{sss:comp-classical}, we can check that all non-standard $4$-tuples (or $5$-tuples if $G = U_6(2).2$) are regular using the {\sc Magma} function \texttt{RegTuplesPlus()}. If $G = U_6(2).2$, then we can also verify that the only non-regular $4$ tuple is $(H, H, H, H)$ for $H = {\rm U}_4(3).2^2$ as the same function outputs all the non-regular $4$-tuples. We may therefore assume that $q\geqs 3$ for the remainder. Now note that Proposition~\ref{prop:gl6}, together with Theorem~\ref{thm:timsbound} implies that 
\[
\widehat{Q}(G, \tau) \leqs \eta_G(1/3) + \alpha,
\]
where
\[
\alpha =  \frac{q-1}{2}\log(q-1)\frac{(q^6(q+1)(q^2+1)(q^3+1))^4}{(q^9(q-\epsilon)(q^3-\epsilon)(q^5-\epsilon))^{3}}
\]
for any non-standard $4$-tuple $\tau$ of $G$. In view of Proposition~\ref{prop:eta-gl6}, it suffices to show that $\alpha < 0.021$
for all $q\geqs 3$ to prove that $\tau$ is regular, and one can easily check that this is true. Moreover, we see that $\alpha \to 0$ as $q\to \infty$, and this combined with Proposition~\ref{prop:eta-asymptotic} implies that $\mathbb{P}(G, 4) \to 1$ as $|G| \to \infty$.
\end{proof}

\begin{prop}\label{prop:a3}
The conclusion to Theorem~\ref{thm:main-classical} holds for the groups with $G_0 \in \mathcal{A}_3$.
\end{prop}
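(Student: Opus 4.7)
The plan is to show that every non-standard $5$-tuple of $G$ is regular and that $\mathbb{P}(G, 6)\to 1$ as $|G|\to \infty$, following the template used in Propositions~\ref{prop:a1} and~\ref{prop:a2} but with additional care for the maximal subgroups of type $G_2(q)$, which are the only non-standard subgroups to which the improved bound $|x^{G_0}|^{-0.4}$ from Propositions~\ref{prop:sp6} and~\ref{prop:o7} does not apply. First, for the small values of $q$ where the generic bounds below are insufficient, I would verify the claim directly using \texttt{RegTuplesPlus()} in {\sc Magma} via the methods described in Section~\ref{sss:comp-classical}.

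For larger $q$, let $\tau = (H_1,\ldots,H_5)$ be a non-standard tuple of $G$, and split into two subcases. If at least one $H_i$ is not of type $G_2(q)$, then Propositions~\ref{prop:sp6} and~\ref{prop:o7} give ${\rm fpr}(x, G/H_i) < |x^{G_0}|^{-0.4}$ for that index, while Theorem~\ref{thm:low-rank-fprs} provides ${\rm fpr}(x, G/H_j) < |x^{G_0}|^{-m_{G_2}}$ for the remaining indices, where $m_{G_2} = 0.249$ if $G_0 = {\rm PSp}_6(q)$ and $m_{G_2} = 0.225$ if $G_0 = \O_7(q)$. The total exponent then satisfies $\sum_j m_j \geqs 4\cdot 0.225 + 0.4 = 1.3$ in the worst case, so $\widehat{Q}(G,\tau) \leqs \eta_G(0.3) \leqs \eta_G(1/5) < 0.9 < 1$ by monotonicity of $\eta_G$ together with Propositions~\ref{prop:eta-sp6} and~\ref{prop:eta-o7}, and Lemma~\ref{l:fpr} delivers the regularity of $\tau$. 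If instead every $H_i$ is of type $G_2(q)$, then since there is a unique $G$-conjugacy class of such subgroups, $\tau$ is conjugate, and its regularity reduces to the base size bound $b(G, H_1)\leqs 5$, which holds by~\cite[Theorem~1]{B07}. Note that in the symplectic case $G_0 = {\rm PSp}_6(q)$ the same conclusion can alternatively be obtained directly from Proposition~\ref{prop:regular}, since $5\cdot 0.249 - 1 = 0.245 > 1/5 > T_G$.

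For the asymptotic statement, I would apply Proposition~\ref{prop:regular} uniformly to any non-standard $6$-tuple $\tau$ with the worst-case bound $m = 0.225$, yielding $\widehat{Q}(G,\tau) \leqs \eta_G(0.35)$. Since $0.35 > 1/5 > T_G$ by Propositions~\ref{prop:eta-sp6} and~\ref{prop:eta-o7}, Proposition~\ref{prop:eta-asymptotic}(i) gives $\eta_G(0.35) \to 0$ as $|G|\to\infty$, so that $\mathbb{P}(G,6)\to 1$ uniformly over non-standard $6$-tuples. The principal obstacle throughout is the all-$G_2(q)$ $5$-tuple case when $G_0 = \O_7(q)$, where the exponent $5\cdot 0.225 - 1 = 0.125$ is too small to conclude $\eta_G(0.125) < 1$ from the estimates currently available; this case is precisely where one must invoke the base size theorem of~\cite{B07} rather than pushing the probabilistic machinery further.
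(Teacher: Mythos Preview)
Your proposal has a genuine gap: the target is the regularity of every non-standard \emph{$4$-tuple}, not $5$-tuple. Theorem~\ref{thm:main-classical} asserts $R_{\rm ns}(G) \leqs 5$ with equality \emph{only} for $G = {\rm U}_6(2).2$, so for $G_0\in\mathcal{A}_3$ one must establish $R_{\rm ns}(G)\leqs 4$. The paper's proof accordingly works with $\tau = (H_1,\ldots,H_4)$, as do all the surrounding Propositions~\ref{prop:a6}--\ref{prop:a5}.

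Your estimates are too coarse to reach $4$-tuples. In the mixed case (one entry not of type $G_2(q)$), your approach with the uniform bound $|x^{G_0}|^{-0.225}$ from Theorem~\ref{thm:low-rank-fprs} yields an exponent of only $3\cdot 0.225 + 0.4 - 1 = 0.075 < 1/5$, so the inequality $\eta_G(0.075) < 1$ is not available. The paper gets around this by invoking the sharper bound from \cite[2.13]{fprIV}: for $H$ of type $G_2(q)$ one has ${\rm fpr}(x, G/H) < |x^{G_0}|^{-0.268}$ for every prime order $x$ \emph{except} the two unipotent classes $[J_2^2, J_1^3]$ and $[J_3^2, J_1]$. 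This pushes the generic exponent to $3\cdot 0.268 + 0.4 - 1 = 0.204 > 1/5$, so the main contribution is bounded by $\eta_G(1/5) < 0.9$, and the two exceptional classes are handled by explicit fixed point ratio values drawn from \cite[Tables~2.4--2.5]{fprIV}, giving correction terms $\alpha + \beta < 0.1$. The all-$G_2(q)$ case is then disposed of via \cite[Theorem~1]{B07} exactly as you propose, and the asymptotic statement for $6$-tuples is handled much as you outline (the paper appeals to \cite[Theorem~3]{BLS} for conjugate $G_2(q)$ tuples rather than pushing $\eta_G$, but your uniform argument via $\eta_G(0.35)$ would also work there).
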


\begin{proof}
We can check that all non-standard tuples of $G$ are regular if $q\leqs5$ using {\sc Magma}, so we may assume that $q\geqs 7$ for the remainder of the proof. Let $\tau = (H_1, \ldots, H_4)$ be a non-standard tuple of $G$. First assume that at least one of the $H_i$'s is not of type $G_2(q)$, say $H_1$ without loss of generality. 
Note that~\cite[2.13]{fprIV} implies that if $H$ is of type $G_2(q)$ and $x\in H$ has prime order, then unless $x$ is unipotent with Jordan form $[J_2^2, J_1^3]$ or $[J_3^2, J_1]$, we have ${\rm fpr}(x, G/H) < |x^{G_0}|^{-0.268}$. 

Now let $x_1, \ldots, x_t$ be a set of representatives of the prime order conjugacy classes of $G$, and without loss of generality assume that $x_{t-1}$ and $x_t$ are the representatives of the unipotent $G$-classes with Jordan form $[J_2^2, J_1^3]$ and $[J_3^2, J_1]$ respectively. Propositions~\ref{prop:sp6} and~\ref{prop:o7} tell us that ${\rm fpr}(x_j, G/H_1) < |x_j^G|^{-0.4}$ for all $j\in [t]$, whilst ${\rm fpr}(x_j, G/H_k) < |x_j|^{-0.268}$ for $k\neq 1$ and $j\in [t-2]$. Finally, for $j\in \{t-1, t-2\}$ ${\rm fpr}(x_j, G/H_k)$ can be recovered from~\cite[Tables 2.4 and 2.5]{fprIV}. Combining all of this we get
\[
\widehat{Q}(G, \tau)\leqs \sum_{i = 1}^{t} |x_i^{G_0}|^{-3\cdot  0.268 - 0.4 + 1} + \alpha + \beta
\]
where
\[
\alpha = |x_{t-1}^{G}|^{0.6}\prod_{i = 2}^4 \frac{|{x_{t-1}}^G\cap H_i|}{|{x_{t-1}}^G|} = \frac{(q^6-1)^3}{((q^2-1)(q^6-1))^{12/5}}
\]
and 
\[
\beta =  |x_{t}^{G}|^{0.6}\prod_{i = 2}^4 \frac{|{x_{t}}^G\cap H_i|}{|{x_{t}}^G|} =  \frac{2^{12/5}(q^2(q^2-1)(q^6-1))^3}{(q^6(q-1)(q^4-1)(q^6-1))^{12/5}}.
\]
Therefore,
\[
\widehat{Q}(G, \tau) \leqs \eta_G(1/5) + \alpha + \beta
\]
and in view of Propositions~\ref{prop:eta-sp6} and~\ref{prop:eta-o7}, it suffices to show that 
\[
\alpha + \beta < 0.1,
\]
to show that $\tau$ is regular. We can easily check that this is true. 

Moreover, noting that $\alpha, \beta \to 0$ as $q\to \infty$ and combining this with Proposition~\ref{prop:eta-asymptotic}, we find that $\mathbb{P}(G, \tau) \to 1$ as $|G|\to \infty$.

\vs

Now suppose that $H_i$ is of type $G_2(q)$ for all $i\in \{1, \ldots, 4\}$. If $G_0 = {\rm Sp}_6(q)$ with $p = 2$, then there is a unique $G$-class of subgroups of type $G_2(q)$, so the regularity of $\tau$ follows straight from~\cite[3.1]{B07}, so now let us turn to the case $G_0 = {\rm \O}_7(q)$ and $q$ odd. If $G = {\rm SO}_7(q)$ or ${\rm \Gamma O}_7(q)$, then again, there is a unique $G$-class of subgroups of type $G_2(q)$, and so the claim follows directly by~\cite[3.2]{B07}.

However, if $G = G_0$ or $G.\la \phi \ra$ for some field automorphism $\phi$, then there are two $G$-classes of subgroups of type $G_2(q)$, so these cases require special attention. We thank Tim Burness for pointing out the omission of this analysis in a previous draft. 

We can write $G = G_0.\la \phi \ra$, where $\phi$ is a (possibly trivial) field automorphism of $G_0$. Let $\rho$ be an irreducible Spin representation of $J = {\rm P\O}_8^{+}(q)$. If $L = G_2(q).\la \phi \ra$, then $\rho(L)$ is the stabiliser inside $\rho(G_0).\la \phi \ra$ of a 1-dimensional non-singular subspace of the natural module $\overline{V}$ of $J$. We claim that $\rho(G_0).\la\phi\ra$ has two orbits in this action, namely
\[
O_1 := \{\la v \ra \, : \, v\in \overline{V}, Q(v) \text{ a square in } \F \} \,\,\, \text{and} \,\,\, O_2 := \{\la v \ra \, : \, v\in \overline{V}, Q(v) \text{ a non-square in } \F \},
\]
where $Q$ is the standard quadratic form on $\overline{V}$. Now let $v\in \overline{V}$, let $g\in \rho(G_0)$ be an isometry, and let $w\in \overline{V}$ be such that $\la v\ra^g = \la w\ra$. Then there exists some $c\in \F$ such that $v^g = cw$. Since $g$ is an isometry, we have $Q(v^g) = Q(v)$, and so 
\[
Q(cw) = c^2Q(w) = Q(v).
\]
Noting that the product of two squares is a square and the product of a square and a non-square is a non-square, we deduce that $g$ preserves $O_1$ and $O_2$.

Now let $h \in \rho(G_0).\la \phi\ra$ be a field automorphism. Since $Q$ is the standard form on $\overline{V}$, it is defined over the base field $\mathbb{F}_q$, so we have $Q^{h}(v) = Q(v)$, and in particular $Q(v^h) = Q(v)^{h}$. Since field automorphisms preserve squares and non-squares, it follows that $Q(v^{h})$ is a square if and only if $Q(v)$ is a square. 

Therefore, since both isometries and field automorphisms preserve $O_1$ and $O_2$, and any element in $\rho(G_0).\la \phi\ra$ is a product of those, we cannot send an element from $O_1$ to $O_2$. Finally, it follows by the Orbit-Stabiliser Theorem that $O_1$ and $O_2$ are orbits and not unions of orbits.

\vs 

Let $H$ be stabiliser of some $\la v \ra \in O_1$ and $K$ be the stabiliser of some $\la w \ra \in O_2$. We will now show that any tuple $\sigma$ only containing $H$ and $K$ is regular. Since the distinction between the two orbits is not made explicit in~\cite{B07}, we will also deal with the conjugate tuples for completeness. 

The first step of the proof will be to show that for every possibility for $\sigma$, we can find vectors $v_1, \ldots, v_4\in \overline{V}$, such that the following hold:
\begin{itemize}
\item [\rm (i)] The $\la v_i\ra $ are not all simultaneously fixed by a nontrivial field automorphism;
\vs 
\item [\rm (ii)] $W := \la v_1, \ldots, v_4 \ra$ is a minus-type $4$-space of $\overline{V}$;
\vs 
\item [\rm (iii)] The stabiliser of $\la v_i\ra$ is conjugate to the $i$-th entry of $\sigma$ for every $i\in \{1, \ldots, 4\}$. 
\end{itemize}
Let $\{e_1, f_1, \ldots, e_4, f_4\}$ be a standard basis for $\overline{V}$ and let $\alpha\in \F^{\times}$ be a primitive element of $\F$, so $\alpha$ is a non-square and it is not contained in any proper subfield of $\F$. We treat each possibility for $\sigma$ in turn:

\vs

\noindent \textbf{Case 1.} $\sigma = (H, H, H, K)$: Let $v_i = e_i + f_i$ for $i\in \{1, \ldots, 3\}$ and $v_4 = e_4 + \alpha f_4$. We note that $Q(v_i) = 1$ for $i\in \{1, \ldots, 3\}$ and $Q(v_4) = \alpha$, and moreover the $\la v_i\ra$ are not all simultaneously fixed by a nontrivial field automorphism, since $v_4$ is not fixed by any nontrivial field automorphism. Let $W = \la v_1, \ldots, v_4\ra $. Since the $v_i$ are mutually orthogonal, it follows that the restriction $Q_W$ of $Q$ on $W$ has Gram matrix $A = [Q(v_1), \ldots, Q(v_4)]$, and in particular ${\rm det}(A) = \alpha$, a non-square. Then~\cite[2.5.10]{KL} implies that $W$ is of minus type, as required.

\vs

\noindent \textbf{Case 2:} $\sigma = (H, K, K, K)$: Take $v_1 = e_1 + f_1$ and $v_i = e_i + \alpha f_i$ for $i\in \{2, \ldots, 4\}$, and again note that the $\la v_i\ra $ are not simultaneously fixed by a nontrivial field automorphism. Take $W = \la v_1, \ldots, v_4\ra $. As above, we note that $Q_W$ has Gram matrix with determinant $\alpha^3 = \alpha^2\cdot \alpha$, a non-square, and~\cite[2.5.10]{KL} implies that $W$ is of minus type.

\vs

\noindent \textbf{Case 3:} $\sigma = (H, H, K, K)$: Take $v_1 = e_1 + f_1$, $v_2 = e_2 + \alpha f_2$, and set $U = \la v_1, v_2 \ra$. Note that~\cite[2.5.10]{KL} implies that $U$ is of plus type if and only if $q\equiv 3 \imod{4}$.

\vs

First suppose that $q\equiv 3 \imod{4}$. By appealing to~\cite[2.2.12]{BG}, we note that we can find some $2$-dimensional subspace $U'$ of minus type inside the $6$-dimensional non-degenerate subspace $U^{\perp}$. Now by appealing again to~\cite[2.2.12]{BG}, we note that we can find $v_3, v_4\in U'$, such that $Q(v_3)$ is a square and $Q(v_4)$ is a non-square. Since $v_3$ and $v_4$ are clearly linearly independent by construction, we have $U' = \la v_3, v_4 \ra$. Finally, we note that~\cite[2.2.11(ii)]{BG} implies that $W = \la v_1, \ldots, v_4 \ra = U\oplus U'$ is a minus-type subspace, as required.

\vs

Now suppose that $q\equiv 1 \imod{4}$. In this case, $U$ is of minus ype. It then follows by~\cite[2.2.12]{BG} that we can find some $2$-dimensional subspace $U' \subseteq U^{\perp}$ of plus type. As before, by appealing to~\cite[2.2.12]{BG} once again, we note that we can find $v_3, v_4\in U'$, such that $Q(v_3)$ is a square and $Q(v_4)$ is a non-square. Arguing exactly as above, we find that $U' = \la v_3, v_4 \ra$ and $W = \la v_1, \ldots, v_4 \ra = U\oplus U'$ is of minus type.

We finally note that in both cases, the choice of $v_2$ ensures the $\la v_i\ra$ are not all simultaneously fixed by some nontrivial field automorphism.

\vs

\noindent \textbf{Case 4:} $\sigma = (K, K, K, K)$: Set $v_i = e_i + \alpha f_i$ for $i\in \{1, 2\}$ and let $U = \la v_1, v_2\ra$. Here,~\cite[2.5.10]{KL} implies that $U$ is of plus type if and only if $q\equiv 1 \imod{4}$.

\vs

First suppose that $q\equiv 3 \imod{4}$, in which case $U$ is of minus type. As before,~\cite[2.2.12]{BG} ensures that we can choose a plus-type $2$-dimensional subspace $U' \subseteq U^{\perp}$. Let $e, f\in U'$ denote a standard basis for $U'$ and set $v_3 = e + \alpha f$, $v_4 = e + \alpha^{-1}f$, so that both $Q(v_3)$ and $Q(v_4)$ are non-squares. Then $U' = \la v_3, v_4 \ra$ and $W = \la v_1, \ldots, v_4 \ra = U\oplus U'$ is a minus-type $4$-space by~\cite[2.2.11(ii)]{BG}.

\vs

We may now suppose that $q\equiv 1 \imod{4}$, so that $U$ is of plus type. It again follows from~\cite[2.2.12]{BG} that we can choose some $2$-dimensional subspace $U' \subseteq U^{\perp}$ of minus type this time. Now note that all $q + 1$ $1$-spaces in $U'$ are non-singular, and in particular exactly $(q+1)/2$ of them are in $O_1$. Therefore, we can find two $1$-spaces $\la v_3\ra, \la v_4 \ra \subseteq U'$ so that $\la v_3 \ra, \la v_4\ra \in O_2$. Then as before, we have $U' = \la v_3, v_4\ra$ and $W = \la v_1, \ldots, v_4\ra = U \oplus U'$ is a minus-type $4$-space of $\overline{V}$.

Finally, the choice of $v_1$ ensures that not all $\la v_i \ra$ are simultaneously fixed by a nontrivial field automorphism.

\vs

\noindent \textbf{Case 5:} $\sigma = (H, H, H, H)$: The argument is entirely similar to the one in Case 4, so we leave the details to the reader.

\vs

We have now shown that for every choice of $\sigma$, we can find a minus-type $4$-space of $\overline{V}$ satisfying conditions (i)-(iii) above. Let $W = \la v_1, \ldots, v_4\ra$ be such a subspace. We now proceed as in the proof of~\cite[3.2]{B07}. Let us identify $G$ with $\rho(G_0).\la \phi \ra$ and suppose for a contradiction that $x\in \displaystyle \bigcap_{i = 1}^4 G_{\la v_i\ra}$ has prime order $r$. Then $x\in G_0$ and $x$ fixes a decomposition $W\oplus W^{\perp}$ of $\overline{V}$, whilst acting trivially on $W$, and in particular $r$ divides $|\Omega_4^{-}(q)| = q^2(q^4-1)$ by Lagrange's theorem.

First suppose that $r = 2$. Then~\cite[3.55]{fprII} implies that $C_J(x)$ must be of type ${\rm GL}_4^{\epsilon}(q), {\rm O}_4^{+}(q)^2$, or ${\rm O}_4^{+}(q^2)$, which contradicts the assumption that $x$ fixes the decomposition $W\oplus W^{\perp}$ and acts trivially on $W$.

Next suppose that $r$ is odd and let $i$ be minimal such that $r$ divides $q^i - 1$, so $i\in \{1, 2, 4\}$. We note that we cannot have $i = 4$, since this would imply that $r$ divides $q^2+1$. But we also have $x\in {G_0}_{\la v_1\ra}= G_2(q)$, and $q^2+1$ does not divide $|G_2(q)|$. On the other hand, if $i\in \{1, 2\}$, then~\cite[3.29]{fprII} implies that the codimension of the largest eigenspace of $\rho(x)$ on $\overline{V}$ is $2$. However, by inspecting the proof of~\cite[2.7]{fprIV} we deduce that there are no such elements in $\rho(G_0)$, and so $\displaystyle \bigcap_{i = 1}^4 G_{\la v_i\ra} = 1$, and $\sigma$ is regular.

Finally, suppose that $\mu = (H_1, \ldots, H_6)$ is a $6$-tuple of $G$ all of whose entries are of type $G_2(q)$. Then Theorem~\ref{thm:timsbound} implies that ${\rm fpr}(x, G/H_i) < |x^G|^{1/2-1/7-0.108} < |x^G|^{-0.24}$. Moreover, by inspecting the paragraph before~\cite[Theorem 1.1]{LSh1}, we note that if $T_G$ is as defined in Definition~\ref{def:ns}, then $T_G < 1/3$. We now note that $1/3 < 6\cdot 0.24 - 1$, and so~\ref{prop:eta-asymptotic} implies that $\mathbb{P}(G, \mu) \to 1$ as $|G|\to \infty$.
\end{proof}

\begin{prop}\label{prop:a4}
The conclusion to Theorem~\ref{thm:main-classical} holds for the groups with $G_0\in \mathcal{A}_4$.
\end{prop}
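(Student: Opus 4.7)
The plan is to mirror the strategy of Propositions~\ref{prop:a1} and~\ref{prop:a2}, but exploiting the low-rank fixed point ratio bounds from Theorem~\ref{thm:low-rank-fprs} in place of the bound $T_G < 1/3$ from Proposition~\ref{prop:TG}, which is unavailable in this rank range. For each socle type $G_0 \in \mathcal{A}_4$, I would first dispose of the small-$q$ cases using \texttt{RegTuplesPlus} from Section~\ref{sss:comp-classical}, with thresholds on $q$ comparable to those appearing in the corresponding fixed point ratio propositions of Section~\ref{s:fprbounds}. For $q$ above the threshold, let $\tau = (H_1, H_2, H_3, H_4)$ be any non-standard $4$-tuple; by Theorem~\ref{thm:low-rank-fprs}, every prime order $x \in G_I$ satisfies ${\rm fpr}(x, G/H_i) < |x^{G_0}|^{-9/20}$, and every prime order field, graph, or graph-field automorphism satisfies ${\rm fpr}(x, G/H_i) < |x^{G_0}|^{-3/10}$. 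Reparametrising the sum in Lemma~\ref{l:fpr} over prime order $G_0$-classes exactly as in the proof of Proposition~\ref{prop:regular} would then yield
\[
\widehat{Q}(G, \tau) \leqs \sum_{C \in \mathscr{C}^{\circ}} |C|^{-4/5} + \sum_{C \in \mathscr{C}^{*}} |C|^{-1/5},
\]
where $\mathscr{C}^{\circ}$ and $\mathscr{C}^{*}$ denote the sets of prime order $G_0$-classes contained in $G \cap G_I$ and in $G \setminus G_I$ respectively.

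Next, for each socle I would partition $\mathscr{C}^{\circ}$ by type (unipotent with prescribed Jordan form, odd-order semisimple with prescribed centraliser type, or semisimple involution), and partition $\mathscr{C}^{*}$ by automorphism type. Lower bounds on $|C|$ would come from Lemma~\ref{lem:class-bounds} together with the tables in~\cite[Appendix B]{BG}, while bounds on the number of classes of each type would come from the $\sigma$-orbit enumeration reviewed in Section~\ref{s:cclasses} combined with the splitting lemmas in~\cite[3.2.7, 3.2.14, 3.2.15, 3.3.10, 3.4.12, 3.5.20]{BG}, following the template of the proofs of Propositions~\ref{prop:eta-gl6} and~\ref{prop:eta-o7}. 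The resulting upper bound would be a finite sum of negative powers of $q$ that is strictly less than $1$ for $q$ above the chosen threshold, and each monomial of which tends to $0$ as $q \to \infty$, so the same estimate simultaneously delivers the regularity of $\tau$ and the asymptotic statement $\mathbb{P}(G, 4) \to 1$.

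The hard part will be that the inner exponent $4 \cdot \tfrac{9}{20} = \tfrac{9}{5}$ exceeds $1$ by only a narrow margin, so the bound is controlled by the handful of classes for which Lemma~\ref{lem:class-bounds} gives reduced lower bounds on $|C|$ --- in particular the unipotent classes of shape $[J_2, J_1^{n-2}]$ and $[J_2^2, J_1^{n-4}]$, the odd-order semisimple classes with centraliser of type ${\rm GL}_{n-1}^{\epsilon}(q) \times {\rm GL}_1^{\epsilon}(q)$, and the $t_1, t_1'$ involutions when $G_0 = {\rm PSp}_4(q)$. These exceptional classes have to be treated individually, and it is their contribution that ultimately fixes the Magma threshold; the remaining generic classes should contribute a sum that is comfortably small by the counting argument, so the asymptotic claim $\mathbb{P}(G,4) \to 1$ follows for free from the same estimate.
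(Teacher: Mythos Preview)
Your proposal is correct and follows the same overall strategy as the paper: after disposing of small $q$ with {\sc Magma}, use the $9/20$ and $3/10$ exponents from Theorem~\ref{thm:low-rank-fprs} (and, for ${\rm PSp}_4(q)$, from Propositions~\ref{prop:sp4} and~\ref{prop:sp4-outer}, which are not formally part of that theorem since ${\rm PSp}_4(q)\notin\mathcal{B}$) to reduce to bounding $\sum_{C\in\mathscr{C}^{\circ}}|C|^{-4/5}+\sum_{C\in\mathscr{C}^{*}}|C|^{-1/5}$, and then show this is a function of $q$ tending to zero.

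The main difference is in how the two sums are estimated. You propose a refined $\eta$-style enumeration, partitioning $\mathscr{C}^{\circ}$ by Jordan form and centraliser type and treating the short classes $[J_2,J_1^{n-2}]$, ${\rm GL}_{n-1}^{\epsilon}\times{\rm GL}_1^{\epsilon}$, $t_1$, $t_1'$ individually. The paper instead uses the much cruder bound $\sum_{C\in\mathscr{C}^{\circ}}|C|^{-4/5}\leqs \alpha_1\cdot\beta_1^{-4/5}$, where $\alpha_1$ is simply the total number of prime-order $G_0$-classes in $G_I$ (taken from Fulman--Guralnick~\cite{FG}, e.g.\ $\alpha_1\leqs q^{n-1}+8q^{n-2}+O(1)$) and $\beta_1$ is the global minimum from Lemma~\ref{lem:class-bounds}. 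Since $(n-1)-\tfrac{4}{5}\cdot 2(n-1)<0$ for all $n\in\{3,4,5\}$, and similarly for ${\rm PSp}_4$, this single product already tends to zero, so no class-by-class analysis is needed and the threshold on $q$ is set only by the outer-automorphism terms. Your approach would certainly work and gives tighter constants, but the paper's shortcut avoids the bookkeeping entirely; your anticipated ``hard part'' turns out not to be hard.
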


\begin{proof}
Let $\tau = (H_1, \ldots, H_4)$ be a non-standard tuple. If $G_0 = {\rm PSL}^{\epsilon}_5(q)$ or ${\rm PSL}^{\epsilon}_4(q)$, then in view of Propositions~\ref{prop:gl5}, \ref{prop:gl4}, and \ref{prop:gl4-outer}, we may write
\begin{equation}\label{eq:a4}
\widehat{Q}(G, \tau) \leqs \alpha_1 \cdot \beta_1^{-4/5} +  \alpha_2\cdot \beta_2^{-1/5} + \alpha_3\cdot \beta_3^{-1/5}
\end{equation}
where $\alpha_1$ denotes the number of prime order $G_0$-classes of unipotent and semisimple elements, $\alpha_2$ denotes the number of $G_0$-classes of field and graph-field automorphisms and $\alpha_3$ denotes the number of $G_0$-classes of graph automorphisms respectively, and
\begin{align*}
\beta_1 &= \min \{|x^{G_0}| \,: \, x\in {\rm Inndiag}(G_0), \, |x| \in \pi(|G|)\}\\
\beta_2 &= \min \{|x^{G_0}| \,: \, x\in \mathcal{F} \}\\
\beta_3 &= \min \{|x^{G_0}| \,: \, x\in \mathcal{G} \}
\end{align*}
where $\mathcal{F}$ denotes the set of prime order field and graph-field automorphisms in $G$ and $\mathcal{G}$ the set of involutory graph automorphisms in $G$. We will derive bounds for  each $\alpha_i$ and $\beta_i$.

\vs

First suppose that $G_0 = {\rm PSL}^{\epsilon}_5(q)$. If $q\leqs 4$, then the regularity of $\tau$ can be verified using {\sc Magma}, so we may assume that $q\geqs 5$. In this case, we can infer from~\cite[Table 3]{FG} that the total number of conjugacy classes in ${\rm PGL}_5^{\epsilon}(q)$ is bounded above by $q^4 + 8q^3$. We now note that the only $G$-class of unipotent elements that potentially splits in $G_0$ is the one corresponding to the Jordan form $[J_5]$, and~\cite[3.2.7, 3.3.10]{BG} imply that it splits into at most $5$ $G_0$-classes. Therefore, we may take $\alpha_1 \leqs q^4 + 8q^3+4$. Moreover,~\cite[4.9.1]{CFSGIII} implies that there are at most 
\[
\left(5, \frac{q-\epsilon}{q^{1/r}-\epsilon}\right)(r-1) \leqs 5\log_{p}q
\]
$G_0$-classes of field automorphisms of prime order $r$, and since $|\pi(\log_{p}(q))| \leqs \log(\log_p q + 2)$, there are at most 
\[
5\log_{p}q \cdot \log(\log_p q + 2)
\]
distinct $G_0$-classes of prime order field automorphisms in $G$. Similarly, there are at most $5$ distinct $G_0$-classes of involutory graph-field automorphisms by~\cite[3.2.15]{BG}. We also get at most $5$ distinct $G_0$-classes of involutory graph automorphisms by~\cite[3.2.14, 3.3.17]{BG}, so we have
\[
\alpha_2 \leqs 5 + 5\log_{p} q \cdot \log(\log_{p}q + 2) \,\,\, \text{and} \,\,\, \alpha_3 \leqs 5.
\]
Finally, it follows straight from Lemma~\ref{lem:class-bounds} that $\beta_1 > \frac{1}{2}\left(\frac{q}{q+1}\right)q^{8}$, $\beta_2 > \frac{1}{10}\left(\frac{q}{q+1}\right)q^{12}$, and $\beta_3 > \frac{1}{10}\left(\frac{q}{q+1}\right)q^{14}$, and by substituting the above bounds in~\eqref{eq:a4} we find that $\widehat{Q}(G, \tau) < 1$ for all $q\geqs 5$, as required.

\vs

Next assume that $G_0 = {\rm PSL}^{\epsilon}_4(q)$. If $q\leqs 9$, then the regularity of $\tau$ can again be verified using {\sc Magma}, so we may assume that $q \geqs 11$ for the remainder. Again, from~\cite[Table 3]{FG} we see that there are at most $\alpha_1\leqs q^3 + 8q^2$ distinct $G_I$-classes of unipotent and semisimple elements, and the only classes that possibly split in $G_0$ are the class of unipotent elements corresponding to the Jordan decompositions $[J_4]$ and $[J_2^2]$, so $\alpha_1 \leqs q^3 + 8q^2+4$. Moreover,~\cite[4.9.1]{CFSGIII} again tells us that the contribution to $\alpha_2$ from field automorphisms is at most  $4\log_{p} q \cdot \log(\log_{p}q + 2)$, and~\cite[3.2.15]{BG} tells us that there are at most $4$ distinct $G_0$-classes of involutory graph-field automorphisms. Finally, we can infer from~\cite[3.2.14, 3.3.17]{BG} that each $G_I$-class of graph automorphisms splits into at most $2$ $G_0$-classes, and so
\[
\alpha_2 \leqs 4\log_{p} q \cdot \log(\log_{p}q + 2) + 4 \,\,\, \text{and} \,\,\, \alpha_3 \leqs 6
\]
Now Lemma~\ref{lem:class-bounds} gives $\beta_1 > \frac{1}{2}\left(\frac{q}{q+1}\right)q^6$, $\beta_2 > \frac{1}{8}\left(\frac{q}{q+1}\right)q^{15/2}$, and $\beta_3 > \frac{1}{4}\left(\frac{q}{q+1}\right)q^4$. Substituting into~\eqref{eq:a4} we get $\widehat{Q}(G, \tau) < 1$ for $q\geqs 32$. If $q\leqs 32$, then we replace the upper bound on $\alpha_1$, by the tighter bound $(4, q-1)\log_{p} q \cdot |\pi(\log_{p}q)|$ and we compute the exact contribution from involutory graph automorphisms, and we again verify that $\widehat{Q}(G, \tau) < 1$.

\vs

Our strategy is similar for the cases where $G_0 = {\rm PSL}_3(q)$ or ${\rm PSp}_4(q)$. Let $\alpha_1, \beta_1, \alpha_3, \beta_3$ be defined as above, and this time let $\alpha_2$ be the number of $G_0$-classes of field automorphisms of odd order and 
\[
\beta_2 = \min \{|x^{G_0}| \,: \, x\in \mathcal{F}, |x| \neq 2 \}.
\]
Finally, let $\alpha_4$ denote the number of $G_0$-classes of involutory field and graph-field automorphisms and 
\[
\beta_4 =  \min \{|x^{G_0}| \,: \, x\in \mathcal{F}, |x| = 2 \}.
\]
Then Propositions~\ref{prop:gl3} and~\ref{prop:gl3-outer} give us
\begin{equation}\label{eq:gl3-sp4}
\widehat{Q}(G, \tau) \leqs \alpha_1\beta_1^{-4/5} + \alpha_2\beta_2^{-1/5} + \alpha_3\beta_3^{-1/5} + \alpha_4\beta_4^{-1/5}
\end{equation}
so it suffices to find sufficient bounds for $\alpha_i, \beta_i$ for $i\in [4]$. We can again infer from~\cite[Table 3]{FG} that there are at most $q^2 + 8q$ distinct prime order classes in $G_I$, none of which split in $G_0$, so $\alpha_1 \leqs q^2 + 8q$. Moreover, the same arguments we used when $G_0 = {\rm PSL}_5(q)$ gives $\alpha_2 \leqs 3\log_{p} q \cdot \log(\log_{p}q + 2)$, $\alpha_3\leqs 3$ and $\alpha_4\leqs 6$. Next, as usual we obtain 
\begin{align*}
\beta_1 &> \frac{1}{2}\left(\frac{q}{q+1}\right)q^{4}\\
\beta_2 &> \frac{1}{6}\left(\frac{q}{q+1}\right)q^{16/3}\\
\beta_3 &> \frac{1}{6}\left(\frac{q}{q+1}\right)q^{4}\\
\beta_4 &> \frac{1}{6}\left(\frac{q}{q+1}\right)q^{4}\\
\end{align*}
via Lemma~\ref{lem:class-bounds} and one can check that these bounds are sufficient for $q\geqs 67$. If $q\leqs 25$, then we verify the claim using {\sc Magma}, so it is left to verify that $\widehat{Q}(G, \tau) < 1$ for $27\leqs q \leqs 64$. In this case, the tighter bounds $\alpha_2 \leqs (3, q-\epsilon) \log_{p} q |\pi(\log_{p}q)|$, $\alpha_3 \leqs (3, q-\epsilon)$ and $\alpha_4 \leqs 2(3, q-\epsilon)$ are sufficient. 

\vs

Finally, suppose that $G_0 = {\rm PSp}_4(q)$. We verify the claim using {\sc Magma} if $q\leqs 11$ so we may assume that $q\geqs 13$ for the remainder. Let $\alpha_i, \beta_i$ be defined as in the case $G_0 = {\rm PSL}_3^{\epsilon}(q)$ for $i\in [4]$. Then Propositions~\ref{prop:sp4} and~\ref{prop:sp4-outer} imply that~\eqref{eq:gl3-sp4} still applies, so we will again derive sufficient bounds for $\alpha_i, \beta_i$ for $i\in [4]$.

By inspecting~\cite[Table 3]{FG} we deduce that there are at most $q^2 + 12q$ distinct $G_I$-classes of unipotent and semisimple elements and each unipotent class can split into at most $4$ distinct $G_0$-classes, so we get $\alpha_1 \leqs q^2+12q+9$. Moreover, Lemma~\ref{lem:class-bounds} implies that $\beta_1 >  \frac{1}{4}\left(\frac{q}{q+1}\right)q^4$. Next, using~\cite[3.4.15]{BG} we find
\[
\alpha_2 \leqs (2, q-1)(r-1)\pi(\log_{p}q) \leqs 2\log_{p} q \cdot \log(\log_{p}q + 2). 
\]
and once again via Lemma~\ref{lem:class-bounds}, we get $\beta_2 > \frac{1}{4}\left(\frac{q}{q+1}\right)q^{20/3}$. Similarly, we deduce that there are at most $2$ distinct $G_0$-classes of involutory field automorphisms by inspecting~\cite[3.4.15]{BG}, and~\cite[3.4.16]{BG} tells us that there is an additional $G_0$-class of involutory graph-field automorphisms when $p = 2$, so $\alpha_4 \leqs 3$. Finally, we get $\beta_4 > \frac{1}{4}q^5$ from Lemma~\ref{lem:class-bounds}, and trivially, $\alpha_3 = \beta_3 = 0$, since $G$ contains no graph automorphisms. One can check that those bounds are sufficient for $q\geqs 13$.

Finally, we observe that $\widehat{Q}(G, \tau) \to 0$ as $q\to \infty$ in all cases, so $\mathbb{P}(G, 4)\to 1$ as $|G|\to \infty$. 
\end{proof}

\begin{prop}\label{prop:a5}
The conclusion to Theorem~\ref{thm:main-classical} holds for the groups with $G_0 \in \mathcal{A}_5$.
\end{prop}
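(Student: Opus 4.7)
For $G_0 = {\rm PSL}_2(q)$ the bound of Theorem~\ref{thm:timsbound} becomes trivial (the "dimension" here is $n = 2$), and the $\eta_G$-machinery of Propositions~\ref{prop:TG} and~\ref{prop:regular} does not apply in rank one. My principal fixed point ratio input will instead be Proposition~\ref{prop:liebeck-saxl}. I would first verify the regularity of every non-standard $4$-tuple for small $q$ (say $q$ below a threshold $q_0$) by constructing each almost simple $G$ with socle ${\rm PSL}_2(q)$ in {\sc Magma} via \texttt{AutomorphismGroupSimpleGroup}, filtering out the standard maximal subgroups by means of Definition~\ref{def:ns} and Table~\ref{t:ns}, and then applying \texttt{RegTuplesPlus} from \cite{ABcomp} with $l = 2$. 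The relevant non-standard maximal subgroups are the dihedral $\mathscr{C}_2,\mathscr{C}_3$ subgroups (for $q$ odd; for $q$ even these are $\mathscr{C}_8$ in the symplectic interpretation and hence subspace by Definition~\ref{def:ns}(ii)), the subfield $\mathscr{C}_5$ subgroups ${\rm PSL}_2(q_0)$ with $q = q_0^r$ for $r$ an odd prime, and the $\mathscr{S}$-subgroups $A_4, S_4, A_5$ and certain ${\rm PSL}_2(q_0)$.

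For $q > q_0$, let $\tau = (H_1, H_2, H_3, H_4)$ be a non-standard $4$-tuple. Proposition~\ref{prop:liebeck-saxl} yields, for each prime-order $x \in G$ and each non-standard maximal $H \leqs G$, either ${\rm fpr}(x, G/H) < 4/(3q)$ or $(H,x)$ is recorded in Table~\ref{t:liebeck-saxl}. Of the entries in that table, ${\rm GL}_2(q^{1/2})$ appears in Table~\ref{t:ns} and is therefore standard whenever $q^{1/2} \neq 2$, so only the dihedral case $H$ of type ${\rm GL}_1(q) \wr S_2$ remains as a genuine exception, with bound $2/(q+1)$. Since $4/(3q) < 2/(q+1)$ for all $q > 2$, I obtain the uniform estimate
\[
{\rm fpr}(x, G/H) < \frac{2}{q+1}
\]
for every non-standard maximal $H$ and every prime-order $x \in G$. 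Substituting into Lemma~\ref{l:fpr} gives
\[
\widehat{Q}(G, \tau) < \left(\frac{2}{q+1}\right)^{4} \sum_{i} |x_i^G| \leqs \frac{16\,|G|}{(q+1)^{4}} \leqs \frac{16\, f\, q(q-1)}{(q+1)^{3}},
\]
using the crude estimate $|G| \leqs f\, q(q^2-1)$ with $q = p^f$. A direct computation shows the right-hand side is strictly less than $1$ once $q > 16 f + O(1)$, and the few remaining borderline cases (most notably the even prime powers $q = 32, 64$, and small values of $q^{1/f}$ with $f = 2, 3, 4$) are closed by replacing $|G|$ with the explicit count of prime-order elements in $G$ obtained from the conjugacy-class description in \cite[Chapter 3]{BG}, which substantially sharpens the bound since inner-diagonal and outer field-automorphism classes together account for far fewer elements than $|G|$.

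The asymptotic assertion follows from the same estimate: as $|G| \to \infty$ we have $q \to \infty$ with $f = O(\log q)$, so $\widehat{Q}(G, \tau) = O(f/q) \to 0$ uniformly over non-standard $4$-tuples $\tau$, and hence $\mathbb{P}(G, 4) \geqs 1 - \widehat{Q}(G, \tau) \to 1$. The main obstacle is calibrating $q_0$ so that the computational verification and the analytical bound overlap: because $\mathcal{A}_5$ is the unique rank-one family in $\mathcal{C}$ and there are genuine non-standard maximal subgroups (the dihedral subgroups of odd $q$) whose fixed point ratios are only of order $1/q$ rather than $|x^G|^{-c}$, the crude $|G|$-bound narrowly fails for a small window of prime powers just above the computational range, and bridging this window requires either pushing the Magma computation further or exploiting the explicit prime-order class structure of ${\rm PSL}_2(q)$ to reduce $\sum_i |x_i^G|$ below the crude bound $|G|$.
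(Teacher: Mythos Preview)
Your approach is essentially the paper's: verify small $q$ with \textsc{Magma} and apply Proposition~\ref{prop:liebeck-saxl} for large $q$. You make a correct and sharper observation than the paper, namely that the ${\rm GL}_2(q^{1/2})$ entry in Table~\ref{t:liebeck-saxl} is standard by Table~\ref{t:ns}, so for non-standard $H$ the only exception to $4/(3q)$ is the dihedral case with bound $2/(q+1)$, and this arises only for involutory field automorphisms. However, you then adopt $2/(q+1)$ as a \emph{uniform} bound, which costs you a factor of roughly $(3/2)^4 \approx 5$ in the dominant term and produces the borderline failures you describe. The paper instead keeps $4/(3q)$ as the bulk bound and isolates the involutory field automorphisms in a separate term (using the cruder constant $(2 + q^{1/2}(q^{1/2}+1))/(q^{1/2}(q+1))$, which it need not have), giving
\[
\widehat{Q}(G,\tau) \leqs |G|\cdot\left(\frac{4}{3q}\right)^{4} + \alpha\cdot\left(\frac{2+q^{1/2}(q^{1/2}+1)}{q^{1/2}(q+1)}\right)^{4},\qquad \alpha \leqs \frac{|{\rm PGL}_2(q)|}{|{\rm PGL}_2(q^{1/2})|},
\]
which is below $1$ for every $q\geqs 32$ and tends to $0$. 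Had you combined your standardness observation with this two-term split---using $4/(3q)$ for all classes except involutory $\phi$, and $2/(q+1)$ rather than the paper's larger constant for those $\alpha$ elements---you would obtain a bound strictly sharper than the paper's and avoid any additional case analysis.
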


\begin{proof}
First we note that the claim can easily be verified using { \sc Magma} for $q\leqs 31$, so we may assume for the remainder of the proof that $q \geqs 32$. If $\tau = (H_1, \ldots, H_4)$ is a non-standard tuple of $G$, then Proposition~\ref{prop:liebeck-saxl} yields
\[
\hat{Q}(G, \tau) \leqs |G|\cdot \left(\frac{4}{3q}\right)^4 + \alpha \cdot \left(\frac{2 + q^{1/2}(q^{1/2} + 1)}{q^{1/2}(q+1)}\right)^4
\] 
where $\alpha$ denotes the number of involutory field automorphisms. We now note that 
\[
\alpha \leqs \frac{|{\rm PGL}_2(q)|}{|{\rm PGL}_2(q^{1/2})|} 
\]
and we can now check that the above bound gives $\widehat{Q}(G, \tau) < 1$ for all $q\geqs 32$. Moreover, we can easily see that $\widehat{Q}(G, \tau)\to 0$ as $q\to \infty$, and so $\mathbb{P}(G, 4)\to 1$ as $|G|\to \infty$, as required.
\end{proof}

\begin{proof}[Proof of Theorem~\ref{thm:main-classical}]
This follows by combining Propositions~\ref{prop:a6}, \ref{prop:a1}, \ref{prop:a2}, \ref{prop:a3}, \ref{prop:a4}, \ref{prop:a5}.
\end{proof}

\section{Exceptional group preliminaries}\label{s:exceptional}

For the remainder of the paper, we may assume that $G$ is an almost simple exceptional group of Lie type with socle $G_0$ over $\F$, where $q = p^f$ for some prime $p$ and $\tau$ is a $6$-tuple of core-free subgroups of $G$. As in the previous section, our main aim is to prove Theorem~\ref{thm:main}. In particular, we will show that the stronger statement below holds, which in combination with Theorem~\ref{thm:main-classical} for classical groups, completes the proof of Theorem~\ref{thm:main}.

\begin{thm}\label{thm:exceptional}
Let $G$ be a finite almost simple exceptional group of Lie type and let $\tau$ be a $6$-tuple of core-free subgroups of $G$. Then $\tau$ is regular, and moreover, $Q(G, \tau) \to 0$ as $q\to \infty$.
\end{thm}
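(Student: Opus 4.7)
The plan is to adopt the probabilistic machinery from Section~\ref{s:prob-main} and reduce the verification of Theorem~\ref{thm:exceptional} to a uniform worst-case fixed point ratio estimate over all core-free maximal subgroups of $G$. Specifically, for each prime order class representative $x \in G$ I would define
\[
f(x, q) = \max\{\fpr(x, G/H) : H \in \mathcal{M}_G\}
\]
and then combine Lemma~\ref{l:fpr} with the observation $\prod_{j=1}^6 \fpr(x, G/H_j) \leqs f(x,q)^6$ to obtain
\[
\widehat{Q}(G, \tau) \leqs \sum_{i=1}^t |x_i^G| \cdot f(x_i,q)^6
\]
for any $6$-tuple $\tau$. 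The theorem then follows once I show this upper bound is strictly less than $1$ (for regularity) and tends to $0$ as $q \to \infty$ (for the asymptotic statement).

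The estimates for $f(x,q)$ split naturally according to whether the relevant maximal subgroup $H$ is parabolic or non-parabolic. For non-parabolic actions I would quote and apply the propositions announced in the introduction, namely Propositions~\ref{prop:e6-fpr}, \ref{prop:e6-fpr-q2}, \ref{prop:e7-fpr}, and \ref{prop:f4-fpr} (together with the existing worst-case bounds from \cite{BLS} and \cite{B18} for the remaining socle types $E_8$, $G_2$, ${}^3D_4$, ${}^2F_4$, ${}^2B_2$, ${}^2G_2$). For parabolic actions I would use the character-theoretic identity $\fpr(x, G/H) = \chi(x)/|\Omega|$ together with the polynomial data on Foulkes functions and the Lawther--Liebeck--Seitz machinery described in Section~\ref{s:parabolics}, computing bounds class-by-class using the \textsf{GAP}-readable data supplied by Frank L\"ubeck. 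Combining these two sources yields an explicit list of bounds on $f(x,q)$ indexed by the prime order $G_0$-classes.

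Having $f(x,q)$, I would organise the sum $\sum_i |x_i^G| f(x_i,q)^6$ by type of element (unipotent, semisimple of each centraliser type, field, graph-field, graph). Using the standard centraliser class size estimates from \cite{BG} and the asymptotic count of prime order classes (a polynomial in $q$ whose degree is dwarfed by the exponential decay of $|x^G|^{-5}$ coming from six factors of $f(x,q) \lesssim |x^G|^{-1}$), the sum collapses to $O(q^{-c})$ for some positive constant $c$, establishing both $\widehat{Q}(G,\tau)<1$ for $q$ large and $\widehat{Q}(G,\tau)\to 0$ as $q\to\infty$. The residual cases where the generic bound fails for small $q$ -- expected to be $G_0 \in \{{}^2F_4(2)', {}^3D_4(2)\}$ and possibly $F_4(2)$, $E_6^{\epsilon}(2)$ -- I would handle directly: for the first two via the \textsf{Magma} function \texttt{RegTuplesPlus} described in Section~\ref{sss:comp-classical}, and for $F_4(2)$ and $E_6^{\epsilon}(2)$ via the Character Table Library in \textsf{GAP}~\cite{GAPchar}, which gives the exact fixed point ratios on the non-parabolic maximal subgroups mentioned in Section~\ref{sss:comp-classical}.

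The main obstacle, I expect, will be the parabolic actions on the largest exceptional groups (particularly $E_7$ and $E_6^{\epsilon}$ with $q = 2$), where the fixed point ratios can be appreciably larger than $|x^G|^{-1}$ for unipotent classes of small $\nu$-value and one must be careful that $|x^G| \cdot f(x,q)^6$ is still summable. In these cases I anticipate needing class-specific refinements of $f(x,q)$ rather than the crude uniform bound -- that is, for the few ``bad'' classes one reverts to estimating the product $\prod_{j=1}^6 \fpr(x, G/H_j)$ by splitting into subcases depending on how many $H_j$ are parabolic of a given type, using the fact that at most one factor can attain the worst case when the $H_j$ are pairwise non-conjugate -- which is precisely the subtlety that makes removing the conjugacy hypothesis from Cameron's original conjecture nontrivial.
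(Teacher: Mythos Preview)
Your proposal is essentially the paper's own approach: bound $\widehat{Q}(G,\tau)$ by $\sum_i |x_i^G|\,f(x_i,q)^6$ with $f(x,q)$ the worst-case fixed point ratio over $\mathcal{M}_G$, split the estimate into non-parabolic contributions (handled via the propositions in Section~\ref{s:non-parabolics} and \cite{BLS,B18}) and parabolic contributions (handled via the exact character formulae in Section~\ref{s:parabolics} and L\"ubeck's data), and clean up the small-$q$ residue computationally.

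One correction to your last paragraph. You write that for the bad classes you would ``split into subcases depending on how many $H_j$ are parabolic of a given type, using the fact that at most one factor can attain the worst case when the $H_j$ are pairwise non-conjugate''. This has the difficulty backwards: the pairwise non-conjugate case is the \emph{easy} one; the genuine obstruction at $G_0=E_6(2)$ is that $\widehat{Q}(G,\tau)>1$ precisely for conjugate tuples with every $H_j\in\{P_1,P_6\}$, and no amount of non-conjugacy reasoning helps there. The paper's actual fix is the dichotomy you almost describe but with the roles reversed: either every $H_j$ lies in $\{P_1,P_6\}$, in which case regularity is verified directly in \textsc{Magma} by random search, or at least one $H_j$ avoids $\{P_1,P_6\}$, in which case one replaces $f(x,q)^6$ by $f(x,q)^5\,l(x,q)$ with $l(x,q)=\max\{\fpr(x,G/H):H\in\mathcal{M}_G\setminus\{P_1,P_6\}\}$, and this refined sum is then shown to be below $1$. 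Your plan would succeed once you make this swap; the rest of your outline matches the paper.
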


We start by recalling some notation we have been using throughout and fixing some new notation. As before, we will write $\bar{G}$ for the ambient algebraic group (of the same type as $G$) over the algebraic closure of $\F$, so $G_0 = (G_{\sigma})'$ for some appropriate Steinberg endomorphism $\sigma$. Also, for a closed subgroup $\bar{H}$ of $\bar{G}$, we will write $\bar{H}^0$ to denote the connected component of $\bar{H}$ containing the identity. We will also be using the standard notation $E_6^{\epsilon}(q)$ for the finite exceptional groups of type $E_6$, where $E_6^{+}(q) = E_6(q)$ and $E_6^{-}(q) = {}^2E_6(q)$.

We will denote the number of elements of order $r$ in some group $H$ by $i_r(H)$. We will often denote the set of prime numbers at most $r$ by $\pi_r$ and the set of odd primes at most $r$ by $\pi'_r$. 

Finally, for the remainder of the paper we will write $\mathcal{M}_G$ for the set of core-free maximal subgroups of $G$ and $\mathcal{P}_G$ for the set of parabolic maximal subgroups of $G$.
\vs

\subsection{Subgroup structure} 
We will repeatedly refer to standard sources in the literature for information regarding maximal subgroups of finite exceptional groups. We recall some important results in this section. We write $\mathcal{M}_G$ as a union $\mathcal{C}\cup \mathcal{S}$ of two collections, where $\mathcal{C}$ is the collection of maximal subgroups $H$ of $G$ of the following types:

\begin{itemize}
\item [\rm (a)] $H = N_G(\bar{H}_{\sigma})$ for some maximal $\sigma$-stable positive dimensional closed subgroup $\bar{H}$ of $\bar{G}$;
\vs
\item [\rm (b)] $H$ is of the same type as $G$. That is, $H$ is either a subfield subgroup or a twisted version of $G$;
\vs
\item [\rm (c)] $H$ is the normaliser of some exotic $r$-local subgroup for some prime $r\neq p$. Those have been classified (up to conjugacy) in~\cite{Coh};
\vs
\item [\rm (d)] $G_0 = E_8(q)$, $p > 5$, and ${\rm soc}(H) = A_5\times A_6$.
\end{itemize}

A description of the subgroups in (a) and (b) can be found in~\cite{LSeitz3}. Finally, the group in case (d) was first described by Borovik in~\cite{Bor} and is known as the \emph{Borovik subgroup}. Note that the work of Borovik in~\cite{Bor} in combination with the work of Liebeck and Seitz in~\cite{LSeitz4} implies that all maximal subgroups in $\mathcal{S}$ are almost simple.

\vs

At the time of writing, all maximal subgroups of $G$ have been determined up to conjugacy unless $G_0 = E_7(q)$ or $E_8(q)$. We now give a few details for each case and the original sources where the information can be found:

\begin{itemize}
\item $\mathbf{G_0 = {}^2B_2(q):}$ In this case, Suzuki determines the maximal subgroups of $G$ up to conjugacy in~\cite{Sz}, and Bray, Holt and Roney-Dougal record them in~\cite[Table 8.16]{BHR}.

\vs

\item $\mathbf{G_0 = {}^2G_2(q)' \textbf{ or } {}^3D_4(q):}$ Here, the subgroups in $\mathcal{M}_G$ up to conjugacy are determined by Kleidman in~\cite{Kl} and~\cite{Kl2} respectively, and they are listed in~\cite[Tables 8.43, 8.51]{BHR}. Note that ${}^2G_2(3)' \cong {\rm PSL}_2(8)$, and so we may assume that $q > 3$ if $G_0 = {}^2G_2(q)'$.

\vs 

\item $\mathbf{G_0 = {}^2F_4(q)':}$ In the case $q\geqs 8$ the maximal subgroups of $G$ are determined by Malle in~\cite{Ma}, apart from the omission of three classes of maximal subgroups isomorphic to ${\rm PGL}_{2}(13)$ when $q = 8$. This is straightened out in~\cite[Remark 4.11]{Cr}. 

If $q = 2$, then the subgroups in $\mathcal{M}_G$ are determined up to conjugacy by Wilson in~\cite{Wi}, with the exception of the omission of a unique class of maximal subgroups isomorphic to ${\rm U}_3(2).2$.

\vs

\item $\mathbf{G_0 = G_2(q)':}$ Here, the members of $\mathcal{M}_G$ are determined up to conjugacy in~\cite{Co} for even $q$ and in~\cite{Kl} for odd $q$. For $q = 2^f$ and $f > 1$ one can find them listed~\cite[Table 8.30]{BHR} and for odd $q$ they are listed in~\cite[Tables 8.41, 8.42]{BHR}.

\vs

\item $\mathbf{G_0 = F_4(q):} \textbf{ or } \mathbf{{E}_6^{\epsilon}(q):}$ The definitive source for those groups is~\cite{Cr}, where Craven determines their maximal subgroups up to conjugacy. The members of $\mathcal{C}$ are listed in~\cite[Tables 7-10]{Cr} and the members of $\mathcal{S}$ are listed in~\cite[Tables 1-3]{Cr}.

\vs

\item $\mathbf{G_0 = E_7(q):}$ The members of $\mathcal{M}_G$ are discussed in~\cite{Cr2} in this case. In particular, the members of $\mathcal{C}$ are listed in~\cite[Table 4.1]{Cr2} up to conjugacy, and as stated in~\cite[1.1]{Cr2}, the collection $\mathcal{S}$ consists of the groups listed in~\cite[Table 1.1]{Cr2} up to conjugacy possibly together with some additional subgroups with socle ${\rm L}_2(q')$ for $q\in \{7, 8, 9, 13\}$. 

\vs

\item $\mathbf{G_0 = E_8(q):}$ The subgroups in $\mathcal{C}$ have been completely determined up to conjugacy. In particular, the parabolic and maximal rank subgroups of the form $N_G(\bar{H}_{\sigma})$ are listed in~\cite[Tables 5.1 and 5.2]{LSS}, the subfield subgroups and exotic local subgroups are listed in~\cite[Table 1]{LSeitz}, and finally for the non-maximal rank subgroups of the form $N_G(\bar{H}_{\sigma})$, one can find the possibilities for ${\rm soc}(\bar{H}_{\sigma})$ listed in~\cite[Table 3]{Coh}. If $p > 5$ then $\mathcal{C}$ also contains the Borovik subgroup. 

It is currently an open problem to determine the members of $\mathcal{S}$ even up to isomorphism. However, significant progress has been made towards a classification. The current state of the art regarding the groups in class $\mathcal{S}$ will not be relevant to us, but we refer the reader to~\cite[p.19]{BKor} for more information.


\end{itemize}
\subsection{Conjugacy classes}

We will repeatedly be using results from the literature about conjugacy classes of elements in finite exceptional groups. In this section we point the reader to the sources where the information is recorded.

\begin{itemize}
\item $\mathbf{G_0 = {}^2B_2(q):}$ The conjugacy classes of $G$ are determined in~\cite{Sz}.

\vs

\item $\mathbf{G_0 = {}^2G_2(q):}$ The conjugacy classes of $G$ are determined in~\cite{Wa}.

\vs

\item $\mathbf{G_0 = {}^3D_4(q):}$ The conjugacy classes of $G$ are determined in~\cite{DM} and~\cite{Sp}.

\vs

\item $\mathbf{G_0 = {}^2F_4(q)':}$ The $G_0$-classes are determined by Shinoda in~\cite{Sh}.

\vs

\item $\mathbf{G_0 = G_2(q)':}$ If $p\geqs 5$ then the conjugacy classes of $G$ are determined in~\cite{Ch} and if $p < 5$, they are determined by Enomoto in~\cite{E70}.

\vs

\item $\mathbf{G_0 = F_4(q):}$ Information on the semisimple classes of $G_0$ is recorded in~\cite[Table 23]{AHL}. For unipotent classes, the Foulkes functions for $G_0$ are given to us by Lübeck, and we discuss further in Section~\ref{s:parabolics} how we use those to compute fixed point ratios.

\vs

\item $\mathbf{G_ 0 = E_6^{\epsilon}(q), E_7(q)} \textbf{ or } \mathbf{E_8(q)}$: Our standard reference for semisimple classes is~\cite{FJ1} when $G_ 0 = E_6^{\epsilon}(q)$ and $E_7(q)$ and~\cite{FJ2} when $G_0 = E_8(q)$, where Fleischmann and Janiszczak describe the conjugacy classes of the simply connected groups of type $E_6, E_7$, and $E_8$ respectively. For unipotent elements we again use the Foulkes functions given to us by Lübeck in~\cite{Lu}.

\end{itemize}

Recall that $\tau$ is a $6$-tuple of core-free subgroups of $G$. As with classical groups, our central aim will be to show that $\widehat{Q}(G, \tau) < 1$, and in particular bound $\widehat{Q}(G, \tau)$ by a function of $q$ that tends to zero as $q$ tends to infinity. We adopt a rather different approach on how we derive an upper bound for $\widehat{Q}(G, \tau)$, depending on the socle type of $G$ and so we will divide the various possibilities for $G_0$ into the following three collections, according to the proof strategy we use, and we will handle each one in turn:

\begin{align}
\mathcal{E}_1 &= \{{}^2G_2(q)', {}^2B_2(q), {}^2F_4(q)', {}^3D_4(q), G_2(q)'\} \nonumber\\
\mathcal{E}_2 &= \{E_6^{\epsilon}(q), E_7(q), F_4(q)\}\\
\mathcal{E}_3 &= \{E_8(q)\}\nonumber
\end{align}

In all the above cases, our proof will rely on obtaining sufficient fixed point ratio estimates on all maximal subgroups of $G$, and we will be working with the bound
\[
\widehat{Q}(G, \tau) \leqs \sum_{x\in \pi} f(x, q),
\]
where $\pi$ denotes the set of prime order elements in $G$ and
\[
f(x, q) = \max \{{\rm fpr}(x, G/H) \, : \, H\in \mathcal{M}_G\}
\]
for each choice of prime order element $x\in G$. Note that $f(x, q)$ is independent of the choice of conjugacy class representative. That is, $f(x, q) = f(x^g, q)$ for all $g\in G$, so if $x_1, \ldots, x_t$ is a list of representatives of the prime order conjugacy classes of $G$, then we will be working with the expression:
\[
\sum_{i = 1}^t |x_i^G|\cdot f(x_i, q).
\]

Let $H$ be a core-free maximal subgroup of $G$. The definitive source on bounds on ${\rm fpr}(x, G/H)$ is~\cite{LLS}. However, the bounds in~\cite{LLS} will not always be sufficient for our purposes.

When $H$ is non-parabolic, our standard approach will be to obtain sufficient bounds by inspecting the proofs of~\cite{BLS} and~\cite{B18}. In certain rare cases, we will also need to improve existing bounds in the literature. For example, if $G_ 0 = F_4(2)$ and $|H| > 2^{22}$, then we will use the Character Table library in \textsf{GAP} to compute the precise value of ${\rm fpr}(x, G/H)$ for all $x\in G$. More details can be found in the proof of Proposition~\ref{prop:f4-fpr}. 

If $G$ has socle in $\mathcal{E}_2$, then obtaining fixed point ratio bounds that hold for all non-parabolic subgroups of $G$ is slightly more involved than in other cases, due to the larger number of possibilities for $H$. We carefully explain how to do this in Section~\ref{s:non-parabolics}. These bounds may be of independent interest, so we record them in Tables~\ref{t:e6:fprs}, \ref{t:e6:fprs-q2}, \ref{t:e7:fprs}, \ref{t:f4:fprs}, and~\ref{t:f4:fprs-q2} for the reader's convenience. Surprisingly, if $G_0 = E_8(q)$, then obtaining sufficient bounds is not as troublesome and can be done by careful inspection of~\cite{BLS}. 

Our approach when $H$ is parabolic is rather different and we give detailed information on how we obtain bounds on ${\rm fpr}(x, G/H)$ in Section~\ref{s:parabolics}. If $G_0 \in \mathcal{E}_1$, then the bounds in~\cite[Theorems 1--2]{LLS} are always sufficient for unipotent and semisimple elements, and better bounds can be obtained by inspecting the proof of~\cite[6.1]{LLS} for field and graph-field automorphisms, if necessary. This is however not true if $G_0 \in \mathcal{E}_2\cup \mathcal{E}_3$, where we will heavily use~\cite[3.2]{LLS} and~\cite{Lu} to derive better estimates for ${\rm fpr}(x, G/H)$ that are sensitive to ${\rm dim}\, x^{\bar{G}}$ for a given prime order element $x\in G$. For a unipotent element $x\in G$, it is always possible to compute precise values for ${\rm fpr}(x, G/H)$ using Lübeck's values of the Foulkes functions for $G_0$~\cite{Lu}. 

Now suppose that $x\in G$ is semisimple. If $G_0\in \mathcal{E}_3$, then an upper bound on the formula given by~\cite[3.2]{LLS} will be sufficient for our purposes but if $G_0\in \mathcal{E}_2$, then we will need to compute the precise value of ${\rm fpr}(x, G/H)$ to obtain our result. For field and graph-field automorphisms we will always appeal to a statement extracted from the proof of~\cite[6.1]{LLS} (see Proposition~\ref{prop:LLS-6.1}). Finally, for graph automorphisms when $G_0 = E_6^{\epsilon}(q)$, we derive appropriate bounds in Proposition~\ref{prop:e6-fpr-graph} when $q$ is odd using the approach described in ~\cite[6.4]{LLS}, which we record in Table~\ref{t:e6:graphs}, and for even $q$, we consult~\cite[Table 7]{BLS} and for the reader's convenience we record this information in Table~\ref{t:e6:graphs-p2}. We give more details in Section~\ref{s:parabolics}.

\vs

\noindent \textbf{Organisation.} The remainder of the paper is organised as follows: In Section~\ref{s:non-parabolics} we give fixed point ratio bounds for non-parabolic actions for groups with socle in $\mathcal{E}_2$. In Section~\ref{s:parabolics}, we explain how we derive fixed point ratio estimates on parabolic subgroups for groups with socle in $\mathcal{E}_2$ and $\mathcal{E}_3$. Finally, in Sections~\ref{s:e1}, \ref{s:e2}, and~\ref{s:e3}, we prove Theorem~\ref{thm:exceptional} for groups with socle in $\mathcal{E}_1, \mathcal{E}_2$ and $\mathcal{E}_3$, respectively.

\section{Fixed point ratios for non-parabolic actions}\label{s:non-parabolics}

In this section we establish fixed point ratio bounds for non-parabolic actions of groups with socle in $\mathcal{E}_2$. In particular, for every prime order element in $G$ we obtain a value $g(x, q)$ such that 
\[
{\rm fpr}(x, G/H) \leqs g(x, q)
\]
for every $H\in \mathcal{M}_G\setminus \mathcal{P}_G$ and we record those values in Tables~\ref{t:e6:fprs}, \ref{t:e6:fprs-q2}, \ref{t:e7:fprs}, \ref{t:f4:fprs}, and~\ref{t:f4:fprs-q2}. As mentioned above, $g(x, q)$ is readily obtained by inspecting the proofs of~\cite{BLS} in the majority of cases, and we carry out the necessary computations here for the cases where more work is required. 

For the remainder of the section, we fix $G$ with $G_0\in \mathcal{E}_2$ and $H\in \mathcal{M}_G\setminus \mathcal{P}_G$, and we proceed by establishing the bounds in question.

\begin{prop}\label{prop:e6-fpr}
Suppose that $G_0 = E^{\epsilon}_6(q)$ with $q\geqs 3$, and $x\in G$ has prime order $r$. Then ${\rm fpr}(x, G/H) \leqs g(x, q)$, where $g(x, q)$ is recorded in Table~\ref{t:e6:fprs}.
\end{prop}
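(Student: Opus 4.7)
The plan is to go through each prime-order $G_0$-class of $x \in G$ in turn and, for each non-parabolic family of maximal subgroup $H$, extract an upper bound on $\fpr(x, G/H)$ by careful inspection of the arguments in~\cite{BLS} and~\cite{B18}. The non-parabolic members of $\mathcal{M}_G$ are enumerated by Craven in~\cite[Tables 7, 8]{Cr} (for the collection $\mathcal{C}$) and~\cite[Tables 1--3]{Cr} (for the collection $\mathcal{S}$), and split into: (a) $\sigma$-stable positive-dimensional subgroups $N_G(\bar{H}_{\sigma})$ of maximal or non-maximal rank; (b) subfield subgroups, including twisted subgroups of the form $E_6^{-\epsilon}(q^{1/2})$; (c) the exotic $r$-local subgroups classified in~\cite{Coh}; (d) almost simple subgroups in $\mathcal{S}$.

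I would handle unipotent and semisimple elements of odd prime order first. For those, the bounds proved in~\cite{BLS} have the form $\fpr(x, G/H) < |x^G|^{-1/2 + \iota}$ with $\iota$ a small constant depending on the family, and the maximum over non-parabolic $H$ is attained on a small number of well-understood maximal rank subgroups such as those of type $F_4(q)$, $(q-\epsilon)E_6^\epsilon(q)/(3,q-\epsilon) $, $\mathrm{GL}_6^\epsilon(q)$, or $D_5^\epsilon(q)T_1$. These worst-case estimates are what would populate the entry $g(x,q)$ of Table~\ref{t:e6:fprs}. Involutions in $G_I$ can be handled in the same way, using Lemma~\ref{lem:inndiag-fpr} together with Remark~\ref{rem:inndiag-fpr} to reduce to a bound of the form $|x^{G_I}\cap H|/|x^{G_0}|$, and then combining $|x^{G_I} \cap H| \leq i_2(H)$ with the standard lower bounds on $|x^{G_0}|$ obtained from~\cite{FJ1} and~\cite{LLS}. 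Field and graph-field automorphisms are controlled uniformly by extracting a refinement of the argument in~\cite[Section 6.1]{LLS}, which I expect to state as a separate lemma (analogous to Proposition~\ref{prop:LLS-6.1}).

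The main obstacle will be the small-$q$ regime and the occurrences of subgroups in $\mathcal{S}$ of exceptional type (such as $L_2(q')$ or $\mathrm{Fi}_{22}$ when $q=2$), where the generic bounds from~\cite{BLS} are not sharp enough and a direct argument is needed. For these, my strategy would be to estimate $|x^{G_I}\cap H|$ from the conjugacy class information of $H$ directly, using~\cite{GAPchar} when $H$ is sporadic or otherwise small, and then divide by the lower bounds on $|x^{G_0}|$ recorded in Section~\ref{s:cclasses}. Graph automorphisms for $\epsilon = +$ will be handled separately in Proposition~\ref{prop:e6-fpr-graph} via the Lang--Steinberg technique of~\cite[Section 6.4]{LLS}, since the generic bounds from~\cite{BLS} are not stated uniformly for this family. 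After collecting all cases, the value of $g(x,q)$ is the maximum of the bounds obtained, and I would verify by inspection that it decays appropriately in $q$ so as to be usable in the probabilistic estimate $\widehat{Q}(G, \tau)$ in Section~\ref{s:e2}.
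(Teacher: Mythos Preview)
Your overall strategy---case analysis on the type of $x$ and inspection of the proofs in~\cite{BLS}---is exactly what the paper does, and the bulk of the work is indeed extracted from~\cite[4.14--4.20]{BLS} together with~\cite[Theorem 2]{LLS}. However, there are a few places where your plan misidentifies what actually needs to be done.

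First, the proposition assumes $q \geqs 3$, so the $\mathrm{Fi}_{22}$ subgroup and other almost simple subgroups in $\mathcal{S}$ that only arise for $q = 2$ are not relevant here; they are treated in the separate Proposition~\ref{prop:e6-fpr-q2}. For $q \geqs 3$ the groups in $\mathcal{S}$ are small enough that the trivial bound $|H|/|x^G|$ already suffices, and the paper does not single them out. Likewise, the reference to~\cite{B18} is not needed for $E_6^{\epsilon}(q)$; that paper is invoked only for $E_7(q)$.

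Second, and more importantly, you propose to defer graph automorphisms to Proposition~\ref{prop:e6-fpr-graph}. That proposition handles \emph{parabolic} actions only; the non-parabolic bounds for $x = \tau$ (the entries $q^{-5}$ and $12q^{-10}$ in Table~\ref{t:e6:fprs}) are obtained in the present proof, again by inspecting~\cite[4.14--4.20]{BLS}. So graph automorphisms must be dealt with here, not deferred.

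Third, the paper's organisation is coarser than yours: rather than going subgroup-family by subgroup-family, it first applies~\cite[Theorem 2]{LLS} to dispose of small-dimension classes uniformly, and only for the residual classes (e.g.\ semisimple with ${\rm dim}\,x^{\bar{G}} \geqs 66$, or unipotent with ${\rm dim}\,x^{\bar{G}} \geqs 40$ when $p = 2$) does it split into the cases $|H| \leqs q^{48}$ (or $q^{32}$) versus $H = N_G(\bar{M}_\sigma)$ for the few large $\bar{M}$. This is less laborious than the exhaustive enumeration you sketch, though the content is the same.
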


\begin{proof}
\noindent \textbf{Case 1.} \emph{$x$ is semisimple:} Here, \cite[Theorem 2]{LLS} implies that ${\rm fpr}(x, G/H) \leqs 2q^{-12}$, and we claim that ${\rm fpr}(x, G/H) \leqs q^{-14}$ if ${\rm dim}\, x^{\bar{G}} \geqs 66$. Let $L$ denote the simply connected group of type $E_6^{\epsilon}(q)$ and recall that $L = G_0.Z$, where $Z = Z(L)$. Also recall that $Z \cong C_3$ if $q\equiv 1 \imod{3}$ and $Z = 1$ otherwise.

 By close inspection of~\cite{FJ1}, we find that 
\[
|C_L(x)| \leqs q^3(q^2-1)(q^3+1)(q^2-q+1)(q^2+q+1)
\]
when $\epsilon = +$ and ${\rm dim}\, x^{\bar{G}} \geqs 66$. Since $|x^L|$ can split into at most $3$ $G_0$-classes, we have 
\begin{align*}
|x^G| &\geqs \frac{|E_6(q)|}{3q^3(q^2-1)(q^3+1)(q^2+q+1)(q^2-q+1)}\\ &\geqs \frac{1}{3}q^{33}(q^2-1)(q^3-1)(q^5-1)(q^6+1)(q^8-1)(q^9-1)\\ &\geqs \frac{q^{39}(q^2-1)(q^3-1)(q^5-1)(q^8-1)(q^9-1)}{3(q^2-1)}
\end{align*}
so by Lemma~\ref{lem:number-theory} we have $|x^G| > \frac{1}{6}q^{64}$.

Similarly, if $\epsilon = -$, then we find
\[
|C_L(x)| \leqs q^3(q^2-1)(q^3+1)(q+1)^4
\]
and again using Lemma~\ref{lem:number-theory} we find $|x^G| > \frac{1}{6}q^{64}$. 

If $|H| \leqs q^{48}$, then we deduce that ${\rm fpr}(x, G/H) < 2q^{-15} < q^{-14}$, as required, so now suppose $|H| > q^{48}$. Here,~\cite{Cr} tells us that $H$ must be of type $F_4$, in which case we verify that ${\rm fpr}(x, G/H) < q^{-14}$ by inspecting the proof of~\cite[4.19]{BLS}.

\vs

\noindent \textbf{Case 2.} \emph{$x$ is unipotent:} Here, $x\in G_0$. If $p > 2$, then we check that the relevant bounds hold by inspecting~\cite[4.15--4.21]{BLS}, so let us assume that $p = 2$. If ${\rm dim}\, x^{\bar{G}} < 40$, then the claim follows directly by~\cite[Theorem 2]{LLS}, so we may now assume that ${\rm dim}\, x^{\bar{G}} \geqs 40$. That is, $x$ is in the $G$-class $A_1^3$. If $|H| \leqs q^{32}$, or  $H$ is of type $E_6^{\epsilon}(q^{1/2})$, then the claim is verified in the proofs of \cite[4.15, 4.21]{BLS} respectively, so we may assume that $H = N_G(\bar{M}_{\sigma})$ where $\bar{M}$ is a $\sigma$-stable subgroup of $\bar{G}$. 

First suppose that $\bar{M} = A_1A_5$. By the Bala-Carter Theorem (see~\cite{Bala-Carter-I, Bala-Carter-II}), which provides a description of unipotent elements in simple algebraic groups, the $\bar{M}$-class of $x$ corresponds to a pair $(L, P_{L'})$, where $L$ is a Levi subgroup of $\bar{M}$ and $P_{L'}$ is a distinguished parabolic subgroup of $L'$. If $L$ is also a Levi subgroup of $\bar{G}$, then the $\bar{G}$-class has the same label as the $\bar{M}$-class, whilst if $L = A_1^4$, then we deduce that $x$ lies in the $G$-class $A_1^3$ via \cite[Table 17]{La}. Now it is shown in \cite[4.16]{BLS} that ${\rm fpr}(x, G/H) < 4q^{-\delta(x)}$, where $\delta(x) = {\rm dim}\, x^{\bar{G}} - {\rm dim}\, (x^{\bar{G}}\cap \bar{M})$, and so we compute that ${\rm fpr}(x, G/H) < 4q^{-20}$. 

Next consider the case $\bar{M} = D_5T_1$. Here~\cite[Tables 9 and 10]{Cr} give $H_0 = H\cap G_0 = {\rm P\O}_{10}^{\epsilon}(q)\times \frac{q-\epsilon}{e}$, where $e = (3, q-\epsilon)$. Therefore, \cite[1.3]{LLS} implies that 
\[
|x^G\cap H| \leqs i_2(H_0) = i_2({\rm Aut}({\rm P\O}_{10}^{\epsilon}(q)) < 2(q+1)q^{24}
\]
so the claim is satisfied.

Now suppose that $\bar{M} = T_2D_4.S_3$. Here we deduce via~\cite[Tables 9 and 10]{Cr} that $x^G\cap H \subseteq \widetilde{H}$, where $\widetilde{H} = {\rm O}_8^{+}(q)$ or ${}^3D_4(q)$. Hence, we have
\[
|x^G\cap H| \leqs i_2(\widetilde{H}) \leqs 2q^{16}
\]
so certainly ${\rm fpr}(x, G/H) < 2q^{-8}$, as required.

Finally, if $\bar{M} = F_4$, then we deduce via \cite[Table A]{La95} that there are no involutions $x\in H$ with ${\rm dim}\, x^{\bar{G}} \geqs 40$.

\vs

\noindent \textbf{Case 3.} \emph{$x$ is a graph or field or graph-field automorphism:} If $r$ is odd and $|H| \leqs q^{32}$ we observe that ${\rm fpr}(x, G/H) \leqs 6q^{32-78(1-1/r)}$, as
\[
|x^G| \geqs |E_6^{\epsilon}(q) \, : \, E_6^{\epsilon}(q^{1/r})| > \frac{1}{6}q^{78(1-1/r)}.
\]
In all other cases the bounds derived in the proofs of~\cite[4.15--4.21]{BLS} are again sufficient, so the proof is complete.
\end{proof}

\begin{prop}\label{prop:e6-fpr-q2}
Suppose that $G_0 = E^{\epsilon}_6(2)$ and $x\in G$ has prime order $r$. Then ${\rm fpr}(x, G/H) \leqs g(x, q)$, where $g(x, q)$ is recorded in Table~\ref{t:e6:fprs-q2}.
\end{prop}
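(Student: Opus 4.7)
The plan is to follow the same case analysis as in Proposition~\ref{prop:e6-fpr}, but to replace the asymptotic-style estimates (which become weakest precisely at $q=2$) with direct computation whenever the combinatorial bounds fail. We first partition the prime order elements of $G$ into the usual families (semisimple, unipotent, outer graph, field, graph-field) and, for each family, loop over the possibilities for $H \in \mathcal{M}_G \setminus \mathcal{P}_G$ listed in \cite[Tables 7 and 8]{Cr} and in \cite[Tables 1--3]{Cr}. For those triples $(x, H, G)$ where the estimates derived in the proofs of \cite[4.14--4.20]{BLS} already yield a bound of the required shape, we simply inspect those proofs and specialise to $q = 2$.

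The main obstacle is that several of the inequalities used in the proof of Proposition~\ref{prop:e6-fpr}, such as $|x^G| > \tfrac{1}{6}q^{64}$ for semisimple $x$ with $\dim x^{\bar G} \geqs 66$, or $|H| \leqs q^{32}$ implying $\fpr(x, G/H) \leqs q^{32 - 78(1-1/r)}$, degenerate at $q = 2$ and produce values that are too weak. For these cases the strategy is to compute $\fpr(x, G/H)$ exactly. Since $G_0 = E_6^{\pm}(2)$ and many of its maximal subgroups are available in the \textsf{GAP} Character Table Library \cite{GAPchar}, for every such pair $(G, H)$ appearing in \cite[Tables 7, 8 and 1--3]{Cr} whose character table (and that of the relevant fusion map into $G$) is stored we may read off the permutation character $1_H^G$ and hence $\fpr(x, G/H) = \chi(x)/|G:H|$ for every prime order class $x^G$. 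This yields the exact maximum of $\fpr(x, G/H)$ over that fixed $H$ as $x$ ranges over $G$, which we then feed into the definition of $g(x, 2)$ in Table~\ref{t:e6:fprs-q2}.

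For the handful of maximal subgroups $H$ whose character table is not directly stored (for instance subgroups of type $F_4(2)$, $D_5^{\epsilon}(2).T_1$, $A_1 A_5$, or the exceptional almost simple $\mathcal{S}$-subgroups from \cite[Tables 1--3]{Cr}), the plan is to construct $H$ explicitly in {\sc Magma} as a subgroup of $G$ (either via its standard generators, or as the normaliser of a suitable subgroup, or using the explicit embeddings in \cite{Cr}), enumerate representatives of the prime order classes of $G$, and apply the functions \texttt{fpr} and \texttt{MaxFPR} from Section~\ref{sss:comp-classical} to compute $\fpr(x, G/H)$ for each such $x$. In those subcases where direct computation inside $G$ is infeasible because of the index $|G:H|$, we fall back on the bound $|x^G \cap H| \leqs i_r(H)$ of \cite[1.3]{LLS}, using the fact that $|H|$ is small enough at $q=2$ to obtain $i_r(H)$ by a straightforward class-count inside $H$.

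Finally, for field and graph-field automorphisms of $G$, we note that $r = 2$ is the only possibility at $q = 2$ (indeed $q = p$), so the only outer prime order elements to consider are the involutory graph automorphisms; these are handled by the same Lemma~\ref{lem:outer-cosets} reduction already used in Section~\ref{s:fprbounds}, giving $|x^G \cap H| \leqs i_2(\widetilde{H}) + 1$ with $\widetilde{H} = H \cap G_I$, whose right-hand side is directly computable from the character tables of $H$. Combining the bounds obtained from each of these sources yields the values $g(x, 2)$ recorded in Table~\ref{t:e6:fprs-q2}, completing the proof.
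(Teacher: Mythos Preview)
Your overall strategy — split by element type and loop over the non-parabolic $H$ from \cite{Cr}, using \textsf{GAP} tables where available and involution counts as a fallback — is essentially the paper's strategy. However, several steps of your execution plan are either mistaken or not feasible, and the paper handles those points differently.

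First, you list $F_4(2)$ among the subgroups whose character table is ``not directly stored''. This is wrong: the character table of $F_4(2)$ \emph{is} in the \textsf{GAP} library, and the paper uses exactly this (together with the stored fusion into $G_0$) to handle the semisimple elements when $\bar M = F_4$. Second, your proposal to construct $H$ inside $G = E_6^\epsilon(2)$ in {\sc Magma} and apply the functions \texttt{fpr}/\texttt{MaxFPR} from Section~\ref{sss:comp-classical} is not realistic at this scale: those functions call \texttt{Classes} and \texttt{IsConjugate} inside $G$, and the paper itself restricts their use to the tiny exceptional groups ${}^2F_4(2)'$ and ${}^3D_4(2)$. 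The paper avoids any such computation for $A_1A_5$, $D_5T_1$, $T_2D_4.S_3$ and the small-order subgroups by citing the bounds already derived in the proofs of \cite[4.14--4.20]{BLS} (in particular the $\delta(x)$ bound for $A_1A_5$ and the estimates in \cite[4.16]{BLS} for $D_5T_1$), combined with the trivial $|x^G\cap H|\leqslant |H_0|$ or $i_2(\widetilde H)$ bounds where needed. Third, Lemma~\ref{lem:outer-cosets} is stated for classical groups (it refers to $\mathrm{PGL}(V)$), so invoking it for graph automorphisms of $E_6^\epsilon(2)$ is not justified; the paper instead verifies the graph-automorphism bounds directly from \cite[4.14--4.20]{BLS}. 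In short, your plan has the right shape but leans on unworkable {\sc Magma} computations and a misapplied lemma where the paper simply quotes \cite{BLS} and the \textsf{GAP} library.
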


\begin{proof}
If $H$ is almost simple with socle ${\rm Fi}_{22}$, then the fusion of conjugacy classes of $H$ in $G$ is computable in \textsf{GAP} using the Character Table Library, so we can compute precise fixed point ratios for all $x\in G$, as noted in Section~\ref{sss:comp-classical}. The same is true if $G = {}^2E_6(2).2$ and $H = {\rm SO}_7(3)$. So, in view of~\cite[4.14]{BLS}, we may assume from now on that $|H|\leqs 2^{32}$ or $H = N_{\bar{G}}(\bar{M}_{\sigma})$ where $\bar{M}$ is some $\sigma$-stable subgroup of $\bar{G}$.

\vs

\noindent \textbf{Case 1.} \emph{$x$ semisimple:} If ${\rm dim}\, x^{\bar{G}} < 42$, then the claim follows directly by \cite[Theorem 2]{LLS}, so we may assume that ${\rm dim}\, x^{\bar{G}} \geqs 42$. If $H = N_{\bar{G}}(\bar{M}_{\sigma})$ with $\bar{M} = F_4$, then we note that $G \leqs G_0.2$, and the character tables of $G_0$ and $H_0 = H\cap G_0$ are both available in \textsf{GAP}. Since $x\in G_0$, we can compute the fusion of semisimple $H_0$-classes in $G_0$ and verify that the bounds in Table~\ref{t:e6:fprs-q2} are satisfied for all semisimple elements. 

Next, if $H = N_{\bar{G}}(\bar{M}_{\sigma})$ with $\bar{M} = D_5T_1$ then the bounds in the proof of \cite[4.17]{BLS} suffice.

In all remaining cases, we can infer from~\cite[Tables 9 and 10]{Cr} that $|x^G\cap H| \leqs 2^{38}$ and we immediately get ${\rm fpr}(x, G/H) \leqs 2^{38}/|x^G|$ when ${\rm dim}\, x^{\bar{G}} > 42$, so finally let us assume that ${\rm dim}\, x^{\bar{G}} = 42$, so $C_{\bar{G}}(x)^0 = T_1A_5$. 

First suppose that $|H| \leqs 2^{32}$. We can infer from \cite[Table 9]{BLS} that $|x^G| > 2^{41}$, so we have ${\rm fpr}(x, G/H) < 2^{-9}$. 
 
 Now assume that $H = N_{\bar{G}}(\bar{M}_{\sigma})$ where $\bar{M}$ is some $\sigma$-stable subgroup of $\bar{G}$. If $\bar{M} = A_1A_5$, then ${\rm fpr}(x, G/H) < 2^{-14}$ as shown in~\cite[4.16]{BLS}. 
 
 Finally, if $\bar{M} = T_2D_4.S_3$, then inspection of~\cite[Tables 9, 10]{Cr} gives 
 \[
 |x^G\cap H| < |H_0| \leqs 18\cdot |{\rm P\O}_8^{+}(2)| < 2^{33}
 \]
 where $H_0 = H\cap G_0$, so the claim follows.
 \vs

\noindent \textbf{Case 2.} \emph{$x$ is unipotent:} If $x$ is in class $A_1$, then~\cite[Theorem 2]{LLS} gives ${\rm fpr}(x, G/H) \leqs 2^{-5}$, so next assume that $x$ is in class $A_1^2$ or $A_1^3$. If $|H| \leqs 2^{32}$, then the claim is proved in~\cite[4.15]{BLS}, so we may now assume that $H = N_{\bar{G}}(\bar{M}_{\sigma})$ where $\bar{M}$ is some $\sigma$-stable subgroup of $\bar{G}$.

If $\bar{M} = A_1A_5$, then the bound ${\rm fpr}(x, G/H) \leqs 4\cdot2^{-\delta(x)}$ is still valid, as highlighted in~\cite[4.16]{BLS}, and we find that $\delta(x) = 16$ if $x$ is in $A_1^2$ and $\delta(x) = 20$ if $x$ is in $A_1^3$, so the claim holds. 

If $\bar{M} = D_5T_1$, then we note as above that $x^G\cap H \subseteq {\rm \O}_{10}^{\epsilon}(2)$, and so using {\sc Magma} we find
\[
|x^G\cap H| \leqs i_2({\rm \O}_{10}^{\epsilon}(2)) = 21999615.
\]
One can check that this bound is sufficient in both cases. 
Now suppose that $\bar{M} = T_2D_4.S_3$. As above we have
\[
|x^G\cap H| \leqs \max \{i_2({}^3D_4(2).3), i_2(\O_8^{+}(q).S_3)\} \leqs 2^{15}
\]
which is sufficent.

\vs

\noindent \textbf{Case 3.} \emph{x is an involutory graph automorphism:} Here, the bounds in Table~\ref{t:e6:fprs-q2} can be verified by inspecting~\cite[4.15--4.21]{BLS}, so the proof is complete.

\end{proof}

{\small
\begin{table}
\[
\begin{array}{lll} \hline
x &   g(x, q) & \text{Conditions} \\ \hline
x = s&  2q^{-12} &  {\rm dim}\, x^{\bar{G}} < 66\\
& q^{-14} & {\rm dim}\, x^{\bar{G}} \geqs 66\\
x = u&2q^{-6}& {\rm dim}\, x^{\bar{G}} < 32, \, p>2\\
& q^{-6}& 32\leqs {\rm dim}\, x^{\bar{G}} < 40, \, p>2\\
& q^{-8}& 40 \leqs {\rm dim}\, x^{\bar{G}} < 48, \, p>2\\
& q^{-14}& 48\leqs {\rm dim}\, x^{\bar{G}} < 54, \, p > 2\\
& q^{-13}& {\rm dim}\, x^{\bar{G}} \geqs 54, \, p>2\\
 & 2q^{-6}& {\rm dim}\, x^{\bar{G}} < 40, \, p = 2\\
& 2q^{-8}& {\rm dim}\, x^{\bar{G}} \geqs 40, \, p = 2\\
x = \phi  \text{ or } \phi\tau& q^{-12} & r = 2\\
& 12q^{-26(1-1/r)}& r\geqs 3\\
x = \tau& q^{-5}& C_{\bar{G}}(x) = F_4\\
& 12q^{-10}& C_{\bar{G}}(x) \neq F_4\\
\hline
\end{array}
\]
\caption{Bounds on ${\rm fpr}(x, G/H)$ for $G_0 = E^{\epsilon}_6(q)$, $H$ non-parabolic and $q\geqs 3$.}
\label{t:e6:fprs}
\end{table}
}

{\small
\begin{table}
\[
\begin{array}{lll} \hline
x &   g(x, q) & \text{Conditions} \\ \hline
x = s&  2^{-6} &  {\rm dim}\, x^{\bar{G}} < 42\\
&  2^{-8} &  {\rm dim}\, x^{\bar{G}} = 42\\
& 2^{38}/|x^G| &42 <{\rm dim}\,  x^{\bar{G}}\leqs 54 \\
&  2^{-17} &  {\rm dim}\, x^{\bar{G}} > 54\\
x = u & 2^{-5}& {\rm dim}\, x^{\bar{G}} < 32\\
& 2^{-6}& {\rm dim}\, x^{\bar{G}} < 40\\
& 2^{-8}& {\rm dim}\, x^{\bar{G}} \geqs 40\\
x = \tau& 2^{-5}& C_{\bar{G}}(x) = F_4\\
 & 3\cdot2^{-8}& C_{\bar{G}}(x) \neq F_4\\
\hline
\end{array}
\]
\caption{Bounds on ${\rm fpr}(x, G/H)$ for $G_0 = E^{\epsilon}_6(2)$ and $H$ non-parabolic.}
\label{t:e6:fprs-q2}
\end{table}
}

\begin{prop}\label{prop:e7-fpr}
Suppose that $G_0 = E_7(q)$ and $x\in G$ has prime order $r$. Then ${\rm fpr}(x, G/H) \leqs g(x, q)$, where $g(x, q)$ is recorded in Table~\ref{t:e7:fprs}.
\end{prop}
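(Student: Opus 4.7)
The plan follows the same stratification used in Propositions~\ref{prop:e6-fpr} and \ref{prop:e6-fpr-q2}. I would divide the argument according to whether $x$ is semisimple, unipotent, or a field/graph-field automorphism (note that $E_7$ has no graph automorphisms, since its Dynkin diagram admits no symmetry). Within each case, I would further stratify by ${\rm dim}\, x^{\bar{G}}$, equivalently by the type of $C_{\bar{G}}(x)^0$. The generic bound from \cite[Theorems 1--2]{LLS} handles small class dimensions immediately, and the refined values in Table~\ref{t:e7:fprs} only need to be verified when $x$ has large class dimension and $H$ is one of a small number of ``large'' maximal subgroups. The possibilities for $H \in \mathcal{M}_G \setminus \mathcal{P}_G$ are read off from \cite[Table 4.1]{Cr2} and \cite[Table 1.1]{Cr2}, with the caveat discussed below.

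For semisimple $x$, I would first apply \cite[Theorem 2]{LLS} to obtain the baseline bound of roughly $2q^{-14}$. For the refinement when ${\rm dim}\, x^{\bar{G}}$ is large, I would work in the simply connected cover $L$ of $E_7(q)$, extract an upper bound on $|C_L(x)|$ from the explicit description of semisimple classes in \cite{FJ1}, and apply Lemma~\ref{lem:number-theory} to derive a lower bound on $|x^G|$. This lower bound, combined with $|x^G \cap H| \leqs |H|$, dispatches every $H$ with $|H|$ bounded by an appropriate power of $q$; the few remaining cases (subgroups of type $A_1 D_6$, $A_7$, $E_6 T_1$, and the subfield subgroup of type $E_7(q^{1/2})$) are handled by inspecting the proofs of the analogous assertions in \cite[Section 4]{BLS} and \cite[Section 5]{B18}.

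For unipotent $x$, I would appeal to the $\delta(x) = {\rm dim}\, x^{\bar{G}} - {\rm dim}(x^{\bar{G}} \cap \bar{M})$ framework from \cite[Section 4]{BLS}. For $H = N_G(\bar{M}_\sigma)$ with $\bar{M}$ a positive-dimensional $\sigma$-stable reductive subgroup, Bala-Carter classification together with the tables of Lawther \cite{La} identifying $\bar{G}$-classes of unipotent elements of $\bar{M}$ yields $\delta(x)$, and the bound ${\rm fpr}(x, G/H) \leqs c \cdot q^{-\delta(x)}$ closes the case. When $\bar{M}$ is of type $D_6 A_1$, $A_7$, $E_6 T_1$, or $D_4 T_3 . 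S_3$, one may alternatively bound $|x^G \cap H|$ by $i_r(H_0)$ using standard estimates on the number of order-$r$ elements in the relevant classical or smaller exceptional group; this is the approach I would take in borderline strata where the $\delta(x)$ bound is tight. For $H \in \mathcal{S}$, which has order bounded independently of $q$, the trivial estimate $|x^G \cap H| \leqs |H|$ suffices against the lower bound on $|x^G|$ extracted from \cite[Table 7]{BLS}.

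For field and graph-field automorphisms, I would use the standard lower bound $|x^G| \geqs |E_7(q) : E_7(q^{1/r})| > \tfrac{1}{2} q^{133(1-1/r)}$ together with $|x^G \cap H| \leqs |H|$ or, when $H$ is large, the refined estimates from the proof of \cite[Proposition 6.1]{LLS}. The main obstacle is the incompleteness of the classification of $\mathcal{S}$: by \cite[Theorem]{Cr2}, there may exist additional maximal subgroups with socle ${\rm L}_2(q')$ for $q' \in \{7, 8, 9, 13\}$ beyond those listed. Since such subgroups have order bounded by an absolute constant, the trivial bound dispatches them once $|x^G|$ exceeds this constant to the appropriate power, which holds for all sufficiently large $q$. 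For the small $q$ left uncovered by the asymptotic bounds, I would fall back on direct computation using the Character Table Library in \textsf{GAP}, exactly as done in the proof of Proposition~\ref{prop:e6-fpr-q2} for the $E_6^\epsilon(2)$ case.
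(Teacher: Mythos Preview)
Your overall stratification (semisimple / unipotent / field automorphisms, with further refinement by ${\rm dim}\,x^{\bar G}$ and by whether $H$ is ``small'' or one of a handful of large reductive-type subgroups) matches the paper's approach, and the key tools you name (the LLS baseline bounds, centraliser estimates from \cite{FJ1}, the $\delta(x)$ machinery and Lawther's tables, inspection of \cite{BLS} and \cite{B18}) are exactly the ones used. A few points need correcting, however.

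First, your list of large non-parabolic $H$ is not quite right: the relevant cutoff is $|H|>q^{46}$, and the survivors are $E_7(q^{1/2})$ and $N_G(\bar M_\sigma)$ for $\bar M\in\{T_1E_6.2,\ A_1D_6,\ A_7.2,\ A_1F_4\}$ (this is \cite[4.6]{BLS}). You omit $A_1F_4$; the $D_4T_3.S_3$ case you mention is already covered by the order bound. Second, there are no graph-field automorphisms of $E_7(q)$, so that clause should be dropped. Third, your quoted LLS baseline of ``roughly $2q^{-14}$'' for semisimple elements is off; for $E_7$ the bound is $q^{-19}$, which is why the table cuts at ${\rm dim}\,x^{\bar G}\leqs 70$ for $q\geqs3$.

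The substantive gap is the $q=2$ semisimple case. Your fallback to the \textsf{GAP} Character Table Library will not work here: the fusion data for the relevant maximal subgroups of $E_7(2)$ are not available, and the group is far too large for direct computation. The paper instead uses the explicit list of semisimple classes in $E_7(2)$ from \cite[Table 2]{BBR}, and for ${\rm dim}\,x^{\bar G}>84$ invokes the refined estimate from \cite[4.5]{LLS},
\[
{\rm fpr}(x,G/H)<\frac{|W(\bar G):W(\bar M)|\cdot|\bar M/\bar M^0|\cdot 2\cdot(q+1)^z\cdot 2}{q^{\delta(x)+z-l}(q-1)^l},
\]
which is sensitive to $z={\rm dim}\,Z(C_{\bar G}(x)^0)$. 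This is what produces the $z\leqs3$ versus $z>3$ split in Table~\ref{t:e7:fprs}, and it cannot be replaced by a character-table lookup.
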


\begin{proof}
Appealing to~\cite[4.7]{BLS} we find that either $|H| \leqs q^{46}$, or $H$ is of type $E_7(q^{1/2})$, or $H = N_G(\bar{M}_{\sigma})$, where $\bar{M} = T_1E_6.2, A_1D_6, A_7.2$, or $A_1F_4$.

\vs

\noindent \textbf{Case 1.} \emph{$x$ is semisimple:} We first consider the case $q\geqs 3$, as $q = 2$ needs special attention. Here,~\cite[Theorem 2]{LLS} implies that ${\rm fpr}(x, G/H) < q^{-19}$, so the desired bound holds if ${\rm dim}\, x^{\bar{G}} \leqs 70$. We may now thus assume that ${\rm dim}\, x^{\bar{G}} > 70$. Recall that if $L$ denotes the simply connected group of type $E_7(q)$, then $L = Z.G_0$, where $Z = Z(L)$ and $Z \cong C_2$ if $q$ is odd and $Z = 1$ if $q$ is even. By carefully inspecting~\cite{FJ1}, we find that 
\[
C_L(x) \leqs |{\rm SU}_7(q)|(q+1) = q^{21}(q+1)(q^2-1)(q^3+1)(q^4-1)(q^5+1)(q^6-1)(q^7+1)
\]
when ${\rm dim}\, x^{\bar{G}} > 70$, so
\begin{align*}
|x^L| &> q^{42}(q^3-1)(q^4+1)(q^5-1)(q^6+1)(q^7-1)(q^{17} + q^{16} + \cdots + 1)\\
& > q^{69}(q^3-1)(q^5-1)(q^7-1)\\
\end{align*}
and using Lemma~\ref{lem:number-theory}, we find $|x^L| > \frac{1}{2}q^{82}$. Since $x^L$ is a union of at most $2$ distinct $G_0$-classes, we get $|x^G|> \frac{1}{4}q^{82}$.
 
 If $|H| \leqs q^{46}$, then the trivial bound ${\rm fpr}(x, G/H) \leqs |H|/|x^G|$ is sufficient. Next, assume that $H$ is of type $E_7(q^{1/2})$ and let $H_0 = G_0\cap H$. 
 
 First suppose $q$ is odd. Inspecting~\cite[Table 4.1]{Cr2} tells us that $H_0 = {\rm Inndiag}(E_7(q^{1/2})) = E_7(q^{1/2}).2$. Let $\xi : \alpha \mapsto \alpha^{q^{1/2}}$, so $H_0 = \bar{G}_{\xi}$. If $C_{\bar{G}}(x)$ is connected, then a well-known corollary to the Lang-Steinberg theorem~\cite[I, 2.7]{SS} gives $x^{\bar{G}} \cap \bar{G}_{\xi} = x^{\bar{G}_{\xi}}$ and similarly $x^{\bar{G}}\cap \bar{G}_{\xi^2} = x^{\bar{G}_{\xi^2}}$. Hence, in view of Proposition~\ref{thm:GLS-ss} we have $x^{G_0}\cap H = x^{H_0}$. On the other hand, if $C_{\bar{G}}(x)$ is disconnected, then the fact that $|C_{\bar{G}}(x) \, : \, C_{\bar{G}}(x)^0| = 2$ implies that $x^{\bar{G}}\cap H_0$ might be a union of two distinct $H_0$-classes. Similarly, $x^{\bar{G}}\cap G_0$ might be a union of two distinct $G_0$-classes. We therefore get
 \begin{equation}\label{eq:disconnected}
 {\rm fpr}(x, G/H) \leqs \frac{2\max \{|y^{H_0}| \, : \, y\in x^{\bar{G}}\}}{\min \{|y^{G_0}| \, : \, y\in x^{\bar{G}}\}}
 \end{equation}
We can now inspect Lübeck's online data on semisimple classes~\cite{Lu2} to deduce which elements have disconnected centraliser in $\bar{G}$ and compute ${\rm fpr}(x, G/H)$ using~\eqref{eq:disconnected}. 

For example, suppose that $C_{\bar{G}}(x)$ is of type $E_6T_1$. Then~\cite{Lu2} tells us that $C_{\bar{G}}(x) = E_6T_1.2$ is disconnected, and in particular we find that
\[
|x^{G_0}\cap H| \leqs 2\frac{|E_7(q^{1/2})|}{2|{}^2E_6(q^{1/2})|(q^{1/2}+1)} < q^{27}
\]
and 
\[
|x^{G_0}| = \frac{|E_7(q)|}{2|E_6(q)|(q-1)} > \frac{1}{2}q^{54}
\]
so the claim holds. The other cases are all similar, so we leave the details to the reader. 

If $q$ is even, then $G_0 = \bar{G}_{\sigma}$ and again inspecting~\cite[Table 4.1]{Cr2} we find that $H_0 = E_7(q^{1/2})$. Using~\cite[I, 2.7]{SS} once again, we deduce that $x^{G_0}\cap H_0 = x^{H_0}$, so the bounds derived in the case where $q$ is odd are again sufficient.

Finally, if $H = N_G(\bar{M}_{\sigma})$ for some $\sigma$-stable subgroup $\bar{M}$ of $\bar{G}$, then one can check that the bounds in Table~\ref{t:e7:fprs} hold by inspecting the proofs of~\cite[4.9, 4.10]{BLS} and~\cite[3.4]{B18}. 

Now suppose that $q = 2$. In this case the prime order $\bar{G}_{\sigma}$-classes and the possible centraliser types are conveniently recorded in~\cite[Table 2]{BBR}. If ${\rm dim}\, x^{\bar{G}} \leqs 66$, then the claim follows from~\cite[Theorem 2]{LLS}, so we may assume that ${\rm dim}\, x^{\bar{G}} > 66$.

If $|H| \leqs 2^{46}$, then one can can obtain much stronger bounds than the ones in Table~\ref{t:e7:fprs} by arguing as in the case $q\geqs 3$, so we may assume that $H = N_G(\bar{M}_{\sigma})$ for some $\sigma$-stable subgroup $\bar{M}$ of $\bar{G}$. If $66 < {\rm dim}\, x^{\bar{G}} \leqs 84$, then we can check that ${\rm fpr}(x, G/H) < 2^{-19}$ for all the remaining possibilities for $H$ by inspecting the proofs of ~\cite[4.9, 4.10]{BLS} and~\cite[3.4]{B18}. Now assume ${\rm dim}\, x^{\bar{G}} > 84$. By inspecting the same proofs, we again obtain sufficient fixed point ratio bounds that this time depend on the value of $z = {\rm dim} \, Z(\bar{D}^0)$, where $\bar{D} = C_{\bar{G}}(x)$. In particular, we find that ${\rm fpr}(x, G/H) < 2^{-21}$ if $z\leqs 3$ and ${\rm fpr}(x, G/H) < 2^{-24}$ if $z > 3$. The authors derive those bounds depending on $z$ by appealing to~\cite[4.5]{LLS}, which implies that
\[
{\rm fpr}(x, G/H) < \frac{|W(\bar{G}) \, : \, W(\bar{M})|\cdot |\bar{M}/\bar{M^0}|\cdot 2\cdot (q+1)^z \cdot 2}{q^{\delta(x) + z - l}(q-1)^l}
\]
where $W(\bar{G})$ and $W(\bar{M})$ denote the Weyl groups of $\bar{G}$ and $\bar{W}$ respectively, $\delta(x) = {\rm dim}\, x^{\bar{G}} - {\rm dim}(x^{\bar{G}}\cap \bar{M})$ and $l$ is the semisimple rank of $\bar{M}$.
\vs

\noindent \textbf{Case 2.} \emph{$x$ is unipotent:} We first consider the case $p = 2$. If ${\rm dim}\, x^{\bar{G}} \leqs 54$, then the claim follows directly by~\cite[Theorem 2]{LLS}, so we may assume that ${\rm dim}\, x^{\bar{G}} > 54$. In this case, $x$ does not lie in the class $A_1$ or $A_1^2$, and so inspecting~\cite[Table 22.2.2]{LSeitz2} we find that $|x^G| > q^{64}$.

 If $|H| \leqs q^{46}$, then we have ${\rm fpr}(x, G/H) < |H|/q^{64} < q^{-18}$. Next suppose that $H$ is of type $E_7(q^{1/2})$. Since $x$ is unipotent, we must have $x\in H_0 = H\cap G_0$, and note that both the $H_0$-class and the $G_0$-class of $x$ are determined by the labelling of its class in $\bar{G}$. Therefore, $|x^{G_0}\cap H_0| = |x^{H_0}|$, and in particular, ${\rm fpr}(x, G/H) = |x^{H_0}|/|x^{G_0}| < q^{-18}$. 
 
Finally, assume that $H = N_G(\bar{M}_{\sigma})$ for some $\sigma$-stable subgroup $\bar{M}$ of $\bar{G}$. If $\bar{M} = A_1D_6$, then the fusion of $H$-classes in $G$ is described in~\cite[Table 3]{BLS} and from this we can compute precise fixed point ratios and verify that the claim holds. For example, suppose that $x$ is in class $(A_1^{3})^{(2)}$, and write $x = uy$, where $u\in A_1$ and $y\in D_6$. Then ~\cite[Table 3]{BLS} tells us that either $u = 1$ and $y$ embeds in $D_6$ as an $a'_6$ involution (see~\cite[Section 3.5.4]{BG} for more information on unipotent involutions in orthogonal groups), or $u\neq 1$ (so $u$ has Jordan decomposition $[J_2]$ on the natural $A_1$-module), and $y$ embeds in $D_6$ as a $c_2$ involution. We therefore get
\[
|x^{G_0}\cap H| \leqs \frac{1}{2}q^6(q^6-1)(q^8-1)(q^{10}-1) + q^{4}(q^2-1)(q^{6}-1)(q^{10}-1) < q^{30}
\]
and so ${\rm fpr}(x, G/H) < q^{30-64} < q^{-18}$. The other cases are all similar, so we omit the details.

Finally, one can check that ${\rm fpr}(x, G/H) < q^{-18}$ for all other possibilities for $\bar{M}$ by inspecting the proofs of \cite[4.9, 4.10]{BLS} and \cite[3.4]{B18}.

Now assume that $p > 2$. If ${\rm dim}\, x^{\bar{G}} \leqs 66$, then the desired bound once again follows from~\cite[Theorem 2]{LLS}. Now suppose that ${\rm dim}\, x^{\bar{G}} > 66$. If $H$ is of type $A_1D_6$, then we verify the claim by inspecting the proof of~\cite[4.10]{BLS}, and in every other case the claim is proved by arguing exactly as in the case $p = 2$. 

\vs

\noindent \textbf{Case 3.} \emph{$x$ is a field automorphism:} If $r = 2$, then~\cite[Theorem 2]{LLS} gives ${\rm fpr}(x, G/H) \leqs q^{-22}$, so now assume that $r$ is odd. In this case, we can infer from~\cite[Table 4.1]{Cr2} that $C_{G_0}(x) = E_7(q^{1/r})$. Then Lemma~\ref{lem:number-theory} gives
\[
|x^{G_0}| = |E_7(q) \, : \, E_7(q^{1/r})| > \frac{1}{2}q^{-133(1-1/r)} \geqs \frac{1}{2}q^{-88}.
\]
If $|H| \leqs q^{46}$, then we get ${\rm fpr}(x, G/H) < \frac{1}{2}q^{-42}$, so the claim holds. If $H$ is of type $E_7(q^{1/2})$, then $x$ also acts as a field automorphism on $E_7(q^{1/2})$, and so Lemma~\ref{lem:inndiag-fpr} implies
\[
{\rm fpr}(x, G/H) \leqs \frac{|E_7(q^{1/2}) \, : \, E_7(q^{1/2r})|}{|E_7(q) \, : \, E_7(q^{1/r})|} < 4q^{-44}.
\]
In all other cases, one can verify that the bounds in Table~\ref{t:e7:fprs} are satisfied by inspecting the proofs of \cite[4.9, 4.10]{BLS} and \cite[3.4]{B18}, so the proof is complete.
\end{proof}

{\small
\begin{table}
\[
\begin{array}{lll} \hline
x &   g(x, q) & \text{Conditions} \\ \hline
x = s&  q^{-19} &  {\rm dim}\, x^{\bar{G}} \leqs 70, \, q\geqs3\\
& q^{-24} & {\rm dim}\, x^{\bar{G}} > 70,\, q\geqs 3\\
 & 2^{-12} & {\rm dim}\, x^{\bar{G}} \leqs 66,\, q = 2\\
 & 2^{-19} & 66 < {\rm dim}\, x^{\bar{G}} \leqs 84, \, q = 2\\
 & 2^{-21} & {\rm dim}\, x^{\bar{G}} > 84,\, q = 2, \, \text{and } z\leqs 3\\ 
& 2^{-24} &  {\rm dim}\, x^{\bar{G}} > 84, \, q = 2, \, \text{and } z > 3\\
x = u&2q^{-12}& {\rm dim}\, x^{\bar{G}} < 66, \, p>2\\
& q^{-22}& {\rm dim}\, x^{\bar{G}} \geqs 66, \, p>2\\
 & 2q^{-12}& {\rm dim}\, x^{\bar{G}} < 54, \, p = 2\\
& q^{-18}& {\rm dim}\, x^{\bar{G}} \geqs 54, \, p = 2\\
x = \phi & q^{-22} & r = 2\\
& 8q^{-32}& r\geqs 3\\
\hline
\end{array}
\]
\caption{Bounds on ${\rm fpr}(x, G/H)$ for $G_0 = E_7(q)$ and $H$ non-parabolic.}
\label{t:e7:fprs}
\end{table}
}

\begin{prop}\label{prop:f4-fpr}
Suppose that $G_0 = F_4(q)$ and $x\in G$ has prime order $r$. Then ${\rm fpr}(x, G/H) \leqs g(x, q)$, where $g(x, q)$ is recorded in Table~\ref{t:f4:fprs} if $q\geqs 3$ and Table~\ref{t:f4:fprs-q2} if $q = 2$.
\end{prop}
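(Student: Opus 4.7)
The plan is to follow the template of Propositions~\ref{prop:e6-fpr}, \ref{prop:e6-fpr-q2}, and \ref{prop:e7-fpr}, splitting the argument according to whether $x$ is semisimple, unipotent, a field automorphism, or (when $p=2$) a graph or graph-field automorphism, and then handling each non-parabolic maximal subgroup $H$ of $G$ in turn. The list of possibilities for $H$ is read off from~\cite[Tables 7 and 8]{Cr} (together with the $\mathcal{S}$-subgroups in~\cite[Tables 1--3]{Cr}): up to a short list of exceptions, either $|H|$ is small (of order at most $q^{14}$, say) or $H = N_G(\bar{M}_{\sigma})$ with $\bar{M}$ one of the positive-dimensional maximal closed subgroups $B_4$, $C_4$ (for $p=2$), $D_4.S_3$, $\tilde{D}_4.S_3$ (for $p=2$), $A_2\tilde{A}_2$, $B_3T_1$, $C_3T_1$ (for $p=2$), $A_1C_3$ (for $p\neq 2$), $\tilde{A}_1G_2$, or $F_4(q_0)$ a subfield subgroup.

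First I would deal with $q=2$ separately: for the almost simple maximal subgroups such as those with socle ${}^3D_4(2)$ or $\mathrm{Sp}_8(2)$ that appear when $q=2$, the character tables and the fusion maps into $F_4(2)$ are available through the Character Table Library in \textsf{GAP}, so precise fixed point ratios can be computed for every prime-order class of $G$, as already done in the $E_6^{\epsilon}(2)$ case. For the remaining subgroups at $q=2$, the bounds in the proofs of \cite[4.21--4.25]{BLS} and \cite[3.5]{B18} are close to optimal, and the sharpened estimates that depend on $\mathrm{dim}\,x^{\bar{G}}$ and, for semisimple $x$, on $z=\mathrm{dim}\,Z(C_{\bar{G}}(x)^0)$ can be extracted exactly as was done for $E_7(q)$.

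For $q\geqs 3$ the approach is uniform. For $x$ semisimple, \cite[Theorem 2]{LLS} already gives the claim whenever $\mathrm{dim}\,x^{\bar{G}}$ is not too large; when $\mathrm{dim}\,x^{\bar{G}}$ is large I would use \cite[Table 22]{AHL} to read off $|C_L(x)|$ (where $L$ is the simply connected cover), derive a lower bound $|x^G|\geqs\tfrac{1}{2}q^{d}$ for an appropriate $d$ via Lemma~\ref{lem:number-theory}, and combine this with the trivial bound $|x^G\cap H|\leqs |H|$ when $|H|$ is small, or with the computations in \cite[4.21--4.25]{BLS} when $H = N_G(\bar{M}_{\sigma})$. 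For $x$ unipotent I would use the same split, appealing to \cite[Theorem 2]{LLS} for small $\mathrm{dim}\,x^{\bar{G}}$ and, for the large classes, to the class fusion data in \cite[Table 3]{BLS} together with the identity ${\rm fpr}(x,G/H)\leqs C\cdot q^{-\delta(x)}$ from \cite[4.5]{LLS}, with $\delta(x)=\mathrm{dim}\,x^{\bar{G}}-\mathrm{dim}(x^{\bar{G}}\cap \bar{M})$ computable from \cite[Table 17]{La} (and \cite[Table A]{La95} for $p=2$). For $x$ a field automorphism of order $r$, the lower bound $|x^G|\geqs\tfrac{1}{2}q^{52(1-1/r)}$ combined with $|H|\leqs q^{30}$ (in the small-$H$ case) is sufficient; if $H$ is of type $F_4(q^{1/2})$ then $x$ itself restricts to a field automorphism of $H_0$ and the argument used in Case~3 of the proof of Proposition~\ref{prop:e7-fpr} applies verbatim. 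Finally, when $p=2$ and $x$ is a graph or graph-field automorphism of order $2$, the estimates in~\cite[4.24--4.25]{BLS} are sharp enough.

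The main obstacle I anticipate is the large-$\mathrm{dim}\,x^{\bar{G}}$ semisimple case when $q=2$: the general estimate from~\cite[Theorem 2]{LLS} is weaker than what Table~\ref{t:f4:fprs-q2} requires, and for subgroups of type $B_4$, $C_4$, $D_4.S_3$ and $\tilde{D}_4.S_3$ one must track the disconnected component group $C_{\bar{G}}(x)/C_{\bar{G}}(x)^0$ carefully, since a single $\bar{G}$-class can split into several $\bar{G}_{\sigma}$-classes and each of them can meet $H$ in more than one $H$-class. This will be handled exactly as in equation~\eqref{eq:disconnected} in the proof of Proposition~\ref{prop:e7-fpr}, using the online tables of semisimple classes in~\cite{Lu2}; the bookkeeping is routine but must be done class-by-class, so the proof will be long but should not contain any substantially new ideas beyond those already present in the $E_6^{\epsilon}$ and $E_7$ cases.
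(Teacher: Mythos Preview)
Your outline follows essentially the same template as the paper's proof: split by element type, appeal to \cite[Theorem~2]{LLS} for the generic bound, use the crude estimate $|H|/|x^G|$ when $|H|$ is small (the paper uses the threshold $|H|\leqs q^{22}$ rather than your $q^{14}$ or $q^{30}$), and fall back on the case analysis in \cite[4.22--4.27]{BLS} for the finitely many large maximal-rank and subfield subgroups. For $q=2$ the paper likewise computes exact fixed point ratios from the \textsf{GAP} Character Table Library for every $H$ with $|H|>2^{22}$, not only for the $\mathcal{S}$-subgroups.

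One point where you are making your life harder than necessary: your ``main obstacle'' does not in fact arise. The algebraic group $\bar{G}=F_4$ is simultaneously simply connected and of adjoint type, so $C_{\bar{G}}(x)$ is \emph{connected} for every semisimple $x$. Hence by \cite[I,~2.7]{SS} one has $x^{G_0}\cap H_0=x^{H_0}$ directly, and the bookkeeping with component groups you anticipate (as in~\eqref{eq:disconnected} for $E_7$) is never needed here. The paper exploits exactly this observation when treating the subfield subgroup $F_4(q^{1/2})$. Also, $B_3T_1$ and $C_3T_1$ are not maximal subgroups of $F_4$; they appear in Table~\ref{t:f4:fprs} only as centraliser types of semisimple elements, not as choices for $\bar{M}$.
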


\begin{proof}
First note that either $|H| \leqs q^{22}$, or $H$ is one of the possibilities recorded in~\cite[4.23]{BLS}. The case $q = 2$ will be dealt with using different methods, so for now we will assume $q\geqs 3$.

\vs

\noindent \textbf{Case 1.} \emph{$x$ is semsimple:} If $C_{\bar{G}_{\sigma}}(x) = B_4$, then the bound immediately follows from~\cite[Theorem 2]{LLS}, so it is left to deal with the remaining possibilities for $x$. First assume that $|H| \leqs q^{22}$. If $C_{\bar{G}_{\sigma}}(x)$ has a $B_3$ or $C_3$ factor, then ${\rm dim}\, x^{\bar{G}} \geqs 28$, and inspecting~\cite{Lu2} gives
\[
|x^G| \geqs \frac{|F_4(q)|}{|{\rm Sp}_6(q)||{\rm L}_2(q)|} = q^{14}(q^8+q^4+1)(q^4+1)(q^2+1) > q^{28}. 
\]
 It follows that ${\rm fpr}(x, G/H) < q^{-6}$, as required. In all remaining cases we find that $|x^G| > q^{31}$, so we get ${\rm fpr}(x, G/H) < q^{-9}$.

If $H$ is of type $F_4(q^{1/2})$, then we refer to~\cite[Table 8]{Cr} and we find that $H_0 = H\cap G_0 = F_4(q^{1/2})$. Now we note that $\bar{G}$ is simply connected, and so $C_{\bar{G}}(x)$ is connected. Therefore, appealing to~\cite[I, 2.7]{SS} we find that $x^G\cap H_0 = x^{H_0}$. We can therefore compute precise fixed point ratios for all semisimple elements and verify that the bounds in Table~\ref{t:f4:fprs} are satisfied for this choice of $H$. In all other cases we appeal to the proofs of~\cite[4.22, 4.24--4.28]{BLS} and we verify that the claim holds.

\vs

\noindent \textbf{Case 2.} \emph{$x$ is unipotent, field or graph-field automorphism:} The bounds in Table~\ref{t:f4:fprs} can be directly verified by inspecting~\cite[4.22, 4.24--4.28]{BLS}, unless $H$ is of type $F_4(q^{1/2})$. If $x$ is unipotent, then we claim that $x^{G_0}\cap H = x^{H_0}$. This follows from the fact that both the $G_0$ and $H_0$-class of $x$ are determined by the labelling of its class in $\bar{G}$, and so one can verify that the bounds in Table~\ref{t:f4:fprs} are satisfied. Finally, if $x$ is a field (respectively, graph-field) automorphism, then $x$ also acts as a field (respectively, graph-field) automorphism on $H$, and so we get
\[
{\rm fpr}(x, G/H) = \frac{|F_4(q^{1/2}) \, : \, F_4^{\epsilon}(q^{1/2r})|}{|F_4(q) \, : \, F^{\epsilon}_4(q^{1/r})|}
\]
which is sufficient.

\vs 

Now assume that $q = 2$, so $G = G_0$, or $G_0.2$. If $x$ is an involutory graph-field automorphism, then~\cite[Theorem 2]{LLS} implies that ${\rm fpr}(x, G/H) < 2^{-6}$, so we may now assume that $x\in G_0$.

Recall that if $|H| > 2^{22}$, then the possibilities for $H$ are determined in~\cite[4.23]{BLS}, and so we can check that the character tables of both $G_0 = F_4(2)$ and $H_0 = H\cap G_0$ are computable in \textsf{GAP}. It follows that we can determine the fusion of $H_0$-classes in $G_0$, and we note that ${\rm fpr}(x, G/H) \leqs \frac{|x^{G_0}\cap H_0|}{|x^{G_0}|}$ by Lemma~\ref{lem:inndiag-fpr}. Now assume that $|H|\leqs 2^{22}$. If ${\rm dim}\, x^{\bar{G}} < 28$, then as noted in~\cite[4.22]{BLS}, $x$ is an involution belonging to one of the $\bar{G}$-classes labelled $A_1, \tilde{A}_1$, or $\tilde{A}_1^{(2)}$, and we have ${\rm fpr}(x, G/H) \leqs (2^4-2^2+1)^{-1} = 13^{-1}$ by~\cite[Theorem 1]{LLS} if $x\in A_1$ or $\tilde{A}_1$, and ${\rm fpr}(x, G/H) < 2^{-4}$ if $x\in \tilde{A}_1^{(2)}$. If ${\rm dim}\, x^{\bar{G}} \geqs 28$, then ${\rm fpr}(x, G/H) \leqs 2^{22}/|x^G|$. We now set
\[
g(x, q) = \max \{{\rm fpr}(x, G/H) \, : \, H\in \mathcal{M}_G\setminus \mathcal{P}_G\}
\]
 and we record this value in Table~\ref{t:f4:fprs-q2} for each prime order class. For semisimple elements we denote $x$ by its centraliser type, whereas for unipotent elements, we use the notation in~\cite{LSeitz2}.
\end{proof}

{\small
\begin{table}
\[
\begin{array}{lll} \hline
x &   g(x, q) & \text{Conditions} \\ \hline
x = s&  2q^{-5} &  C_{\bar{G}}(x) = B_4 \\
x = s& q^{-6} & C_{\bar{G}}(x) = B_3T_1, \text{ or } C_3T_1\\
x = s & 2q^{-9} &  C_{\bar{G}}(x) = A_2\tilde{A}_2, A_1\tilde{A}_2T_1, \tilde{A}_2T_2, \text{ or } B_2T_2\\
x = s & q^{-9} & C_{\bar{G}}(x) \text{ other}\\
x = u&(q^4-q^2+1)^{-1}& {\rm dim}\, x^{\bar{G}} \leqs 22\\
x = u & q^{-8}& 22 < {\rm dim}\, x^{\bar{G}} \leqs 28, \, p>2\\
x = u& 3q^{-8}& 28 < {\rm dim}\, x^{\bar{G}} \leqs 30,  \, p>2\\
x = u& 3q^{-10} & {\rm dim}\, x^{\bar{G}} > 30, \, p>2\\
x = u& q^{-6}& {\rm dim}\, x^{\bar{G}} > 22,\, p = 2 \\
x = \phi \text{ or } \phi\tau& q^{-6}& r = 2\\
x = \phi& 4q^{-16(1-1/r)} & r\geqs 3\\
\hline
\end{array}
\]
\caption{Bounds on ${\rm fpr}(x, G/H)$ for $G_0 = F_4(q)$, $H$ non-parabolic and $q\geqs 3$.}
\label{t:f4:fprs}
\end{table}
}

{\small
\begin{table}
\[
\begin{array}{lll} \hline
x & |x| & {\rm fpr}(x, G/H) \leqs\\
\hline
A_1 &2 & 13^{-1}\\
\tilde{A}_1&2 &  13^{-1}\\
\tilde{A}_1^{(2)}&2 &  2^{-4}\\
A_1\tilde{A}_1&2 & 262144/21928725\\
C_3T_1& 3& 128/23205\\
B_3T_1& 3&128/23205\\
A_2\tilde{A}_2&3 &1/5824\\
B_2T_2&5& 1/5376 \\
A_2T_2&7& 1/2496\\
\tilde{A}_2T_2&7& 1/113152\\
T_4&13& 1/5222400 \\
T_4& 17&1/69888\\
\phi\tau& 2& 2^{-6}\\
\hline
\end{array}
\]
\caption{bounds on ${\rm fpr}(x, G/H)$ for $G_0 = F_4(2)$ and $H$ non-parabolic.}
\label{t:f4:fprs-q2}
\end{table}
}

\section{Fixed point ratios for parabolic actions}\label{s:parabolics}
 In this section, we describe how we obtain fixed point ratio estimates for a group $G$ with socle $G_0\in \mathcal{E}_2\cup \mathcal{E}_3$ in the natural action of $G$ on $G/H$ for some maximal parabolic subgroup $H$ of $G$. We will be using the standard $P_m$ parabolic notation for maximal parabolic subgroups of exceptional groups. Moreover, we will denote the parabolic subgroup corresponding to removing nodes $m$ and $n$ from the Dynkin diagram of $G$ by $P_{m, n}$. 
 
 We will be using the same methods that are used and discussed in~\cite{LLS}. In particular, if $\O = G/H$, then we have
 \[
 {\rm fpr}(x, G/H) = \frac{\chi(x)}{|\O|},
 \]
 where $\chi(x) = 1_{\bar{H}_{\sigma}}^{\bar{G}_{\sigma}}$ is the corresponding permutation character. We divide this section into four subsections, where we discuss our approach for semisimple elements, unipotent elements, field and graph-field automorphisms, and graph automorphisms respectively. 

\subsection{Semisimple elements}\label{ss:semisimple}

Let $x\in \bar{G}_{\sigma}$ be a semisimple element and let $\O = G/H$. As described above, $|C_{\O}(x)| = \chi(x)$, where $C_{\O}(x) = \{\omega \in \O \, : \, \omega^x = \omega\}$ is the fixed point set of $x$ in $\O$. 

Let $W$ denote the Weyl group of $\bar{G}$ and let $W_{\bar{H}}$ denote the Weyl group of $\bar{H}$, so $W_{\bar{H}}$ is a standard parabolic subgroup of $W$. We write $\Phi$ for the root system of $\bar{G}$ with respect to a fixed maximal torus, we let $\Pi$ be a simple system of roots for $\bar{G}$ and $\alpha_0$ to be the highest root in $\Phi$ with respect to $\Pi$. The possible centraliser types of semisimple elements in $\bar{G}_{\sigma}$ are in one-to-one correspondence with the pairs $(J, [w])$, where $J\subsetneq \Pi \cup \{\alpha_0\}$ and is determined up to conjugacy in $W$ and $[w] = W_Jw$ is a conjugacy class representative of $N_W(W_J)/W_J$, where $W_J$ denotes the subgroup of $W$ generated by reflections in the roots in $J$. This is all described in~\cite{Derz, FJ1, FJ2}. 

Recall that $W$ is $\sigma$-stable, and so $\sigma$ induces permutation of the elements of $W$. We say that two elements $w, w'\in W$ are $\sigma$-conjugate if there exists $x\in W$ such that $w' = x^{-1}w\sigma(x)$. Now let $C_1, \ldots, C_l$ be the $\sigma$-conjugacy classes of $W$. As explained in~\cite[Section 3]{LLS}, the $\sigma$-stable maximal tori of $\bar{G}$ are parametrised by the $\sigma$-classes of $W$, so we will write $T_i$ for a representative of the $\bar{G}_{\sigma}$-class of maximal tori in $\bar{G}$ corresponding to $C_i$ for every $i\in \{1, \ldots, l\}$, so that $(T_1)_{\sigma}, \ldots, (T_l)_{\sigma}$ is a list of representatives of the $\bar{G}_{\sigma}$-classes of maximal tori of $\bar{G}_{\sigma}$. 

We will also write $\epsilon_{T_i} = (-1)^{r_{T_i}}$, where $r_{T_i}$ is the relative rank of $T_i$ (that is the multiplicity of $q - \epsilon$ as a divisor of $|(T_i)_{\sigma}|$) and similarly $\epsilon_{C_{\bar{G}}(x)^0} = (-1)^{r_{C_{\bar{G}}(x)^0}}$. Finally, $w^*$ will denote the longest word of $W$ if $G_0 = {}^2E_6(q)$ and $w^* = 1$ otherwise and $|(C_{\bar{G}}(x)^0)_{\sigma}|_{p'}$ will denote the largest factor of $|(C_{\bar{G}}(x)^0)_{\sigma}|$ not divisible by $p$.

Let $x$ be in the $\bar{G}_{\sigma}$-class corresponding to the pair $(J, [w])$. Using the above description of $\bar{G}_{\sigma}$-classes, Lawther, Liebeck, and Seitz provide an explicit formula for the number of fixed points of $x$ in $\O$ in~\cite[3.2]{LLS}, which we record below:

\begin{prop}\label{prop:LLS-3.2}
Let $x\in G$ be semisimple. With the notation above, we have
\[
|C_{\O}(x)| = \chi(x) = \sum_{i = 1}^l \frac{|W|}{|C_i|} \cdot \frac{|W_{\bar{H}} \cap C_i{w^*}^{-1}|}{|W_{\bar{H}}|} \cdot \frac{|W_Jw \cap C_i|}{|W_J|} \cdot \frac{\epsilon_{C_{\bar{G}}(x)^0} \cdot \epsilon_{T_i}|(C_{\bar{G}}(x)^0)_{\sigma}|_{p'}}{|(T_i)_{\sigma}|}
\]
\end{prop}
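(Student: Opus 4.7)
The plan is to derive the formula by combining Frobenius reciprocity with Steinberg's classification of maximal tori via $\sigma$-conjugacy classes of the Weyl group, and then invoking Deligne--Lusztig-style counts on each torus. First I would rewrite $\chi(x) = 1_{\bar{H}_\sigma}^{\bar{G}_\sigma}(x)$ as the number of $\bar{G}_\sigma$-conjugates of $\bar{H}_\sigma$ containing $x$. Since $x$ is semisimple, any such conjugate contains a $\sigma$-stable maximal torus of $\bar{G}$ through $x$, so the total can be partitioned according to the $\bar{G}_\sigma$-class of the ambient torus. Using Steinberg's bijection, these classes are parametrised by the $\sigma$-classes $C_1, \ldots, C_l$ of $W$, and the number of tori in $\bar{G}_\sigma$ of type $C_i$ equals $|W|/|C_i|$, which provides the first factor in each term.

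Next, for a fixed torus class $C_i$, I would count separately: (a) how many $\bar{H}_\sigma$-classes of maximal tori of $\bar{H}$ fuse into the $C_i$-class of $\bar{G}$, and (b) how many $(C_{\bar{G}}(x)^0)_\sigma$-classes of maximal tori of $C_{\bar{G}}(x)^0$ fuse into $C_i$. Standard Weyl-group arithmetic for fusion of tori under an embedding of reductive groups gives exactly the ratios $|W_{\bar{H}} \cap C_i(w^*)^{-1}|/|W_{\bar{H}}|$ and $|W_J w \cap C_i|/|W_J|$ respectively; the $w^*$ twist is forced by the fact that $\sigma$ acts non-trivially on $W$ when $G_0 = {}^2\!E_6(q)$, and the coset $W_J w$ encodes the $\sigma$-class of the centraliser type $(J, [w])$. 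Finally, within each torus $T$ of $C_{\bar{G}}(x)^0$, the proportion containing $x$ is $|(C_{\bar{G}}(x)^0)_\sigma|_{p'}/|(T_i)_\sigma|$ up to the sign $\epsilon_{C_{\bar{G}}(x)^0}\epsilon_{T_i}$, which appears because we are implicitly evaluating a Deligne--Lusztig virtual character $R_T^{C_{\bar{G}}(x)^0}(1)$ at $x$ and using that this equals $\epsilon_{C_{\bar{G}}(x)^0}\epsilon_T|C_{\bar{G}}(x)^0_\sigma|_{p'}/|T_\sigma|$ on semisimple elements.

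The main obstacle, in my view, is bookkeeping: correctly threading the $w^*$-twist through the twisted case, and handling disconnected centralisers where $|C_{\bar{G}}(x) : C_{\bar{G}}(x)^0|$ contributes additional $G_0$-class splitting that must be absorbed into the sign $\epsilon_{C_{\bar{G}}(x)^0}$ rather than double-counted. The cleanest way to avoid reinventing the Deligne--Lusztig machinery is to quote the Steinberg character formula for $1_{\bar{H}_\sigma}^{\bar{G}_\sigma}$ on semisimple classes and then recognise each factor of the product as an instance of a standard torus-fusion count; this reduces the proof to verifying that the combinatorial indices $(J, [w])$ and the Weyl-group cosets $W_J w, W_{\bar{H}}$ appear in the expected places, which can be done uniformly using the Bala--Carter parametrisation of semisimple classes described in \cite{Derz,FJ1,FJ2}.
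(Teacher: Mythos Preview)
The paper does not prove this proposition at all: it is quoted verbatim from \cite[3.2]{LLS}, as the sentence immediately preceding the statement makes explicit. So there is nothing to compare your argument against in this paper.

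That said, your sketch is in the right spirit but contains some inaccuracies worth flagging. The factor $|W|/|C_i|$ is not the number of $\sigma$-stable maximal tori of type $C_i$ in $\bar G_\sigma$; rather, it arises from the decomposition of $1_{\bar H_\sigma}^{\bar G_\sigma}$ into Deligne--Lusztig characters $R_{T_i}^{\bar G_\sigma}(1)$ and the subsequent evaluation at a semisimple element via the character formula. The correct derivation (as carried out in \cite{LLS}) writes the induced character as a uniform function, expands it in the $R_T$ basis indexed by $\sigma$-classes of $W$, and then uses that on a semisimple element $x$ one has $R_T^{\bar G_\sigma}(1)(x)$ expressible in terms of $|(C_{\bar G}(x)^0)_\sigma|_{p'}/|T_\sigma|$ with the appropriate signs. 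The two ``fusion'' ratios you identify do appear for the right reason, but your final remark about the ``Bala--Carter parametrisation of semisimple classes'' is a slip: Bala--Carter classifies unipotent classes, not semisimple ones; the relevant parametrisation here is the one by pairs $(J,[w])$ due to Deriziotis and recorded in \cite{Derz,FJ1,FJ2}.
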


Now using Proposition~\ref{prop:LLS-3.2}, we will derive an upper bound on $|C_{\O}(x)|$ that will be of use to us when $G_0 = E_8(q)$. Let $\{[w_1], \ldots, [w_t]\}$ be a complete set of conjugacy class representatives in $N_W(W_J)/W_J$ and order the $\sigma$-classes $C_1, \ldots, C_l$ of $W$ so that $\epsilon_{T_i} = 1$ for $i \leqs s$ and $\epsilon_{T_i} = -1$ for $i > s$ for some $s\in \{0, \ldots, l\}$. Moreover, for every $i\in \{1, \ldots, l\}$, let $L_i = \max_{j\in [t]} |W_Jw_j \cap C_i|/|W_J|$ and $R_i =  \min_{j\in [t]} |W_Jw_j \cap C_i|/|W_J|$. We then obtain the following estimate:

\begin{cor}\label{cor:3.2}
With the notation above, we have
\[
C_{\O}(x) \leqs |A - B|,
\]
where 
\[
A = \sum_{i = 1}^s \frac{|W|}{|C_i|} \cdot \frac{|W_{\bar{H}} \cap C_i{w^*}^{-1}|}{|W_{\bar{H}}|} \cdot \frac{|(C_{\bar{G}}(x)^0)_{\sigma}|_{p'}}{|(T_i)_{\sigma}|} \cdot L_i 
\]
and
\[
B =  \sum_{i = s+1}^l \frac{|W|}{|C_i|} \cdot \frac{|W_{\bar{H}} \cap C_i{w^*}^{-1}|}{|W_{\bar{H}}|} \cdot \frac{|(C_{\bar{G}}(x)^0)_{\sigma}|_{p'}}{|(T_i)_{\sigma}|} \cdot R_i 
\]

\end{cor}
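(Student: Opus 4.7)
The plan is to derive the bound directly from the exact expression in Proposition~\ref{prop:LLS-3.2}, exploiting the non-negativity of $\chi(x)$. First, I would isolate the dependence on the choice of $w$: writing
\[
\chi(x) = \epsilon_{C_{\bar{G}}(x)^0} \sum_{i=1}^l \epsilon_{T_i} a_i
\]
where
\[
a_i = \frac{|W|}{|C_i|} \cdot \frac{|W_{\bar{H}} \cap C_i{w^*}^{-1}|}{|W_{\bar{H}}|} \cdot \frac{|(C_{\bar{G}}(x)^0)_{\sigma}|_{p'}}{|(T_i)_{\sigma}|} \cdot \frac{|W_J w \cap C_i|}{|W_J|} \geqs 0,
\]
the outer sign can be absorbed into an absolute value because $|C_{\O}(x)| = \chi(x) \geqs 0$. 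Grouping the summation according to the two values of $\epsilon_{T_i}$ as dictated by the ordering in the statement yields $|C_{\O}(x)| = |S_1 - S_2|$, where $S_1 = \sum_{i=1}^s a_i$ and $S_2 = \sum_{i=s+1}^l a_i$.

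Next, I would factor $a_i = b_i c_i$, where $c_i = |W_J w \cap C_i|/|W_J|$ depends on $w$ and $b_i \geqs 0$ does not. Since $w$ represents one of the classes $[w_1], \ldots, [w_t]$ in $N_W(W_J)/W_J$, the definitions of $L_i$ and $R_i$ give $R_i \leqs c_i \leqs L_i$. Substituting these inequalities into the two partial sums and using $b_i \geqs 0$ yields $S_1 \leqs A$ and $S_2 \geqs B$, so that $S_1 - S_2 \leqs A - B$.

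The final step is to pass from this upper bound on $S_1 - S_2$ to a bound on $|S_1 - S_2|$. The key observation is that the sign constraint $\chi(x) \geqs 0$ combined with the identity $\chi(x) = \epsilon_{C_{\bar{G}}(x)^0}(S_1 - S_2)$ forces $S_1 - S_2$ to have sign $\epsilon_{C_{\bar{G}}(x)^0}$. In particular, when $\epsilon_{C_{\bar{G}}(x)^0} = 1$ the bound $S_1 - S_2 \leqs A - B$ immediately gives $|C_{\O}(x)| \leqs A - B = |A - B|$ (the left-hand side being already non-negative). The case $\epsilon_{C_{\bar{G}}(x)^0} = -1$ is handled by the symmetric argument with $L_i$ and $R_i$ interchanged, which produces the matching lower bound $S_1 - S_2 \geqs A^{-} - B^{+}$ (with $A^{-}, B^{+}$ defined in the obvious way) and, together with the sign information, yields the required estimate on $|S_1 - S_2|$. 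The main technical point is precisely this bookkeeping of signs in the final step; everything else reduces to substitution and monotonicity.
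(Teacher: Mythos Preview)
Your overall approach matches the paper's: factor out the global sign $\epsilon_{C_{\bar G}(x)^0}$, use $\chi(x)\geqs 0$ to pass to $|S_1-S_2|$, and then replace each $c_i=|W_Jw\cap C_i|/|W_J|$ by $L_i$ or $R_i$ according to the sign $\epsilon_{T_i}$ of the summand in which it sits. The paper's proof says essentially this in two sentences and does not separate the two cases for $\epsilon_{C_{\bar G}(x)^0}$ at all.

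Your final step, however, has a gap. In the case $\epsilon_{C_{\bar G}(x)^0}=-1$ your symmetric argument correctly gives $|C_{\O}(x)|=S_2-S_1\leqs B^{+}-A^{-}$, where $A^{-}=\sum_{i\leqs s}b_iR_i$ and $B^{+}=\sum_{i>s}b_iL_i$. You then assert that this ``yields the required estimate'' $|C_{\O}(x)|\leqs|A-B|$, but $B^{+}-A^{-}$ and $|A-B|=\bigl|\sum_{i\leqs s}b_iL_i-\sum_{i>s}b_iR_i\bigr|$ are genuinely different expressions, and there is no general inequality $B^{+}-A^{-}\leqs|A-B|$: take for instance $s=1$, $l=2$, $b_1=1$, $b_2$ large, $L_1=L_2=1$, $R_1=R_2=0$. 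So as written your argument does not establish the stated bound when $\epsilon_{C_{\bar G}(x)^0}=-1$. The paper's own proof glosses over exactly this point. What your argument does cleanly establish in both sign cases is the bound $|C_{\O}(x)|\leqs\max(A-B,\,B^{+}-A^{-})$, which is a $w$-independent, computable upper bound of the same shape and is what is actually needed in the application to $E_8(q)$.
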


\begin{proof}
This follows almost immediately from Proposition~\ref{prop:LLS-3.2}. In particular, we note that $\epsilon_{C_{\bar{G}(x)^0}}$ is independent of $i$, so we can omit it from all the summands, and take an absolute value of the summation at the end. Moreover, for every $i\in \{1, \ldots, l\}$, we check which conjugacy class of $N_W(W_J)/W_J$ gives the largest intersection with $C_i$ and which class gives the smallest intersection in $C_i$, and substituting for $|W_Jw \cap C_i|/|W_J|$ in the expression gives the required bound.
\end{proof}

\begin{rem}
We can use the polynomial functionality in {\sc Magma} to estimate $\chi(x)$ using Proposition~\ref{prop:LLS-3.2} and Corollary~\ref{cor:3.2}.

The benefit of the upper bound in Corollary~\ref{cor:3.2} is that it is not sensitive to the choice of $w$. This is particularly useful when working with groups with socle $E_8(q)$, as the number of possible pairs $(J, [w])$, which are listed in~\cite{FJ2} is extremely large, and it would be an incredibly laborious task to consider them all separately. It turns out that the upper bound from Corollary~\ref{cor:3.2} is sufficient for our purposes, apart from the case $q=2$. When $q = 2$, the possibilities for the centraliser type for $x$ are very restricted and they are all given in~\cite[Section 3]{ABMNPRW}, so we can use Proposition~\ref{prop:LLS-3.2} to compute the precise value of $|C_{\O}(x)|$. 

However, the upper bound in Corollary~\ref{cor:3.2} is not sufficient when $G_0\in \mathcal{E}_2$, in which case we use Proposition~\ref{prop:LLS-3.2} to compute $|C_{\O}(x)|$ precisely in all cases. 
\end{rem}

We finish the discussion on semisimple elements by giving a description of how one can compute fixed point ratios of semisimple elements in {\sc Magma} using Proposition~\ref{prop:LLS-3.2} and Corollary~\ref{cor:3.2}. Suppose that we want to compute the fixed points for a semisimple element $x\in \bar{G}_{\sigma}$ corresponding to the pair $(J, [w])$. The key ingredient is the function \texttt{fprSemisimple} that takes as input a group of Lie type $G$, a set encoding a parabolic subgroup $P$ of $G$ in {\sc Magma} (which in our case will be $H$), a set $J$, where $J$ is as defined above, a polynomial \texttt{poly}, which is the $p'$-part of the centraliser of the element in question, and a word \texttt{wrd} in standard generators for $W$ that encodes $w$ in {\sc Magma}. Below we provide the relevant version of \texttt{fprSemisimple} for the case where $\bar{G}_{\sigma}$ is untwisted. If $\bar{G}_{\sigma}$ is twisted, then one can obtain a similar function with minor modifications. Similarly, if $G_0 = E_8(q)$ and we want to use the approximation of $|C_{\O}(x)|$ given by Corollary~\ref{cor:3.2}, it is possible to do so only with slight modifications of the function provided.

{\small
\begin{verbatim}
fprSemisimple := function(G, P, J, poly, wrd)
W := WeylGroup(G);
id := Identity(W);
R := Roots(G);
Q := RationalField();
L<q> := PolynomialRing(Q, 1);
S := RationalFunctionField(L);
C := Classes(W);
A := TwistedToriOrders(G);
T := [];
for i in [1 .. #A] do
  t := A[i];
  b := 1;
  for l in t[1] do
    b := b*Evaluate(l, q);
  end for;
  Append(~T, b);
end for;
 
E := [];
for i in [1 .. #T] do
  f := Factorization(T[i]);
  e := 0;
  if f[1][1] eq q - 1 then
    e := f[1][2];
  end if;
  Append(~E, e);
end for;
 
WJ := ReflectionSubgroup(W, J);
 
w := wrd;
 
WP := StandardParabolicSubgroup(W, P);
CP := Classes(WP);
 
k := L!poly;
f := Factorization(k);
F := 0;
if f[1][1] eq q - 1 then
  F := f[1][2];
end if;
k := S!k;
 
c := Classes(WP);
A := [0: i in [1 .. #C]];
for i in [1 .. #c] do
  x := c[i][3];
  a := exists(j){j : j in [1 .. #C] | IsConjugate(W, x, C[j][3])};
  A[j] := A[j] + c[i][2];
end for;
 
B := [0: i in [1 .. #C]];
U := [x*w : x in WJ];
for i in [1 .. #U] do
  x := U[i];
  a := exists(j){j : j in [1 .. #C] | IsConjugate(W, x, C[j][3])};
  B[j] := B[j] + 1;
end for;
z := S!0;
for i in [1 .. #C] do
  z := z + (#W/C[i][2]) * (A[i]/#WP) * (B[i]/#WJ) * (-1)^(F+E[i]) * k/T[i];
end for;
 
return z;
 
end function;
\end{verbatim}
}

On the right input, the function \texttt{fprSemisimple} will return $|C_{\O}(x)|$ and in this way one can compute ${\rm fpr}(x, G/H)$.

Let us see a concrete example when $G_0 = E_7(q)$ and $H$ is a $P_1$ parabolic subgroup of $G$. Here we write $\Pi = \{\alpha_1, \ldots, \alpha_7\}$ for the simple roots of $\bar{G}$ and the possible pairs $(J, [w])$ parametrising the $\bar{G}_{\sigma}$-classes are recorded in~\cite{FJ1}. Note that Fleischmann and Janiszczak adopt a labelling of the simple roots of $W$ that is different to the standard Bourbaki labelling~\cite{Bou}. However, the labelling of simple roots in {\sc Magma} agrees with the Bourbaki labelling, so one needs to translate the sets $J$ to Bourbaki labelling, when working with {\sc Magma}. In this paper, we will always adopt the Bourbaki labelling. 

Let $x\in G$ be a semisimple element whose $\bar{G}_{\sigma}$-class corresponds to the pair $(J, [w])$ for $J = \{\alpha_0, \alpha_1, \alpha_2\}$ and $w = a_1a_3$, as given in~\cite[p.123]{FJ1}. Translating the Fleischmann and Janiszczak to Bourbaki notation, we find that $a_1$ and $a_3$ are the reflections corresponding to $\alpha_7$ and $\alpha_5$ in $W$ respectively. We also see that $C_{\bar{G}_{\sigma}}(x)$ is of type $A_2A_1T_4$, and in particular $|C_{\bar{G}_{\sigma}}(x)| = q^4(q^2-1)^4(q^3-1)$. Finally, noting that the $P_1$ parabolic of $G$ is given by \texttt{StandardParabolicSubgroup(W, \{2, 3, 4, 5, 6, 7\})} in {\sc Magma}, we can run the following lines of code:

{\small
\begin{verbatim}
G := GroupOfLieType("E7", 3);
P := {2, 3, 4, 5, 6, 7};
W := WeylGroup(G);
id := Identity(W);
R := Roots(G);
Q := RationalField();
L<q> := PolynomialRing(Q, 1);
S := RationalFunctionField(L);
J := {1, 2, #R};
a1 := Reflection(W, 7);
a3 := Reflection(W, 5);
poly := (q^2-1)^4 * (q^3-1);
fprSemisimple(G, P, J, poly, a1*a3);
\end{verbatim}
}
\noindent and we find that $|C_{\O}(x)| = q^3 + 8q^2 + 9q + 8$. Then dividing by the index of $H$ in $G$ in {\sc Magma} we can also obtain an expression for ${\rm fpr}(x, G/H)$ in $q$. We can perform a similar computation to obtain the fixed point ratio of any semisimple element in $\bar{G}_{\sigma}$ and any parabolic action.

\vs

In the process of extracting results from~\cite{FJ1}, we spotted some minor errors in some places. We take the opportunity to correct them here in case this is useful to the reader.

Let $G$ denote the simply connected group of type $E_7$, and let $x\in G$ be a semisimple element corresponding to the pair $(J, [w])$. First consider the case $J = \{\alpha_0, \alpha_1, \alpha_2, \alpha_4, \alpha_6\}$, so that $C_G(x)$ is of type $A_2^2A_1$. If $w = w_2 = a_5$, as given in~\cite[p.127]{FJ1}, then we need to add an extra ${\rm SU}_3(q)$ factor in $C_G(x)$, so that $C_G(x) = {\rm SU}_3(q)^2\times {\rm SL}_2(q) \times (q+1)^2$.

Next suppose that $J = \{\alpha_0, \alpha_1, \alpha_2, \alpha_3, \alpha_5, \alpha_6, \alpha_7\}$, so that $C_G(x)$ is of type $A_3^2$. In this case there is a mismatch in~\cite{FJ1} between the $w_i$ and the right centraliser structure for $w = w_i$ for $i\in \{4, 6, 7, 8\}$, which can be corrected by an appropriate permutation of the $w_i$. We record the correct assignment of centraliser to each $w_i$ in Table~\ref{t:FJ-correction}.

{\small
\begin{table}
\[
\begin{array}{lll} \hline
i& w_i& C_G(x)\\
\hline
4& a_{49}a_{53}& {\rm SL}_4(q^2)\times (q-1)\\
6& a_5a_{49}a_{53} \\
7& a_{53} & {\rm SU}_4(q)^2\times (q+1)\\
8& a_5a_{53}&  {\rm SU}_4(q)^2\times (q-1)\\
\hline
\end{array}
\]
\caption{Correction to~\cite{FJ1} for $J =  \{\alpha_0, \alpha_1, \alpha_2, \alpha_3, \alpha_5, \alpha_6, \alpha_7\}$.}
\label{t:FJ-correction}
\end{table}
}

Finally, suppose that $C_G(x)$ is of type $(A_1^3)_1$. In this case, by inspecting the Dynkin diagram of $G$, we note that the choice of $J$ in~\cite{FJ1} is not correct. One could set $J = \{\alpha_0, \alpha_3, \alpha_7\}$. We then turn into {\sc Magma} and run the following code:

{\small
\begin{verbatim}
G := GroupOfLieType("E7", 3);
W := WeylGroup(G);
J := {3, 7, 126};
WJ:=ReflectionSubgroup(W, J);
Q, f := quo<Normalizer(W, WJ)| WJ>;
Z := [];
cl := Classes(Q);
for i in [1 .. #cl] do
  Append(~Z, cl[i][3]@@f);
end for;
\end{verbatim}}
\noindent to obtain a list of coset representatives in $N_W(W_J)$ of the $\sigma$-conjugacy classes of $N_W(W_J)/W_J$. The final step is to write those representatives as products of $a_1, \ldots, a_{62}$, as defined in~\cite[pp.118-119]{FJ1}. To do this, we construct $a_1, \ldots, a_{62}$ in {\sc Magma} as permutations and we store them in a list A. We then turn into \textsf{GAP} and we input the lists of permutations $A$ and $Z$ we have obtained. We then run the following code:
{\small
\begin{verbatim}
G := Group(A);
C := [];
for i in [1 .. Length(Z)] do
Add(C, Factorization(G, Z[i]));
od;
\end{verbatim}}
\noindent and we obtain the following list of $w_i$:
\begin{align*}
w_1 &= {\rm id},
w_2 = a_{38}a_{22}a_{12},
w_3 = a_{10}a_5,
w_4 = a_{61}a_{18},
w_5 = a_5,\\
w_6 &= a_{15}a_{10}a_5,
w_7 = a_{15}a_{11},
w_8 = a_{15},
w_9 = a_{21}a_{26}a_{21}a_6,
w_{10} = a_{38}a_{15},\\
w_{11} &= a_{49}a_{37}a_{15}a_5,
w_{12} = a_{33}a_{60}a_{16}a_{14},
w_{13} = a_{53}a_{37}a_{53},
w_{14} = a_{49}a_{37},
w_{15} = a_{53}a_{55}a_{61}a_3,\\
w_{16} &= a_{49}a_{37}a_{11},
w_{17} = a_{55}a_{30}a_{49},
w_{18} = a_{36}a_{37}a_{18},
w_{19} = a_{49}a_{36}a_{37}a_{15},
w_{20} = a_{16}a_{49}a_{26}a_6
\end{align*}
Taking $C_G(x)$ as in~\cite[p.122]{FJ1} for each $i\in \{1, \ldots, 20\}$, we obtain a complete list of the possible pairs $(J, [w])$ when $C_G(x)$ of type $(A_1^3)_1$.
\subsection{Unipotent elements}\label{ss:unipotent}

Let $x\in \bar{G}_{\sigma}$ be a unipotent element and let $W$, $W_{\bar{H}}$, and $\O$ be as in Section~\ref{ss:semisimple}. Note that we again have $|C_{\O}(x)| = \chi(x)$, where $\chi = 1^{\bar{G}_{\sigma}}_{\bar{H}_{\sigma}}$. Now if $\widehat{W}$ denotes the set of (ordinary) irreducible characters of $W$, then~\cite[2.4]{LLS} gives
\begin{equation}\label{eq:unipotent1}
\chi(x) = \sum_{\phi \in \widehat{W}}n_{\phi}R_{\phi}(x) 
\end{equation}
where
\begin{equation}
n_{\phi} = \la 1^W_{W_{\bar{H}}}, \phi\ra = \la 1_{W_{\bar{H}}}, \phi \vert_{W_{\bar{H}}}\ra_{W_{\bar{H}}} = \frac{1}{|W_{\bar{H}}|}\sum_{w\in W_{\bar{H}}} \phi(w)
\end{equation}
and $R_{\phi}(x)$ are the \emph{Foulkes functions} of $\bar{G}_{\sigma}$. The values $n_{\phi}$ are recorded in~\cite[pp. 413--415]{LLS} when $\bar{G}_{\sigma}$ is untwisted and $\bar{H}$ is a maximal parabolic subgroup of $\bar{G}$, and we can find them in~\cite[Table 8]{BT} if $G_0 = E_6(q)$ and $H = P_{1, 6}$, or $P_{3, 5}$, or $G_0 = F_4(q)$ and $H = P_{1, 4}$ or $P_{2, 3}$. Finally, if $G_0 = {}^2E_6(q)$, then the values $n_{\phi}$ are recorded in~\cite[p. 125]{BLS}.

The values $R_{\phi}$ of unipotent elements of order $p$ can be computed using the data on Green functions in~\cite{Lu3} as described in~\cite[Section 4]{Lu4}. Lübeck has conveniently converted Green functions to Foulkes functions in \textsf{GAP}-readable form for us~\cite{Lu}, so we can compute ${\rm fpr}(x, G/H)$ precisely for all groups with $G_0 \in \mathcal{E}_2\cup \mathcal{E}_3$, all unipotent elements and all maximal parabolic subgroups of $G$. 

\subsection{Field and graph-field automorphisms}\label{ss:fields}
Let $x\in G$ be a field or graph-field automorphism of prime order $r$. We have the following proposition:

\begin{prop}\label{prop:LLS-6.1}
Let $\bar{G}_{\sigma} = \bar{G}(q)$, $\bar{H}_{\sigma} = \bar{H}(q)$ and $C_{\bar{G}_{\sigma}}(x) = \bar{G}^{\epsilon}(q^{1/r})$. Then the following hold:
\begin{itemize}
\item [\rm (i)] $x^{\bar{G}_{\sigma}}\cap \bar{H}_{\sigma}x = x^{\bar{H}_{\sigma}}$;
\vs
\item [\rm (ii)] $C_{\bar{H}_{\sigma}} = \bar{H}^{\epsilon}(q^{1/r})$, where $\bar{H}^{\epsilon}(q^{1/r})$ is the corresponding parabolic subgroup of $C_{\bar{G}_{\sigma}}(x)$ and $\epsilon = -$ if $x$ is a graph-field automorphism and $\epsilon = +$ otherwise;
\vs
\item [\rm (ii)] We have
\[
{\rm fpr}(x, G/H) = \frac{|\bar{G}^{\epsilon}(q^{1/r}) \, : \, \bar{H}^{\epsilon}(q^{1/r})|}{|\bar{G}(q) \, : \, \bar{H}(q)|}.
\]
\end{itemize}
\end{prop}

\begin{proof}
(i) is~\cite[7.2]{GL} and (ii) and (iii) are shown directly at the start of the proof of~\cite[6.1]{LLS}.
\end{proof}
We will be using this result repeatedly.
\subsection{Graph automorphisms}

If $G_0 = {}^3D_4(q)$, then the bound in~\cite[Theorem 1]{LLS} will be sufficient for our purposes, so we now consider the case $G_0 = E_6^{\epsilon}(q)$, in which case there are two classes of graph automorphisms, namely those of type $F_4$, and those of type $C_4$. We record an upper bound $t(H, \tau, q)$ for every $H\in \mathcal{P}_G$ and every involutory graph automorphism $\tau\in G$. If $p = 2$, then~\cite[Section 19]{AS} implies that either $C_{\bar{G}}(x) = F_4$ or $C_{F_4}(t)$ for a long root element $t\in F_4$. The value $t(H, \tau, q)$ can be obtained via~\cite[2.6]{LLS} when $C_{\bar{G}}(x) = F_4$, and if $C_{\bar{G}}(x) = C_{F_4}(t)$, then a bound $t(H, \tau, q)$ is recorded in~\cite[Table 7]{BLS}. For the reader's convenience, we record these values in Table~\ref{t:e6:graphs-p2}.

{\small
\begin{table}
\[
\begin{array}{lll} \hline
H & C_{\bar{G}}(\tau)&   t(H, \tau, q)\\ \hline
P_2 & F_4 & (q^6-q^3-1)^{-1}\\
& C_{F_4}(t) & q^{-10}(q-1)^{-1}\\
P_4 & F_4 & q^{-6}(q^2-1)^{-1}(q-1)^{-1}\\
&C_{F_4}(t) & q^{-14}(q-1)^{-2}\\
P_{1,6}& F_4 & q^{-8}(q-1)^{-1}\\
& C_{F_4}(t)  & q^{-12}(q-1)^{-1}\\
P_{3,5}& F_4& q^{-10}(q-1)^{-1}\\
& C_{F_4}(t) & q^{-15}(q-1)^{-2}\\
\hline
\end{array}
\]
\caption{The value $t(H, \tau, q)$ when $p = 2$.}
\label{t:e6:graphs-p2}
\end{table}
}

If $p > 2$, then $C_{\bar{G}}(\tau) = F_4$ or $C_4$. Sufficient fixed point ratio bounds can be obtained by working as in the proof of~\cite[6.4]{LLS}. In the following proposition we carry out these computations to obtain $t(H, \tau, q)$. Note that we will only need an upper bound over all parabolic subgroups for our purposes, but those bounds might be of independent interest, and so we record separate upper bounds for each choice of parabolic subgroup. The value $t(H, \tau, q)$ for each distinct pair $(H, \tau)$ is recorded in Table~\ref{t:e6:graphs}.

\begin{prop}\label{prop:e6-fpr-graph}
Let $G_0 = E_6^{\epsilon}(q)$, where $q$ is odd, and $\tau \in G$ be an involutory graph automorphism. If $H\in \mathcal{P}_G$, then ${\rm fpr}(x, G/H) \leqs t(H, \tau, q)$, where $t(H, \tau, q)$ is recorded in Table~\ref{t:e6:graphs}.
\end{prop}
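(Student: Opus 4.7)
The plan is to follow the blueprint of the argument in~\cite[Section 6.4]{LLS}, which establishes closely related bounds, and adapt it to the four parabolic types $P_2, P_4, P_{1,6}, P_{3,5}$ that arise here. The starting point is the standard identity
\[
\mathrm{fpr}(\tau, G/H) = \frac{|\tau^{\bar{G}_\sigma} \cap \bar{H}_\sigma|}{|\tau^{\bar{G}_\sigma}|},
\]
so the task reduces to bounding $|\tau^{\bar{G}_\sigma} \cap \bar{H}_\sigma|$ from above in each case. The denominator $|\tau^{\bar{G}_\sigma}|$ is immediate from the known structure of $C_{\bar{G}_\sigma}(\tau) = F_4(q)$ or ${\rm PSp}_8(q)$ (and their appropriate twisted forms when $\epsilon = -$).

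The second step is a Levi decomposition of the intersection. Writing $\bar{H} = \bar{U}\bar{L}$ with $\bar{U}$ the unipotent radical and $\bar{L}$ a Levi complement, every element of $\tau^{\bar{G}_\sigma}\cap \bar{H}_\sigma$ has a well-defined image in $\bar{L}_\sigma$, which is itself an involution conjugate to $\tau$ in $\bar{G}$. Partitioning according to $\bar{L}_\sigma$-class gives
\[
|\tau^{\bar{G}_\sigma}\cap \bar{H}_\sigma| \;=\; \sum_{[\tau_0]} \frac{|\bar{H}_\sigma|}{|C_{\bar{H}_\sigma}(\tau_0)|},
\]
where $[\tau_0]$ ranges over $\bar{L}_\sigma$-classes of involutions in $\bar{L}_\sigma$ that fuse to $\tau^{\bar{G}_\sigma}$. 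For each of the four parabolic types, the structure of $\bar{L}$ is read off from~\cite[Table 1]{LSS} (or equivalently from the subsystem subgroups obtained by deleting the relevant nodes), and the intersection $\tau^{\bar{G}}\cap \bar{L}$ is analysed by restricting the embeddings $F_4 < E_6$ and $C_4 < E_6$ to each Levi. The resulting involution classes in $\bar{L}_\sigma$, together with the orders of their centralisers in $\bar{H}_\sigma = \bar{U}_\sigma \rtimes \bar{L}_\sigma$, yield an explicit polynomial expression for $|\tau^{\bar{G}_\sigma}\cap \bar{H}_\sigma|$, from which the bounds in Table~\ref{t:e6:graphs} are obtained by division.

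The main obstacle, and what makes the proof bookkeeping-heavy rather than conceptually difficult, is the careful case analysis required to enumerate the $\bar{L}_\sigma$-classes of involutions of type $F_4$ and $C_4$ inside each of the eight Levi subgroups (four parabolics times $\epsilon = \pm$), and in particular the twisting that occurs for $G_0 = {}^2E_6(q)$ where the relevant centraliser becomes ${}^2F_4$-like on one factor. Each individual computation is short, but their total number makes a direct manual verification tedious; as with the semisimple case in Section~\ref{ss:semisimple}, it is convenient to perform the polynomial arithmetic in {\sc Magma}, evaluating each candidate bound symbolically in $q$ and confirming that it lies below the entry listed in Table~\ref{t:e6:graphs}.
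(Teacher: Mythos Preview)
Your overall strategy coincides with the paper's: both follow~\cite[6.4]{LLS}, counting $|\tau^G\cap H|$ as a sum of $H$-class sizes $|H:C_H(\tau_0)|$ indexed by the possible actions of $\tau$ on the Levi factor. The paper makes one structural point explicit that you leave implicit and which is what makes the computation clean: since $P$ is $\tau$-stable, $C_P(\tau)$ is itself a \emph{parabolic subgroup} of $C_{\bar G}(\tau)\in\{F_4,C_4\}$, and the Levi type $C_{L'}(\tau)$ pins down exactly which parabolic. So rather than computing centraliser orders in $\bar H_\sigma$ from scratch, one simply reads off $|C_H(\tau)|$ as the order of a known parabolic of $F_4(q)$ or $C_4(q)$ and divides. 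For instance, for $H=P_2$ with $L'=A_5$ and $C_{\bar G}(\tau)=F_4$, the only possibility is $C_{L'}(\tau)=C_3$ (no parabolic of $F_4$ has Levi containing $A_3$), so $C_P(\tau)$ is the $P_1$ parabolic of $F_4$ and the bound drops out immediately.

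Two points in your write-up need correcting. First, the notation is off throughout: $\tau$ is a graph automorphism, so $\tau\notin\bar G_\sigma$ and expressions like ``$\tau^{\bar G_\sigma}\cap\bar H_\sigma$'' or ``image in $\bar L_\sigma$'' are not well-defined as stated. You should be working in $\bar H_\sigma\langle\tau\rangle$ with images in $\bar L_\sigma\langle\tau\rangle$ (the coset $\bar L_\sigma\tau$), and the relevant $H$-classes are those of involutions in $H_0\tau$. Second, your remark that for $\epsilon=-$ ``the relevant centraliser becomes ${}^2F_4$-like'' is incorrect: the hypothesis is that $q$ is odd, so ${}^2F_4$ does not arise, and in any case the centraliser of an involutory graph automorphism of $E_6^{\epsilon}(q)$ is of type $F_4(q)$ or $C_4(q)$ for both signs $\epsilon$. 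No additional twisting enters the analysis for ${}^2E_6(q)$ beyond the obvious change in $|G_0|$.
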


\begin{proof}
First observe that $H = N_G(P)$, where $P$ is a $\tau$-stable parabolic subgroup of $\bar{G} = E_6$. Moreover, as shown in~\cite[6.4]{LLS}, $C_P(\tau)$ is a parabolic subgroup of $C_{\bar{G}}(\tau)$. We write $P = QL$, where $Q$ is the unipotent radical of $P$ and $L$ is a Levi subgroup, and we treat each possibility for $P$ in turn.

First suppose that $P = P_2$, in which case $L = A_5T_1$. Here $\tau$ acts as a graph automorphism on $L' = A_5$, which implies that $C_{L'}(\tau)$ must either be of type $C_3$ or $A_3$. 

If $C_{\bar{G}}(\tau) = F_4$, then we note that no proper parabolic subgroup of $F_4$ has a Levi factor containing $A_3$, so $C_{L'}(\tau) = C_3$ and thus $C_P(\tau)$ is the $P_1$ parabolic subgroup of $F_4$. This implies that
\[
{\rm fpr}(\tau, G/H) = \frac{|H \, : \, C_H(\tau)|}{|G \, : \, C_G(\tau)|} \leqs \frac{1}{q^6 - q^3+1}.
\]

If $C_{\bar{G}}(\tau) = C_4$, then either $C_{L'}(\tau)$ is of type $C_3$, which corresponds to the $P_1$ parabolic of $C_4$, or $C_P(\tau)$ is of type $A_3$, in which case $C_{L'}(\tau)$ is the $P_4$ parabolic of $C_4$, and so we find ${\rm fpr}(\tau, G/H) < 2q^{-11}$.

Next assume that $P = P_4$, in which case $L = A_2^2A_1T_1$. Here, $\tau$ swaps the two $A_2$ factors of $L'$ and it either centralises the $A_1$ factor, or acts on it as an involution with centraliser $T_1$. It follows that $C_{L'}(\tau)$ is either $A_2A_1$ or $A_2T_1$.

First suppose that $C_{\bar{G}}(\tau) = F_4$. If $C_{L'}(\tau) = A_2A_1$, then $C_P(\tau)$ is the $P_2$ or $P_3$-parabolic subgroup of $F_4$. We therefore get ${\rm fpr}(\tau, G/H) < 2q^{-9}$. On the other hand, if $C_{L'}(\tau) = A_2T_1$, then the Levi factor of $C_P(\tau)$ must contain $A_2$, and we compute ${\rm fpr}(\tau, G/H) < 3q^{-8}$ for all choices of parabolic where this is possible.

Now let us assume that $C_{\bar{G}}(\tau) = C_4$. If $C_{L'}(\tau) = A_2A_1$, then $C_P(\tau)$ must be the $P_3$ parabolic subgroup of $C_4$ and we compute ${\rm fpr}(\tau, G/H) < 2q^{-17}$. On the other hand, if $C_{L'}(\tau) = A_2T_1$, then the Levi factor of $C_{P}(\tau)$ must contain $A_2$, and by checking all possibilities for $C_P(\tau)$, we deduce that ${\rm fpr}(\tau, G/H) < 2q^{-16}$.

We now consider the case $P = P_{1,6}$, in which case $L' = D_4$ and $\tau$ acts as an involutory graph automorphism on $D_4$, and so $C_{L'}(\tau) = B_3$ or $B_2A_1$. 

Since $F_4$ has no parabolic subgroups with Levi factor containing $B_2A_1$, it follows that if $C_{\bar{G}}(\tau) = F_4$, we must have $C_{L'}(\tau) = B_3$, and so $C_P(\tau)$ must be the $P_4$ parabolic of $F_4$. This gives ${\rm fpr}(\tau, G/H) < 2q^{-9}$.

On the other hand, if $C_{\bar{G}}(\tau) = C_4$, then we must have $C_{L'}(\tau) = B_2A_1$, and so $C_P(\tau)$ is the $P_2$ parabolic of $C_4$. From this we get ${\rm fpr}(\tau, G/H) < 2q^{-12}$.

Finally let us assume that $P = P_{3,5}$, so $L' = A_2A_1A_1$. Here $\tau$ swaps the two $A_1$ factors and it either centralises the $A_2$ factor, or acts on it as an involution with centraliser $A_1T_1$. Hence, we must have $C_{L'}(\tau) = A_2A_1$, or $A_1^2T_1$. 

If $C_{\bar{G}}(\tau) = F_4$ and $C_{L'}(\tau) = A_2A_1$, then as above we get ${\rm fpr}(\tau, G/H) < 2q^{-11}$. If $C_{L'}(\tau) = A_1^2T_1$, then the semisimple part of the Levi factor of $C_P(\tau)$ contains $A_1^2$, and by checking all possibilities for which this containment is satisfied, we find ${\rm fpr}(\tau, G/H) < 4q^{-9}$.

If $C_{\bar{G}}(\tau) = C_4$ and $C_{L'}(\tau) = A_2A_1$, then as above we get ${\rm fpr}(\tau, G/H) < 2q^{-19}$. On the other hand, if $C_{L'}(\tau) = A_1^2T_1$, then the semisimple part of the Levi factor of $C_P(\tau)$ must contain $A_1^2$, and by inspecting all possibilities for $C_P(\tau)$ we find that ${\rm fpr}(\tau, G/H) < 3q^{-17}$, as required.
\end{proof}

{\small
\begin{table}
\[
\begin{array}{lll} \hline
H & C_{\bar{G}}(\tau)& t(H, \tau, q)\\ \hline
P_2 & F_4 & (q^6-q^3+1)^{-1}\\
& C_4 & 2q^{-11}\\
P_4 & F_4 & 3q^{-8}\\
& C_4 & 2q^{-16}\\
P_{1,6}& F_4 & 2q^{-9}\\
& C_4 & 2q^{-12}\\
P_{3,5}& F_4& 4q^{-9}\\
& C_4& 3q^{-17}\\
\hline
\end{array}
\]
\caption{The value $t(H, \tau, q)$ when $p$ is odd.}
\label{t:e6:graphs}
\end{table}
}

\section{Proof of Theorem~\ref{thm:exceptional}}\label{s:proof-exceptional}

\subsection{Proof of Theorem~\ref{thm:exceptional} for groups with socle in $\mathcal{E}_1$}\label{s:e1}
In this section we prove Theorem~\ref{thm:exceptional} for the groups with socle in $\mathcal{E}_1$. For any prime order element $x\in G$ our aim is to derive an upper bound $f(x, q)$, where we recall 
\[
f(x, q) = \max \{{\rm fpr}(x, G/H) \, : \, H\in \mathcal{M}_G\}. 
\]
Then noting that 
\begin{equation}\label{eq:mx}
\widehat{Q}(G, \tau) \leqs \sum_{x \in \pi} f(x, q)^6
\end{equation}
where by $\pi$ we denote the set of prime order elements in $G$, we can show that the probability in question is strictly less than some function of $q$ that tends to zero as $q$ tends to infinity and prove the required claim.

In most cases we refer to~\cite{LLS} to obtain an upper bound on $f(x, q)$. In the cases where the bounds in ~\cite{LLS} are not sufficient, sometimes trivial bounds on the orders of core-free maximal subgroups of $G$ are good enough, but in most cases we use bounds derived in \cite{BLS}.
\begin{prop}
The conclusion to Theorem~\ref{thm:exceptional} holds when $G_0 = {}^2G_2(q)'$.
\end{prop}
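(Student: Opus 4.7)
The plan is to apply the probabilistic framework introduced in Section~\ref{s:exceptional}: bound $\widehat{Q}(G,\tau)$ above by $\sum_{x \in \pi} f(x,q)^6$, where $\pi$ is the set of prime order elements in $G$ and $f(x,q) = \max\{{\rm fpr}(x,G/H) : H \in \mathcal{M}_G\}$. It suffices to treat the case where each $H_i$ is maximal, since if $K_i \leqs H_i$ and $\bigcap H_i^{g_i} = 1$ then $\bigcap K_i^{g_i} = 1$ too.

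First I would invoke Kleidman's classification~\cite{Kl} to list the maximal subgroups of $G$ up to conjugacy. For $q = 3^{2m+1} \geqs 27$ these are: the Borel subgroup of order $q^3(q-1)$, the involution centraliser $2 \times {\rm PSL}_2(q)$, the dihedral-type normaliser $(2^2 \times D_{(q+1)/2}).3$ of a torus of order $q+1$, the two cyclic torus normalisers $C_{q \pm \sqrt{3q}+1}.6$ associated to the Ree tori, and subfield subgroups ${}^2G_2(q_0)$ with $q = q_0^r$ for $r$ an odd prime. I would then list the prime order conjugacy classes from Ward~\cite{Wa}: a unique involution class (with centraliser of order $2|{\rm PSL}_2(q)|$), three unipotent classes in characteristic $3$, semisimple classes indexed by primes dividing $q-1$, $q+1$, or $q \pm \sqrt{3q}+1$, and field automorphism classes of each prime order dividing $\log_3 q$.

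Next I would derive $f(x,q)$ for each class. For semisimple and unipotent $x$, the estimates in~\cite[Theorems 1--2]{LLS} give ${\rm fpr}(x,G/H) \leqs |x^G|^{-1/2 + \iota}$; combined with the explicit centraliser orders in~\cite{Wa}, this produces $f(x,q) \leqs c_1 q^{-2}$ for involutions (where the worst action is the parabolic one of degree $q^3+1$), $f(x,q) \leqs c_2 q^{-3}$ for typical semisimple elements, and comparable bounds for unipotents. For a field automorphism of order $r$, Proposition~\ref{prop:LLS-6.1} gives $f(x,q) \leqs |{}^2G_2(q^{1/r}):H(q^{1/r})|/|{}^2G_2(q):H(q)|$, which decays like $q^{-(7/r)(r-1)}$. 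In the handful of cases where the LLS bound is insufficient, I would either appeal to the sharper estimates obtained in the proof of~\cite[Theorem 3]{BLS} or use the trivial bound $|x^G \cap H| \leqs |H|$ when $H$ is a subfield subgroup or a torus normaliser of small order.

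Having $f(x,q)$ in hand, I would verify
\[
\widehat{Q}(G,\tau) \leqs \sum_{x \in \pi} f(x,q)^6 < 1.
\]
The dominant contribution comes from the involution class, of size $O(q^4)$, contributing a term of order $q^{4} \cdot q^{-12} = q^{-8}$; all other prime order classes contribute fixed negative powers of $q$ times at most logarithmic factors counting the number of classes, so the total tends to $0$ as $q \to \infty$. The main obstacle is the small-$q$ cases where the asymptotic bounds may leave a positive gap: specifically $q = 3$ (where $G_0 = {}^2G_2(3)' \cong {\rm PSL}_2(8)$ is classical and the claim follows from Theorem~\ref{thm:main-classical}) and $q = 27$. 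For $q = 27$ a direct computation using~\cite{Wa} for conjugacy class sizes together with the explicit orders of the maximal subgroups, possibly aided by {\sc Magma} via the function \texttt{RegTuplesPlus} described in Section~\ref{sss:comp-classical}, closes the remaining gap.
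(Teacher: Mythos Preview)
Your proposal is correct in outline but far more elaborate than the paper's argument, and it contains one misattribution worth flagging. The paper dispatches this case in three lines: after reducing $q=3$ via ${}^2G_2(3)'\cong{\rm L}_2(8)$ (as you do), it invokes a \emph{single uniform} bound from \cite[Theorem~1]{LLS}, namely ${\rm fpr}(x,G/H)\leqs (q^2-q+1)^{-1}$ for every prime order $x$ and every $H\in\mathcal{M}_G$, and then crudely bounds the number of prime order elements by $|G|<q^7\log_3 q$. This yields $\widehat{Q}(G,\tau)<|G|\cdot(q^2-q+1)^{-6}<q^{-1}$ for all $q\geqs 27$, with no case analysis, no class-by-class bookkeeping, and no need for any computation at $q=27$.

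Your route---enumerating maximal subgroups, conjugacy classes, and bounding each $f(x,q)$ separately---would also succeed, and is the template the paper genuinely needs for the larger exceptional groups in $\mathcal{E}_2$ and $\mathcal{E}_3$. For ${}^2G_2(q)'$ it is simply overkill: the group is small enough that the coarsest LLS bound already swamps everything. Note also that your sentence ``the estimates in \cite[Theorems 1--2]{LLS} give ${\rm fpr}(x,G/H)\leqs |x^G|^{-1/2+\iota}$'' is a misattribution: that is the shape of Burness's classical bound (Theorem~\ref{thm:timsbound}), not the LLS bounds, which are stated as explicit functions of $q$ rather than of $|x^G|$. This does not damage your argument, since the conclusions you draw are compatible with the actual LLS statements, but it should be corrected.
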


\begin{proof}
Here $q = 3^{2m + 1}$ for some non-negative integer $m$. In view of the isomorphism ${}^2G_2(3)' \cong {\rm L}_2(8)$, we may assume that $q\geqs 27$. In this case we can infer from~\cite[Theorem 1]{LLS} that ${\rm fpr}(x, G/H) \leqs (q^2-q+1)^{-1}$ for all prime order elements $x\in G$ and $H\in \mathcal{M}_G$, and therefore
\[
\widehat{Q}(G, \tau) < |G|\cdot (q^2-q+1)^{-6} < q^{-1},
\]
since $|G| < q^{7}\log_3(q)$.
\end{proof}

\begin{prop}
The conclusion to Theorem~\ref{thm:exceptional} holds when $G_0 = {}^2B_2(q)$.
\end{prop}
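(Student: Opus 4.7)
The plan is to mimic the proof for ${}^2G_2(q)'$ given just above, since ${}^2B_2(q)$ has similarly restricted subgroup structure and very small index for parabolic actions. First I would observe that $q = 2^{2m+1}$ for some positive integer $m$, because the soluble group ${}^2B_2(2)$ is excluded by the simplicity of $G_0$, so in fact $q \geqs 8$. The very smallest case $q = 8$ (and, if necessary, $q = 32$) can be handled computationally via the function \texttt{RegTuplesPlus} from {\sc Magma} described in Section~\ref{sss:comp-classical}, reducing the problem to the asymptotic regime.

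For the remaining values of $q$, I would appeal to~\cite[Theorem 1]{LLS}, which, applied to the Suzuki socle, gives a uniform fixed point ratio bound of the form ${\rm fpr}(x, G/H) \leqs c(q)$ for every prime order element $x \in G$ and every $H \in \mathcal{M}_G$, where $c(q)$ is $O(q^{-1})$ (in fact $c(q)$ is the reciprocal of the order of a maximal torus, so $c(q) \leqs (q-1)^{-1}$). Combining this with the crude bound $|G| \leqs q^5 \log_2 q$ in~\eqref{eq:mx} gives
\[
\widehat{Q}(G, \tau) \leqs |G| \cdot c(q)^6 < q^{-1},
\]
which both confirms that every such $\tau$ is regular, by Lemma~\ref{l:fpr}, and establishes the asymptotic statement $Q(G, \tau) \to 0$ as $q \to \infty$.

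The only potential obstacle is ensuring that the constants in the LLS bound combine with $|G|$ to drop below $1$ at the first uncovered value of $q$; however, since $|G|$ grows like $q^5$ while $c(q)^6$ decays like $q^{-6}$, the product decays like $q^{-1}$, so the bound is very comfortably less than $1$ once $q$ is moderately large, leaving only $q = 8$ (and at worst a handful of additional small values) to be checked by the computational method referenced above.
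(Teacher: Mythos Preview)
Your approach is essentially the same as the paper's: apply the uniform bound from \cite[Theorem~1]{LLS}, multiply by $|G| < q^{5}\log_2 q$, and conclude. Two points are worth correcting. First, the actual LLS bound for ${}^2B_2(q)$ is $(q^{2/3}+1)/(q^{2}+1)$, which is of order $q^{-4/3}$, not $(q-1)^{-1}$; your guess that $c(q)$ is the reciprocal of a maximal torus order is not what LLS gives here. Second, with your weaker bound $c(q)\leqs (q-1)^{-1}$ the displayed inequality $\widehat{Q}(G,\tau) < q^{-1}$ fails (indeed $q^{5}\log_2 q \cdot (q-1)^{-6}$ is of order $(\log_2 q)/q$, not $q^{-1}$), although $\widehat{Q}(G,\tau)<1$ and $\widehat{Q}(G,\tau)\to 0$ would still follow. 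Using the correct LLS bound, the paper obtains $\widehat{Q}(G,\tau)<q^{-1}$ directly for every $q\geqs 8$, so the computational verification you propose for $q=8$ (and $q=32$) is unnecessary.
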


\begin{proof}
Here $q = 2^{m+1}$ for some $m\geqs 1$. Observe that~\cite[Theorem 1]{LLS} asserts that ${\rm fpr}(x, G/H) \leqs (q^{2/3}+1)/(q^2+1)$ for all prime order elements $x\in G$ and $H\in \mathcal{M}_G$, and thus
\[
\widehat{Q}(G, \tau) < |G|\cdot  \left(\frac{q^{2/3}+1}{q^2+1}\right)^{6} < q^{-1} 
\]
since $|G| < q^{5}\log_2(q)$.
\end{proof}

\begin{prop}
The conclusion to Theorem~\ref{thm:exceptional} holds when $G_0 = {}^2F_4(q)'$.
\end{prop}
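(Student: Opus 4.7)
\begin{proof}
Here $q = 2^{2m+1}$ for some $m \geqs 0$. First assume that $q = 2$, so $G_0 = {}^2F_4(2)'$ is the Tits group. As noted in Section~\ref{sss:comp-classical}, we can apply the \textsf{Magma} function \texttt{RegTuplesPlus} with $l = 2$ to verify directly that every $6$-tuple of core-free maximal subgroups of $G$ is regular, so from now on we may assume $q \geqs 8$.

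By \cite[Theorem 1]{LLS}, for every prime order element $x \in G$ and every core-free maximal subgroup $H$ of $G$ we have
\[
{\rm fpr}(x, G/H) \leqs \frac{4}{3(q+1)}q^{-1/2}+q^{-2},
\]
so using \eqref{eq:mx} we obtain
\[
\widehat{Q}(G, \tau) \leqs |G|\cdot \left(\frac{4}{3(q+1)}q^{-1/2}+q^{-2}\right)^{6}.
\]
Combining this with the trivial estimate $|G| \leqs |{\rm Aut}(G_0)| < q^{26}\log_{2}(q)$, one checks that $\widehat{Q}(G,\tau) \to 0$ as $q \to \infty$ and $\widehat{Q}(G,\tau) < 1$ for all $q \geqs 8$. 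In particular, $\tau$ is regular by Lemma~\ref{l:fpr}, which completes the proof.
\end{proof}

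The main obstacle is the Tits group case $q = 2$, where the general fixed point ratio bound is too weak to yield $\widehat{Q}(G,\tau)<1$ and a computational verification via \texttt{RegTuplesPlus} is required; for all larger $q$ the argument is essentially the same uniform estimate used in the preceding two propositions, and the key input is the bound from \cite[Theorem 1]{LLS} whose exponent $-1/2$ combined with the 6th power easily overwhelms the polynomial growth $|G| \approx q^{26}$.
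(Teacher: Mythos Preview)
Your treatment of $q=2$ matches the paper's (a direct \textsf{Magma} check), but the argument for $q\geqs 8$ contains a genuine gap: the fixed point ratio bound you attribute to \cite[Theorem~1]{LLS} is both incorrectly quoted and, more importantly, too weak for the argument to work. Your bound is of order $q^{-3/2}$, so after taking sixth powers and multiplying by $|G|\approx q^{26}\log_2 q$ you obtain an expression of order $q^{17}$, which diverges rather than tending to~$0$. (Concretely, for $q=8$ your displayed upper bound for $\widehat{Q}(G,\tau)$ exceeds $10^{16}$.) A uniform bound ${\rm fpr}\leqs q^{-d}$ applied to all of $G$ only succeeds if $6d>26$, i.e.\ $d>13/3$, and no such uniform bound is available from \cite{LLS} here.

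The paper handles this by splitting the prime order elements. For unipotent involutions it uses Shinoda's description \cite{Sh} to see that there are just two classes with $|t_2^G|<q^{14}$ and $|t_2'^G|<q^{11}$, and applies the bound ${\rm fpr}\leqs q^{-4}$ from \cite[Theorem~1]{LLS}, giving a contribution $<2q^{14}\cdot q^{-24}=2q^{-10}$. For semisimple elements and field automorphisms it uses the sharper bound ${\rm fpr}\leqs 2q^{-6}$ from \cite[Theorem~2]{LLS}, giving a contribution $<|G|\cdot(2q^{-6})^6=64q^{-10}\log_2 q$. The sum is then $<q^{-1}$ for all $q\geqs 8$. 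The essential point your argument misses is that one cannot simply multiply $|G|$ by the weakest available fpr bound; one must either count the ``bad'' elements (those with the largest fpr) explicitly, or use a bound strong enough to beat $|G|$.
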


\begin{proof}
Here $q = 2^{2m + 1}$ for some non-negative integer $m$. The claim can easily be verified using {\sc Magma} for $q = 2$ using the function \texttt{RegOrbitsPlus} given in~\cite{ABcomp}, so we may assume that $q\geqs 8$ for the remainder. In particular, $G_0 = {}^2F_4(q)$.

As noted in~\cite[Section 4.6]{BLS}, we can infer from~\cite{Sh} that $G$ has two classes of involutions with respective representatives $t_2$ and $t'_2$, where
\[
|{t_2}^G| < q^{14} \,\,\, \text{and} \,\,\, |{{t'_2}^{G}}| < q^{11}.
\]
Moreover,~\cite[Theorem 1]{LLS} implies that ${\rm fpr}(x, G/H) \leqs q^{-4}$ for all involutions $x\in G$ and all $H\in \mathcal{M}_G$, so the contribution to $\widehat{Q}(G, \tau)$ from unipotent elements is less than $2q^{14}\cdot (q^{-4})^6 = 2q^{-10}$.

On the other hand, if $x\in G$ is semisimple or a field automorphism of prime order, then~\cite[Theorem 2]{LLS} gives ${\rm fpr}(x, G/H) \leqs 2q^{-6}$, so we have 
\[
\widehat{Q}(G, \tau) < 2q^{-10} + |G|\cdot (2q^{-6})^6 \leqs (2 + 64\log_2q) \cdot q^{-10} <  q^{-1}.
\]
\end{proof}

\begin{prop}
The conclusion to Theorem~\ref{thm:exceptional} holds when $G_0 = {}^3D_4(q)$.
\end{prop}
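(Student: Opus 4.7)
Here $|G_0| = q^{12}(q^8 + q^4 + 1)(q^6 - 1)(q^2 - 1)$, and by the isomorphism ${}^3D_4(2)$ (handled via the methods in~\cite{AB}) together with a direct {\sc Magma} verification using \texttt{RegTuplesPlus} on the case $q = 2$, we may assume $q\geqs 3$ for the remainder of the argument. By~\cite{Kl2} (see also~\cite[Table 8.51]{BHR}) the maximal subgroups of $G$ are well understood, and every prime order element of $G$ is either unipotent, semisimple, or a field automorphism (note that the triality graph automorphism has already been incorporated into the full automorphism group $G_0.3$).

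The plan is to obtain a uniform upper bound $f(x, q)$ on ${\rm fpr}(x, G/H)$ over all $H\in \mathcal{M}_G$ for each prime order class, and then bound $\widehat{Q}(G, \tau)$ via~\eqref{eq:mx}. For unipotent elements~\cite[Theorem 1]{LLS} yields ${\rm fpr}(x, G/H) \leqs 2q^{-5}$ (corresponding to the case where $x$ is a long root element, which has the worst fixed point ratio), and the total number of prime order unipotent classes in $G$ is bounded by a small absolute constant, so their contribution to $\widehat{Q}(G, \tau)$ is at most $c_1\cdot q^{-30}$ for some constant $c_1$. For semisimple elements,~\cite[Theorem 2]{LLS} yields ${\rm fpr}(x, G/H) \leqs 2q^{-7}$; noting that $|G| < q^{29}$ and summing over all semisimple classes gives a contribution of at most $c_2\cdot q^{-13}$. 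Finally, for field automorphisms of prime order $r$, we apply Proposition~\ref{prop:LLS-6.1} (i.e.\ the statement extracted from the proof of~\cite[6.1]{LLS}) together with the bound $|x^G| > \tfrac{1}{2}q^{28(1-1/r)}$ to see that the contribution of field automorphisms is bounded by $c_3 \log_p q \cdot q^{-t}$ for a suitable positive exponent $t$.

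Combining these contributions we obtain $\widehat{Q}(G, \tau) < q^{-1}$ for all $q\geqs 3$ (after checking the very smallest cases by hand if needed), which both establishes regularity of $\tau$ and yields $Q(G, \tau) \to 0$ as $q\to \infty$. The only potential obstacle is the semisimple case: one must verify that the worst case bound from~\cite[Theorem 2]{LLS} really is $2q^{-7}$ and not improved in the cases of interest, but since this is asymptotically negligible compared to $|G|^{-1}\cdot |G| = 1$ and the exponent on $q$ in the sum remains negative, the standard argument goes through without further refinement.
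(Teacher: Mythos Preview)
Your proposal has a genuine gap: the fixed point ratio bounds you quote from \cite{LLS} are incorrect for ${}^3D_4(q)$, and without the correct bounds the argument does not close. Specifically, \cite[Theorem 1]{LLS} gives only ${\rm fpr}(x, G/H) \leqs (q^4-q^2+1)^{-1}$ uniformly (not $2q^{-5}$), and \cite[Theorem 2]{LLS} does not give anything as strong as $2q^{-7}$ for all semisimple elements; in fact for semisimple elements with small class dimension (${\rm dim}\,x^{\bar G}\leqs 20$) one only has the Theorem~1 bound of roughly $q^{-4}$. Since $|G_0|\approx q^{28}$, a uniform estimate of the form $|G|\cdot(q^{-4})^6\approx q^{28-24}=q^{4}$ is useless, and your contribution ``$c_2\cdot q^{-13}$'' for semisimple elements is unjustified.

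What the paper does, and what you are missing, is a split according to ${\rm dim}\,x^{\bar G}$: for elements with ${\rm dim}\,x^{\bar G}>20$ (and field automorphisms of order $r\geqs 5$) one obtains the sharper bound ${\rm fpr}(x,G/H)<(q^2(q^3-2))^{-1}\approx q^{-5}$ by combining \cite[Theorem 2]{LLS} for parabolics with case analysis (including \cite[4.42]{BLS} for the subfield subgroup) for non-parabolics; for the remaining elements one counts that there are at most $3q^{22}$ of them and applies the $(q^4-q^2+1)^{-1}$ bound. This yields $3q^{22}(q^4-q^2+1)^{-6}+|G|(q^2(q^3-2))^{-6}<3q^{-1}$. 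You also need to account explicitly for the triality graph automorphisms of order $3$ (they are genuine outer elements of $G$, not ``already incorporated''); the paper bounds their number by $4q^{20}+2q^{14}$ and places them in the small-dimension bucket.
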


\begin{proof}
Let $x\in G$ be an element of prime order $r$. We claim that if ${\rm dim} \, x^{\bar{G}} > 20$ or $x$ is a field automorphism of prime order $r\geqs 5$, then ${\rm fpr}(x, G/H) < (q^2(q^3-2))^{-1}$ for any $H\in \mathcal{M}_G$. If $H$ is  parabolic, then the claim follows directly by~\cite[Theorem 2]{LLS}, so assume that $H$ is non-parabolic. If $H_0 = {}^3D_4(q^{1/2})$, then we verify that the bound holds by inspecting the proof of~\cite[4.43]{BLS}, and in all other cases $|H_0| < q^{14}$, so the bound is again satisfied.

Now assume that ${\rm dim} \, x^{\bar{G}} \leqs 20$, or $r\in \{2, 3\}$ and $x$ is either a field or triality graph automorphism. In this case~\cite[Theorem 1]{LLS} implies that ${\rm fpr}(x, G/H) < (q^4-q^2+1)^{-1}$ and we claim that there are at most $3q^{22}$ elements in $G$ with ${\rm dim}\, x^{\bar{G}} \leqs 20$. 

By inspecting~\cite[Table 4.4]{DM}, we deduce that there are at most $q^2 + q+1$ such semisimple classes, and we find that there are $4$ such unipotent classes by inspecting~\cite{Sp}. Moreover, if $x$ is a field automorphism, then either $r = 2$ and we have $|x^G| < 2q^{14}$, or $r = 3$ and $|x^G| < 2q^{56/3} < q^{20}$. Note that $G$ has at most two classes of order $3$ field automorphisms. Finally there are fewer than $4q^{20} + 2q^{14}$ triality graph automorphisms, so in total we get
\[
(q^2+q+11)\cdot q^{20} + 4q^{14} \leqs 3q^{22}
\]
elements with ${\rm dim}\, x^G \leqs 20$.

Hence, we obtain
\[
\widehat{Q}(G, \tau) < 3q^{22} \cdot (q^4-q^2+1)^{-6} + |G|\cdot (q^2(q^3-2))^{-6} < 3q^{-1}
\]
if $q\geqs 5$, since $|G| <  3q^{28}\cdot {\rm log}_2q$.
If $q = 3$, then we note that there are no field automorphisms and so we can replace $|G|$ with $|\bar{G}_{\sigma}|$ in the bound above and we obtain the result.
\end{proof}

\begin{prop}
The conclusion to Theorem~\ref{thm:exceptional} holds when $G_0 = G_2(q)'$.
\end{prop}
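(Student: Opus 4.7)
The plan is to follow the template set by the preceding propositions in this subsection, using the uniform bound $\widehat{Q}(G,\tau) \leqs \sum_{x \in \pi} f(x,q)^6$ where $\pi$ is a set of representatives for the prime-order classes of $G$ and $f(x,q) = \max\{\operatorname{fpr}(x,G/H) : H \in \mathcal{M}_G\}$. For the smallest values of $q$ (say $q \leqs 4$), including the case $G_2(2)' \cong {\rm U}_3(3)$, we verify the regularity of every $6$-tuple computationally using the Magma function \texttt{RegTuplesPlus} from~\cite{ABcomp}, as in the proof for ${}^2F_4(q)'$. From that point on we may assume $q$ is reasonably large.

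For the remaining $q$, I would split prime-order elements into two categories according to the dimension of $x^{\bar G}$. The \emph{small} classes consist of the short- and long-root unipotent classes together with the small semisimple classes (those with centraliser of maximal rank in $\bar G$, corresponding to centraliser types such as ${\rm GL}_2^{\epsilon}(q) \times T_1$ or similar). By~\cite[Theorem 1]{LLS}, every $x$ in such a class satisfies $\operatorname{fpr}(x,G/H) \leqs (q^2+q+1)^{-1}$ for every $H \in \mathcal{M}_G$. The number and sizes of these classes can be read off from Chang~\cite{Ch} (for $p \geqs 5$) and Enomoto~\cite{E70} (for $p \leqs 3$): there are $O(q^2)$ such elements up to conjugacy, and the total count of elements in the union of the small classes is $O(q^{12})$. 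For all remaining (\emph{large}) classes, including field automorphisms of order $r \geqs 3$, we appeal to~\cite[Theorem 2]{LLS} to obtain $\operatorname{fpr}(x,G/H) \leqs 2q^{-5}$ (or stronger), noting that the bounds in the proof of~\cite[6.1]{LLS} can be used when they are needed for field and graph-field automorphisms.

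Putting the two contributions together yields a bound of the shape
\[
\widehat{Q}(G,\tau) \;\leqs\; 3q^{12} \cdot (q^2+q+1)^{-6} \;+\; |G|\cdot (2q^{-5})^6
\]
where the first summand handles the small classes and the second handles everything else. Since $|G| < q^{14}\log_p q$, both summands are $O(q^{-1}\log q)$, so $\widehat Q(G,\tau) \to 0$ as $q \to \infty$, giving both the regularity of $\tau$ for large enough $q$ and the asymptotic statement. The one point needing extra care is the case $p = 3$, where $G$ may contain involutory graph (or graph-field) automorphisms whose centraliser in $\bar G$ has type related to ${}^2G_2$; for these automorphisms one uses $|x^G| \geqs |G_2(q):{}^2G_2(q)| \approx q^{11}$ together with the elementary bound $|x^G \cap H| \leqs |H|$ for $H$ non-parabolic and Proposition~\ref{prop:LLS-6.1} for $H$ parabolic, which keeps the corresponding contribution comfortably smaller than the leading term above.

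The main obstacle is therefore not any deep structural issue but rather a careful bookkeeping: making sure that the small-class count (semisimple classes of maximal-torus centraliser type, plus the handful of unipotent classes from~\cite{Ch,E70}) really is bounded by $3q^{12}$, and that the $p = 3$ graph/graph-field contribution does not overwhelm the sixth-power estimate. Once both are verified, the combination of the uniform $(q^2+q+1)^{-1}$ bound on small classes and the $q^{-5}$ bound on the rest closes the argument.
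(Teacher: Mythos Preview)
Your overall template is right, but the arithmetic in the key inequality does not close. The first summand
\[
3q^{12}\cdot (q^2+q+1)^{-6}
\]
does \emph{not} tend to zero: since $(q^2+q+1)^6 \sim q^{12}$, this quantity tends to $3$ as $q\to\infty$, so it is neither $<1$ nor $O(q^{-1}\log q)$ as you claim. The underlying problem is that your ``small'' category is too large. Every semisimple element has centraliser of maximal rank in $\bar G$, so your description sweeps in essentially all semisimple classes; the regular ones alone already contribute $O(q^{14})$ elements, making the situation even worse than your stated $3q^{12}$.

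The paper avoids this by putting far fewer elements into the weak-bound bin: only long root elements and involutory field or graph-field automorphisms, a total of at most $q^8$ elements, giving $q^{8}(q^2-q+1)^{-6}\approx q^{-4}$. Everything else (short root unipotents, all semisimple classes, odd-order field automorphisms) goes into the strong-bound bin, for which the paper establishes ${\rm fpr}(x,G/H)\leqs (q^2(q-1))^{-1}$. Note that this is roughly $q^{-3}$, not the $2q^{-5}$ you quote; \cite[Theorem~2]{LLS} does not give a $q^{-5}$ bound for $G_2$. The $q^{-3}$ bound does not come for free for field automorphisms of odd order $r$: the paper checks it case by case over the possible $H$ (subfield, ${\rm SL}_3^{\epsilon}(q)$, ${}^2G_2(q)$, $G_2(q^{1/2})$, parabolics via Proposition~\ref{prop:LLS-6.1}). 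With this split one gets
\[
\widehat{Q}(G,\tau) < q^8(q^2-q+1)^{-6} + q^{14}\log_2 q\cdot (q^2(q-1))^{-6} < q^{-1},
\]
valid already for $q\geqs 3$, so no {\sc Magma} verification is needed (beyond the isomorphism $G_2(2)'\cong {\rm U}_3(3)$, which places $q=2$ in the classical case). Your remark on $p=3$ graph-field automorphisms is fine in spirit; these are among the involutory outer automorphisms and are absorbed into the $q^8$ count.
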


\begin{proof}
In view of the isomorphism $G_2(2)' \cong {\rm U}_3(3)$, we may assume that $q\geqs 3$. In particular, $G_0 = G_2(q)$. Let $x\in G$ be a prime order element. If $x$ is an involutory field or graph field automorphism, or a long root element, then \cite[Theorem 1]{LLS} asserts that ${\rm fpr}(x, G/H) \leqs (q^2-q+1)^{-1}$ for any $H\in \mathcal{M}_G$ and we can check that there are at most $q^8$ such elements in $G$. For all other elements, we claim that ${\rm fpr}(x, G/H) \leqs (q^2(q-1))^{-1}$. 

First suppose that $x$ is a field automorphism of order $r \geqs 3$. If $|H_0| \leqs q^6$, then the claim trivially holds, since $|x^G| > \frac{1}{2}q^{28/3}$ as noted in~\cite[4.30]{BLS}, and so 
\[
{\rm fpr}(x, G/H) < \frac{|H|}{|x^G|} < q^{-10/3}\log_2{q}. 
\]
Next, if $H$ is of type ${\rm SL}_3^{\epsilon}(q)$ or ${}^2G_2(q)$, then we confirm the required bound holds by inspecting the proofs of \cite[4.31, 4.32]{BLS} respectively, and if $H$ is of type $G_2(q^{1/2})$, then $x$ acts on $H_0$ as a field automorphism, so we obtain
\[
{\rm fpr}(x, G/H) \leqs \frac{|G_2(q^{1/r}) \, : \, G_2(q^{1/2r})|}{|G_2(q) \, : \, G_2(q^{1/2})|} < q^{-6(1-1/r)} \leqs q^{-4}
\]
and the claim holds.

It now remains to show that the bound holds if $H$ is parabolic. In this case Proposition~\ref{prop:LLS-6.1} implies that 
\[
{\rm fpr}(x, G/H) \leqs \frac{(q^5|{\rm GL_2}(q)|)/(q^{5/r}|{\rm GL}_2(q^{1/r})|)}{q^{14(1-1/r)}} < 2q^{-5(1-1/r)} \leqs 2q^{-10/3}
\]
which is sufficient. For all other possibilities for $x$, the claim follows immediately from~\cite[Theorem 2]{LLS}.

It now follows that 
\[
\widehat{Q}(G, \tau) < q^8 (q^2-q+1)^{-6} + \log_2q \cdot q^{14} \cdot (q^2(q-1))^{-6} < q^{-1}
\]
and the result follows.
\end{proof}

\subsection{Proof of Theorem~\ref{thm:exceptional} for groups with socle in $\mathcal{E}_2$}\label{s:e2}

In this section we prove Theorem~\ref{thm:exceptional} for the groups with $G_0 \in \mathcal{E}_2$. In view of~\eqref{eq:mx}, it suffices to show that 
\[
\sum_{x\in \mathcal{P}}f(x, q)^6 < R(q)
\]
where $\mathcal{P}$ denotes the set of prime order elements in $G$ and $F$ is a function with the properties that $R(q) < 1$ for all $q$, and $R(q) \to 0$ as $q\to \infty$. 

Let $\mathcal{P} = \mathcal{S}\cup \mathcal{U}\cup \mathcal{F}$, where $\mathcal{S}$ denotes the set of semisimple elements of prime order in $G$, $\mathcal{U}$ denotes the class of prime order unipotent elements in $G$, and $\mathcal{F}$ denotes the prime order graph, field, and graph-field automorphisms of $G$. We may then write
\[
\sum_{x\in \mathcal{P}}f(x, q)^6 = \alpha + \beta + \gamma,
\]
where 
\begin{equation}\label{eq:abc}
\alpha = \sum_{x\in \mathcal{S}}f(x, q)^6, \, \, \, \beta = \sum_{x\in \mathcal{U}} f(x, q)^6, \,\,\,\text{and} \,\,\, \gamma = \sum_{x\in \mathcal{F}}f(x, q)^6.
\end{equation}
Our aim is to derive suitable bounds on $\alpha$, $\beta$ and $\gamma$ to prove Theorem~\ref{thm:exceptional} for $G$.

\begin{rem}
Let us now record an important remark about semisimple elements. Note that Fleischmann and Janiszczak work with simply connected groups of type $E_6$ and $E_7$ in~\cite{FJ1}, whereas we are interested in the adjoint type groups. However, for a semisimple element $x\in \bar{G}_{\sigma}$, we are only using their results on centraliser orders to estimate ${\rm fpr}(x, G/H)$. As described in Section~\ref{s:parabolics}, we do this by appealing to Proposition~\ref{prop:LLS-3.2}, and so we are only interested in $(C_{\bar{G}}(x)^0)_{\sigma}$, which is the same both in the adjoint and in the simply connected case.
\end{rem}

\begin{prop}\label{prop:e6}
The conclusion to Theorem~\ref{thm:exceptional} holds when $G_0 = E_6^{\epsilon}(q)$.
\end{prop}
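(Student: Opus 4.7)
The plan is to follow the general strategy encoded in \eqref{eq:abc} and bound $\widehat{Q}(G,\tau) \leqs \alpha + \beta + \gamma$, where the three summands collect contributions from semisimple, unipotent, and outer (field, graph, graph-field) prime order elements respectively. For each prime order $x\in G$, I need to compute (or bound)
\[
f(x,q) = \max\{{\rm fpr}(x,G/H) \,:\, H\in \mathcal{M}_G\}
\]
as the maximum of two quantities: the non-parabolic estimate, read off directly from Table~\ref{t:e6:fprs} (or Table~\ref{t:e6:fprs-q2} when $q=2$) via Propositions~\ref{prop:e6-fpr} and~\ref{prop:e6-fpr-q2}, and the parabolic estimate, which must be worked out using the machinery of Section~\ref{s:parabolics}. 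The first step is to handle $q=2$ separately, since the character tables of $G_0$ and of many $H_0$ are already available in the \textsf{GAP} character table library, so the required fixed point ratios for the parabolic actions can be obtained directly, and then one verifies that $\alpha+\beta+\gamma < 1$ by direct summation over the (finitely many) prime order classes listed in the literature.

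For $q\geqs 3$, I would treat $\alpha$, $\beta$, and $\gamma$ in turn. For the semisimple contribution $\alpha$, the list of $\bar{G}_\sigma$-classes with their centraliser types is recorded in~\cite{FJ1}, and for each pair $(J,[w])$ I would apply Proposition~\ref{prop:LLS-3.2} (using the \textsf{MAGMA} function \texttt{fprSemisimple} described in Section~\ref{ss:semisimple}) to produce a polynomial expression for $|C_{\O}(x)|$ when $H$ runs over the four classes of maximal parabolic subgroups $P_2$, $P_4$, $P_{1,6}$, $P_{3,5}$. Taking the max against Table~\ref{t:e6:fprs} gives $f(x,q)$. The sum $\sum f(x,q)^6$ is then estimated by grouping classes according to ${\rm dim}\, x^{\bar{G}}$: the classes with small $\nu$-value are few (their count is bounded by a polynomial in $q$ of small degree) but have larger fixed point ratios, while the generic classes (of which there are $O(q^6)$) satisfy ${\rm fpr}(x,G/H) < q^{-14}$ from Case~1 of Proposition~\ref{prop:e6-fpr}, giving a contribution of size $O(q^{6-84})$. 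A similar analysis applies to $\beta$: for each unipotent class I combine the non-parabolic bound from Table~\ref{t:e6:fprs} with a precise value of ${\rm fpr}(x,G/H)$ computed from Lübeck's Foulkes function data as in Section~\ref{ss:unipotent}, noting that the number of unipotent classes is absolutely bounded.

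For $\gamma$, the field and graph-field contributions are controlled by combining Table~\ref{t:e6:fprs} with Proposition~\ref{prop:LLS-6.1}, which in particular gives ${\rm fpr}(\phi,G/H) < 12 q^{-26(1-1/r)}$ on parabolic actions via the standard index comparison; since there are at most $O(\log_p q)$ classes of field or graph-field automorphisms and each satisfies $|x^G| \leqs q^{78(1-1/r)}$, the contribution is negligible. For involutory graph automorphisms (only present when $\epsilon = +$), I combine Table~\ref{t:e6:fprs} with Table~\ref{t:e6:graphs} (for odd $q$, using Proposition~\ref{prop:e6-fpr-graph}) or Table~\ref{t:e6:graphs-p2} (for even $q\geqs 4$); the two $G_0$-classes give $f(\tau,q)^6$ of order $q^{-60}$, which is absorbed by $|G|$.

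The main obstacle is the semisimple contribution $\alpha$: the number of $\bar{G}_\sigma$-classes grows polynomially in $q$, and although \texttt{fprSemisimple} produces precise polynomial expressions, one must verify uniformly across all $q\geqs 3$ that the worst-case class still contributes a term of order $o(1)$. The key observation is that the worst candidates are the classes with connected centraliser of type $A_5T_1$, $D_5T_1$, or $A_1A_4T_1$ (the smallest-dimensional classes among those with a parabolic fixed point); for these I expect ${\rm fpr}(x,G/H) \leqs cq^{-k}$ with $k\geqs 6$ on every parabolic $H$, so that $f(x,q)^6$ is dominated by the class count bound $|x^G|$ only in the asymptotic verification. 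Collecting all bounds yields $\widehat{Q}(G,\tau) = O(q^{-1})$, which completes the proof and confirms $\mathbb{P}(G,\tau)\to 1$ as $|G|\to\infty$.
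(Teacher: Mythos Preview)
Your overall strategy for $q\geqs 3$ matches the paper's approach closely: decompose into $\alpha+\beta+\gamma$, compute parabolic fixed point ratios exactly via Proposition~\ref{prop:LLS-3.2} and L\"ubeck's Foulkes data, compare against the non-parabolic bounds in Tables~\ref{t:e6:fprs} and~\ref{t:e6:graphs}, and sum. One minor slip: involutory graph automorphisms are present for both signs $\epsilon$, not only $\epsilon=+$; the paper bounds their contribution $\gamma_3$ uniformly by $q^{26}(q^{-5})^6 + q^{42}(12q^{-10})^6$.

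The real gap is at $q=2$ with $\epsilon=+$. Your plan is to compute all fixed point ratios via the \textsf{GAP} character table library and then verify $\alpha+\beta+\gamma<1$ by direct summation. This fails: as the paper notes (citing~\cite{BLS}), when $G_0=E_6(2)$ and $\tau$ is a conjugate $6$-tuple with all components equal to $P_1$ or $P_6$, one has $\widehat{Q}(G,\tau)>1$. So the uniform bound $\sum_x f(x,q)^6$, where $f(x,q)$ ranges over \emph{all} maximal subgroups including $P_1$ and $P_6$, exceeds $1$ and gives no information. The paper's workaround is two-fold: first, construct $P_1$ and $P_6$ in {\sc Magma} and verify by random search that every $6$-tuple drawn entirely from $\{P_1,P_6\}$ is regular; second, for any remaining tuple $\tau$ (which must contain at least one component $H_j\notin\{P_1,P_6\}$), replace the bound by the asymmetric estimate
\[
\widehat{Q}(G,\tau)\leqs \sum_{x}f(x,q)^5\,l(x,q),
\]
where $l(x,q)=\max\{{\rm fpr}(x,G/H):H\in\mathcal{M}_G\setminus\{P_1,P_6\}\}$, and check that this refined sum is below $1$. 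Without this splitting, your argument does not close the $E_6(2)$ case.
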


\begin{proof}
For now we will assume that $q\geqs 3$, as $q = 2$ needs separate consideration. We will derive sufficient bounds for $\alpha, \beta$ and $\gamma$, as defined in~\eqref{eq:abc}, and to do this, we need to compute upper bounds on $m_x$ for each prime order element $x\in G$. First note that 
\begin{equation}\label{eq:mx-nx}
f(x, q) \leqs \max \left(\{g(x, q)\}\cup \{{\rm fpr}(x, G/H) \, : \, H\in \mathcal{P}_G\}\right) =: F(x, q),
\end{equation}
where $g(x, q)$ is determined in Proposition~\ref{prop:e6-fpr}. It therefore remains to consider fixed point ratio bounds for parabolic subgroups. 

We first consider semisimple elements. Recall that the possible centralisers of semisimple elements in $\bar{G}_{\sigma}$ are parametrised by pairs $(J, [w])$, where $J$ and $w$ are defined in Section~\ref{ss:semisimple}. The possibilities for $(J, [w])$ are listed in~\cite{FJ1}, and the number of $\bar{G}_{\sigma}$-classes with centraliser type corresponding to a fixed pair $(J, [w])$ is recorded in~\cite{Lu2}. Therefore, if $\{x_1, \ldots, x_t\}$ is a complete set of representatives of $\bar{G}_{\sigma}$-classes of each centraliser type corresponding to a fixed pair $(J, [w])$ and $c_i$ denotes the number of $\bar{G}_{\sigma}$-classes corresponding to a pair $(J, [w])$, then we can write
\begin{equation}\label{eq:semisimple}
\alpha \leqs \sum_{i = 1}^t c_i \cdot |x_i^{\bar{G}_{\sigma}}|\cdot f(x_i, q)^6 \leqs \sum_{i = 1}^t c_i \cdot |x_i^{\bar{G}_{\sigma}}|\cdot F(x_i, q)^6.
\end{equation}
Now recall that using {\sc Magma}, we can compute the exact value of ${\rm fpr}(x, G/H)$ for every parabolic subgroup $H\leqs G$, which is given by the formula in Proposition~\ref{prop:LLS-3.2}, and so we can compute the value of $F(x_i, q)$ for every $i\in [t]$. Moreover, we can read off $c_i$ for every $i\in [t]$ from~\cite{Lu2}, and using {\sc Magma} we can compute the polynomial expression
\begin{equation}\label{eq:e6-semisimple}
\sum_{i = 1}^t c_i \cdot |x_i^{\bar{G}_{\sigma}}|\cdot F(x_i, q)^6
\end{equation}
and check that it is less than $q^{-1}$ for all $q\geqs 3$.

Let us see an example. Assume $\epsilon = +$ and consider the $\bar{G}_{\sigma}$-class corresponding to the pair $(J, [w])$ with $J = \{\alpha_0, \alpha_2,\alpha_5,\alpha_6\}$ and $w = a_1a_2$ as given in~\cite{FJ1}. Let $y$ be a representative of a $\bar{G}_{\sigma}$-class corresponding to this pair. Inspecting~\cite{FJ1} tells us that $C_{\bar{G}}(y)^0$ is of type $A_2^2T_2$, and in particular, $|C_{\bar{G}_{\sigma}}(y)| = \Phi_3(q)|{\rm SL}_3(q)|^2$, where $\Phi_3(q) = q^2-q+1$ denotes the third cyclotomic polynomial. We now turn to~\cite{Lu2} and we find that the entry corresponding to the class of $y$ is the one with label $(i, j, k) = (12, 1, 5)$, and from there we can read the number of classes.

We will now derive an upper bound for $\beta$.  As described in Section~\ref{ss:unipotent}, in this case we can compute precise fixed point ratios for parabolic subgroups of $G$ using the data in~\cite{Lu}, and so using \textsf{GAP}, we can compute the expression 
\[
\beta \leqs \sum_{i = 1}^s |x_i^{\bar{G}_{\sigma}}|\cdot F(x_i, q)^6
\]
where $x_1, \ldots, x_s$ represent the prime order unipotent classes of $G$. In this way, we find that 
\begin{align*}
\beta < 
\begin{cases}
0.43  \text{ if }  q = 3, 4\\
q^{-1} \text{ if } q\geqs 5
\end{cases}
\end{align*}

We finally compute a bound on $\gamma$. Let $\gamma_1$ denote the contribution from involutory field and graph-field automorphisms, $\gamma_2$ denote the contribution from the odd order field automorphisms and $\gamma_3$ denote the contribution from graph automorphisms. 

If $\bar{H}\leqs \bar{G}$ is parabolic and $x$ is a field or graph-field automorphism of prime order $r$, then Proposition~\ref{prop:LLS-6.1} gives
\[
{\rm fpr}(x, G/H) \leqs \frac{|E_6^{\epsilon}(q^{1/r})\, : \,\bar{H}^{\epsilon}(q^{1/r})|}{|E_6(q) \, : \,\bar{H}(q)|}. 
\]
where we recall that $\bar{H}(q) = \bar{H}_{\sigma}$ and $\bar{H}^{\epsilon}(q^{1/r}) = C_{\bar{H}_{\sigma}}(x)$.
Note that this implies that ${\rm fpr}(x, G/H)$ is maximised when $|\bar{H}(q)|$ (and consequently $|\bar{H}^{\epsilon}(q^{1/r})|$) is maximised, and we certainly have $|\bar{H}(q)| \leqs |P_1(q)|$, where $P_1$ is a $P_1$-parabolic subgroup of $E_6$. Moreover, we note that $|{}^2E_6(q)| > |E_6(q)|$ for all $q\geqs 3$. Therefore, if $q\geqs 5$ we find
\[
\gamma_1 \leqs  \frac{3+\epsilon}{2}\cdot \left(\frac{|E_6^{\epsilon}(q)|}{|{}^2E_6(q^{1/2})|}\right)^{-5}\left(\frac{|P_1(q)|}{|P_1(q^{1/2})|}\right)^6
\]
and
\[
\gamma_2 \leqs \sum_{r\in \pi'_{13}} (r-1) \left(\frac{|E^{\epsilon}_6(q)|}{|E^{\epsilon}_6(q^{1/r})|}\right)^{-5}\left(\frac{|P_1(q)|}{|P_1(q^{1/r})|}\right)^6 + q^{78}\log_2{q} \cdot  \left(\frac{|E^{\epsilon}_6(q^{1/17})\, : \,P_1(q^{1/17})|}{|E_6^{\epsilon}(q) \, : \,P_1(q)|}\right)^{6}
\]
where recall that $\pi'_{13}$ denotes the set of odd prime numbers at most $13$. 

Finally, we infer from Propositions~\ref{prop:e6-fpr}, \ref{prop:e6-fpr-q2}, and~\ref{prop:e6-fpr-graph} that if $x$ is an involutory graph automorphism, then $f(x, q) \leqs q^{-5}$ if $C_{\bar{G}}(x) = F_4$ and $f(x, q) \leqs 12q^{-10}$ if $C_{\bar{G}}(x)$ is not of type $F_4$. If $q \geqs 5$, then we find
\begin{align*}
\gamma_3 \leqs q^{26} (q^{-5})^{6} + q^{42}(12q^{-10})^{6}
\end{align*}
One then checks that 
\[
\gamma = \gamma_1 + \gamma_2 + \gamma_3 < q^{-1}.
\]

Finally, if $q = 3$ or $4$, then by only considering the contribution by involutory graph automorphisms (and field and graph-field automorphisms if $q = 4$), we compute $\gamma \leqs 0.025$ using {\sc Magma}.

It follows that $\widehat{Q}(G, \tau) < 1$ for all $q$, and moreover, $\widehat{Q}(G, \tau) < 3q^{-1}$ for all $q\geqs 5$, so $\widehat{Q}(G, \tau) \to 0$ as $q\to \infty$.

\vs

Now assume that $q = 2$. Here, the value $g(x, q)$ can be obtained by Proposition~\ref{prop:e6-fpr-q2}. For parabolic subgroups, the unipotent $\bar{G}_{\sigma}$-involutions and the tables of Foulkes functions are are determined by Lübeck in~\cite{Lu}, so we can compute precise fixed point ratios for involutions in parabolic actions using \textsf{GAP}. 

The odd order semisimple classes are listed in~\cite[Table 9]{BLS}. From this table we can recover the pair $(J, [w])$ corresponding to each semisimple $\bar{G}_{\sigma}$-class via~\cite{FJ1}. More specifically, the centraliser type for each class is listed in~\cite[Table 9]{BLS}, so we can immediately recover $J$ by consulting~\cite{FJ1}, and by inspecting the possible centraliser orders for this particular choice of $J$, we can then recover $w$. 

For example, assume that $\epsilon = +$ and let us look at the fourth row in~\cite[Table 9]{BLS}. In this case our element $y$ has centraliser type $A_3T_3$. We then turn to~\cite{FJ1} and we find that $J = \{\alpha_0, \alpha_2, \alpha_4\}$ and by computing all possible centraliser orders we deduce that $w = a_1a_{17}a_{18}$. In this way we can compute precise fixed point ratios for all prime order semisimple elements, as in the case $q \geqs 3$. 

Finally, sufficient fixed point ratio bounds for involutory graph automorphisms are given by Proposition~\ref{prop:e6-fpr-graph}.

If $\epsilon = -$, then we can show that $\widehat{Q}(G, \tau) < 1$ for any $6$-tuple $\tau$ of $G$ using almost the same strategy as in the case $q\geqs 3$. The only difference is that we are only considering prime order elements when finding upper bounds for $\alpha$ and $\beta$. In particular, \eqref{eq:mx-nx} still holds, where, as mentioned above, $g(x, q)$ is obtained via Proposition~\ref{prop:e6-fpr-q2}. Then using~\eqref{eq:e6-semisimple}, we find that $\alpha \leqs 0.1$ and by using Lübeck's data on unipotent elements we find that $\beta \leqs 0.055$. Finally, the contribution from involutory graph automorphisms is no more than $0.125$, and hence $\widehat{Q}(G, \tau) < 1$.

 The case $\epsilon = +$ requires special attention. As pointed out in~\cite{BLS}, if $G_0 = E_6(2)$ and $\tau$ is a conjugate $6$-tuple containing copies of $P_1$ or $P_6$, then $\widehat{Q}(G, \tau) > 1$. Therefore, it is not possible to prove the claim using the same methods as in the case $q\geqs 3$, so we adopt a slightly different approach. We find that it is possible to construct $P_1$ and $P_6$ in {\sc Magma}. More specifically, we can construct a $P_1$-parabolic using the following code:
 {\small
 \begin{verbatim}
G:=AutomorphismGroupSimpleGroup("E6",2);
T:=Socle(G);
H:=Stabilizer(T,1);
 \end{verbatim}
 }
\noindent and then noting that $P_1$ and $P_6$ are $G_0.2$-conjugate but not $G_0$-conjugate, we can construct $P_6$ using random search. Then again using random search, one can show that every $6$-tuple of $G$ only containing copies of $P_1$ and $P_6$ is regular. We may therefore assume that $\tau$ contains at least one component that is not isomorphic to $P_1$ or $P_6$. 

For $x\in G$, we define 
\[
l(x, q) = \{{\rm fpr}(x, G/H) \, : \, H\in \mathcal{M}_G\setminus \{P_1, P_6\} \}, 
\]
and we note that 
\[
l(x, q) \leqs \max \left(\{g(x, q)\}\cup \{{\rm fpr}(x, G/H) \, : \, H\in \mathcal{P}_G\setminus \{P_1, P_6\}\}\right) =: L(x, q)
\]
Further, observe that our assumption on $\tau$ gives
\[
\widehat{Q}(G, \tau) \leqs \sum_{x\in \mathcal{P}}f(x, q)^5l(x, q) \leqs \alpha' + \beta' + \gamma',
\]
where 
\[
\alpha' = \sum_{x\in \mathcal{S}}f(x, q)^5l(x, q), \, \, \, \beta' = \sum_{x\in \mathcal{U}} f(x, q)^5l(x, q), \,\,\,\text{and} \,\,\, \gamma' = \sum_{x\in \mathcal{F}}f(x, q)^5l(x, q).
\]
We can now estimate $\alpha', \beta', \gamma'$ from above using the methods described above, and we find that $\alpha' \leqs 0.053$, $\beta' \leqs 0.6$, and $\gamma' \leqs 0.125$, so $\widehat{Q}(G, \tau) < 1$ and the proof is complete.
\end{proof}

\begin{prop}\label{prop:e7}
The conclusion to Theorem~\ref{thm:exceptional} holds if $G_ 0 = E_7(q)$.
\end{prop}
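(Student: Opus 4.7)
The plan is to mimic the strategy used in Proposition~\ref{prop:e6} almost verbatim, exploiting the fact that $E_7$ has no graph automorphisms (so the graph automorphism contribution $\gamma_3$ does not appear). Writing
\[
\widehat{Q}(G,\tau) \leqs \alpha + \beta + \gamma,
\]
with $\alpha,\beta,\gamma$ as in \eqref{eq:abc}, I will assume first that $q\geqs 3$ and then handle $q=2$ separately. For every prime order element $x\in G$ I take
\[
f(x,q) \leqs F(x,q) := \max\bigl(\{g(x,q)\}\cup\{{\rm fpr}(x,G/H):H\in\mathcal{P}_G\}\bigr),
\]
where $g(x,q)$ is the non-parabolic bound from Proposition~\ref{prop:e7-fpr}. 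Thus the task reduces to controlling ${\rm fpr}(x,G/H)$ for each of the seven maximal parabolic subgroups $P_1,\dots,P_7$ of $G$.

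For the semisimple contribution $\alpha$, I will invoke Proposition~\ref{prop:LLS-3.2} and compute ${\rm fpr}(x,G/H)$ exactly (as a polynomial in $q$) for every pair $(J,[w])$ parametrising a $\bar G_\sigma$-class, using the list of such pairs in~\cite{FJ1} and the multiplicities $c_i$ of each class from Lübeck's data~\cite{Lu2}. This is carried out using the \texttt{fprSemisimple} function described in Section~\ref{ss:semisimple}, together with a straightforward translation between Fleischmann--Janiszczak notation and the Bourbaki labelling used by \textsf{Magma}. Summing
\[
\alpha \leqs \sum_{i=1}^{t} c_i\cdot |x_i^{\bar G_\sigma}|\cdot F(x_i,q)^{6}
\]
as a rational function of $q$ then yields a bound tending to $0$ as $q\to\infty$. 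For the unipotent contribution $\beta$, precise values of ${\rm fpr}(x,G/H)$ for each parabolic $H$ are obtained from Lübeck's Foulkes function data via the formula \eqref{eq:unipotent1}, and a direct \textsf{GAP} computation over the finitely many unipotent prime-order classes of $E_7(q)$ gives the required estimate on $\beta$. For the field/graph-field contribution $\gamma$, I use Proposition~\ref{prop:LLS-6.1}, noting that ${\rm fpr}(x,G/H)$ is maximised when $|\bar H(q)|$ is largest, which happens at $H=P_1$; separating involutory graph-field automorphisms (with $\epsilon=-$) from odd-order field automorphisms gives
\[
\gamma \leqs \Bigl(\tfrac{|E_7(q)|}{|E_7(q^{1/2})|}\Bigr)^{-5}\Bigl(\tfrac{|P_1(q)|}{|P_1(q^{1/2})|}\Bigr)^{6} + \sum_{r\in\pi'_{q}} (r-1)\Bigl(\tfrac{|E_7(q)|}{|E_7(q^{1/r})|}\Bigr)^{-5}\Bigl(\tfrac{|P_1(q)|}{|P_1(q^{1/r})|}\Bigr)^{6},
\]
which again tends to $0$ as $q\to\infty$.

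The main obstacle will be $q=2$, and possibly $q=3$, where the summation over prime-order classes is tightest. For $q=2$ there are no field automorphisms, so $\gamma=0$, and the bounds from Proposition~\ref{prop:e7-fpr} for non-parabolic $H$ are correspondingly sharp (depending on $z=\dim Z(\bar D^0)$ as recorded in Table~\ref{t:e7:fprs}); combined with precise parabolic computations from Proposition~\ref{prop:LLS-3.2} (using the restricted list of semisimple classes of $E_7(2)$ recoverable from~\cite{BBR} and~\cite{FJ1}) and Lübeck's tables for unipotent classes, a direct \textsf{GAP} computation should give $\alpha + \beta < 1$. If a stubborn small-$q$ case remains -- as happened for $E_6(2)$ with the $P_1,P_6$ parabolics in Proposition~\ref{prop:e6} -- I anticipate using the same workaround: identify the problematic parabolic(s), confirm by random search in \textsf{Magma} that any $6$-tuple consisting entirely of copies of those subgroups is regular, and then refine the bound by replacing one factor of $f(x,q)$ with $l(x,q)=\max\{{\rm fpr}(x,G/H):H\in\mathcal{M}_G\setminus\{\text{problematic parabolics}\}\}$ in the sum.
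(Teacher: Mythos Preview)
Your approach is essentially the same as the paper's and is sound, but there are a few factual slips and one methodological difference worth noting.

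First, $E_7$ has no graph or graph-field automorphisms (its Dynkin diagram has trivial symmetry), so there is no ``$\epsilon=-$'' term to separate out; $\gamma$ consists only of field automorphisms of prime order $r\mid f$. Second, the parabolic with the largest order in $E_7$ is $P_7$ (Levi factor $E_6T_1$), not $P_1$ (Levi $D_6T_1$); the paper accordingly finds that $f(x,q)$ for field automorphisms is achieved at $H=P_7$. Neither error is fatal, but both should be corrected.

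More substantively, the paper does \emph{not} compute the exact multiplicities $c_i$ from~\cite{Lu2} for every pair $(J,[w])$: because the number of centraliser types in $E_7$ is large, it instead uses the crude global bound $C_i\leqs q^7+q^4$ on the number of semisimple classes for all but a handful of types (those whose centraliser contains an $E_6$, $D_6$ or $E_7$ factor, recorded in Table~\ref{t:e7-c_i}), and this suffices to get $\alpha<q^{-1}$ (with $\alpha\leqs 0.25$ when $q=2$). This shortcut saves considerable bookkeeping over your proposal to read off every $c_i$ exactly.

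Finally, your anticipated $q=2$ workaround is unnecessary: the paper obtains $\alpha\leqs 0.25$ and $\beta<0.61$ directly (with $\gamma=0$ since there are no field automorphisms), so $\widehat{Q}(G,\tau)<1$ without any refinement of the type used for $E_6(2)$.
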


\begin{proof}
We will again estimate $\alpha, \beta, \gamma$, as defined in~\eqref{eq:abc} to derive a bound on $\widehat{Q}(G, \tau)$. Our approach here is almost the same as in the case $G_0 = E_6^{\epsilon}(q)$ for $q\geqs 3$. In particular, \eqref{eq:mx-nx} applies here as well, where $n_x$ is obtained this time via Proposition~\ref{prop:e7-fpr} for each choice of prime order element $x\in G$. 

We first consider semisimple elements. The pairs $(J, [w])$ encoding the possible centraliser types of semisimple elements in $\bar{G}_{\sigma}$ are again determined in~\cite{FJ1} and so we can compute precise fixed point ratios for each centraliser type using {\sc Magma}, as described in Section~\ref{ss:semisimple}. Now let $x_1, \ldots, x_t$ be representatives of the semisimple $\bar{G}_{\sigma}$-classes of each centraliser type and $c_i$ be the number of $\bar{G}_{\sigma}$-classes of centraliser type $C_{\bar{G}_{\sigma}}(x_i)$ for each $i\in [t]$. Then we get
\[
\alpha \leqs \sum_{i = 1}^t c_i \cdot |x_i^G|\cdot f(x_i, q)^6 \leqs \sum_{i = 1}^t C_i \cdot |x_i^G|\cdot f(x_i, q)^6
\]
where $C_i$ is an upper bound on $c_i$ for all $i\in [t]$, so it is left to compute sufficient bounds $C_i$ for all $i\in [t]$ to estimate $\alpha$. Note that the exact number of classes corresponding to a given pair $(J, [w])$ is given to us by~\cite{Lu2}. However, crude bounds are good enough for most cases, and due to the large number of possible pairs, we will be working with upper bounds instead.

We find in~\cite{Lu2} that the total number of semisimple classes in $\bar{G}_{\sigma}$ is no more than $q^7 + q^4$ and unless $C_G(x)$ has an $E_6, D_6$, or $E_7$ factor, it suffices to take $C_i = q^7 + q^4$. In the final few cases we can estimate $c_i$ by inspecting~\cite{Lu2} and we give a bound $C_i$ in Table~\ref{t:e7-c_i}. In particular, we record the type of $C_{\bar{G}}(x_i)$, the set $J$ and $w$ and an upper bound on the number of classes of such type. Recall that we use the standard Bourbaki labelling~\cite{Bou} when recording the set $J$ in this paper, instead of the notation used in~\cite{FJ1}. Now using {\sc Magma}, we can compute the expression
\[
\alpha \leqs \sum_{i = 1}^t C_i \cdot |x_i^G|\cdot F(x_i, q)^6
\]
and we find that $\alpha < q^{-1}$, and in particular $\alpha \leqs 0.25$ if $q = 2$.

For unipotent elements, we can again use the \textsf{GAP}-readable data provided by Lübeck in~\cite{Lu} to compute precise fixed point ratios. If $\{x_1, \ldots, x_r\}$ is a complete set of representatives of the $\bar{G}_{\sigma}$-classes of elements of order $p$, then using \textsf{GAP} we compute
\[
\beta \leqs \sum_{i = 1}^r |x_i^G|\cdot F(x_i, q)^6 <
\begin{cases}
0.61 \, \text{ if } \, q = 2\\
1/q \, \text{ if } \, q\geqs 3
\end{cases}
\]

Finally, if $x$ is a field automorphism of prime order $r$, then sufficient fixed point ratio bounds on non-parabolic subgroups are obtained in Proposition~\ref{prop:e7-fpr}, and if $H\leqs G$ is parabolic, then Proposition~\ref{prop:LLS-6.1} gives 
\[
{\rm fpr}(x, G/H) \leqs \frac{|E_7(q^{1/r}) \, : \, \bar{H}(q^{1/r})|}{|E_7(q) \, : \, \bar{H}(q)|},
\]
where $\bar{H}(q)$ and $\bar{H}(q^{1/r})$ are defined as in Proposition~\ref{prop:LLS-6.1}. By computing a bound for each parabolic subgroup and comparing with $g(x, q)$, we find that $f(x, q)$ is achieved when $\bar{H}$ is the $P_7$ parabolic of $\bar{G}$ for all $r$, and so we get
\[
\gamma \leqs \sum_{r\in \pi_{11}} (r-1)\left(\frac{|E_7(q)|}{|E_7(q^{1/r})|}\right)^{-5}\left(\frac{|P_7(q)|}{|P_7(q^{1/r})|}\right)^6 + q^{133}\log_2{q} \cdot  \left(\frac{|E_7(q^{1/13})\, : \,P_7(q^{1/13})|}{|E_7(q) \, : \,P_7(q)|}\right)^{6} < q^{-1}
\]
for all $q\geqs 4$.

It follows that $\widehat{Q}(G, \tau) < 1$ for all $q$, with $\widehat{Q}(G, \tau) < 3q^{-3}$ for all $q\geqs 3$.
\end{proof}

{\small
\begin{table}
\[
\begin{array}{llll} \hline
\text{Type of $C_{\bar{G}_{\sigma}}(x_i)$} & J & w& C_i \\ \hline
E_6T_1&\{\alpha_1, \alpha_2, \alpha_3, \alpha_4, \alpha_5, \alpha_6\}&w_1 & q + 2\\
&& w_2& q+2\\
D_6T_1 & \{\alpha_0, \alpha_1, \alpha_2, \alpha_3, \alpha_4, \alpha_5\}& w_1& q/2\\
&& w_2 & q/2\\
D_6A_1&\{\alpha_0, \alpha_1, \alpha_2, \alpha_3, \alpha_4, \alpha_5, \alpha_7\}& w_1 & 1\\
\hline
\end{array}
\]
\caption{The value $C_i$ for $E_7(q)$-classes that contain an $E_6$ or $D_6$ factor, as defined in Proposition~\ref{prop:e7}.}
\label{t:e7-c_i}
\end{table}
}

\begin{prop}
The conclusion to Theorem~\ref{thm:exceptional} holds when $G_0 = F_4(q)$.
\end{prop}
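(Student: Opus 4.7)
The plan is to follow the same blueprint that was used in the proofs of Propositions~\ref{prop:e6} and~\ref{prop:e7}. Namely, for each prime order element $x\in G$ I will replace $f(x,q)$ by the upper bound
\[
F(x,q) = \max\bigl(\{g(x,q)\}\cup\{{\rm fpr}(x,G/H) \, : \, H\in \mathcal{P}_G\}\bigr),
\]
where $g(x,q)$ is the non-parabolic bound supplied by Proposition~\ref{prop:f4-fpr} (Table~\ref{t:f4:fprs} for $q\geqs 3$ and Table~\ref{t:f4:fprs-q2} for $q=2$). Writing $\widehat{Q}(G,\tau)\leqs \alpha+\beta+\gamma$ with the summands defined as in~\eqref{eq:abc}, the goal will be to show that each of the three quantities is bounded above by a function of $q$ tending to zero, and moreover that their sum is less than $1$ for every admissible $q$.

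To bound $\alpha$, I will use Proposition~\ref{prop:LLS-3.2} to compute the exact parabolic fixed point ratios, following the recipe of Section~\ref{ss:semisimple}. The semisimple classes of $\bar{G}_\sigma$ for $\bar G=F_4$ and the associated pairs $(J,[w])$ are listed in~\cite{AHL} (together with the centraliser types and class counts), so running the Magma function \texttt{fprSemisimple} across the four maximal parabolics $P_1,P_2,P_3,P_4$ yields polynomial expressions $F(x_i,q)$ for each centraliser type. Multiplying by the class size $|x_i^G|$ and a crude upper bound $C_i$ for the number of classes of that type, I can sum the resulting polynomial in $q$ and verify $\alpha<q^{-1}$ for $q\geqs 3$ and $\alpha\leqs c_1$ (some explicit constant less than $1$) for $q=2$. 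For $\beta$ I invoke Lübeck's \textsf{GAP}-readable tables of Foulkes functions for $F_4$, evaluate~\eqref{eq:unipotent1} for each unipotent class of order $p$ over the four parabolic actions, and combine with the non-parabolic bound from Table~\ref{t:f4:fprs} to obtain precise parabolic fixed point ratios. For $\gamma$, Proposition~\ref{prop:LLS-6.1} gives
\[
{\rm fpr}(x,G/H) \leqs \frac{|F_4^\epsilon(q^{1/r})\, : \,\bar H^\epsilon(q^{1/r})|}{|F_4(q)\, : \,\bar H(q)|}
\]
for each field or graph-field automorphism $x$ of prime order $r$ and each parabolic $\bar H$, and the maximum over $\bar H$ is readily identified (a single parabolic dominates) so that $\gamma$ collapses to a short sum indexed by the prime divisors of $2f$.

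The routine portion of the argument is then entirely mechanical: symbolic manipulation in Magma/\textsf{GAP} of polynomial expressions in $q$, checked either by evaluation at enough small values or by a standard degree comparison to conclude $\widehat{Q}(G,\tau)<1$ for all $q$, with $\widehat{Q}(G,\tau)\to 0$ as $q\to\infty$. The main obstacle is the base case $q=2$: here Proposition~\ref{prop:f4-fpr} provides only the numerical bounds of Table~\ref{t:f4:fprs-q2}, some unipotent classes (notably $A_1,\tilde A_1$) give $g(x,q)\leqs 13^{-1}$, which is tight, and the involutory graph-field contribution has a comparatively large fixed point ratio. I expect that the uniform bound strategy may fall just short for certain $6$-tuples $\tau$ consisting predominantly of $P_1$ or $P_4$ parabolics (the extremal parabolics for the long and short root fixed point ratios), exactly as occurred for $G_0=E_6(2)$ in Proposition~\ref{prop:e6}. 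In that eventuality, I would isolate the offending tuples of parabolics, verify their regularity directly by random search in Magma using \texttt{RegTuplesPlus}, and for the remainder replace $f(x,q)$ in the sixth factor by the smaller quantity $L(x,q)=\max\{{\rm fpr}(x,G/H) \, : \, H\in \mathcal{M}_G\setminus\{P_1,P_4\}\}$, as was done for $E_6(2)$. Any residual casework is finite and executable.
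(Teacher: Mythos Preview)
Your plan coincides with the paper's proof: semisimple contributions via \texttt{fprSemisimple} using the $(J,[w])$ data from~\cite[Table~23]{AHL} and class counts from~\cite{Lu2}, unipotent contributions via L\"ubeck's Foulkes tables, and field/graph-field contributions via Proposition~\ref{prop:LLS-6.1} (the paper records $f(x,q)\leqs q^{-6}$ for $r=2$ and identifies $\bar H=P_1$ as extremal for $r\geqs 3$). Two small corrections are worth noting.

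First, your list of maximal parabolics is incomplete when $p=2$ and $G$ contains the graph automorphism: in that case the core-free maximal parabolics of $G$ are of type $P_{1,4}$ and $P_{2,3}$ rather than $P_1,\ldots,P_4$. The $n_\phi$ values for these are available in~\cite[Table~8]{BT} (cf.\ Section~\ref{ss:unipotent}), so they must be included in your parabolic sweep.

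Second, the contingency you sketch for $q=2$ is not needed. The paper's computation shows that the uniform bound $F(x,q)$ already gives $\widehat Q(G,\tau)<1$ at $q=2$ (with the non-parabolic input from Table~\ref{t:f4:fprs-q2}), so no $E_6(2)$-style random-search fallback is required here.
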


\begin{proof}
We use the same approach we did in the proof of Proposition~\ref{prop:e6} when $q\geqs 3$. In particular, \eqref{eq:mx-nx} holds in this case as well, where $g(x, q)$ is obtained this time via Proposition~\ref{prop:f4-fpr} for each choice of prime order element $x\in G$, so we can estimate $\alpha, \beta$, and $\gamma$, as defined in~\eqref{eq:abc} to show that $\widehat{Q}(G, \tau) < 1$. 

First let us consider semisimple elements. In this case, the pairs $(J, [w])$ encoding the possible centraliser types of semisimple elements can be found in~\cite[Table 23]{AHL} and the number of classes of each centraliser type can be found in~\cite{Lu2} as usual, so we can compute a bound on $\alpha$ via~\eqref{eq:e6-semisimple}, and we find that $\alpha < q^{-1}$.

The contribution to $\widehat{Q}(G, \tau)$ from unipotent elements can be estimated in the usual way. Sufficient fixed point ratio bounds for non-parabolic subgroups can be obtained via Proposition~\ref{prop:f4-fpr} and \textsf{GAP}-readable data that allow us to compute precise fixed point ratios for parabolic subgroups are again given to us by Lübeck~\cite{Lu}. Therefore, if $\{x_1, \ldots, x_r\}$ is a complete list of representatives for the conjugacy classes of prime order unipotent elements in $G$, we can compute the expression
\[
\sum_{i = 1}^r |x_i^G|\cdot F(x_i, q)^6
\]
using \textsf{GAP} and we deduce that $\beta < q^{-1}$.

It is only left to obtain a sufficient bound for $\gamma$. Let $x$ be a field or graph-field automorphism of prime order $r$. We have already obtained a sufficient fixed point ratio bound for non-parabolic subgroups in Proposition~\ref{prop:f4-fpr}, and for a parabolic subgroup $H$ of $G$ we can once again appeal to Proposition~\ref{prop:LLS-6.1} to get
\[
{\rm fpr}(x, G/H) \leqs \frac{|F_4^{\epsilon}(q^{1/r}) \, : \, \bar{H}^{\epsilon}(q^{1/r})|}{|F_4(q) \, : \, \bar{H}(q)|},
\]
where $\bar{H}(q)$ and $\bar{H}(q^{1/r})$ are defined as in Section~\ref{prop:LLS-6.1}. In this way, we find that $f(x, q) \leqs q^{-6}$ if $r = 2$, whilst $f(x, q)$ is achieved when $\bar{H} = P_1$ if $r\geqs 3$. Noting that $|x^G| < 2q^{26}$ if $x$ is an involutory field or graph-field automorphism, we find
\[
\gamma \leqs 4q^{26}(q^{-6})^6 + \sum_{r\in \pi'_{17}} (r-1)\left(\frac{|F_4(q)|}{|F_4(q^{1/r})|}\right)^{-5}\left(\frac{|P_1(q)|}{|P_1(q^{1/r})|}\right)^6 + 2q^{52}\log_2{q} \cdot  \left(\frac{|F_4(q^{1/19})\, : \,P_1(q^{1/19})|}{|F_4(q) \, : \,P_1(q)|}\right)^{-6} < q^{-1},
\]
where we recall that $\pi'_{17}$ is the set of odd primes at most 17. Therefore $\widehat{Q}(G, \tau) < 1$ and if $q\geqs 3$, then $\widehat{Q}(G, \tau) < 3q^{-1}$.
\end{proof}

\subsection{Proof of Theorem~\ref{thm:exceptional} for groups with socle in $\mathcal{E}_3$}\label{s:e3}

In this final section we prove Theorem~\ref{thm:exceptional} for groups with socle $G_0 = E_8(q)$, which completes the proof of Theorem~\ref{thm:main}.

\begin{prop}\label{prop:e8}
The conclusion to Theorem~\ref{thm:exceptional} holds if $G_0 = E_8(q)$.
\end{prop}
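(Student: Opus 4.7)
The plan is to adapt the template laid out in the proofs of Propositions~\ref{prop:e6} and~\ref{prop:e7}. Since $E_8$ admits no graph automorphisms and no graph-field automorphisms, we again use the decomposition $\widehat{Q}(G,\tau) \leqs \alpha + \beta + \gamma$ from~\eqref{eq:abc}, where now $\gamma$ only collects the contribution from field automorphisms. For each prime order $x \in G$ we work with the estimate $f(x,q) \leqs F(x,q)$ from~\eqref{eq:mx-nx}, where the bound $g(x,q)$ on non-parabolic actions is extracted directly from the fixed point ratio estimates appearing in the proof of~\cite[\S4.4]{BLS}. As mentioned in Section~\ref{s:exceptional}, in contrast to the situation for $G_0\in\mathcal{E}_2$, here inspection of \cite{BLS} yields estimates on $g(x,q)$ that are sensitive to $\dim x^{\bar G}$ and are strong enough for our purposes without any further refinement, so the only new work is for parabolic actions.

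For the semisimple contribution $\alpha$, the list of pairs $(J,[w])$ parametrising the $\bar G_\sigma$-classes, given in~\cite{FJ2}, is vastly larger than for $E_6$ or $E_7$, and enumerating them individually in \textsf{Magma} is impractical. Instead I would appeal to Corollary~\ref{cor:3.2}, whose upper bound for $|C_\Omega(x)|$ depends only on $J$ and on the $p'$-part of $|(C_{\bar G}(x)^0)_\sigma|$, not on the choice of $w$. For each admissible $J$ one then computes the corresponding polynomial expression in $q$ using the analogue of the \texttt{fprSemisimple} routine described in Section~\ref{ss:semisimple}, and multiplies by an upper bound $c_J$ on the number of $\bar G_\sigma$-classes with this value of $J$, which can be read off (or crudely estimated) from~\cite{Lu2}. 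Summing the resulting polynomials and maximising over the seven choices of parabolic $P_i$ should yield $\alpha < q^{-1}$ for $q \geqs 3$, with very significant room to spare thanks to the enormous index $|G:H|$.

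The unipotent contribution $\beta$ is handled exactly as in Propositions~\ref{prop:e6} and~\ref{prop:e7}: Lübeck has supplied \textsf{GAP}-readable Foulkes data $R_\phi$ and decomposition coefficients $n_\phi$ for $G_0 = E_8(q)$, so using~\eqref{eq:unipotent1} one computes the precise value of $\chi(x)$ for each of the (finitely many) prime order unipotent classes and each maximal parabolic $P_i$, and then sums $|x^G|\cdot F(x,q)^6$ over these classes. For the field contribution $\gamma$, Proposition~\ref{prop:LLS-6.1} applies, and comparing with $g(x,q)$ shows that $f(x,q)$ is realised on the parabolic $\bar H$ of $\bar G$ that maximises $|\bar H(q)|$; one then gets a bound of the form
\[
\gamma \leqs \sum_{r \in \pi_{29}} (r-1)\left(\frac{|E_8(q)|}{|E_8(q^{1/r})|}\right)^{-5}\left(\frac{|P(q)|}{|P(q^{1/r})|}\right)^{6} + q^{248}\log_2 q \cdot \left(\frac{|E_8(q^{1/31}):P(q^{1/31})|}{|E_8(q):P(q)|}\right)^{6},
\]
which is readily checked to be less than $q^{-1}$.

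The main obstacle is the case $q=2$, where we lose the freedom to absorb constants into a $q^{-1}$ bound and, as for $E_6(2)$ in Proposition~\ref{prop:e6}, the uniform upper bound from Corollary~\ref{cor:3.2} may fail to be sharp enough. Fortunately, in this case the possible semisimple centralisers are severely restricted, and a complete list of prime order semisimple classes is given in~\cite[\S 3]{ABMNPRW}, so I would invoke Proposition~\ref{prop:LLS-3.2} class-by-class to obtain the precise value of $\chi(x)$ for each such $x$ and each of the seven maximal parabolics; combined with the exact unipotent calculation via Lübeck's data, and the (minuscule) field-automorphism contribution, this should suffice to push $\alpha+\beta+\gamma$ below $1$. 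Once this is established, combining with Propositions~\ref{prop:e6}, \ref{prop:e7}, and the $F_4$ case completes the proof of Theorem~\ref{thm:exceptional}, and then Theorem~\ref{thm:exceptional} together with Theorem~\ref{thm:main-classical} yields Theorem~\ref{thm:main}.
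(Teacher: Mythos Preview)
Your proposal is correct in spirit and uses exactly the same underlying machinery as the paper (Corollary~\ref{cor:3.2} for semisimple parabolic fixed point ratios when $q\geqs 3$, the class list of~\cite{ABMNPRW} together with Proposition~\ref{prop:LLS-3.2} when $q=2$, L\"ubeck's Foulkes data for unipotents, Proposition~\ref{prop:LLS-6.1} for field automorphisms, and inspection of~\cite[4.1--4.5]{BLS} for non-parabolics). The organisational difference is that the paper does \emph{not} run the $E_6/E_7$ template class-by-class: instead it partitions the prime order elements into five bins $\mathcal{L}_1,\dots,\mathcal{L}_5$ according to ranges of ${\rm dim}\,x^{\bar G}$, establishes a uniform bound $f(x,q)\leqs c_i$ on each bin (this is where the computations with Corollary~\ref{cor:3.2} and the L\"ubeck data are actually used), and a crude count $|\mathcal{L}_i|\leqs d_i$, reducing the whole argument to checking the single inequality $\sum_{i=1}^5 d_ic_i^6 < q^{-1}$. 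Your finer summation over $J$-types would certainly work too, at the cost of a heavier \textsc{Magma} run; the paper's binning shortcut avoids enumerating the very long list from~\cite{FJ2} and makes the final verification elementary.

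One small slip: $E_8$ has eight maximal parabolics, not seven.
\end{document}
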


\begin{proof}
We will divide the prime order elements of $G$ into five collections $\mathcal{L}_1, \ldots, \mathcal{L}_5$. In particular, we have $x\in \mathcal{L}_i$ if and only if $x\in \bar{G}_{\sigma}$ and $a_i \leqs {\rm dim} \, x^{\bar{G}} < b_i$, or $x$ is a field automorphism with $q^{a_i} <  |x^{G_0}| < q^{b_i}$, where $a_i$ and $b_i$ are listed in Table~\ref{t:a_i-d_i} for all $i\in \{1, \ldots, 5\}$. We claim that if $x\in \mathcal{L}_i$, then $f(x, q) \leqs c_i$, where $c_i$ is listed in Table~\ref{t:a_i-d_i} for all $i$. Moreover, we claim there are no more than $d_i$ prime order elements in $\mathcal{L}_i$ for all $i\in \{1, \ldots, 5\}$, where $d_i$ is again listed in Table~\ref{t:a_i-d_i} for all $i$. This would in turn imply that 
\[
\widehat{Q}(G, \tau) < \sum_{i = 1}^5 d_ic_i^6 < q^{-1}
\]
which proves the claim. 

First let us record some lower bounds on $|x^{G_0}|$ for a prime order element $x\in G$, which we will need below. In particular, we claim that if $x\in \mathcal{L}_i$ for some $i\in \{1, \ldots, 5\}$, then $q^{88}/c_i < |x^{G}| < q^{b_i}$. All cases are similar, so we look at the case $i = 2$, and we leave the remaining cases for the reader to check. If $x\in \mathcal{L}_2$ is a field automorphism, then the claim follows straight from the definition, so we may assume that $x\in \bar{G}_{\sigma}$. First suppose that $x$ is semisimple. In this case, inspecting~\cite{FJ2} tells us that the possible centraliser types for $x$ are 
\[
D_5, A_4^2, A_6, D_5A_1, A_6A_1, D_5A_1^2, D_5A_2, D_5A_3, A'_7, A''_7 \text{ and } A_7A_1
\]
and from this we deduce that 
\[
|x^{G_0}| \geqs \frac{|E_8(q)|}{|{\rm SU_8(q)}||{\rm SL}_2(q)|} > q^{139}(q^{28}+q^{27}+\dots +q^2+1)(q^3-1)(q^7-1)(q^5-1) > q^{125}.
\]
Similarly, we find that 
\[
|x^{G_0}| \leqs \frac{|E_8(q)|}{|{\rm SO}_{10}^{-}(q)|(q-1)^3} < 2q^{200}
\]
by Lemma~\ref{lem:number-theory}, so the claim follows.

Now suppose that $x$ is unipotent. In this case inspecting~\cite[Table 22.2.1]{LSeitz2} tells us that the smallest class of elements in $\mathcal{L}_2$ is $A_3A_2A_1$ and the largest is $D_5$. We therefore find
\[
|x^{G_0}| = \frac{|E_8(q)|}{q^{60}|{\rm SL}_2(q)|^2} > q^{125}
\]
Similarly, we get
\[
|x^{G_0}| \leqs \frac{|E_8(q)|}{q^{27}|{\rm Sp}_6(q)|} < q^{202}
\]
\vs

We now show that $f(x, q)\leqs c_i$ for all $i\in \{1, \ldots, 5\}$. Fix an $i\in \{1, \ldots, 5\}$, a prime order element $x\in G$ such that $a_i \leqs {\rm dim} \, x^{\bar{G}} < b_i$ and $H\in \mathcal{M}_G$. If $i = 5$, then~\cite[Theorem 1]{LLS} implies that ${\rm fpr}(x, G/H) \leqs 2q^{-24} = c_5$, so we may assume that $i \leqs 4$. First assume that $H$ is non-parabolic. If $|H| \leqs q^{88}$, then the claim follows from the bounds on $|x^G|$ above, as ${\rm fpr}(x, G/H) \leqs |H|/|x^G| \leqs c_i$. 

Now assume that $|H| > q^{88}$. The possibilities for $H$ are described in~\cite[4.2]{BLS} and one can check that ${\rm fpr}(x, G/H) \leqs c_i$ by inspecting the proofs of~\cite[4.4--4.6]{BLS}. 

Finally assume that $H$ is parabolic. If $x$ is unipotent, then using~\cite{Lu}, we can compute precise fixed point ratios in \textsf{GAP} for each conjugacy class and verify the claim. Now suppose that $x$ is semisimple. If $q\geqs 3$, then using {\sc Magma}, we compute the upper bound for $\chi(x)$ derived in Corollary~\ref{cor:3.2}, we verify that ${\rm fpr}(x, G/H) \leqs c_i$ in all cases. If $q = 2$, then the prime order semisimple classes of $G$ are listed in \cite[Tables 3--15]{ABMNPRW}, and by calculating precise fixed point ratios for each class, we again deduce that ${\rm fpr}(x, G/H) \leqs c_i$ in all cases.

Finally, if $x$ is a field automorphism, then Proposition~\ref{prop:LLS-6.1} gives us
\[
{\rm fpr}(x, G/H) \leqs \frac{|E_8(q^{1/r})\, : \,\bar{H}(q^{1/r})|}{|E_8(q) \, : \,\bar{H}(q)|} < c_i.
\]
where $\bar{H}(q)$ and $\bar{H}(q^{1/r})$ are defined as in Section~\ref{prop:LLS-6.1}. It now remains to show that the number of prime order elements $x\in G$ with $a_i\leqs {\rm dim}\, x^{\bar{G}} < b_i$ is bounded above by $d_i$ for all $i\in \{1, \ldots, 5\}$. We treat each possibility of $i$ in turn.
\vs

\noindent \textbf{Case 1.} $i = 1$:

\vs

In this case $d_1 \leqs |G|$, so the claim about $d_1$ is trivially true.

\vs

\noindent \textbf{Case 2.} $i = 2$:

\vs

Here, we either have $x\in \bar{G}_{\sigma}$  or $x$ is a field automorphism of order $5$.  It follows from~\cite{Lu2} that $G$ has fewer than $2q^8$ semisimple classes and we verify using~\cite{Lu} that there are exactly 24 unipotent classes with $182 \leqs {\rm dim}\, x^{\bar{G}} < 202$. Finally, there are no more than 4 classes of field automorphisms of order $5$, and $|x^{G_0}| < q^{202}$, as shown above, so we get 
\[
q^{202} (2q^8 + 28) < 3q^{210} = d_2
\]
as required.

\vs

\noindent \textbf{Case 3.} $i = 3$:

\vs 

Here, we either have $x\in \bar{G}_{\sigma}$  or $x$ is a field automorphism of order $3$. By inspecting \cite{Lu2}, we deduce that there are at most $2q^2 + 3q + 6$ semisimple classes with $154 \leqs {\rm dim} \, x^{\bar{G}} < 182$ and it follows from~\cite{Lu} that there are exactly 11 unipotent classes whose dimension is in this range. Finally, there are no more than $2$ classes of involutory field automorphisms. We've also shown above that $|x^G| < q^{182}$, and so we get
\[
q^{182}(2q^2 + 3q + 19) < q^{188} = d_3
\]
as claimed.

\vs

\noindent \textbf{Case 4.} $i = 4$:

\vs 

By inspecting~\cite{Lu} and~\cite{Lu2} one deduces that there are only $4$ unipotent classes with dimension in this range. Moreover, one can verify that $|x^G| < q^{154}$ for all such classes, and so we get $ 4q^{154} < q^{156} = d_4$, as claimed.

\vs

\noindent \textbf{Case 5.} $i = 5$:

\vs

Inspecting~\cite{Lu2} tells us that there are at most $q+2$ semisimple classes such that ${\rm dim}\, x^{\bar{G}} < 129$ and it follows from~\cite{Lu} that there are $6$ such unipotent classes. Finally, there is a unique class of involutory field automorphisms in $G$, and $|x^G| < q^{129}$ if ${\rm dim}\, x^{\bar{G}} < 129$, so we find
\[
q^{129}(q + 9) < q^{133} = d_5
\]
which completes the proof.

{\small
\begin{table}
\[
\begin{array}{llll} \hline
a_i & b_i & c_i & d_i \\ \hline
202 & 242& q^{-42}& q^{248}\log_2(q)\\
182& 202& q^{-37}& 2q^{211}\\
154 & 182& q^{-32}&q^{188}\\
129 &154& q^{-29}&q^{156}\\
58 &129& 2q^{-24}&q^{133}\\
\hline
\end{array}
\]
\caption{The values $a_i, b_i, c_i, d_i$ as defined in Proposition~\ref{prop:e8}.}
\label{t:a_i-d_i}
\end{table}
}
\end{proof}

\end{document}